\documentclass[12pt]{amsart}
\usepackage{verbatim,amscd,amssymb}
\usepackage{graphicx}

\newtheorem{thm}{Theorem}[section]
\newtheorem{prop}[thm]{Proposition}
\newtheorem{lem}[thm]{Lemma}

\newtheorem{cor}[thm]{Corollary}

\newtheorem*{thmA}{Theorem A}
\newtheorem*{thmB}{Theorem B}
\newtheorem*{thmC}{Theorem C}

\newtheorem*{thmm}{Main Theorem}
\newtheorem*{thmS}{Straightening Theorem}

\theoremstyle{definition}
\newtheorem{dfn}[thm]{Definition}

\def\C{\mathbb{C}}   

\def\R{\mathbb{R}}

\def\0{\emptyset}
  \def\Cc{\mathcal{C}}

\def\Fc{\mathcal{F}}   \def\Mc{\mathcal{M}}
\def\Pc{\mathcal{P}} \def\Rc{\mathcal{R}}  
\def\Uc{\mathcal{U}} \def\Vc{\mathcal{V}}  \def\Wc{\mathcal{W}}
 \def\Zc{\mathcal{Z}}  \def\tWc{\widehat{\mathcal{W}}}
\def\lam{\mathcal{L}} \def\Li{\mathcal{LI}}

\def\cf{\mathfrak{c}} 
\def\gf{\mathfrak{g}}  
 \def\hf{\mathfrak{h}}
  \def\Mf{\mathfrak{M}}
\def\Qf{\mathfrak{Q}}
\def\Uf{\mathfrak{U}} \def\Vf{\mathfrak{V}}

\def\Z{\mathbb{Z}}
\renewcommand\emptyset{\varnothing}
\newcommand{\sm}{\setminus}

\def\eps{\varepsilon}
\def\ol{\overline}

\def\si{\sigma}  \def\ta{\theta} \def\Ta{\Theta} \def\Ga{\Gamma}
\def\al{\alpha}  \def\be{\beta}  \def\la{\lambda} \def\ga{\gamma}
\def\om{\omega}
\def\vk{\varkappa}
\def\vp{\varphi}
\def\le{\leqslant}
\def\ge{\geqslant}

\def\imp{\mathrm{Imp}}
\def\cu{\mathcal{CU}}
\def\cuc{\mathcal{CU}}
\def\ch{\mathrm{CH}}
\def\uc{\mathbb{S}^1}
\def\thu{\mathrm{Th}}
\def\bd{\mathrm{Bd}}

\newcommand\3{\frac13}
\newcommand\4{\frac23}

\newcommand{\pr}{\mathrm{Pr}}

\def\disk{\mathbb{D}}
\def\cdisk{\overline{\mathbb{D}}}
\newcommand{\fg}{\mathrm{FG}}

\def\phd{\mathcal{PHD}}

\begin{document}
\date{August 23, 2016}

\title[Slices of Parameter Space of Cubic Polynomials]
{Slices of the Parameter Space of\\ Cubic Polynomials}

\author[A.~Blokh]{Alexander~Blokh}

\author[L.~Oversteegen]{Lex Oversteegen}

\author[V.~Timorin]{Vladlen~Timorin}

\address[Alexander~Blokh, Lex~Oversteegen]
{Department of Mathematics\\ University of Alabama at Birmingham\\
Birmingham, AL 35294-1170}

\address[Vladlen~Timorin]
{Faculty of Mathematics\\
National Research University Higher School of Economics, Russian Federation\\
6 Usacheva St., 119048 Moscow
}

\address[Vladlen~Timorin]
{Independent University of Moscow\\
Bolshoy Vlasyevskiy Pereulok 11, 119002 Moscow, Russia}

\email[Alexander~Blokh]{ablokh@math.uab.edu}
\email[Lex~Oversteegen]{overstee@math.uab.edu}
\email[Vladlen~Timorin]{vtimorin@hse.ru}

\subjclass[2010]{Primary 37F20; Secondary 37C25, 37F10, 37F50}

\keywords{Complex dynamics; Julia set; Mandelbrot set}

\begin{abstract}
In this paper, we study slices of the parameter space of cubic
polynomials, up to affine conjugacy, given by a fixed value of the multiplier at a
non-repelling fixed point. In particular, we study the location of
the \emph{main cubioid} in this parameter space. The \emph{main
cubioid} is the set of affine conjugacy classes of complex cubic
polynomials that have certain dynamical properties generalizing
those of polynomials $z^2+c$ for $c$ in the filled main cardioid.
\end{abstract}

\maketitle

\section{Introduction}
By \emph{classes} of polynomials, we mean affine conjugacy classes. For
a polynomial $f$, let $[f]$ be its class. For any polynomial $f$, we
write $K(f)$ for the filled Julia set of $f$, and $J(f)$ for the Julia
set of $f$. The \emph{connectedness locus $\Mc_d$ of degree $d$} is the
set of classes of degree $d$ polynomials whose critical points \emph{do
not escape} (i.e., have bounded orbits). Equivalently, $\Mc_d$ is the
set of classes of degree $d$ polynomials $f$ whose Julia set $J(f)$ is
connected. The connectedness locus $\Mc_2$ of degree $2$ is otherwise
called the \emph{Mandelbrot set}; the connectedness locus $\Mc_3$ of
degree $3$ is also called the \emph{cubic connectedness locus}.  The
\emph{principal hyperbolic domain} $\phd_3$ of $\Mc_3$ can be defined
as the set of classes of hyperbolic cubic polynomials with Jordan curve
Julia sets. Equivalently, we have $[f]\in\phd_3$ if both critical
points of $f$ are in the immediate attracting basin of the same
attracting (or super-attracting) fixed point. Recall that a polynomial
is \emph{hyperbolic} if the orbits of all its critical points converge
to attracting or super-attracting cycles.

We define the \emph{main cubioid} $\cu$ as the set of classes $[f]\in\Mc_3$
with the following properties: $f$ has a non-repelling fixed point,
$f$ has no repelling periodic cutpoints in $J(f)$, and all non-repelling
periodic points of $f$, except at most one fixed point, have multiplier 1.
The main cubioid is a cubic analogue of the main cardioid
(by the \emph{main cardioid} we mean the closure of the family of all polynomials $z^2+c$
that have an attracting fixed point).
In this paper, we discuss properties of \emph{$\cu$-polynomials}, i.e.,
cubic polynomials $f$ such that $[f]\in\cu$.

\begin{thm}[\cite{bopt14}]\label{t:nonlin}
We have that $\ol\phd_3\subset\cu$.
\end{thm}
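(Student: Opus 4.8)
The plan is to show first that $\phd_3\subseteq\cu$, and then that each of the three defining properties of $\cu$ is inherited by any limit of classes in $\phd_3$. For $[f]\in\phd_3$ the polynomial $f$ is hyperbolic, $J(f)$ is a Jordan curve, and both critical points of $f$ lie in the immediate basin $U$ of an attracting (or super-attracting) fixed point $a$; since $J(f)$ is a Jordan curve, $K(f)$ is a closed topological disk and $K(f)=\ol U$. Then $a$ is a non-repelling fixed point of $f$; $J(f)$, being a Jordan curve, has no cutpoints whatsoever, hence no repelling periodic cutpoints; and, for a hyperbolic polynomial every non-repelling cycle is attracting with a critical point in its immediate basin, so, both critical points lying in $U$, the cycle $\{a\}$ is the only non-repelling cycle of $f$ and the third property of $\cu$ holds with $a$ as the exceptional fixed point. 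Thus $[f]\in\cu$. Now let $[f]\in\ol{\phd_3}$ and pick representatives $f_n\to f$ with $[f_n]\in\phd_3$; let $a_n$ be the attracting fixed point of $f_n$. Since $\Mc_3$ is closed, $[f]\in\Mc_3$, so $J(f)$ is connected. The fixed points of $f_n$ (the roots of $f_n(z)-z$) and their multipliers depend continuously on $f_n$, and the $a_n$ stay in a fixed disk because $a_n\in K(f_n)$; hence a subsequential limit $a$ of the $a_n$ is a fixed point of $f$ with $|f'(a)|\le 1$. So $f$ has a non-repelling fixed point.

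For the second property I would argue by contradiction. If $x\in J(f)$ were a repelling periodic cutpoint, of period $p$, then since $J(f)$ is connected at least two external rays of $f$ land at $x$, and their angles are periodic under $\theta\mapsto 3\theta$. The repelling cycle of $x$ persists under small perturbation, so for all large $n$ the polynomial $f_n$ has a repelling periodic point $x_n\to x$ of period $p$; by the Douady--Hubbard stability of external rays with (pre)periodic angles landing at repelling periodic points, those two rays of $f_n$ also land at $x_n$. Then $x_n$ is a repelling periodic cutpoint of $J(f_n)$, contradicting that $J(f_n)$ is a Jordan curve. Hence $f$ has no repelling periodic cutpoints.

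The hard part, which I expect to be the main obstacle, is the third property: every non-repelling periodic point of $f$ except at most one fixed point has multiplier $1$. A cubic polynomial has at most two non-repelling cycles, so it is enough to analyze a single non-repelling cycle $Y$ of $f$. If some cycle of $f_n$ converging to $Y$ is non-repelling for infinitely many $n$, it must be $\{a_n\}$, the unique non-repelling cycle of $f_n$; then $Y=\{a\}$ is a fixed point, and the persistence of $a_n$ shows there is at most one such $Y$ -- this furnishes the one allowed exceptional fixed point. Otherwise $Y$ is approximated by repelling cycles of the $f_n$, so its multiplier has modulus $1$, and the point is to show that its multiplier equals $1$, i.e.\ that its combinatorial rotation number is $0$. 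Here I would use that on $J(f_n)$, a Jordan curve, the map $f_n$ acts as an expanding degree-$3$ circle map, and the external rays landing at the repelling periodic points approximating $Y$ have controlled, persistent angles; a nonzero rotation number for $Y$ would force at least two external rays to land at $Y$ whose angles, for the approximating $f_n$, already landed at one common repelling periodic point -- a cutpoint of the Jordan curve $J(f_n)$ -- which is impossible. This is the mechanism of the previous paragraph run in reverse. Making this argument precise, handling degenerations in which several cycles of the $f_n$ collide at one cycle of $f$, and excluding Siegel or Cremer cycles of period at least $2$, are the delicate points; they are organized most transparently through the invariant lamination (``pinched disk'') model of $\ol{\phd_3}$, and this bookkeeping is where the real work lies.
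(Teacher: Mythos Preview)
This theorem is not proved in the present paper; it is cited from \cite{bopt14}, so there is no in-paper proof to compare against. The paper does, however, leave a pointer: in the proof of Proposition~\ref{P:rootpt-nsp} it remarks that the Yoccoz inequality is what forces a secondary non-repelling periodic point to have multiplier exactly $1$, and refers explicitly to the proof of Theorem~\ref{t:nonlin} in \cite{bopt14} for this step.

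Your treatment of $\phd_3\subset\cu$ and of the first two closure properties (existence of a non-repelling fixed point via continuity of multipliers; absence of repelling periodic cutpoints via the Douady--Hubbard/Goldberg--Milnor stability of periodic rays, Lemma~\ref{l:rep} here) is correct and is the standard route. The genuine gap is in the third property, which you yourself flag as incomplete. Your proposed mechanism --- a nonzero combinatorial rotation number at a limit cycle would force two distinct periodic rays to land at a common periodic point of $f_n$, hence a cutpoint of the Jordan curve $J(f_n)$ --- does not cover Cremer or Siegel limit cycles (no periodic ray need land there), and even for a parabolic limit it does not by itself exclude multiplier $e^{2\pi i p/q}$ with $q\ge 2$: the rays landing at such a point for $f$ are not, in general, continuous limits of rays for $f_n$, precisely because the landing point is not repelling for $f$, so Lemma~\ref{l:rep} does not run backwards. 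The missing ingredient is the Pommerenke--Levin--Yoccoz inequality applied to the appropriate iterate: it gives a quantitative bound relating the multiplier of a periodic point to the number and combinatorial rotation number of rays landing there, and in the limit from $\phd_3$ this forces the multiplier of any secondary non-repelling cycle to equal $1$. That is the concrete tool \cite{bopt14} uses and that your sketch is lacking.
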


It is conjectured in \cite{bopt14} that $\ol\phd_3=\cu$. Some
properties of the complement $\cu\sm\ol\phd_3$ are discussed in
\cite{bopt14b}.

Let $\Fc$ be the space of polynomials
$$
f_{\lambda,b}(z)=\lambda z+b z^2+z^3,\quad \lambda\in \C,\quad b\in \C.
$$
An affine change of variables reduces any cubic polynomial $f$ to
the form $f_{\lambda,b}$. Clearly, $0$ is a fixed point for every
polynomial in $\Fc$.
Define the \emph{$\la$-slice} $\Fc_\lambda$ of $\Fc$ as the space of
all polynomials $g\in\Fc$ with $g'(0)=\lambda$. The $\lambda$-slice
$\Fc_\lambda$ maps onto the space of classes of all cubic polynomials
with a fixed point of multiplier $\lambda$ as a finite branched
covering. Indeed, it is easy to see that polynomials $f_{\lambda, b}$
and $f_{\lambda, -b}$ are affinely conjugate and belong to the same
class consisting of \emph{exactly} these polynomials. Hence, this
branched covering is equivalent to the map $b\mapsto a=b^2$, i.e.,
classes of polynomials $f_{\la,b}\in\Fc_\la$ are in one-to-one
correspondence with the values of $a$. Thus, if we talk about, say,
points $[f]$ of $\Mc_3$, then it suffices to take $f\in\Fc_\lambda$ for
some $\lambda$. There is no loss of generality in that we consider only
perturbations of $f$ in $\Fc$. The family $\Fc$ has been studied by
Zakeri \cite{Z}, Buff and Henriksen \cite{BH}. The main result of this
paper is a description of $\cu$ through $\lambda$-slices where
$|\lambda|\le 1$.

We use calligraphic (script) font for parameter space objects like
$\Fc$, $\Mc_3$, etc., to distinguish them from the dynamical plane
objects. We mostly use German Gothic fonts for various objects in the
closed disk related to \emph{laminations} and used in the combinatorial
models of polynomials (laminations will be introduced in
Subsection~\ref{ss:geolam}). We mostly use Greek letters for
\emph{angles}, i.e. elements of $\R/\Z$.

We need a few combinatorial concepts.
Given an angle $\al\in\R/\Z$, we write $\ol\al$ for the corresponding point $e^{2\pi i\al}$ of the unit circle $\uc=\{z\in\C\,|\, |z|=1\}$.
The angle tripling map $\si_3:\R/\Z\to\R/\Z$ identifies with the self-map of $\uc$ taking $z$ to $z^3$.
We write $(\ol\al,\ol\be)$ for an open arc of the unit circle with endpoints $\ol\al$ and $\ol\be$ if the direction from $\ol\al$ to $\ol\be$ within the arc is positive.
A closed chord of the closed unit disk $\cdisk=\{z\in\C\,|\, |z|\le 1\}$ with endpoints $\ol\al$, $\ol\be\in\uc$ is denoted by $\ol{\al\be}$.
Given a closed set $X\subset\uc$, define \emph{holes} of $X$ as components of $\uc\sm X$.

Let $\Uf\subset\ol\disk$ be the convex hull of $\Uf'=\Uf\cap\uc$.
By definition, \emph{holes} of $\Uf$ are holes of $\Uf'$; \emph{edges} of $\Uf$ are
chords on the boundary of $\Uf$. The set $\Uf$ is said to be a {\em
$($stand alone$)$ invariant gap} if $\si_3(\Uf')=\Uf'$, and, for every
hole $(\ol\al,\ol\be)$ of $\Uf$, we either have
$\si_3(\ol\al)=\si_3(\ol\be)$ (then the chord $\ol{\al\be}$ is called
\emph{critical}), or the circular arc $(\si_3(\ol\al),\si_3(\ol\be))$
is also a hole of $\Uf$. Extend the map $\si_3:\Uf'\to \Uf'$ to every
edge of $\Uf$ linearly so that a critical edge maps to a point, and a
non-critical edge $\ol{\al\be}$ maps to
$\ol{(3\al)(3\be)}=\si_3(\al)\si_3(\be)$ and keep the notation $\si_3$
for this extension. The \emph{degree} of $\Uf$ is the number of its
edges mapping \emph{onto} a non-degenerate edge of $\Uf$; it is
well-defined. Degree two gaps are also called \emph{quadratic gaps}.

We measure arc length in $\uc$ so that the total length of the entire
circle is 1. The \emph{length of a chord} $\ell$ of $\ol\disk$ is the
length of the shorter circle arc in $\uc$ connecting the endpoints of
$\ell$. A hole of $\Uf$ is called a \emph{major hole} if its length is
greater than or equal to $\frac13$. The edge of $\Uf$ connecting the
endpoints of a major hole is called a \emph{major edge}, or simply a
\emph{major}, of $\Uf$. Any \emph{quadratic} invariant gap $\Uf$ has
exactly one major that is either critical or periodic \cite{BOPT}.
By \cite{BOPT}, there exists a Cantor set $Q\subset\uc$ with the following property.
If we collapse every hole of $Q$ to a point, we obtain a topological circle
whose points are in one-to-one correspondence with \emph{all} quadratic
invariant gaps $\Uf$ such that $\Uf\cap \uc$ is a Cantor set.
Moreover, the following holds:

\begin{enumerate}

\item for each point $\ol\theta\in\uc$ of $Q$ that is not an endpoint
    of a hole of $Q$, the critical chord $\ol{(\theta+\frac
    13)(\theta+\frac 23)}$ is the major of a quadratic invariant gap
    $\Uf$ such that $\Uf\cap \uc$ is a Cantor set;

\item for each hole $(\ol\theta_1,\ol\theta_2)$ of $Q$ the chord
    $\ol{(\theta_1+\frac 13)(\theta_2+\frac 23)}$ is the periodic
    major of a quadratic invariant gap $\Uf$ such that $\Uf\cap \uc$
    is a Cantor set.

\end{enumerate}

The convex hull $\Qf$ of $Q$ in the plane is called the \emph{Principal
Quadratic Parameter Gap} (see Figure \ref{fig:Qf}); it is similar to the Main Cardioid of
$\Mc_2$.

\begin{figure}
  \includegraphics[height=6cm]{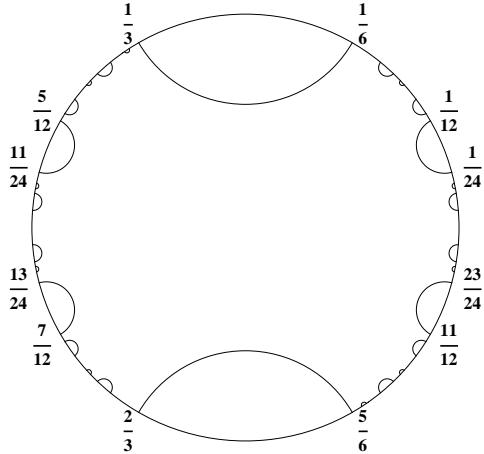}
  \caption{The Principal Quadratic Parameter Gap}
  \label{fig:Qf}
\end{figure}

Let us now introduce some analytic notions. Fix $\la$ with $|\la|\le
1$. The \emph{$\la$-connectedness locus $\Cc_\lambda$} is defined as the set of
all $f\in\Fc_\la$ such that $K(f)$ is connected, equivalently, such
that $[f]\in\Mc_3$. This is a full continuum \cite{BrHu,Z} (a compact
set $X\subset\C$ is \emph{full} if $\C\sm X$ is connected).
For every polynomial $f\in\Fc_\la$ and every angle $\al$, we define the
\emph{dynamic ray} $R_f(\al)$.
Also, for every angle $\theta$, in the parameter plane of $\Fc_\la$ we define the \emph{parameter ray} $\Rc_\lambda(\theta)$. We use rays to show that the picture in
$\Fc_\la$ resembles the picture in the parameter plane of quadratic
polynomials. Let $\cuc_\la$ be the set of all polynomials $f\in\Fc_\la$
with $[f]\in\cu$.

\begin{thmm}
The set $\cuc_\la$ is a full continuum. The set $\Cc_\la$ is the union
of $\cuc_\la$ and a countable family of \emph{limbs} $\Li_H$ of
$\Cc_\la$ parameterized by holes $H$ of $\Qf$.
The union is disjoint $($except when $\la=1)$.
For a hole $H=(\theta_1,\theta_2)$ of $\Qf$, the following holds.

\begin{enumerate}

\item The parameter rays $\Rc_\lambda(\theta_1)$ and
    $\Rc_\lambda(\theta_2)$ land at the same point $f_{root(H)}$.

\item Let $\Wc_\lambda(H)$ be the component of $\C\sm
    \ol{\Rc_\lambda(\theta_1)\cup \Rc_\lambda(\theta_2)}$ containing
    the parameter rays with arguments from $H$. Then, for every
    $f\in\Wc_\la(H)$, the dynamic rays $R_f(\theta_1+\frac 13)$,
    $R_f(\theta_2+\frac 23)$ land at the same point, either periodic
    and repelling for all $f\in\Wc_\la(H)$, or $0$ for all
    $f\in\Wc_\la(H)$. Moreover, $\Li_H=\Wc_\lambda(H)\cap \Cc_\la$.

\item The dynamic rays $R_{f_{root(H)}}(\theta_1+\frac 13)$,
    $R_{f_{root(H)}}(\theta_2+\frac 23)$ land at the same parabolic
    periodic point, and $f_{root(H)}$ belongs to $\cuc_\la$.
\end{enumerate}
\end{thmm}

Figure \ref{fig:Fc1d3} shows the parameter slice $\Fc_{e^{2\pi i/3}}$ in which several
parameter rays and several wakes are shown.

\begin{figure}
  \includegraphics[height=8cm]{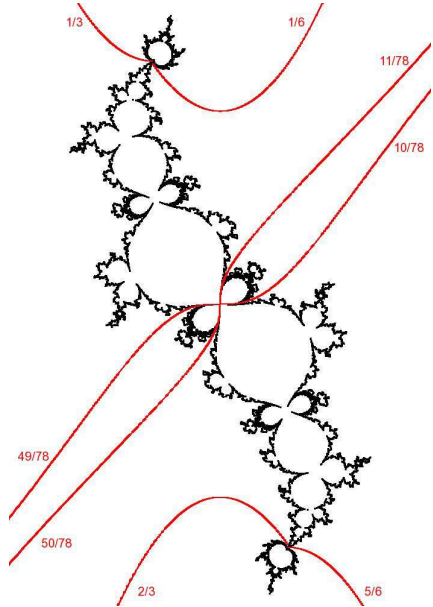}
  \caption{Parameter slice $\Fc_{e^{2\pi i/3}}$ with some rays}
  \label{fig:Fc1d3}
\end{figure}

\section{Detailed statement of the main results}\label{s:detail}

In this section, we break our Main Theorem into steps called Theorem A,
Theorem B and Theorem C.

\subsection{The structure of $\Fc_\la$}\label{ss:strufc}
A \emph{$($pre$)$critical} point of a polynomial $f$ is defined as a
point mapped to a critical point of $f$ by some iterate $f^{\circ r}$
of the polynomial $f$ (here $r\ge 0$). Let $G_{K(f)}$ be the Green
function for $K(f)$. Call unbounded trajectories of the gradient flow
for $G_{K(f)}$ \emph{dynamic rays} of $f$. Dynamic rays of $f$ can be
of two types. All but countably many of them accumulate in $K(f)$ and
are called \emph{smooth rays} (of $f$). Otherwise a dynamic ray extends
from infinity until it \emph{crashes} at an escaping (pre)critical
point of $f$; such rays can exist only if $K(f)$ is disconnected, and
escaping critical points exist. The remaining part of $\C\sm K(f)$
consists of bounded trajectories of the gradient flow for $G_{K(f)}$
called \emph{ideal rays} (of $f$). An ideal ray can extend from one
escaping (pre)critical point to another escaping (pre)critical point or
from an escaping (pre)critical point to the Julia set. Thus, if $c$ is
an escaping (pre)critical point, then several dynamic rays crash at $c$,
and some ideal rays accumulate at $c$. We conclude that $\C\sm K(f)$ is
the union of dynamic rays and ideal rays.

Let $V_f$ be the union of all dynamic rays of $f$.
Then $V_f$ is homeomorphic to $\C\sm \cdisk$ and coincides with the basin of attraction of infinity with countably many ideal rays removed.
The \emph{B\"ottcher coordinate} is an analytic map $\phi_f:V_f\to\C$ such
that $\phi_f(V_f)$ is $\C\sm \cdisk$ united with countably many finite radial
segments that originate at the boundary of $\cdisk$ and form a null sequence.
Moreover, we can choose $\phi_f$
so that the derivative $\phi'_f(z)$ tends to a positive real number as $z\to\infty$
and $\phi_f\circ f=\phi_f^3$. The existence of B\"ottcher coordinates
was established by Douady and Hubbard in \cite{DH}. Theorem \ref{T:BH} stated
below is a consequence of the analytic dependence of the
B\"ottcher coordinate on parameters \cite{DH,BrHu}.

\begin{thm}[\cite{BH}, Proposition 2]
 \label{T:BH}
Fix $\la$, and let $\Vc$ be the union of $\{b\}\times V_{f_{\la,b}}$ over all $b\in\C$.
This set is open in $\C^2$.
The map $\Psi:\Vc\to\C^2$ given by the formula $\Psi(b,z)=(b,\phi_{f_{\la,b}}(z))$
is an analytic embedding of $\Vc$ into $\C^2$.
\end{thm}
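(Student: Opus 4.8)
The plan is to construct the B\"ottcher coordinates $\phi_{f_{\la,b}}$ outward from infinity, starting in a region where joint analyticity in $(b,z)$ is classical, and to propagate them inward along dynamic rays using the functional equation $\phi_{f_{\la,b}}\circ f_{\la,b}=\phi_{f_{\la,b}}^{3}$. For the base step, note that the critical points of $f_{\la,b}(z)=\la z+bz^2+z^3$ are the two roots of $\la+2bz+3z^2$, hence lie in a bounded region depending only on $b$; one may therefore pick $R(b)$, locally bounded in $b$, so that the ``collar'' $\Vc_0=\{(b,z)\in\C^2\,:\,|z|>R(b)\}$ meets the basin of infinity of $f_{\la,b}$ in a set containing no escaping $($pre$)$critical point. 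On $\Vc_0$ the Douady--Hubbard construction \cite{DH,BrHu} realizes $\phi_{f_{\la,b}}(z)$ as the locally uniform limit of $\big(f_{\la,b}^{\circ n}(z)\big)^{1/3^n}$, normalized so that $\phi'_{f_{\la,b}}\to 1$ at infinity. Each $f_{\la,b}^{\circ n}$ is a polynomial in $(b,z)$ and the usual telescoping estimate gives local uniform convergence, so $\Psi$ is analytic on $\Vc_0$; moreover, for fixed $b$, being tangent to the identity at infinity, $\phi_{f_{\la,b}}$ is a biholomorphism of a neighborhood of $\infty$ onto a neighborhood of $\infty$, so $\Psi|_{\Vc_0}$ is a fiberwise biholomorphism onto an open subset of $\C^2$.

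Next I would propagate. Put $\Vc_{n+1}=\{(b,z)\,:\,(b,f_{\la,b}(z))\in\Vc_n\}$; this is open and contains $\Vc_n$ because $|f_{\la,b}(z)|>R(b)$ whenever $|z|>R(b)$. Fiberwise, $\bigcup_n\Vc_n$ meets $\{b\}\times\C$ exactly in $V_{f_{\la,b}}$: a point $z$ lies on a dynamic ray of $f_{\la,b}$ iff some forward image $f_{\la,b}^{\circ n}(z)$, whose Green value is $3^nG_{K(f_{\la,b})}(z)$, enters the collar, which is precisely the condition $(b,z)\in\Vc_n$ — and since no forward iterate of a point of $V_{f_{\la,b}}$ is $($pre$)$critical, these pullbacks never meet a crash point. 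Hence $\Vc=\bigcup_n\Vc_n$ is open in $\C^2$, which is the first assertion. On $\Vc$ one then defines $\phi_{f_{\la,b}}(z)=\big(\phi_{f_{\la,b}}(f_{\la,b}(z))\big)^{1/3}$, taking at each point the branch of the cube root extending the value already assigned on the preceding $\Vc_n$; this is unambiguous because, for each fixed $b$, the coordinate $\phi_{f_{\la,b}}$ is already known to be analytic on all of $V_{f_{\la,b}}$ $($a set homeomorphic to $\C\sm\cdisk)$, so running the recursion along a single fiber recovers exactly that function with no monodromy.

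The substantive point — and the step I expect to cost the most — is to upgrade this to \emph{joint} analyticity of $\Psi$ on $\Vc$, handling simultaneously the two-variable issue and the bookkeeping of cube-root branches. By the standard continuous dependence of the B\"ottcher coordinate on parameters (which also follows by spreading the local uniform convergence of the base step along the recursion), $\Psi$ is continuous on $\Vc$; it is holomorphic in $z$ for fixed $b$, as recalled above; and it is holomorphic in $b$ for fixed $z$ because, near any given $b_*$, choosing $n$ with $(b_*,f_{\la,b_*}^{\circ n}(z))\in\Vc_0$ (the same $n$ works nearby, $\Vc_0$ being open and $b\mapsto f_{\la,b}^{\circ n}(z)$ polynomial) expresses $\phi_{f_{\la,b}}(z)=\big(\phi_{f_{\la,b}}(f_{\la,b}^{\circ n}(z))\big)^{1/3^n}$ as a continuous $3^n$-th root of a function that is holomorphic in $b$ by the base step and nonvanishing, hence itself holomorphic in $b$. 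Osgood's lemma then gives joint holomorphy of $\Psi$ on $\Vc$. Finally, $\Psi$ is an embedding: $\Psi(b,z)=\Psi(b',z')$ forces $b=b'$ and $\phi_{f_{\la,b}}(z)=\phi_{f_{\la,b}}(z')$, whence $z=z'$ since $\phi_{f_{\la,b}}$ is injective on $V_{f_{\la,b}}$; and an injective holomorphic map between open subsets of $\C^2$ is automatically a biholomorphism onto its $($open$)$ image, so $\Psi$ is an analytic embedding of $\Vc$ into $\C^2$.
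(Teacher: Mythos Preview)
The paper does not give its own proof of this statement; it is attributed to Buff--Henriksen \cite{BH} (their Proposition~2) as a consequence of the analytic dependence of B\"ottcher coordinates on parameters established in \cite{DH,BrHu}. So there is no in-paper argument to compare against, and what follows is an assessment of your proposal on its own merits.

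There is a genuine gap in your openness step. You assert that $\bigcup_n\Vc_n$ meets each fiber $\{b\}\times\C$ exactly in $V_{f_{\la,b}}$, but this is false whenever $J(f_{\la,b})$ is disconnected. Your $\Vc_{n+1}$ is the \emph{full} preimage of $\Vc_n$ under $(b,z)\mapsto(b,f_{\la,b}(z))$, so fiberwise $\bigcup_n\Vc_n$ is simply the set of all escaping points --- the entire basin of infinity. But $V_{f_{\la,b}}$ is, by the paper's definition, the basin of infinity \emph{with all ideal rays removed}; it excludes escaping $($pre$)$critical points and the bounded gradient trajectories issuing from them. Your parenthetical (``since no forward iterate of a point of $V_{f_{\la,b}}$ is $($pre$)$critical'') yields only the inclusion $V_{f_{\la,b}}\subset\bigcup_n\Vc_n$, not equality. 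Thus you have shown that the total basin of infinity is open in $(b,z)$ --- which is easy --- but not that $\Vc$ is. Since the rest of your argument (the branch choices in the recursion, and the separate holomorphy in $b$) implicitly relies on $(b,z)\in\Vc$ for all nearby parameters, this gap propagates.

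A fix requires a different characterization of $V_{f_{\la,b}}$: a point $z$ lies in $V_{f_{\la,b}}$ if and only if the upward gradient trajectory of $G_{K(f_{\la,b})}$ from $z$ reaches infinity without passing through an escaping $($pre$)$critical point. On any bounded Green-level range $[G(z),T]$ there are only finitely many such $($pre$)$critical points, and the trajectory avoids them by a positive margin; since the Green function, its gradient flow, and the $($pre$)$critical points all vary continuously with $b$, this avoidance persists under small perturbations of $(b,z)$, and above level $T$ one is in the stable collar. With openness of $\Vc$ in hand, your remaining steps --- separate holomorphy plus continuity, then Osgood, then injectivity of $\phi_{f_{\la,b}}$ on each fiber and the automatic biholomorphicity of injective holomorphic maps between domains in $\C^2$ --- go through.
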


Note that dynamic rays of $f$ are preimages under $\phi_f$ of unbounded radial segments.
Hence dynamic rays can be parameterized by \emph{angles}, i.e., elements of $\R/\Z$: the
dynamic ray $R=R_f(\theta)$ of argument $\theta\in\R/\Z$ is such that for every
$z\in R$, we have
$\phi_f(z)=re^{2\pi i\theta}$, where $r>1$.
A dynamic ray $R$ \emph{lands} at a point $z\in\C$ if $z\in K(f)$, and $z$
is the only accumulation point of $R$ in $\C$.
The complement of $V_f$ in $\C\sm K(f)$ consists of ideal rays. Every accessible point
of $K(f)$ is the landing point of some dynamic ray or some ideal ray.

Recall that $|\la|\le 1$. Since $0$ is a non-repelling fixed point
of $f\in\Fc_\lambda$, it follows from the Fatou--Shishikura
inequality \cite{fat20, shi87} that at least one critical point
of $f$ is non-escaping. Thus, each $f$ in $\Fc_\la\sm\Cc_\la$
must have two \emph{distinct} critical points, one of which
escapes to infinity while the other is non-escaping. In this case,
the non-escaping critical point of $f$ will be denoted by
$\omega_1(f)$. Let $\omega_2(f)\ne \omega_1(f)$ denote the other critical point of
$f$. Let $\omega^*_2(f)$ be the corresponding \emph{co-critical
point} (i.e., $\omega^*_2(f)$ is the unique point with
$\omega^*_2(f)\ne \om_2(f)$ and $f(\omega^*_2(f))=f(\om_2(f))$).

We write $\Pc_\la$ for the set of polynomials $f\in\Fc_\la$ such that
$[f]\in\ol\phd_3$. By \cite{bopt14}, if $f\in\Pc_\la$, then
$[f]\in\cu$. By \cite{bopt14a}, the points $\om_1(f)$ and $\om_2(f)$
can be consistently defined for all $f\in\Fc_\la\sm\Pc_\la$. Moreover,
for every component $\Wc$ of $\Fc_\la\sm\Pc_\la$, the points $\om_1(f)$
and $\om_2(f)$ depend holomorphically on $f$ as $f$ moves through
$\Wc$. Observe that if $f\in \Fc_\la \sm \Cc_\la$, then $\om^*_2(f)\in
V_f$. Then the map $\Phi_\la(f)=\phi_f(\omega^*_2(f))$ is a conformal
isomorphism between the complement of $\Cc_\la$ in $\Fc_\la$ and the
complement of the closed unit disk \cite{BH}. This isomorphism can be
used to define \emph{parameter rays}. Namely, the ray
$\Rc_\lambda(\theta)$ is defined as the preimage of the straight ray
(unbounded radial segment) $\{re^{2\pi i\theta}\ |\ r>1\}$ under
$\Phi_\lambda$. We emphasize here that for \emph{dynamic rays} we use
the notation $R(\cdot)$ (with possible subscripts and superscripts)
while for \emph{parameter rays} we use the notation $\Rc(\cdot)$ (with
possible subscripts). This is consistent with our convention to use the
calligraphic font for parameter space objects. We have
$f\in\Rc_\lambda(\theta)$ if and only if $\omega^*_2(f)$ belongs to the
ray $R_f(\theta)$, i.e., if and only if both $R_f(\theta+1/3)$ and
$R_f(\theta+2/3)$ crash into the critical point $\omega_2(f)$.

\subsection{Immediate renormalization}\label{ss:immeren}

In \cite{lyu83, MSS}, the notion of \emph{$J$-stability} was introduced
for any holomorphic family $\mathcal T$ of rational functions: a map
from $\mathcal T$  is \emph{$J$-stable} with respect to $\mathcal T$ if
its Julia set admits an equivariant holomorphic motion over some
neighborhood of the map in the family. Say that $f\in \Fc_\la$ is
\emph{stable} if it is $J$-stable with respect to $\Fc_\lambda$, otherwise we call $f$ \emph{unstable}.
The set $\Fc^{st}_\la$ of all stable polynomials $f\in \Fc_\la$ is an open
subset of $\Fc_\la$. A component of $\Fc^{st}_\la$ is called a
\emph{$(\lambda$-$)$stable component}, or a \emph{domain of
$(\la$-$)$stability}. It is easy to see that, given $\la$, the
polynomial $f_{\lambda,b}$ has a disconnected Julia set if $|b|$ is
sufficiently big. Hence, if $f=f_{\lambda,b}$ is stable and $J(f)$ is
connected, then its domain of stability is bounded. Let $\cuc_\la$ be
the set of all polynomials $f\in\Fc_\la$ with $[f]\in\cu$.

Recall some topological concepts.
Observe that, given a compact set $A\subset \C$, there is a unique unbounded complementary domain $U$ of $A$.
The set $A$ is said to be \emph{unshielded} if $A$ coincides with the boundary of $U$.
For example, a polynomial Julia set is unshielded whether it is connected or not.
For an unshielded compact set $A$, the \emph{topological hull} $\thu(A)$ of $A$ is by definition the union of
$A$ with all bounded complementary components of $A$.
Equivalently, $\thu(A)$ is the complement of $U$.

\begin{thm}[\cite{bopt14,bopt14a}]
  \label{t:stab}
  All bounded components of $\Fc_\la\sm\Pc_\la$ consist of
  stable $\cu$-polynomials. Moreover, $\Pc_\la\subset\thu(\Pc_\la)\subset\cuc_\la$.
\end{thm}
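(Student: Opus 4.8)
The plan is to establish the two assertions in sequence. First, to see that every bounded component $\Wc$ of $\Fc_\la\sm\Pc_\la$ consists of stable $\cu$-polynomials, I would argue as follows. On $\Wc$ the critical points $\om_1(f)$ and $\om_2(f)$ are globally defined holomorphic functions of $f$ (as recalled in Subsection~\ref{ss:strufc}, quoting \cite{bopt14a}). The key dichotomy is whether the marked critical point $\om_2(f)$ escapes somewhere on $\Wc$. If it does, then the escape function $f\mapsto G_{K(f)}(\om_2(f))$ is positive, harmonic and non-constant on the relevant subset, forcing $\Wc$ to meet $\Fc_\la\sm\Cc_\la$; but on that intersection one may use the conformal coordinate $\Phi_\la$ of Subsection~\ref{ss:strufc} together with the fact that $\Cc_\la$ is a full continuum to produce either a parameter ray separating $\Wc$ or an unbounded escape locus, contradicting boundedness of $\Wc$ unless $\Wc$ is in fact entirely inside $\Cc_\la$. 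Having thus shown $\Wc\subset\Cc_\la$, both critical points are non-escaping throughout $\Wc$; since there are no parabolic or indifferent bifurcations available inside a component of the complement of $\Pc_\la$ (any such bifurcation would, by the classification of non-repelling periodic orbits, either land us in $\ol\phd_3$ or create a critical orbit relation), the Julia set admits an equivariant holomorphic motion over $\Wc$, i.e.\ every $f\in\Wc$ is stable. That $[f]\in\cu$ for $f\in\Wc$ is then read off from the definition of $\cu$ together with the absence of non-trivial non-repelling cycles and repelling periodic cutpoints, which again follows from $J$-stability over a component not meeting $\Pc_\la=\Fc_\la\cap\ol\phd_3$ combined with Theorem~\ref{t:nonlin}.

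For the second assertion, $\Pc_\la\subset\thu(\Pc_\la)\subset\cuc_\la$, the first inclusion is immediate from the definition of the topological hull. For the inclusion $\thu(\Pc_\la)\subset\cuc_\la$, note that $\Pc_\la\subset\cuc_\la$ already by Theorem~\ref{t:nonlin} (since $[f]\in\ol\phd_3\subset\cu$), so it remains to handle points of $\thu(\Pc_\la)\sm\Pc_\la$. Such a point lies in a bounded complementary component $\Wc$ of $\Pc_\la$. Now $\Pc_\la$ is closed in $\Fc_\la$, hence $\Wc$ is an open set, and I claim $\Wc$ is a union of bounded components of $\Fc_\la\sm\Pc_\la$. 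Then by the first part of the theorem every $f\in\Wc$ is a $\cu$-polynomial, i.e.\ $\Wc\subset\cuc_\la$; and since $\cuc_\la$ is closed (being the preimage of the closed set $\cu\subset\Mc_3$ under the continuous map $f\mapsto[f]$) we also get the boundary $\partial\Wc\subset\Pc_\la\subset\cuc_\la$, so $\thu(\Pc_\la)=\Pc_\la\cup\bigcup\Wc\subset\cuc_\la$.

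I expect the main obstacle to be the stability claim for the bounded components $\Wc$ — specifically, ruling out escape of $\om_2$ on $\Wc$ and ruling out indifferent bifurcations. The escape issue is really a statement that a bounded component of $\Fc_\la\sm\Pc_\la$ cannot meet $\Fc_\la\sm\Cc_\la$; intuitively this is because the escape region is ``at infinity'' in $\Fc_\la$ and is itself connected to $\infty$ through parameter rays, but making this rigorous requires care with how $\Pc_\la$ sits inside $\Cc_\la$ and uses the conformal isomorphism $\Phi_\la$ onto the complement of the closed disk. The no-bifurcation issue reduces to showing that a parabolic or Cremer or Siegel parameter that is \emph{not} in $\ol\phd_3$ must already be unstable in $\Fc_\la$; this is where one invokes the Mañé–Sad–Sullivan / Lyubich theory together with the structure of $\cu$ and the hypothesis $|\la|\le 1$, and it is the technical heart of the argument. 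Everything else is a formal manipulation with topological hulls and the continuity of $f\mapsto[f]$.
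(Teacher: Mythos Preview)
This theorem is not proved in the present paper: it is quoted from \cite{bopt14,bopt14a} and used as a black box. So there is no ``paper's own proof'' to compare against, and your proposal must be judged on its own merits.

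The second assertion $\Pc_\la\subset\thu(\Pc_\la)\subset\cuc_\la$ does reduce cleanly to the first, exactly as you describe; that part is fine. Your argument that a bounded component $\Wc$ of $\Fc_\la\sm\Pc_\la$ lies in $\Cc_\la$ is also essentially correct: since $\Pc_\la\subset\Cc_\la$ and $\Fc_\la\sm\Cc_\la$ is connected and unbounded (being conformally $\C\sm\cdisk$ via $\Phi_\la$), any component of $\Fc_\la\sm\Pc_\la$ meeting $\Fc_\la\sm\Cc_\la$ must contain all of it and hence be unbounded.

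The genuine gap is in the step from $\Wc\subset\Cc_\la$ to ``$\Wc$ consists of stable $\cu$-polynomials''. Your justification for stability --- that ``any such bifurcation would \dots\ either land us in $\ol\phd_3$ or create a critical orbit relation'' --- is not an argument; a parabolic cycle of period $>1$, or a Siegel or Cremer cycle, can appear without forcing $[f]\in\ol\phd_3$, and ``critical orbit relation'' is neither defined nor shown to be impossible on $\Wc$. Likewise, your claim that $J$-stability over $\Wc$ implies the absence of repelling periodic cutpoints and of non-repelling cycles with multiplier $\ne 1$ is unsupported: $J$-stability is about holomorphic motion of $J(f)$, not about the combinatorial structure of cutpoints, and a stable component could in principle carry a persistent attracting cycle (which would violate $[f]\in\cu$ unless it is the one at $0$). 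These are precisely the points where the cited papers do substantial work --- in particular, \cite{bopt14a} uses quadratic-like renormalization and a detailed analysis of how the second critical point is captured to show that polynomials in such $\Wc$ are immediately renormalizable with quadratic-like filled Julia set equal to all of $K(f)$, from which the $\cu$ properties are then extracted. Your outline correctly identifies the location of the difficulty but does not supply the mechanism that resolves it.
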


The following  classic definition and major result are due to Douady
and Hubbard \cite{DH-pl}.

\begin{dfn}\label{d:ql}
A \emph{polynomial-like} map is a proper holomorphic map $f: U\to f(U)$ of degree $k>1$, where $U$, $f(U)\subset\C$ are open subsets isomorphic to a disk, and $\ol{U}\subset f(U)$. The \emph{filled Julia set} $K(f)$ of $f$ is the set of points in $U$ that never leave $U$ under iteration.
The \emph{Julia set} $J(f)$ of $f$ is defined as the boundary of $K(f)$.
Two polynomial-like maps $f:U\to f(U)$ and $g:V\to g(V)$ are said to be \emph{hybrid equivalent} if there is a quasi-conformal map $\vp$ from a neighborhood of $K(f)$ to a neighborhood of $K(g)$ conjugating $f$ to $g$ in the sense that $g\circ\vp=\vp\circ f$ wherever both sides are defined and such that $\ol\partial\vp=0$ almost everywhere on $K(f)$.
If $k=2$, then the corresponding polynomial-like maps are said to be \emph{quadratic-like}.
Note that $U$ can always be chosen as a Jordan domain.
\end{dfn}

\begin{thmS}[\cite{DH-pl}]
Let $f :U\to f(U)$ be a polynomial-like map.
Then $f$ is hybrid equivalent to a polynomial $P$ of the same degree.
Moreover, if $K(f)$ is connected, then $P$ is unique up to $($global$)$
conjugation by an affine map.
\end{thmS}

We will need the following definition.

\begin{dfn}\label{d:plrays}
Let $f$ be a polynomial, $f^*=f|_U$ be a polynomial-like
map, and $g$ be the polynomial hybrid equivalent to $f^*$.
The curves in $\C\sm K(f^*)$ corresponding to dynamic rays of $g$
are called \emph{polynomial-like rays} of $f$. If the degree of $f^*$
is two, then we will talk about quadratic-like rays.
We will denote polynomial-like rays $R^*(\be)$, where $\be$ is the argument of the
external ray of $g$ corresponding to $R^*(\be)$.
\end{dfn}

Note that polynomial-like rays of $f$ are only defined in a bounded neighborhood of $K(f^*)$.
Observe also that the polynomial-like (qua\-dra\-tic-like) map will be always specified when we talk about polynomial-like (quadratic-like) rays, which is why we omit it from our notation.

Say that a cubic polynomial $f\in\Fc$ is \emph{immediately
renormalizable} if there are Jordan domains $U^*$ and $V^*$ such that
$0\in U^*$, and $f^*=f:U^*\to V^*$ is a
quadratic-like map (we will use the notation $f^*$ at several occasions
in the future when we talk about immediately renormalizable maps). If
$f\in \Fc_\la$ with $|\la|\le 1$ is immediately renormalizable, then the quadratic-like
Julia set $J(f^*)=J^*$ is connected. Indeed, $f^*$ is hybrid equivalent
to a quadratic polynomial $g$. Since $0\in K(f^*)$ is a non-repelling
$f$-fixed point, it corresponds to a non-repelling fixed point of $g$.
Hence, $J(g)$ and $J(f^*)$ are connected, and $g(z)=z^2+c$ with $c$ in
the filled main cardioid. The filled quadratic-like Julia set of $f^*$ is
denoted by $K(f^*)$. In \cite{bopt14a}, some sufficient conditions on
polynomials for being immediately renormalizable are obtained.

\begin{thm}[\cite{bopt14a}]\label{t:unboren}
  All polynomials in the unbounded component of $\Fc_\la\sm\Pc_\la$
  are immediately renormalizable.
\end{thm}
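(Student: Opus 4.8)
The plan is to locate, for a polynomial $f$ in the unbounded component of $\Fc_\la\sm\Pc_\la$, a quadratic-like restriction around the fixed point $0$. First I would recall the structure of this unbounded component: since $f\notin\Cc_\la$ (for $|b|$ large $K(f)$ is disconnected, and the unbounded component contains such $f$, hence by continuity all of it), exactly one critical point $\om_1=\om_1(f)$ is non-escaping and the other, $\om_2=\om_2(f)$, escapes to infinity. The escaping critical point organizes the picture: the co-critical point $\om_2^*$ lies in $V_f$, and the two dynamic rays $R_f(\al_1)$, $R_f(\al_2)$ landing on $\om_2$ (where $\al_1,\al_2$ differ by $1/3$ from the argument $\theta$ of $\om_2^*$) together with the equipotential through $\om_2$ cut the plane. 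The non-escaping critical point $\om_1$, the fixed point $0$, and the bounded part $\thu(K(f))$ all lie on one side of the figure formed by these two rays and the arcs of the crashing rays.

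Next I would build the polynomial-like domain by a standard ``thickening'' construction à la Douady--Hubbard. Take the equipotential $E_r=\{G_{K(f)}=r\}$ for a well-chosen small $r>0$; the component $V^*$ of the region bounded by $E_r$ (together with the relevant crashing ray segments) that contains $0$ is a Jordan domain. Its preimage under $f$ has a component $U^*$ containing $0$; because only one critical point ($\om_1$, which must itself lie in $U^*$ or in $K(f^*)$) is ``captured'' on that side while $\om_2$ escapes to the equipotential of height $3r$, the restriction $f\colon U^*\to V^*$ is proper of degree exactly $2$, and one checks $\ol{U^*}\subset V^*$ by the choice of $r$. This gives the immediate renormalization; connectedness of $J(f^*)$ then follows from the discussion already in the text, since $0$ is a non-repelling fixed point inside $K(f^*)$.

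The main obstacle is the combinatorial-topological bookkeeping needed to guarantee that the construction genuinely yields degree two and that $\ol{U^*}\subset V^*$: one must know that the two rays $R_f(\al_1)$ and $R_f(\al_2)$ crash into $\om_2$ and bound a region whose complementary side contains all of $\thu(K(f))$ and $\om_1$ but not $\om_2$, so that no spurious preimage components intrude and no second critical value sits inside $V^*$. I would establish this by tracking, along the unbounded component, where $\om_1^*$ and $\om_1$ sit relative to the dynamic rays: since $\Fc_\la\sm\Pc_\la$ has $\om_1$ and $\om_2$ depending holomorphically on $f$ over each component (as recalled before Theorem \ref{t:stab}), and since for $|b|$ large the picture is explicitly that of a degree-two polynomial-like map near $0$ perturbed from $z\mapsto \la z + z^3$-type behavior, the desired configuration persists throughout the unbounded component by a connectedness/continuity argument. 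Once the configuration is pinned down, the Straightening Theorem supplies the hybrid-equivalent quadratic $g(z)=z^2+c$, and the fixed-point multiplier constraint forces $c$ into the filled main cardioid, completing the proof.
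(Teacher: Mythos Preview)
The paper does not supply its own proof of this theorem; it is quoted from \cite{bopt14a}. So there is no proof in this manuscript to compare against.

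Your proposal has a genuine gap at the first step. You assert that every $f$ in the unbounded component of $\Fc_\la\sm\Pc_\la$ lies outside $\Cc_\la$, arguing ``by continuity'' from large $|b|$. This is false: the unbounded component is $\Fc_\la\sm\thu(\Pc_\la)$, and since $\thu(\Pc_\la)\subset\cuc_\la\subset\Cc_\la$ (Theorem~\ref{t:stab}), it contains all of $\Cc_\la\sm\thu(\Pc_\la)$, a set with plenty of polynomials having \emph{connected} Julia sets. Connectedness of $K(f)$ is a closed condition, not an open one, so it cannot be ruled out by the continuity you invoke. For such $f$ there is no escaping critical point and no dynamic rays crashing at $\om_2$; the entire scheme of ``equipotential plus crashing-ray segments'' on which your construction of $U^*$ and $V^*$ rests collapses.

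The disconnected case you outline is essentially the easy part (and is indeed treated directly in Subsection~\ref{ss:exteql} of this paper, using the domain bounded by the equipotential through $\om_2(f)$). The real work in \cite{bopt14a} is to produce the quadratic-like restriction when $K(f)$ is connected but $f\notin\thu(\Pc_\la)$: here one needs some other device to cut out a degree-two piece around $0$, and the holomorphic labelling of $\om_1$, $\om_2$ over the component is not by itself enough to supply the Jordan domains. Your sketch does not address this case.
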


Theorem~\ref{t:unboren} and Theorem~\ref{t:stab} imply the following
corollary.

\begin{cor}\label{c:unboren}
All polynomials in $\Fc_\la\sm\cuc_\la$ are immediately renormalizable.
\end{cor}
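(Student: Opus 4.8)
The goal is to derive Corollary~\ref{c:unboren} by combining Theorem~\ref{t:unboren} (polynomials in the unbounded component of $\Fc_\la\sm\Pc_\la$ are immediately renormalizable) and Theorem~\ref{t:stab} ($\Pc_\la\subset\thu(\Pc_\la)\subset\cuc_\la$, and bounded components of $\Fc_\la\sm\Pc_\la$ consist of stable $\cu$-polynomials). The plan is to take an arbitrary $f\in\Fc_\la\sm\cuc_\la$ and locate it relative to $\Pc_\la$ and its complementary components.

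First I would observe that $\Pc_\la\subset\cuc_\la$ by Theorem~\ref{t:stab}, so any $f\in\Fc_\la\sm\cuc_\la$ automatically lies in $\Fc_\la\sm\Pc_\la$. Next I would decompose $\Fc_\la\sm\Pc_\la$ into its connected components: it has exactly one unbounded component $\Wc_\infty$ together with (possibly countably many) bounded components. If $f$ lies in $\Wc_\infty$, then Theorem~\ref{t:unboren} applies directly and $f$ is immediately renormalizable, so we are done in that case. It remains to rule out the possibility that $f$ lies in a bounded component $\Wc$ of $\Fc_\la\sm\Pc_\la$. But Theorem~\ref{t:stab} says every such bounded component consists entirely of $\cu$-polynomials, i.e., $\Wc\subset\cuc_\la$; this contradicts $f\notin\cuc_\la$. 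Hence the bounded-component case cannot occur, and every $f\in\Fc_\la\sm\cuc_\la$ must lie in $\Wc_\infty$ and therefore be immediately renormalizable.

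There is essentially no obstacle here: the corollary is a short logical deduction, and the only thing to be slightly careful about is the bookkeeping of which set contains which. One subtlety worth spelling out is that $\Fc_\la\sm\cuc_\la$ is contained in $\Fc_\la\sm\Pc_\la$ rather than only meeting it, which is exactly what makes the component decomposition of $\Fc_\la\sm\Pc_\la$ available to us; this uses the inclusion $\Pc_\la\subset\cuc_\la$ from Theorem~\ref{t:stab}. The other point is that ``immediately renormalizable'' is a property of individual polynomials, so applying Theorem~\ref{t:unboren} to the single polynomial $f$ in the unbounded component is legitimate. With these observations in place the proof is complete in a few lines.
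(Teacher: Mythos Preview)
Your proof is correct and is essentially the same argument as the paper's. The only cosmetic difference is that the paper invokes the inclusion $\thu(\Pc_\la)\subset\cuc_\la$ from Theorem~\ref{t:stab} directly---since $\Fc_\la\sm\thu(\Pc_\la)$ is precisely the unbounded component of $\Fc_\la\sm\Pc_\la$, this single inclusion packages together your two separate observations (that $\Pc_\la\subset\cuc_\la$ and that the bounded components lie in $\cuc_\la$) and lets one bypass the explicit case split.
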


\begin{proof}
By Theorem \ref{t:stab}, $\thu(\Pc_\la)\subset \cuc_\la$. Therefore, any
polynomial $f\in \Fc_\la\sm\cuc_\la$ in fact belongs to $\Fc_\la\sm
\thu(\Pc_\la)$ and therefore, by Theorem \ref{t:unboren}, is immediately
renormalizable as desired.
\end{proof}

The discussion below, following \cite{bclos}, aims at relating
qua\-dra\-tic-like rays to dynamic and ideal rays (observe that
\cite{bclos} deals with polynomials of arbitrary degree). Define an
\emph{external ray $R^e$} as a smooth dynamic ray, or a one-sided
limit of smooth dynamic rays. Slightly abusing the language, we will
call such rays either \emph{smooth} rays or \emph{one-sided} rays.
The analysis of the structure of $\C\sm K(f)$ given above shows that
every external ray $R^e$ has its initial (unbounded) part coinciding
with the dynamic ray of a well-defined argument $\theta$.
If $R^e$ is itself a smooth ray, this completely
defines its argument as $\theta$.
The ray $R^e$ is then approximated by smooth external rays from either side.
If, however, $R^e$
is not smooth, then the same analysis shows that $R^e$ is the one-sided
limit of smooth rays from exactly one side, in which case we associate
to $R^e$ the appropriate \emph{one-sided external argument}, $\theta^+$
or $\theta^-$, and denote $R^e$ by $R^e_f(\ta^-)$ or $R^e_f(\ta^+)$,
respectively. Observe that parts of $R^e_f(\ta^-)$ and $R^e_f(\ta^+)$
from infinity to a certain precritical point in the plane coincide.
If $R^e$ is a smooth ray of argument $\ta$, then we set $R^e(\ta^+)=R^e(\ta^-)=R^e$.

An external ray $R^e$ of a polynomial $f$ accumulates in a unique
component of $J(f)$. If $R^e$ accumulates in a component $E$ of $J(f)$, then
$R^e$ is called \emph{an external ray to $E$}. For a non-closed
oriented curve $l$ from infinity or a finite point in $\C$ to a bounded
region in $\C$, define its \emph{principal} (or \emph{limit}) set
$\pr(l)$, analogously to how it is done for conformal
external rays, as the set of all accumulation points of $l$ in the forward direction, i.e., as the $\om$-limit set of $l$.
Thus, the \emph{principal set $\pr(R^e)$} of $R^e$ is the set of all accumulation points
of $R^e$ in $J(f)$.
Possible intersections of external rays are described in \cite{bclos}.

\begin{lem}[Lemma 6.1 \cite{bclos}]\label{l:6.1}
If two distinct external rays $R_0^e$, $R^e_1$ have a common point, then
$R_0^e$, $R^e_1$ are both non-smooth. The intersection $L = R_0^e \cap R^e_1$
is connected
and can contain a $($pre$)$critical point only as an endpoint. Furthermore, one
and only one of the following cases holds:

\begin{enumerate}

\item $L$ is a smooth curve joining infinity and a $($pre$)$critical point;

\item $L$ is a single $($pre$)$critical point;

\item $L$ is a smooth closed arc between two $($pre$)$critical points;

\item $L$ is a smooth curve from a $($pre$)$critical point to $J(f)$ and, moreover,
the rays $R_0^e, R^e_1$ are not periodic.

\end{enumerate}

Except for the last case, the rays $R_0^e$, $R^e_1$ have their principal sets in
different components of $J(f)$.
\end{lem}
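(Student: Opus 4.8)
\emph{Sketch of the intended argument.} The plan is to work throughout with the gradient flow of the Green function $G=G_{K(f)}$ and the description of $\C\sm K(f)$ recalled above: off the set $\mathcal C$ of escaping (pre)critical points of $f$ (the only zeros of $\nabla G$ there) the trajectories of $\nabla G$ partition $\C\sm K(f)$, distinct trajectories are disjoint, $G$ is strictly monotone along each trajectory, and the trajectories are precisely the smooth dynamic rays, the crashing dynamic rays and the ideal rays. I would record two preliminary facts. First, a non-smooth external ray has non-periodic argument (a short argument from $G\circ f^k=3^kG$ and the crash structure: a period $k$ would make $f^k$ carry the innermost ideal ray of the ray to a single trajectory whose $G$-levels straddle a point of $\mathcal C$, which cannot lie on the ray); this will supply the non-periodicity asserted in case (4). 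Second, $R_0^e$ and $R_1^e$ are both non-smooth: a smooth ray is a single unbounded trajectory $T$ carrying no point of $\mathcal C$, and if $R_0^e=T$ then — $R_1^e$ having a different argument — every trajectory arc of $R_1^e$ differs from $T$ (its unbounded dynamic-ray arc by argument near infinity, its ideal-ray arcs by boundedness) and its $\mathcal C$-vertices miss $T$, so $R_0^e\cap R_1^e=\emptyset$, against the hypothesis. The same bookkeeping shows that each point of $L=R_0^e\cap R_1^e$ is either an interior point of a trajectory arc shared by both rays or a common point of $\mathcal C$; hence a point of $\mathcal C$ in $L$ that is not an endpoint of $L$ cannot occur.

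Next I would pin down $L$. Parameterizing each $R_i^e$ by the value of $G$ makes it an order-interval in which the points of $\mathcal C$ are isolated; through a point off $\mathcal C$ the trajectory is unique, so near a common point of the two rays off $\mathcal C$ they coincide. Thus $L$ can fail to continue ``inward'' from one of its points only at a point of $\mathcal C$ where the two rays branch apart, or at accumulation on $J(f)$, and symmetrically ``outward'' with $\infty$ in place of $J(f)$. Granting that $L$ is connected, its outer end lies in $\{\infty\}\cup\mathcal C$ and its inner end in $\mathcal C$ or on $J(f)$: the pair $(\infty,J(f))$ would force $R_0^e=R_1^e$ and is excluded; $(\infty,\mathcal C)$ is case (1); both ends in $\mathcal C$ give case (2) if they coincide and case (3) otherwise; $(\mathcal C,J(f))$ is case (4), where the rays share their innermost ideal ray and hence accumulate on the same component of $J(f)$. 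In cases (1)--(3) it would then remain to prove the principal sets lie in \emph{different} components.

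The connectedness of $L$ and the separation of principal sets I would both obtain from one trapping lemma: a bounded Jordan domain whose boundary is built of gradient-trajectory arcs, points of $\mathcal C$, and subsets of $K(f)$ contains no point of any smooth ray. Indeed such a domain $D$ has $D\sm K(f)$ nonempty and open (its boundary is not all in $K(f)$), so it meets some smooth ray $S$ (smooth rays are dense in $\C\sm K(f)$); but $S$ is an unbounded trajectory carrying no point of $\mathcal C$, it cannot cross the trajectory arcs of $\partial D$ (distinct trajectories are disjoint, and no arc of a non-smooth ray is a smooth ray), nor the $\mathcal C$-vertices, nor the parts of $\partial D$ inside $K(f)$, so it cannot leave the bounded set $\overline D$ — a contradiction. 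For connectedness: were $L$ disconnected, the two sub-arcs of $R_0^e,R_1^e$ between the inner endpoint $v\in\mathcal C$ of one component of $L$ and the outer endpoint $a\in\mathcal C$ of the next would bound such a Jordan domain (all trajectory arcs and points of $\mathcal C$, at positive $G$-levels), contradiction. For separation: in cases (1)--(3) the rays are disjoint below $v\in\mathcal C$, and if $\pr(R_0^e),\pr(R_1^e)$ lay in one component $E$ of $J(f)$, I would close $R_0^e|_{\le v}$ and $R_1^e|_{\le v}$ into a Jordan curve by joining their ends through $E\subset K(f)$ and invoke the same lemma.

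I expect the main obstacle to be precisely that last closure: producing from the two rays and the continuum $E$ a genuine Jordan curve in $(\C\sm K(f))\cup E$ whose bounded side meets $\C\sm K(f)$ — routine when the rays land and $E$ is locally connected, but requiring a careful plane-topology argument in general. Everything else rests on the soft structural features of the gradient flow recalled at the start.
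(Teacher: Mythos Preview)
The paper does not prove this lemma at all: it is quoted verbatim from \cite{bclos} (Lemma 6.1 there) and used as a black box, so there is no ``paper's own proof'' to compare against. Your sketch is therefore an independent attempt at a result the present paper merely imports.

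On the substance of your sketch: the overall strategy via the gradient flow of $G_{K(f)}$ and the trapping lemma is sound, and you correctly isolate the delicate point (building a Jordan curve through a possibly non-locally-connected component $E$ of $J(f)$). However, your first ``preliminary fact'' is wrong as stated. You assert that a non-smooth external ray has non-periodic argument, but the paper itself later observes the opposite (just before Proposition~\ref{p:W-nonspec}): if $\alpha$ is periodic and the dynamic ray $R_f(\alpha)$ crashes at a critical point, then the one-sided external rays $R^e_f(\alpha^\pm)$ are non-smooth and contain \emph{infinitely many} precritical points. So non-smooth periodic external rays certainly exist. What you actually need for case~(4) is more specific: if two distinct external rays share their entire tail from some precritical point down to $J(f)$, then neither can be periodic. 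This does follow, but from the structure just described rather than from your claimed fact---a periodic non-smooth external ray has precritical points accumulating on its principal set, so any shared terminal arc $L$ reaching $J(f)$ would contain precritical points in its interior, contradicting the clause you have already argued (that $L$ contains precritical points only as endpoints). You should replace your first preliminary fact with this more targeted argument.
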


In our setting, external rays are closely related to quadratic-like
rays. To describe this relation, we need Lemma~\ref{l:pericom},
which follows immediately from \cite{BrHu}. Recall that any $f\in
\Fc_\la\sm \Cc_\la$ with $|\la|\le 1$ is immediately renormalizable.

\begin{lem}\label{l:pericom}
If $f\in \Fc_\la\sm \Cc_\la$ with $|\la|\le 1$, then $J(f^*)$ is an
invariant component of $J(f)$, and $\thu(J(f^*))=K(f^*)$ is a component of
$K(f)$.
\end{lem}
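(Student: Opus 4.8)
The plan is to start from the fact that $f\in\Fc_\la\sm\Cc_\la$ is immediately renormalizable (Corollary~\ref{c:unboren} together with Theorem~\ref{t:unboren}, or more directly the discussion following Definition~\ref{d:ql}), so there are Jordan domains $U^*$, $V^*$ with $0\in U^*$ and $f^*=f\colon U^*\to V^*$ quadratic-like, with $J(f^*)$ connected. The first step is to show $J(f^*)$ is contained in a single component $E$ of $J(f)$. Since $K(f^*)=\thu(J(f^*))$ is connected and $f$-invariant (as $f^*=f|_{U^*}$ and points of $K(f^*)$ never leave $U^*$), its boundary $J(f^*)$ is a connected subset of $J(f)$, hence lies in one component $E$; and $E$ is $f$-invariant because $f(E)$ is connected, meets $E$, and the component is maximal.

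The second and main step is the reverse inclusion $E\subset J(f^*)$, equivalently $\thu(E)\subset K(f^*)$. Here is where I would invoke the machinery on external rays. Since $f\notin\Cc_\la$, $J(f)$ is disconnected, so $E$ is a proper sub-continuum of $J(f)$, and $E$ is accessible from the basin of infinity; let $R^e$ be an external ray accumulating on $E$. The co-critical point $\om^*_2(f)$ lies in $V_f$ and the escaping critical point $\om_2(f)$ carries crashing dynamic rays; all the escaping (pre)critical orbit lies outside $K(f^*)$ because $K(f^*)\subset K(f)$. The idea is that the quadratic-like rays $R^*(\be)$ of $f^*$, defined in a bounded neighborhood of $K(f^*)$, must each be "fed" by an external ray $R^e(\theta)$ of $f$: following $R^*(\be)$ outward it has to exit the neighborhood and join up with a dynamic/external ray of $f$ landing on, or accumulating on, the same point of $J(f^*)$. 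This correspondence (which is exactly the point of the $\cite{bclos}$ discussion and Lemma~\ref{l:6.1}) lets one transport the structure: every point of $E$ accessible by an external ray of $f$ that is "local" to $E$ is accessible by a quadratic-like ray, hence lies on $\partial K(f^*)=J(f^*)$. Combined with the fact that $E$, being a component of the unshielded set $J(f)$, equals the accumulation set of the external rays approaching it, this gives $E\subset J(f^*)$.

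Finally, with $E=J(f^*)$ established, invariance of $E$ gives that $J(f^*)$ is an invariant component of $J(f)$, and $\thu(J(f^*))=K(f^*)$; since $\thu(E)$ is by definition $E$ together with the bounded complementary components it surrounds, and all of these are filled-in by points that stay in $U^*$ (they cannot escape, or they would be separated from $E$), we get $\thu(J(f^*))=K(f^*)$ is a component of $K(f)$. The last claim needs a small argument that no other component of $K(f)$ meets $\thu(J(f^*))$: a component of $K(f)$ meeting $K(f^*)$ would have to meet $J(f^*)=E$ or lie inside a bounded complementary component, and in either case is absorbed into $K(f^*)$ by connectedness.

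I expect the main obstacle to be the precise ray-matching in the second step: making rigorous the claim that the quadratic-like ray structure on $\partial K(f^*)$ is compatible with the external-ray structure of $J(f)$ restricted near $E$, so that accessibility "from the polynomial-like side" and "from the cubic side" coincide. This is where one must be careful about non-smooth and one-sided rays, about (pre)critical points sitting on ray intersections (Lemma~\ref{l:6.1}), and about the fact that polynomial-like rays are only defined in a bounded neighborhood of $K(f^*)$. The cleanest route is probably to cite $\cite{bclos}$ for the statement that external rays to $E$ extend/continue the quadratic-like rays of $f^*$, and then to note that $E$ is unshielded (as a component of the unshielded set $J(f)$) so it is the principal-set union of those external rays, forcing $E\subseteq J(f^*)$; the opposite inclusion and the hull statements are then formal.
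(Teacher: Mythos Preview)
Your approach is far more elaborate than the paper's, which simply records that the lemma ``follows immediately from \cite{BrHu}''.  The Branner--Hubbard argument is direct and does not touch external or quadratic-like rays at all: since $f\notin\Cc_\la$, the critical point $\omega_2(f)$ escapes; take $V^*$ to be the component of $\{z:G_{K(f)}(z)<G_{K(f)}(\omega_2(f))\}$ containing $0$ (a Jordan disk bounded by equipotential arcs) and $U^*$ its $f$-pullback inside $V^*$.  Then $f^*=f|_{U^*}:U^*\to V^*$ is quadratic-like, and $K(f^*)=\bigcap_{n\ge 0}(f|_{U^*})^{-n}(V^*)$ is, by construction, precisely the set of points of $K(f)$ whose forward orbits remain in $V^*$.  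But this is exactly the component of $K(f)$ containing $0$: any component of $K(f)$ is either contained in $V^*$ forever (hence in $K(f^*)$) or eventually leaves $V^*$, and connectedness forces the component through $0$ to be the former.  The boundary statement and invariance are then immediate.

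Your ray-matching route has two problems.  First, it is circular within the paper's architecture: the correspondence between external rays to $J(f^*)$ and quadratic-like rays (Theorem~\ref{t-extepoly}, drawn from \cite{bclos}) is stated for external rays to a \emph{component} of $J(f)$, so it presupposes this lemma---indeed the paper's commented-out proof of Theorem~\ref{t-extepoly} begins by invoking Lemma~\ref{l:pericom}.  Second, even setting circularity aside, your reverse inclusion $E\subset J(f^*)$ is not established: you argue that points of $E$ accessible by external rays lie in $J(f^*)$, then appeal to $E$ being unshielded.  But unshielded does not mean every point is a landing point of an external ray; you would still need that landing points are dense in $E$ and that $J(f^*)$ is closed in $E$, and the former is not obvious without more input.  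The potential-theoretic description via equipotential levels bypasses all of this.
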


Hence, if $J(f)$ is disconnected, then we can choose neighborhoods $V^*\supset U^*\supset J(f^*)$ enclosed by components of equipotentials.
In addition to quadratic-like rays accumulating in $J(f^*)$, it is natural to consider external rays that penetrate $U^*$; by definition and by the choice of $U^*$, any such ray has its entire tail inside $U^*$ so that its principal set is located inside $U^*$.

\begin{dfn}\label{d:corres}
A quadratic-like ray $R^*$ and an external ray $R^e$ to $J(f^*)$
\emph{correspond} to each other if $\pr(R^*)=\pr(R^e)$, and
$R^*$, $R^e$ are homotopic in $\mathbb{C}\setminus K(f^*)$ among
curves with the same limit set.
\end{dfn}

Let $\psi:\C\sm K(f^*)\to \C\sm \cdisk$ be the Riemann map with $\psi(z)\sim kz$ as $z\to \infty$ with some $k>0$.
It follows that, in the situation of Definition \ref{d:corres}, the rays $\psi(R^*)\subset \C\sm
\cdisk$ and $\psi(R^e)\subset \C\sm \cdisk$ land at the same point in
$\uc=\bd(\disk)$. Observe also that no two distinct quadratic-like rays
can correspond to each other in the sense of Definition~\ref{d:corres}.
Theorem~\ref{t-extepoly} is a special case of a more general result
obtained in Theorem 6.9 of \cite{bclos}.

\begin{thm}[cf \cite{bclos}, Theorem 6.9]\label{t-extepoly}
Consider a polynomial $f\in \Fc_\la\sm \Cc_\la$ with $|\la|\le 1$.
Then every external ray $R^e$ to $J(f^*)$
corresponds to exactly one quadratic-like ray $l=\xi(R^e)$.
The mapping $\xi:R^e\mapsto l$ from external rays to $J(f^*)$ to quadratic-like rays is onto
and $\xi^{-1}(l)$ consists of finitely many rays. Moreover, if
$\xi^{-1}(l)=\{R^e_1,\dots,R^e_k\}$ with $k>1$, then one
of the following holds:
\begin{enumerate}
\item[(i)] we have $k=2$, both rays $R^e_1$, $R^e_2$ are non-smooth, meet at
a precritical point $x$, and share a common arc from $x$ to $E$;
\item[(ii)] the rays $R^e_1$, $\dots$, $R^e_k$ land at the same preperiodic point,
and include at least one pair of disjoint rays.
\end{enumerate}
\end{thm}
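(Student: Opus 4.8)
The plan is to deduce Theorem~\ref{t-extepoly} from the general machinery of \cite{bclos}, specialized to our family. First I would recall that, by Corollary~\ref{c:unboren} (or by the very hypothesis $f\in\Fc_\la\sm\Cc_\la$ combined with the remarks preceding Lemma~\ref{l:pericom}), the polynomial $f$ is immediately renormalizable, so the quadratic-like restriction $f^*=f|_{U^*}$ exists, and by Lemma~\ref{l:pericom} the set $J(f^*)$ is an invariant component $E$ of $J(f)$ with $\thu(J(f^*))=K(f^*)$ a component of $K(f)$. This is exactly the setting in which Theorem~6.9 of \cite{bclos} applies: $E$ is a periodic (in fact invariant) component of the Julia set of the polynomial $f$ carrying a polynomial-like structure, and the correspondence of Definition~\ref{d:corres} between polynomial-like rays accumulating in $E$ and external rays to $E$ is precisely the one set up there. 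The Riemann map $\psi:\C\sm K(f^*)\to\C\sm\cdisk$ and the observation recorded after Definition~\ref{d:corres} (that corresponding rays land at the same boundary point, and that distinct quadratic-like rays never correspond to the same external ray) give the well-definedness of the map $\xi$ and the fact that $\xi^{-1}(l)$ is a genuine equivalence class of external rays.

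Next I would verify the three assertions. Surjectivity of $\xi$: every quadratic-like ray $R^*$ of $f^*$ has a well-defined principal set in $E=J(f^*)$; since $E$ is an unshielded component of $J(f)$ and $K(f^*)=\thu(E)$ is enclosed (after shrinking $U^*$) by a component of an equipotential, the homotopy class of $R^*$ in $\C\sm K(f^*)$ among curves with the same limit set contains at least one external ray of $f$ — this is the accessibility statement built into Theorem~6.9 of \cite{bclos}. Finiteness of $\xi^{-1}(l)$: two distinct external rays in the same fiber must intersect or land together, and Lemma~\ref{l:6.1} forces any such pair to be non-smooth; since there are only countably many non-smooth external rays, and in fact only finitely many that crash at (pre)critical points relevant to $E$ (there is a single escaping (pre)critical orbit, namely that of $\om_2(f)$, because $\om_1(f)$ is non-escaping by the Fatou–Shishikura count), the fiber is finite. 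For the dichotomy (i)/(ii): apply Lemma~\ref{l:6.1} to the rays in $\xi^{-1}(l)$. If the fiber has size $k>1$, any two of its rays are non-smooth and share a common point; case~(4) of Lemma~\ref{l:6.1} (a common arc from a (pre)critical point to $J(f)$, rays not periodic) gives alternative~(i) with the precritical crash point $x$ and the shared arc from $x$ to $E$; the remaining cases (1)–(3) of Lemma~\ref{l:6.1}, together with the observation that rays with principal sets in the same component $E$ of $J(f)$ must land, force the rays to be preperiodic and to land at a common preperiodic point of $E$, which is alternative~(ii) — and such a configuration, being more than a single crash arc, necessarily includes two disjoint rays among $R^e_1,\dots,R^e_k$.

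The main obstacle is not any single step but rather the careful bookkeeping needed to transport Theorem~6.9 of \cite{bclos} — which is stated for periodic Julia components of polynomials of arbitrary degree and in considerable generality — into the concrete cubic situation, checking that all of its hypotheses are met: that $K(f^*)$ can be enclosed by equipotential components so that external rays penetrating $U^*$ have their tails (hence principal sets) inside $U^*$ (this uses the disconnectedness of $J(f)$ and the explicit choice of $V^*\supset U^*\supset J(f^*)$ made just before Definition~\ref{d:corres}), and that the homotopy/limit-set notion of correspondence is the same on both sides. Once the translation is in place, the structural dichotomy is a direct reading of Lemma~\ref{l:6.1}; the only genuinely family-specific input is that the single escaping critical point $\om_2(f)$ (and its preimages) is the sole source of non-smooth rays, which both bounds the fiber size and pins down case~(i) as the ``two rays meeting at one precritical point'' scenario.
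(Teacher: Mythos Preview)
Your proposal is correct and matches the paper's approach exactly: the paper's proof (which appears only in a commented-out block) simply observes via Lemma~\ref{l:pericom} that $J(f^*)$ is an invariant component of $J(f)$ and then defers everything else to Theorem~6.9 of \cite{bclos}. Your write-up supplies more of the bookkeeping behind that deferral, but the route is the same; one small remark is that you should cite the remarks preceding Lemma~\ref{l:pericom} (or Theorem~\ref{t:unboren}) for immediate renormalizability rather than Corollary~\ref{c:unboren}, since the latter concerns $\Fc_\la\sm\cuc_\la$ rather than $\Fc_\la\sm\Cc_\la$.
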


\subsection{Combinatorics of the angle tripling map: an overview}
\label{ss:overview} In this subsection, we briefly describe some
results of \cite{BOPT}.

Let $\Uf\subset\ol\disk$ be the convex hull of $\Uf'=\Uf\cap\uc$. It is
said to map under $\si_3$ in a {\em quasi-covering fashion} if,
\begin{enumerate}
\item we have $\si_3(\Uf')=\Vf'$ for some set $\Vf'$ and
    $\Vf=\ch(\Vf')$, and
\item for every hole $(\ol\al,\ol\be)$ of $\Uf$, we either have
    $\si_3(\ol\al)=\si_3(\ol\be)$, or the circular arc
    $(\si_3(\ol\al),\si_3(\ol\be))$ is also a hole of $\Vf$.
\end{enumerate}
So, the set $\Uf$ is a {\em $($stand alone$)$ invariant gap} if
$\si_3(\Uf')=\Uf'$, and $\si_3$ maps $\Uf$ in a quasi-covering fashion.
The set $\Uf$ is a {\em $($stand alone$)$ periodic gap/leaf} if there
exists $n>0$ such that $\si_3^{\circ n}(\Uf')=\Uf'$, convex hulls of
sets $\si_3^{\circ i}(\Uf')$, $i=0$, $\dots$, $n-1$ intersect
\textbf{at most} by common edges, and each such convex hull maps under
$\si_3$ in a quasi-covering fashion.

An invariant gap $\gf$ can have one or two majors \cite{BOPT}.
Any \emph{quadratic} invariant gap $\Uf$ has exactly one major.
It can be shown that any major edge is either critical or periodic.
If the major of $\Uf$ is critical and has no periodic endpoints, then $\Uf$ is said to be of
\emph{regular critical type}.
If the major of $\Uf$ is critical and has a periodic endpoint, then $\Uf$
is said to be of \emph{caterpillar type}.
Finally, if the major of $\Uf$ is periodic, then $\Uf$ is said to be of
\emph{periodic type}.
Since any major edge is either critical or periodic, then any quadratic invariant gap is of one of the three types just introduced.

Quadratic invariant gaps can be generated as follows.
Let $\cf$ be any critical chord.
Set $L(\cf)$ to be the longer closed arc of $\uc$ connecting the endpoints of $\cf$,
and $X(\cf)$ be the set of all points in $L(\cf)$, whose forward orbits stay in $L(\cf)$.
Let $\Uf(\cf)$ be the convex hull of $X(\cf)$ in the plane.

\begin{thm}[\cite{BOPT}]\label{t:invagap}
If $\cf$ is any critical chord, then $\Uf(\cf)$ is a quadratic invariant gap.
If one of the endpoints of $\cf$ is periodic, then $\Uf(\cf)$ is of caterpillar type.
Otherwise, $\Uf(\cf)$ is of regular critical type or of periodic type depending
on whether or not the forward orbits of the endpoints of $\cf$ are contained in $L(\cf)$.
Any quadratic invariant gap is of the form $\Uf(\cf)$ for some $\cf$.
\end{thm}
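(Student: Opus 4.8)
The plan is to recover the construction of \cite{BOPT}: one regards $\si_3|_{L(\cf)}$, after gluing the two endpoints of $\cf$ together, as a degree-two covering of $\uc$, so that $\Uf(\cf)$ becomes the associated ``quadratic-like'' invariant gap, and then reads off the four assertions from the combinatorics of this covering. First I check that $X=X(\cf)$ is closed with $\si_3(X)=X$. Since $\cf$ is a critical chord, its endpoints $\ol a,\ol b$ satisfy $3a\equiv 3b\pmod 1$, so $\cf$ has length $\frac13$ and $L(\cf)$ is a closed arc of length $\frac23$. Closedness follows from $X=\bigcap_{n\ge0}\si_3^{-n}(L(\cf))$, and $\si_3(X)\subseteq X$ is immediate. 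For the reverse inclusion, the three $\si_3$-preimages of any point of $\uc$ are spaced $\frac13$ apart, so the open arc $\uc\sm L(\cf)$ of length $\frac13$ contains at most one of them and at least two lie in $L(\cf)$; given $y\in X$, any preimage $x\in L(\cf)$ has forward orbit $\{x\}\cup(\text{forward orbit of }y)\subseteq L(\cf)$, so $x\in X$. Thus $X=\Uf(\cf)'$ is closed and $\si_3(\Uf(\cf)')=\Uf(\cf)'$.

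Next come the gap axioms and the degree. Let $\ol m$ be the third vertex of the critical triangle on $\ol a,\ol b$ — the unique interior point of $L(\cf)$ with $\si_3(\ol m)=\si_3(\ol a)=\si_3(\ol b)$ — which splits $L(\cf)$ into arcs $A_1=[\ol a,\ol m]$, $A_2=[\ol m,\ol b]$ of length $\frac13$, each mapped by $\si_3$ onto $\uc$ and injectively off the endpoints; identifying $\ol a$ with $\ol b$ turns $L(\cf)$ into a circle on which $\si_3$ acts as a degree-two covering, with $\ol m$ and the identified point the two preimages of $\si_3(\ol a)$. Let $\tau\colon A_1\to A_2$ be the translation by $\frac13$, which pairs the two $\si_3$-preimages lying in $L(\cf)$: it fixes $\ol m$, interchanges $\ol a$ and $\ol b$, and — because $x\in L(\cf)$ lies in $X$ exactly when $\si_3(x)$ does — it carries $X\cap A_1$ onto $X\cap A_2$. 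For a hole $H=(\ol\gamma,\ol\delta)$ of $\Uf(\cf)$ with $\overline H\subseteq A_1$ or $\overline H\subseteq A_2$, the map $\si_3$ is injective on $\overline H$, so $\si_3(H)$ is a proper arc whose interior — the $\si_3$-image of escaping points of $L(\cf)$ — avoids $\si_3(X)=X$ while its endpoints lie in $X$; hence $\si_3(H)$ is again a hole. The remaining holes are the one around $\uc\sm L(\cf)$ and, when $\ol m\notin X$, the one around $\ol m$; for each of these the quasi-covering condition holds, either because its edge is critical (the slit $\cf$ itself, or — forced by $\tau$ — the edge straddling $\ol m$) or because the arc $(\si_3(\ol\gamma),\si_3(\ol\delta))$ is again a hole, which one checks directly since $\si_3|_{L(\cf)}$ is a degree-two quasi-covering. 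Therefore $\Uf(\cf)$ is an invariant gap, and, $\si_3|_{L(\cf)}$ being two-to-one, exactly two edges of $\Uf(\cf)$ map onto non-degenerate edges, so $\deg\Uf(\cf)=2$.

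It remains to identify the type and to prove the converse. Since $\ol a,\ol m,\ol b$ share the image $\si_3(\ol a)$, their forward orbits coincide from the first step; call this orbit $O$. If $O\subseteq L(\cf)$ then $\ol a,\ol m,\ol b\in X$, the chord $\cf$ is an edge of $\Uf(\cf)$, the main hole is exactly $\uc\sm L(\cf)$ (of length $\frac13$) and every other hole, lying in $A_1$ or $A_2$, has length $<\frac13$; hence the unique major is the critical chord $\cf$, and according to the definitions $\Uf(\cf)$ is of caterpillar type when an endpoint of $\cf$ is periodic and of regular critical type otherwise. When $O\not\subseteq L(\cf)$ one has $\ol a,\ol m,\ol b\notin X$, the main hole strictly contains $\uc\sm L(\cf)$, and one determines the major and its type by following the itinerary of the first point of $O$ that leaves $L(\cf)$; this matches the stated alternatives. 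For the converse, a quadratic invariant gap $\Uf$ contains (being of degree $\ge 2$) a $\si_3$-fixed point and has a unique major $M$; if $M$ is critical put $\cf=M$, and if $M$ is periodic let $\cf$ be the critical chord positioned in the major hole of $\Uf$ so that $\uc\sm L(\cf)$ lies in that hole and the itinerary of the escaping endpoint of $\cf$ reproduces the periodic orbit carrying $M$. In either case $\Uf'$ is a $\si_3$-invariant subset of $L(\cf)$, whence $\Uf'\subseteq X(\cf)$, while the reverse inclusion $X(\cf)\subseteq\Uf'$ follows from the itinerary comparison, and so $\Uf(\cf)=\Uf$.

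I expect the main obstacle to be the case analysis underlying the type classification — deciding in each regime whether the major is the slit $\cf$, a deeper preimage of it, or a periodic edge — together with the reverse inclusion $X(\cf)\subseteq\Uf'$ in the converse; both amount to a careful kneading-sequence bookkeeping for the degree-two map $\si_3|_{L(\cf)}$, with the verification of the quasi-covering condition in the second step a close second. No single step is conceptually deep, but this combinatorial bookkeeping has to be executed with care, and it is the technical content drawn from \cite{BOPT}.
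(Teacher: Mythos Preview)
The paper does not prove this theorem: it is stated with the citation \cite{BOPT} and no proof is given here. So there is no ``paper's own proof'' to compare against; the result is imported wholesale from \cite{BOPT}, and your sketch is essentially an outline of how that proof goes. The overall architecture you describe --- view $\si_3|_{L(\cf)}$ as a degree-two map of a circle (after gluing the endpoints of $\cf$), read off invariance and degree from this, then classify the major by the itinerary of $\si_3(\ol a)$ --- is indeed the approach of \cite{BOPT}, and you correctly flag the type classification and the reverse inclusion in the converse as the places where the real work sits.

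Two concrete points to tighten. First, a slip: your involution $\tau$ (translation by $\frac13$ from $A_1$ to $A_2$) sends $\ol a$ to $\ol m$, not to $\ol b$; it does not ``interchange $\ol a$ and $\ol b$''. This does not affect the argument that the hole around $\ol m$ has a critical edge, but the statement is wrong as written. Second, and more substantively: for the caterpillar case you need that if an endpoint of $\cf$ is periodic then the common forward orbit $O$ of $\ol a,\ol b,\ol m$ stays in $L(\cf)$, so that $\cf$ really is the (critical) major. You assert the caterpillar conclusion only under the hypothesis $O\subseteq L(\cf)$, but the theorem claims it under the hypothesis ``an endpoint is periodic''; the implication ``periodic endpoint $\Rightarrow O\subseteq L(\cf)$'' is not automatic and is part of the combinatorial content of \cite{BOPT} (it comes from the structure of rotational orbits under $\si_3$). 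Without it, your case split does not cover the theorem as stated. You are right that none of this is deep, but it is exactly the bookkeeping that \cite{BOPT} carries out and that your sketch defers.
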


For any critical chord $\cf$, we let $\Uf_c(\cf)$ denote the convex hull
of all non-isolated points in $\Uf'(\cf)$ (here the subscript $c$ stands for ``clean'').
Then $\Uf_c(\cf)=\Uf(\cf)$ unless $\Uf(\cf)$ is of caterpillar type.
If the gap $\Uf(\cf)$ is of caterpillar type, then
$\Uf_c(\cf)$ is a quadratic invariant gap of periodic type while
the set $\Uf'(\cf)$ has isolated points and is obtained from $\Uf'_c(\cf)$ by adding the
non-periodic endpoint of $\cf$ and countably many iterated preimages of it.
The boundary of the caterpillar gap $\Uf(\cf)$ consists of $\Uf'_c(\cf)$ and
countable concatenations of edges inserted into holes of $\Uf_c(\cf)$.

We also have a description of the parameter space of all quadratic
invariant gaps.
First of all, we need to parameterize critical chords.
This is done by the map taking a point $\ol\theta\in\uc$
to the critical chord $\ol{(\theta+\frac 13)(\theta+\frac 23)}$,
which will be denoted by $\cf_\theta$.
Consider the map $\pi$ from $\uc$ to the set of all quadratic invariant
gaps taking $\ol{\theta}$ to $\Uf(\cf_\theta)$. 
The parameter picture of quadratic invariant gaps is somewhat similar
to that of the rotation number in an analytic family of circle diffeomorphisms.

\begin{thm}[\cite{BOPT}]
\label{T:ParDynHoles}
The mapping $\pi$ is surjective but not injective.
Non-trivial fibers of $\pi$ are exactly those of quadratic invariant gaps of
periodic type.
The fiber of the invariant quadratic gap of periodic type with major
$\ol{(\theta_1+\frac 13)(\theta_2+\frac 23)}$ is the open arc $(\ol\theta_1,\ol\theta_2)$.
\end{thm}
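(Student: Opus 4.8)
The plan is to fix a quadratic invariant gap $\Uf$, determine which $\ol\theta$ satisfy $\pi(\ol\theta)=\Uf$, and read the three assertions off the trichotomy (regular critical / caterpillar / periodic type) of Theorem~\ref{t:invagap}. I will systematically use the reformulation: if $Z_\theta=(\ol{\theta+\frac13},\ \ol{\theta+\frac23})$ denotes the open arc of length $\frac13$ behind $\cf_\theta$, then $L(\cf_\theta)=\uc\sm Z_\theta$, so $X(\cf_\theta)=\{\ol\phi:\ \si_3^{\circ n}(\ol\phi)\notin Z_\theta\text{ for all }n\ge 0\}$ and $\pi(\ol\theta)=\ch(X(\cf_\theta))$. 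Surjectivity of $\pi$ is then immediate from the last sentence of Theorem~\ref{t:invagap} together with the elementary remark that $\ol\theta\mapsto\cf_\theta$ is a bijection from $\uc$ onto the set of all critical chords (two distinct $\si_3$-preimages of a point differ by $\pm\frac13$).

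First I would settle the ``easy'' half: that $\pi^{-1}(\Uf)$ is contained in the claimed set. Suppose $\pi(\ol\theta)=\Uf$ and let $H$ be the unique major hole of $\Uf$ and $M=\partial H$ its major, which is critical or periodic by \cite{BOPT}. Since $Z_\theta$ has length $\frac13$ and is disjoint from $X(\cf_\theta)=\Uf\cap\uc$, it lies inside a hole of $\Uf$; as $H$ is the only hole of length $\ge\frac13$, we get $Z_\theta\subseteq H$, and in fact the major hole of $\pi(\ol\theta)$ equals $H$. If $M$ is critical then $|H|=\frac13$, so $Z_\theta=H$ and $\cf_\theta=M$; together with Theorem~\ref{t:invagap} (which supplies at least one $\ol\theta_0$ with $\pi(\ol\theta_0)=\Uf$, and the same argument forces $\cf_{\theta_0}=M$), this shows the fibre over every regular-critical and every caterpillar gap is the single point parametrizing its critical major. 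If $M$ is periodic, normalize so that $M=\ol{(\theta_1+\frac13)(\theta_2+\frac23)}$ and $H=(\ol{\theta_1+\frac13},\ \ol{\theta_2+\frac23})$; since $M$ is not critical one has $\frac13<|H|<\frac23$, so $(\ol{\theta_1},\ol{\theta_2})$ is a non-degenerate arc, and a short arc computation gives $Z_\theta\subseteq H\iff\ol\theta\in[\ol{\theta_1},\ol{\theta_2}]$ and $\overline{Z_\theta}\subseteq\mathrm{int}(H)\iff\ol\theta\in(\ol{\theta_1},\ol{\theta_2})$. At $\ol\theta\in\{\ol{\theta_1},\ol{\theta_2}\}$ the chord $\cf_\theta$ has a periodic endpoint (an endpoint of $M$), so $\pi(\ol\theta)$ is of caterpillar type by Theorem~\ref{t:invagap} and hence $\ne\Uf$; and for $\ol\theta\notin[\ol{\theta_1},\ol{\theta_2}]$ the arc $Z_\theta$ lies in no hole of $\Uf$ (all holes other than $H$ are shorter than $\frac13$), so $Z_\theta$ meets $\Uf\cap\uc$, forcing $X(\cf_\theta)$ to omit a neighbourhood of a point of $\Uf\cap\uc\subseteq\Uf$ and hence $\pi(\ol\theta)\ne\Uf$. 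Thus $\pi^{-1}(\Uf)\subseteq(\ol{\theta_1},\ol{\theta_2})$ in the periodic case.

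The remaining and main step is the reverse inclusion for periodic-type gaps: for every $\ol\theta\in(\ol{\theta_1},\ol{\theta_2})$ one must prove $\pi(\ol\theta)=\Uf$. The inclusion $\Uf\subseteq\pi(\ol\theta)$ is immediate, since $\Uf\cap\uc$ is $\si_3$-invariant and disjoint from $H\supseteq Z_\theta$, so $\Uf\cap\uc\subseteq X(\cf_\theta)$. The real content is $X(\cf_\theta)\subseteq\Uf\cap\uc$, i.e.\ the statement that \emph{every point of the major hole $H$ has a forward $\si_3$-orbit entering the sub-arc $Z_\theta$}, for each admissible position $\overline{Z_\theta}\subseteq\mathrm{int}(H)$; equivalently, that $\Uf$ is maximal, in that it cannot be enlarged to a strictly bigger quadratic invariant gap by ``sliding the critical chord inside $H$''. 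I expect this to be the genuine obstacle, and I would prove it using the combinatorial structure of quadratic invariant gaps from \cite{BOPT} underlying Theorem~\ref{t:invagap}: $\Uf$ itself has the form $\Uf(\cf)$, its holes form the $\si_3$-pullback tree over the critical configuration, so the forward orbit of any point outside $\Uf\cap\uc$ is eventually carried into the critical region; and because the major $M$ is \emph{periodic} (not critical), the successive $\si_3$-images of $H$ sweep all the way around $\uc$, which is precisely what pins the entrance down to $Z_\theta$ for every admissible $\ol\theta$. Granting this, $\pi^{-1}(\Uf)=(\ol{\theta_1},\ol{\theta_2})$ whenever $\Uf$ is of periodic type; since such gaps exist (for instance the gap attached to any hole of the Cantor set $Q$ described above), $\pi$ is not injective, and combining this with the previous paragraph the non-trivial fibres of $\pi$ are exactly the fibres over periodic-type gaps — which is the statement of the theorem.
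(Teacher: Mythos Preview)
The paper does not prove this theorem; it is quoted from \cite{BOPT} without argument, so there is no ``paper's own proof'' to compare against. I therefore evaluate your argument on its merits.

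Your reductions are sound. Surjectivity via the last sentence of Theorem~\ref{t:invagap} is correct, and your analysis of the ``easy half'' is clean: the containment $Z_\theta\subseteq H$ forces $\cf_\theta=M$ in the critical-major case (so those fibres are singletons), and in the periodic case you correctly pin $\pi^{-1}(\Uf)\subseteq(\ol{\theta_1},\ol{\theta_2})$ and correctly exclude the endpoints by the caterpillar criterion. The inclusion $\Uf'\subseteq X(\cf_\theta)$ for $\ol\theta\in(\ol{\theta_1},\ol{\theta_2})$ is also fine.

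The genuine gap is exactly where you flag it: the reverse inclusion $X(\cf_\theta)\subseteq\Uf'$ for $\ol\theta\in(\ol{\theta_1},\ol{\theta_2})$. Your sketch (``the successive $\si_3$-images of $H$ sweep all the way around $\uc$, which pins the entrance down to $Z_\theta$'') does not do the job: $\si_3(H)=\uc$ already for the single major hole, so sweeping is automatic and says nothing about whether an orbit that enters $H$ must enter the strictly smaller arc $Z_\theta$. What you actually need is a rigidity statement: \emph{if $\Uf'\subseteq\Vf'$ are bases of two quadratic invariant gaps, then $\Uf'=\Vf'$}. One clean way to get it is to exploit that $\si_3$ is exactly $2$-to-$1$ on each of $\Uf'$ and $\Vf'$. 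For $p\in\Uf'$ the two $\si_3$-preimages inside $\Vf'$ are already in $\Uf'$; hence $\Vf'\sm\Uf'$ is forward invariant, and in fact $\si_3$ restricted to $\Vf'\sm\Uf'$ is again a $2$-to-$1$ surjection. Now pass to the monotone projection $\psi_\Vf:\Vf'\to\uc$ collapsing holes of $\Vf$ and semiconjugating $\si_3|_{\Vf'}$ to $\si_2$. The image $\psi_\Vf(\Uf')$ is closed, nonempty, and completely $\si_2$-invariant (both $\si_2$-preimages of any of its points lie in it, since $\Uf$ is of periodic type so no edge of $\Uf$ is critical), hence equals $\uc$. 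On the other hand two points of $\Uf'$ identified by $\psi_\Vf$ must be endpoints of a common edge of $\Vf$, which is then also an edge of $\Uf$; so $\psi_\Vf|_{\Uf'}$ factors through $\psi_\Uf$, and one concludes $\Vf'\sm\Uf'=\emptyset$. Without some argument of this kind, the ``main step'' is unproved, and the heuristic you give does not substitute for it.
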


Nontrivial fibers of $\pi$ are the holes of a certain compact subset $Q\subset\uc$.
The convex hull $\Qf$ of $Q$ in the plane is called the
\emph{Principle Quadratic Parameter Gap}.
Holes of $\Qf$ will play an important role in our description of $\cuc_\la$.
Sometimes, we will also need the mapping $\pi_c$ taking
$\ol \ta$ to the quadratic invariant gap $\Uf_c(\cf_\ta)$.

Now we classify finite (i.e., having finitely many edges) invariant
gaps. Such gaps $\gf$ have also finitely many {\em vertices}, i.e.,
points of $\gf'$. Our classification of gaps $\gf$ mimics Milnor's
classification of hyperbolic components in slices of cubic polynomials
and quadratic rational functions \cite{M, miln93}. Say that $\gf$ is of
\emph{type A} (for \lq\lq adjacent") if $\gf$ has only one major, and
$\gf$ is of \emph{type B} (for \lq\lq bi-transitive") if $\gf$ has two
majors that belong to the same $\si_3$-orbit of edges. Finally, we say
that $\gf$ is of \emph{type D} (for \lq\lq disjoint") if $\gf$ has two
majors, whose $\si_3$-orbits are disjoint (except for common
endpoints). Every finite invariant gap must be of one of these three
types.

Let $A\subset \uc$ be a finite invariant set. Then we associate with it a unique
\emph{rotation number} $p/q$ if there is an orientation preserving
homeomorphism of the circle that conjugates $\si_3|_A$ with the
restriction of the rotation by the angle $p/q$ onto an invariant subset of
$\uc$. We also talk about rotation numbers of periodic angles and
finite invariant gaps. Clearly, not every periodic point/orbit has a
rotation number; also, to have a rotation number $p/q$, a periodic
point/orbit must be of period $q$. On the other hand, every finite
invariant gap has a rotation number by definition.
We will use the following simple count.

\begin{lem}
  \label{l:typeD-count}
  There are $q$ type D finite invariant gaps of rotation number $p/q$.
\end{lem}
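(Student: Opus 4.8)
The plan is to reduce the count to a count of rotational periodic orbits of $\si_3$, together with a count of the ways two of them can be superimposed. First I would record the shape of a type D finite invariant gap $\gf$ of rotation number $p/q$. Note that a finite invariant gap has no critical edge: a critical edge $\ol{\al\be}$ would give $\si_3(\ol\al)=\si_3(\ol\be)$ with $\ol\al\ne\ol\be$, so $\si_3|_{\gf'}$ would fail to be injective, contradicting that $\si_3|_{\gf'}$ is conjugate to a rotation. Consequently, by \cite{BOPT}, the two majors $M_1$, $M_2$ of $\gf$ are periodic edges of period $q$ with disjoint $\si_3$-orbits, one has $\gf=\ch(\gf')$, and $\gf'$ is the union of the endpoints of all leaves in $\mathrm{orbit}(M_1)\cup\mathrm{orbit}(M_2)$; moreover $\gf'$ is the union of exactly two distinct rotational periodic orbits $O_1$, $O_2$ of $\si_3$ of rotation number $p/q$. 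Since the endpoints of $\mathrm{orbit}(M_1)$ already make up all of $O_1\cup O_2$, the whole gap $\gf$ is recovered from either one of its majors alone; hence a type D gap of rotation number $p/q$ is the same datum as an unordered pair $\{O_1,O_2\}$ of rotational $p/q$-orbits whose convex hull is a (type D) invariant gap. Call such a pair \emph{admissible}. The lemma then becomes: there are exactly $q$ admissible pairs.

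Second, I would use the count of rotation sets for $\si_3$: there are exactly $q+1$ rotational periodic orbits of $\si_3$ of rotation number $p/q$ (a count that follows from the combinatorics of \cite{BOPT}; it can be seen from the bookkeeping of the winding degrees $\delta_i\ge1$ of the $q$ holes of such an orbit, which satisfy $\sum_i(\delta_i-1)=2$, so that for $\si_3$ exactly two of these orbits have a single triple-covering hole and the other $q-1$ have exactly two double-covering holes). The $q+1$ orbits carry a natural linear order $O^{(0)},\dots,O^{(q)}$, which I would pin down precisely in the course of the argument, with the two orbits having a triple-covering hole sitting at the two ends.

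The heart of the matter is the claim: a pair $\{O,O'\}$ is admissible if and only if $O$ and $O'$ are consecutive in this order; and when they are, $\ch(O\cup O')$ is automatically a type D gap. For the ``if'' direction, I would read off the cyclic arrangement of $O^{(i)}\cup O^{(i+1)}$ on $\uc$: exactly two of its holes have length $\ge\frac13$ (these become the two majors), while every remaining hole has winding degree $1$; from this one checks the invariant-gap axioms hole by hole (each hole maps onto a hole), verifies that the two majors are periodic leaves of period $q$ with disjoint orbits, and concludes that the gap is of type D. For the ``only if'' direction, one checks that if $O$, $O'$ are not consecutive then $O\cup O'$ has no hole of length $\ge\frac13$, hence no major, so its convex hull is not an invariant gap at all. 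Granting this dichotomy, the admissible pairs are exactly the $q$ consecutive pairs $\{O^{(i)},O^{(i+1)}\}$ with $0\le i\le q-1$, which proves the lemma. (For $q=1$ the lone candidate pair $\{\{\ol 0\},\{\ol{1/2}\}\}$ has convex hull the invariant leaf $\ol{0\,\frac12}$, whose single edge is fixed, so it is of type B rather than D; thus the statement should be read for $q\ge2$, where no such degeneracy arises.)

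The main obstacle is precisely this dichotomy: determining exactly which superpositions $\ch(O\cup O')$ close up into an invariant gap carrying two majors. This is where the combinatorics of \cite{BOPT} --- holes, winding degrees, the length recurrence, and the interleaving pattern of two rotation sets --- is really used; once the hole-length pattern of $\ch(O\cup O')$ is understood, both the equivalence ``two holes of length $\ge\frac13$ $\iff$ $O,O'$ consecutive'' and the hole-by-hole verification of the invariant-gap axioms are routine, and the final count $q$ drops out at once.
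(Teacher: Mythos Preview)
Your approach is correct in outline but takes a genuinely different and more elaborate route than the paper. The paper's proof is a single sentence: a type D finite invariant gap of rotation number $p/q$ is determined by the number of edges separating its two majors, counted counterclockwise from the major whose hole contains $\ol 0$ to the major whose hole contains $\ol{\frac12}$; this number takes exactly $q$ values. You instead (i) identify a type D gap with an unordered pair of rotational $p/q$-orbits of $\si_3$, (ii) count that there are $q+1$ such orbits, and (iii) argue that exactly the $q$ consecutive pairs in a natural linear order are admissible. Your decomposition is more structural and connects nicely to the classification of rotation sets in \cite{BOPT}, but it front-loads the work into the dichotomy ``admissible $\iff$ consecutive'', which you correctly flag as the main obstacle and do not fully prove. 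The paper's parametrization sidesteps this entirely by working directly with the edge combinatorics of a single gap.

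One point of friction: your parenthetical about $q=1$ conflicts with the paper's conventions. The paper does treat the leaf $\ol{0\,\frac12}$ as a (degenerate) type D object --- see the remark following Lemma~\ref{l:landvk}, where it is ``informally viewed as a gap with two distinct edges $\ol{0\frac12}$ and $\ol{\frac12 0}$'', each fixed and hence forming its own orbit. Under this reading the count $q=1$ is correct and no exception is needed, so you should drop the remark that the case $q=1$ yields type B rather than type D.
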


Indeed, a type D finite invariant gap $\gf$ of a fixed rotation number is
determined by the number of edges that separate the two majors of $\gf$
in a counterclockwise motion from the major whose hole contains $\ol 0$
to the major whose
hole contains $\ol{\frac 12}$. 
The next lemma also easily follows from \cite{BOPT}. Observe that if an
endpoint of a hole $(\ol{\ta_1}, \ol{\ta_2})$ of $\Qf$ is such that
$\ta_1+\frac13$ (resp., $\ta_2+\frac 23$) has a well-defined rotation number
$p/q$, then the convex hull of the entire orbit of $\ol{\ta_1+\frac13\,
\ta_2+\frac23}$ is a finite invariant gap of rotation
number $p/q$.

\begin{lem}\label{l:all-angles}
Suppose that $(\ol{\ta_1}, \ol{\ta_2})$ is a hole of $\Qf$ such that
$\ta_1+\frac13$ and $\ta_2+\frac23$ are of rotation number $p/q$. Then
$\ol{\ta_1+\frac13\, \ta_2+\frac23}$ is a major of a finite invariant gap of type D.
\end{lem}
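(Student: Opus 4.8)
The plan is to take the finite invariant gap to be $\gf:=\ch(\mathcal O)$, where $\mathcal O$ denotes the $\si_3$-orbit of the chord $M:=\ol{\ta_1+\frac13\,\ta_2+\frac23}$. By the observation recorded just before the statement (a consequence of \cite{BOPT}), since $\ta_1+\frac13$ and $\ta_2+\frac23$ have rotation number $p/q$, the orbit $\mathcal O$ is finite and $\gf$ is a finite $\si_3$-invariant gap of rotation number $p/q$; moreover $\ta_1+\frac13$ and $\ta_2+\frac23$ are $\si_3$-periodic of period $q$. It therefore remains to prove that (i) $M$ is a major edge of $\gf$, and (ii) $\gf$ is of type D.

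For (i) I would exploit property (2) of the Cantor set $Q$: since $(\ol{\ta_1},\ol{\ta_2})$ is a hole of $\Qf$, the chord $M$ is the major of a quadratic invariant gap $\Uf$ with $\Uf\cap\uc$ a Cantor set. As $\Uf$ is $\si_3$-invariant and $M$ is a non-critical edge of $\Uf$ (it is periodic, hence not critical), every chord of $\mathcal O$ is an edge of $\Uf$, so $\gf\subseteq\Uf$ and every vertex of $\gf$ lies in $\Uf':=\Uf\cap\uc$. Writing $\Uf=\Uf(\cf_\theta)$ for each $\ol\theta\in(\ol{\ta_1},\ol{\ta_2})$ (Theorem~\ref{T:ParDynHoles}), the set $\Uf'$ is contained in the long arc $L(\cf_\theta)$ and hence misses the open arc $(\ol{\theta+\frac13},\ol{\theta+\frac23})$ of length $\frac13$; letting $\ol\theta$ sweep the hole shows that $\Uf'$ misses the open arc $A$ from $\ol{\ta_1+\frac13}$ to $\ol{\ta_2+\frac23}$, of length $(\ta_2-\ta_1)+\frac13\ge\frac13$. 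Since the endpoints of $A$ (the endpoints of $M$) lie in $\Uf'$ and its interior meets no vertex of $\gf\subseteq\Uf$, the arc $A$ is a major hole of $\gf$ and $M$ is the major edge spanning it.

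For (ii) the key point is that $\gf$ is \emph{not} the convex hull of a single periodic orbit. Indeed, for a hole $(\ol{\ta_1},\ol{\ta_2})$ of $\Qf$ the gap $\Uf$ is of periodic type, and by \cite{BOPT} the periodic major $M$ of such a gap joins two \emph{distinct} periodic $\si_3$-cycles of $\Uf$; hence the two endpoints $\ol{\ta_1+\frac13}$, $\ol{\ta_2+\frac23}$ of $M$ lie in different $\si_3$-orbits, so $\gf$ has at least $2q$ vertices and is not a rotational $q$-gon. Since, by the classification of finite invariant gaps in \cite{BOPT}, type A and type B invariant gaps are convex hulls of a single periodic orbit, $\gf$ must be of type D; by (i) it then has $M$ as one of its two majors, which is the assertion. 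It is worth recording the matching count: by Lemma~\ref{l:typeD-count} there are exactly $q$ type D invariant gaps of rotation number $p/q$, and by \cite{BOPT} there are exactly $q$ holes of $\Qf$ to which the hypothesis applies, so once one checks that distinct such holes yield distinct gaps $\gf$ (say, that the pair $(\ta_1,\ta_2)$ is read off from $\gf$ via its two majors and which of $\ol 0$, $\ol{\frac12}$ each of their holes contains), the assignment is a bijection.

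The main obstacle is step (ii): one must extract from \cite{BOPT} the precise structural statement that the periodic major of a periodic-type quadratic invariant gap joins two distinct $\si_3$-cycles, and then translate ``two distinct cycles'' into the definition of type D — in particular exhibiting the second major of $\gf$ and verifying that its $\si_3$-orbit of edges is disjoint from that of $M$. The alternative, counting route replaces this by the (routine but not entirely trivial) injectivity of the map $(\ol{\ta_1},\ol{\ta_2})\mapsto\gf$.
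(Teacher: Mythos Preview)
The paper gives no proof beyond the sentence ``easily follows from \cite{BOPT}'', so your outline is precisely the natural way to unpack that citation; your step~(i) is correct and clean, and your step~(ii) correctly isolates the two structural facts needed from \cite{BOPT} (that the endpoints of a periodic major lie in distinct $\si_3$-orbits, and that type~A and type~B finite invariant gaps are convex hulls of a single orbit).

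One slip in your closing side remark: there are $2q$, not $q$, holes of $\Qf$ to which the hypothesis applies (cf.\ Lemma~\ref{l:spec-wake-d})---one for each of the two majors of each of the $q$ type~D gaps of rotation number $p/q$---so the assignment $(\ol{\ta_1},\ol{\ta_2})\mapsto\gf$ is two-to-one rather than bijective, and injectivity alone would not finish the counting route. This does not affect your main argument.
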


Theorem~\ref{t:typeD-quad} follows from \cite{BOPT}.
It relates type D finite invariant gaps and quadratic invariant gaps of periodic type.

\begin{thm}
  \label{t:typeD-quad}
  If\, $\Mf$ is a major of a type D finite invariant gap,
  then $\Mf$ is the major of some quadratic invariant gap of periodic type.
\end{thm}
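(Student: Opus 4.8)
The plan is to realize $\Mf$ as the major of a quadratic invariant gap. Since $\Mf$ is a major of a finite invariant gap $\gf$, it cannot be critical: $\si_3$ restricted to the finite vertex set $\gf'$ is a bijection of $\gf'$, so no edge of $\gf$ is critical. Hence $\Mf$ is periodic, and then \emph{any} quadratic invariant gap with major $\Mf$ is of periodic type by the very definition of that type. By Theorem~\ref{t:invagap} every quadratic invariant gap equals $\Uf(\cf)$ for some critical chord $\cf$, so it suffices to produce a critical chord $\cf$ for which $\Mf$ is the major of $\Uf(\cf)$, or of its clean core $\Uf_c(\cf)$.

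Write $\Mf=\ol{\al\be}$, and let $H$ be the hole of $\gf$ that $\Mf$ bounds on the side away from $\gf$, chosen so that $H=(\ol\be,\ol\al)$. I would first record that the length of $H$ lies strictly between $\frac13$ and $\frac23$: the values $\frac13$ and $\frac23$ would give $\si_3(\ol\al)=\si_3(\ol\be)$, i.e.\ $\Mf$ critical; and a length $>\frac23$ would confine $\gf$ to an arc shorter than $\frac13$, leaving no room for the second major of $\gf$. Consequently $\ol{\be+\frac13}$ lies strictly inside $H$, so $\cf:=\ol{\be\,(\be+\frac13)}$ is a critical chord whose $\frac13$-arc $(\ol\be,\ol{\be+\frac13})$ is contained in $H$ (equivalently $\cf=\cf_\theta$ with $\theta=\be-\frac13$). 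Then $L(\cf)=\uc\sm(\ol\be,\ol{\be+\frac13})$ contains the complementary arc $\uc\sm H$ and hence all of $\gf'$; since $\gf'$ is $\si_3$-invariant, the forward orbits of $\ol\al$ and $\ol\be$ stay in $L(\cf)$, so $\ol\al,\ol\be\in X(\cf)$. Moreover $\cf$ has the periodic endpoint $\ol\be$, so by Theorem~\ref{t:invagap} the gap $\Uf(\cf)$ is of caterpillar type and its clean core $\Uf_c(\cf)$ is a quadratic invariant gap of periodic type. The whole statement therefore reduces to the claim that the periodic major of $\Uf_c(\cf)$ is $\Mf$; and since $\ol\al,\ol\be\in X(\cf)$ while the isolated point $\ol{\be+\frac13}$ is deleted in passing to $\Uf_c(\cf)$, this in turn reduces to: $X(\cf)$ contains no non-isolated point of the open sub-arc $(\ol{\be+\frac13},\ol\al)$ of $H$.

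Proving this last claim is, I expect, the genuine obstacle, and it is the part that leans on \cite{BOPT}. Under $\si_3$ the sub-arc $(\ol{\be+\frac13},\ol\al)$ is carried bijectively onto the arc $(\si_3(\ol\be),\si_3(\ol\al))$, which is again a hole of $\gf$ (and, in the non-degenerate situation, one different from $H$); iterating, the orbit of any point of $(\ol{\be+\frac13},\ol\al)$ runs through the finitely many holes in the $\si_3$-orbit of $H$, and one must show that such an orbit is forced to enter the critical arc $(\ol\be,\ol{\be+\frac13})$ unless the point is pre-periodic to the rotation orbit carried by $\gf$ — in which case it contributes only isolated points of $X(\cf)$, killed in $\Uf_c(\cf)$. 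This is a purely combinatorial assertion about the position, relative to $H$, of the orbit of $\Mf$ and of the second major of $\gf$ (which is what blocks the bad returns), and it is of exactly the kind proved in \cite{BOPT}, being essentially contained in the description there of the Cantor set $Q$. Granting it, the edge of $\Uf_c(\cf)$ issuing from $\ol\be$ across the old critical arc terminates at $\ol\al$, so it equals $\Mf$; having length $>\frac13$ it is the unique major of $\Uf_c(\cf)$, and we are done.

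An alternative I would keep in reserve is a counting argument. By Lemma~\ref{l:typeD-count} there are only $q$ type D finite invariant gaps of rotation number $p/q$, hence only finitely many type D majors of that rotation number; on the other hand Lemma~\ref{l:all-angles} together with Theorem~\ref{T:ParDynHoles} attaches to every hole $(\ol{\ta_1},\ol{\ta_2})$ of $\Qf$ with $\ta_1+\frac13$ and $\ta_2+\frac23$ of rotation number $p/q$ the chord $\ol{(\ta_1+\frac13)(\ta_2+\frac23)}$, which is simultaneously a type D major of rotation number $p/q$ and the periodic major of the quadratic invariant gap of periodic type whose $\pi$-fiber is that hole. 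Checking that this correspondence exhausts the type D majors of rotation number $p/q$ then proves the theorem; here too the only non-formal ingredient is the combinatorial count from \cite{BOPT}.
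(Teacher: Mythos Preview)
The paper does not prove this theorem at all: it simply records that the statement follows from \cite{BOPT}, and the short paragraph after the theorem indicates the intended construction, namely that $\Uf_1'$ is the set of points whose entire $\si_3$-orbit stays on the $\gf$-side of $\Mf$. Your write-up is therefore strictly more detailed than what the paper offers, and your deferral of the hard combinatorial step to \cite{BOPT} is exactly what the paper does too.

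That said, your route is genuinely different from the one the paper sketches. The paper would build the quadratic gap directly as ``all points whose orbits never cross $\Mf$ to the hole side''; you instead pick the critical chord $\cf=\ol{\be(\be+\frac13)}$ with the periodic endpoint $\ol\be$, land in a caterpillar gap, and then pass to its clean core $\Uf_c(\cf)$. Both are legitimate. The direct construction has the advantage that the major $\Mf$ is visibly the boundary edge from the start; in your approach the honest work is in identifying the major of $\Uf_c(\cf)$ with $\Mf$, which is precisely your ``genuine obstacle''.

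Here is a way to close that gap without a separate combinatorial excursion into \cite{BOPT}. You have already shown $\gf'\subset X(\cf)$; in fact the same inclusion gives $\Uf_1'\subset X(\cf)$, where $\Uf_1'$ is the paper's set of points whose orbits stay on the $\gf$-side of $\Mf$ (since that side lies inside $L(\cf)$). As $\Uf_1'$ is a Cantor set it has no isolated points, so $\Uf_1'\subset \Uf_c'(\cf)$. Now both $\Uf_1$ and $\Uf_c(\cf)$ are quadratic invariant gaps, and one is contained in the other; but two nested quadratic invariant gaps must coincide, because $\si_3$ is exactly two-to-one on each and backward orbits are dense (this is the argument used verbatim in the proof of Lemma~\ref{l:major-dyn}). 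Hence $\Uf_c(\cf)=\Uf_1$ and its major is $\Mf$. This replaces your deferred claim about $(\ol{\be+\frac13},\ol\al)$ with a two-line structural observation.

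Two small points on the preliminary bounds: your exclusion of $|H|\ge \frac23$ via ``no room for the second major'' is morally right but a cleaner justification is that $|H|\in\{\frac13,\frac23\}$ both force $\si_3(\ol\al)=\si_3(\ol\be)$, i.e.\ $\Mf$ critical; and for a type D gap the second major hole of length $\ge\frac13$ forces $|H|\le\frac23$, with equality again collapsing $\gf$ to a leaf with a critical edge. Your alternative counting argument is also viable, but be careful: Lemma~\ref{l:spec-wake-d} in the paper is proved using Lemma~\ref{l:typeD-count} in a direction that already presupposes the correspondence you want to establish, so you would need to supply the count of $p/q$-special holes of $\Qf$ independently to avoid circularity.
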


Let us provide more detail concerning the relation between finite
invariant gaps of type D and quadratic invariant gaps; our description
here is based upon \cite{BOPT}. Let $\gf$ be a finite invariant gap of
type D.
Let $\Mf_1$ be a major of $\gf$.
Then $\Mf_1$ defines an invariant quadratic gap $\Uf_1$ of periodic type such that $\Uf_1\cap\uc$ is the set of all points of the circle whose orbits stay at the same side of $\Mf_1$ as $\gf$.
We have $\gf\subset \Uf_1$.
Similarly, the other major $\Mf_2$ of $\gf$ determines another invariant quadratic gap $\Uf_2$.

\subsection{Main results}
\label{ss:mainre} The proof of Theorem A uses methods close to those
used by J. Milnor in \cite{M00}. 

\begin{thmA}[Wakes in $\lambda$-slices]
Fix a complex number $\lambda$ such that $|\lambda|\le 1$.
For every hole $(\theta_1,\theta_2)$ of $\Qf$, the parameter rays $\Rc_\lambda(\theta_1)$
and $\Rc_\lambda(\theta_2)$ land at the same point.
\end{thmA}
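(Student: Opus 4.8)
The plan is to exploit the conformal isomorphism $\Phi_\lambda\colon\Fc_\lambda\sm\Cc_\lambda\to\C\sm\cdisk$ together with the quadratic-like structure on the unbounded part of $\Fc_\lambda\sm\Pc_\lambda$ (Theorem~\ref{t:unboren} and Corollary~\ref{c:unboren}). The key point is that a hole $(\theta_1,\theta_2)$ of $\Qf$ corresponds, via the combinatorics of Section~\ref{ss:overview}, to a periodic quadratic invariant gap with major $\ol{(\theta_1+\frac13)(\theta_2+\frac23)}$; thus the two angles $\theta_1+\frac13$ and $\theta_2+\frac23$ are identified as landing at a common periodic point by the quadratic lamination associated with the quadratic-like restriction $f^*$ of any $f\in\Fc_\lambda\sm\Cc_\lambda$. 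I would argue that on the two parameter rays $\Rc_\lambda(\theta_i)$ the co-critical point $\omega_2^*(f)$ lies on the dynamic ray $R_f(\theta_i)$, equivalently both $R_f(\theta_i+\frac13)$ and $R_f(\theta_i+\frac23)$ crash into $\omega_2(f)$; so a polynomial on $\Rc_\lambda(\theta_1)$ and one on $\Rc_\lambda(\theta_2)$ have ``the same'' combinatorial configuration of crashing rays relative to the quadratic-like piece.

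The steps, in order, would be: (1) fix $f_0$ on $\Rc_\lambda(\theta_1)$ near $\infty$ and describe its dynamical plane: the escaping critical point is $\omega_2(f_0)$, the rays $R_{f_0}(\theta_1+\frac13)$ and $R_{f_0}(\theta_1+\frac23)$ crash into it, and there is a quadratic-like map $f_0^*\colon U^*\to V^*$ with $0\in K(f_0^*)$; (2) using Theorem~\ref{t-extepoly}, identify which quadratic-like ray the pair of crashing external rays corresponds to, and check that the combinatorial rotation number forces this to be the quadratic-like ray whose angle is the root of the satellite/hyperbolic component of the main cardioid cut out by the hole $H$ — this is where the description of $\Qf$ via periodic-type quadratic gaps enters; (3) show that as the parameter moves along $\Rc_\lambda(\theta_1)$ toward $\Cc_\lambda$, the configuration persists by $J$-stability / holomorphic motion over the stable component (Theorem~\ref{t:stab}), so the accumulation set of $\Rc_\lambda(\theta_1)$ on $\Cc_\lambda$ consists of polynomials for which the two dynamic rays $R_f(\theta_1+\frac13)$, $R_f(\theta_2+\frac23)$ land at a common parabolic or preperiodic point; (4) repeat for $\Rc_\lambda(\theta_2)$ and observe the two parameter rays must accumulate at the \emph{same} such polynomial because the landing point is determined by the combinatorial data of $H$ (this is essentially a Milnor-style ``pinching'' argument — two parameter rays whose dynamical configurations forcibly merge must co-land). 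I would model the details on Milnor's argument in \cite{M00} for cubic polynomial slices, transporting it through $\Phi_\lambda$.

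The main obstacle is step~(3)–(4): making rigorous that the parameter rays actually \emph{land} (not merely accumulate) at a single common point, rather than wiggling. The standard remedy is to identify the landing candidate first as the parabolic polynomial $f_{root(H)}$ provided by part~(3) of the Main Theorem — i.e. the one whose dynamic rays $R_f(\theta_1+\frac13)$, $R_f(\theta_2+\frac23)$ land at a parabolic periodic point — and then to show both parameter rays accumulate only there: one direction uses that near $f_{root(H)}$ the escaping region is cut into wakes by exactly these two parameter rays (a local normal-form / parabolic-implosion analysis), and the other uses that any accumulation point must, by continuity of rays off the critical locus and by Lemma~\ref{l:6.1}, have the same pair of rays co-landing, which by the combinatorics of the tripling map pins down a unique parameter. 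A secondary technical point is handling the case $\omega_2(f)=0$ along the ray (the second alternative in part~(2) of the Main Theorem), where the quadratic-like picture degenerates; there one argues directly with the dynamic rays $R_f(\theta_1+\frac13)$, $R_f(\theta_2+\frac23)$ landing at $0$ and the same co-landing rigidity applies.
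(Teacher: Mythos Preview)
Your proposal has the right dynamical ingredients but misses the key organizational idea that makes the proof work, and this leads to a genuine gap in steps (3)--(4).

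The paper does \emph{not} follow the individual parameter rays $\Rc_\la(\theta_1)$, $\Rc_\la(\theta_2)$ toward $\Cc_\la$ and try to identify their accumulation sets. Instead, it defines the open set
\[
\tWc_\la(\theta_1,\theta_2)=\{f\in\Fc_\la : R_f(\theta_1+\tfrac13),\ R_f(\theta_2+\tfrac23)\text{ land at a common \emph{repelling} periodic point}\},
\]
shows it is nonempty (for $\vk\in(\theta_1,\theta_2)$ and $f\in\Rc_\la(\vk)$, Proposition~\ref{p:land-endholes} and Lemma~\ref{l:landvk} give co-landing), and then analyzes its boundary. A boundary polynomial $f$ either has one of the two dynamic rays crashing into a critical point (which forces $f\in\Rc_\la(\theta_1)\cup\Rc_\la(\theta_2)$, using Lemma~\ref{l:landvk} to rule out the wrong critical configurations), or has the common landing point become non-repelling (which puts $f$ in a finite set of parabolic parameters). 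Since the boundary of an open set containing all parameter rays with arguments in $(\theta_1,\theta_2)$ lies in $\Rc_\la(\theta_1)\cup\Rc_\la(\theta_2)\cup\{\text{finite}\}$, the two parameter rays are forced to co-land. (When the common landing point is $0$ rather than repelling, a parallel argument with the set $\widetilde\Wc_\la(\theta_1,\theta_2)$ and the zeros of $T_{p/q}$ is run; see Propositions~\ref{p:W-spec1}--\ref{p:W-spec2}.) This is exactly Milnor's scheme from \cite{M00}, which you cite but do not actually follow.

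Your step~(3) misapplies Theorem~\ref{t:stab}: that theorem concerns \emph{bounded} components of $\Fc_\la\sm\Pc_\la$, which lie in $\cuc_\la$, not the parameter rays in the unbounded component. There is no $J$-stability or holomorphic motion argument that lets you pass to the limit along a single parameter ray and conclude anything about the dynamic rays of the limiting polynomial. Your proposed remedy---identify $f_{root(H)}$ first and then show both rays accumulate only there---is circular (the existence and uniqueness of $f_{root(H)}$ is the content of the theorem), and the ``local normal-form / parabolic-implosion analysis'' you allude to is precisely what the boundary-of-$\tWc_\la$ argument replaces. The missing piece is not a sharper analysis of the two rays, but the switch to analyzing the boundary of an open parameter set.
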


Thus the parameter rays $\Rc_\lambda(\theta_1)$ and $\Rc_\lambda(\theta_2)$, together
with their common landing point, divide the plane into two (open) parts.

\begin{dfn}\label{d:wedges}
Let $(\theta_1, \theta_2)$ be a hole of $\Qf$ and
$\Wc_\lambda(\theta_1,\theta_2)$ be the component of $\C\sm
\ol{\Rc_\lambda(\theta_1)\cup \Rc_\lambda(\theta_2)}$ containing the
parameter rays with arguments from $(\theta_1, \theta_2)$. The set
$\Wc_\lambda(\theta_1,\theta_2)$ is called the (parameter) \emph{wake}
(of $\Fc_\la$). The joint landing point of the rays
$\Rc_\lambda(\theta_1)$ and $\Rc_\lambda(\theta_2)$ is called the
\emph{root point} of the parameter wake
$\Wc_\lambda(\theta_1,\theta_2)$. Let the \emph{period} of the
parameter wake $\Wc_\la(\theta_1,\theta_2)$ be
the period of $\theta_1+\frac 13$ under the angle tripling map.
\end{dfn}

Theorem B gives a dynamical description of parameter wakes. Observe
that by definition for almost all holes $(\theta_1,\theta_2)$ of $\Qf$
the corresponding major $\ol{\ta_1+\frac13 \ta_2+\frac23}$ is in
one-to-one correspondence with $(\theta_1,\theta_2)$. The only
exception is the major $\ol{0 \frac12}$, which corresponds to two holes
of $\Qf$, namely to holes $(\frac16, \frac16)$ and $(\frac23,
\frac56)$. This in turn  is related to the fact that $\ol{0 \frac12}$
as the major of an invariant quadratic gap $\Uf$ does not uniquely define $\Uf$.
A unique quadratic invariant gap with major $\ol{0 \frac12}$ contained in the upper half of the unit disk
(\textbf{a}bove $\ol{0 \frac12}$) is denoted by $\fg_a$ while a unique
quadratic invariant gap with major $\ol{0 \frac12}$ contained in the
lower half of the unit disk (\textbf{b}elow $\ol{0 \frac12}$) is denoted by $\fg_b$.
Both $\fg_a$ and $\fg_b$ have the same major $\ol{0\frac12}$.
Define the set $\Wc_\la'(\ta_1, \ta_2)$ as the wake $\Wc_\la(\ta_1, \ta_2)$ except for the holes $(\frac16, \frac13)$ and
$(\frac23, \frac56)$ for which we set

$$\Wc_\la'\left(\frac16, \frac13\right)=\Wc_\la'\left(\frac23, \frac56\right)=
\Wc_\la\left(\frac16, \frac13\right)\cup\Wc_\la\left(\frac23, \frac56\right).$$

\begin{thmB}\label{t:dynawake}
Fix a hole $I=(\theta_1, \theta_2)$ of $\Qf$.
Then the set $\Wc'_\la(I)$ coincides with the set of polynomials $f$ for which the dynamic rays $R_f(\theta_1+\frac 13)$, $R_f(\theta_2+\frac 23)$ land at the same periodic point that is either repelling for all $f\in\Wc'_\la(I)$ or equals $0$ for all $f\in\Wc'_\la(I)$.
If $f_{root}$ is the root point of $\Wc'_\la(I)$, then the dynamic rays
$R_{f_{root}}(\theta_1+\frac 13)$, $R_{f_{root}}(\theta_2+\frac 23)$
land at the same parabolic periodic point, and $f_{root}$ belongs to
$\cuc_\la$.
\end{thmB}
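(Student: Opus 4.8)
The plan is to carry out the classical ``wake argument'' (following Milnor's account \cite{M00} and the theory of rational parameter rays) inside the slice $\Fc_\la$, fed by the combinatorics of Subsection~\ref{ss:overview} and the renormalization results of Subsection~\ref{ss:immeren}. Write $\be_1=\ta_1+\frac13$, $\be_2=\ta_2+\frac23$ and $\ell=\ol{\be_1\be_2}$. By Theorem~\ref{T:ParDynHoles} (item (2) in the description of $Q$) the chord $\ell$ is the periodic major of a quadratic invariant gap $\Uf$ of periodic type, and the angles $\be_1,\be_2$ are $\si_3$-periodic of period $q$ (the period of the wake); in the exceptional case $I\in\{(\frac16,\frac13),(\frac23,\frac56)\}$ one gets $\ell=\ol{0\,\frac12}$, $q=1$, and the two exceptional holes share the ray pair $\{0,\frac12\}$, which is exactly why the statement is phrased for $\Wc'_\la$. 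Let $S$ be the set of $f\in\Fc_\la$ for which $R_f(\be_1)$ and $R_f(\be_2)$ land at a common point $p(f)$ that is periodic and repelling, or equals $0$. The theorem reduces to: (a) $S=\Wc'_\la(I)$; (b) the repelling-versus-$0$ alternative, which follows from connectedness of $\Wc'_\la(I)$; (c) that $R_{f_{root}}(\be_1),R_{f_{root}}(\be_2)$ land at a common parabolic periodic point and $f_{root}\in\cuc_\la$.

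First I would prove $\Wc_\la(I)\sm\Cc_\la\subseteq S$, which is the geometric core. Since $\Phi_\la$ is a conformal isomorphism from $\Fc_\la\sm\Cc_\la$ onto $\C\sm\cdisk$ carrying each $\Rc_\la(\ta)$ to the radial ray of argument $\ta$, and since $\Rc_\la(\ta_1),\Rc_\la(\ta_2)$ co-land by Theorem~A, the part of $\Wc_\la(I)$ outside $\Cc_\la$ is precisely $\Phi_\la^{-1}$ of the open sector of arguments in $(\ta_1,\ta_2)$; hence for such $f$ the co-critical point $\om_2^*(f)$ lies on a dynamic ray $R_f(\psi)$ with $\psi\in(\ta_1,\ta_2)$, so $\om_2(f)$ is squeezed among the critical chords $\cf_\ta$, $\ta\in[\ta_1,\ta_2]$. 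By Corollary~\ref{c:unboren} $f$ is immediately renormalizable, and by Lemma~\ref{l:pericom} the small Julia set $J(f^*)$ is an invariant component of $J(f)$ hybrid equivalent to $z^2+c$ with $c$ in the closed main cardioid, hence a Jordan curve with $\om_1(f)\in K(f^*)$. The angles $\be_1,\be_2$ are the outer endpoints of the family $\cf_\ta$, so $R_f(\be_1),R_f(\be_2)$ avoid the escaping critical orbit and are honest smooth periodic rays; therefore they land at periodic points of period dividing $q$, and since $f\notin\Cc_\la$ carries no non-repelling cycle other than possibly $\{0\}$ (Fatou--Shishikura), these points are repelling or equal $0$. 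That the two landing points coincide uses the co-critical configuration: $\ell$ is a non-crossed periodic edge of the \emph{connected} gap $\Uf$, and $\om_2(f)$ with its forward orbit sits on the side of $R_f(\be_1)\cup R_f(\be_2)$ away from $K(f^*)$; equivalently, via the correspondence between external rays to $J(f^*)$ and quadratic-like rays (Theorem~\ref{t-extepoly}) and the identification of $\Uf'$ with the arguments of rays accumulating on $J(f^*)$, the two rays are forced to share a principal set. Thus $\Wc_\la(I)\sm\Cc_\la\subseteq S$.

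Next I would extend the co-landing to the whole open wake and establish $S\subseteq\Wc'_\la(I)$. The set $S$ is open (a repelling orbit and the finitely many rays landing at it move holomorphically; and since $f'(0)=\la$ is constant along $\Fc_\la$, the case $p(f)=0$ is likewise locally rigid), so $S\cap\Wc'_\la(I)$ is open and, by the previous step, nonempty. The crux is that $S\cap\Wc'_\la(I)$ is relatively closed in $\Wc'_\la(I)$: if $f_n\to f_\infty\in\Wc'_\la(I)$ with $f_n\in S$, then $p(f_n)$ subconverges to a periodic point $p_\infty$ of $f_\infty$ of period dividing $q$ at which, by continuity of landing along repelling orbits (plus approximation of a parabolic limit point by nearby landing rays), $R_{f_\infty}(\be_1)$ and $R_{f_\infty}(\be_2)$ still land; if $p_\infty$ were non-repelling and $\ne 0$, then these rays would co-land parabolically, so $f_\infty$ would be the root of a parabolic wake with characteristic ray pair $\{\be_1,\be_2\}$, which by a Douady--Hubbard-type argument feeding on Theorem~A is the common landing point of $\Rc_\la(\ta_1),\Rc_\la(\ta_2)$, i.e.\ $f_\infty=f_{root}$, which is not interior to $\Wc'_\la(I)$. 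Hence $S\cap\Wc'_\la(I)=\Wc'_\la(I)$; since $\Wc'_\la(I)$ is connected, either $p(f)$ is repelling throughout or $p(f)\equiv 0$, the latter precisely when $\ell$ is a leaf of the $\la$-determined lamination attached to $0$ (nonempty only if $\la$ is a root of unity of order $q$). For the reverse inclusion, if $f\in S$ then, following \cite{M00}, one joins $f$ to a polynomial outside $\Cc_\la$ keeping $R(\be_1),R(\be_2)$ co-landing at a repelling orbit; the resulting co-critical point must lie on a dynamic ray of argument strictly between $\ta_1$ and $\ta_2$, so by the first step $f\in\Wc'_\la(I)$.

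Finally, at $f_{root}\in\Cc_\la$: letting $f\to f_{root}$ inside $\Wc'_\la(I)$, the repelling orbit $p(f)$ converges to a periodic orbit of $f_{root}$ whose multiplier, being a limit of numbers of modulus $>1$, has modulus $\ge1$; since $f_{root}\in\partial\Wc'_\la(I)$ and a repelling co-landing would force it into the open wake, this orbit is parabolic and $R_{f_{root}}(\be_1),R_{f_{root}}(\be_2)$ land at it. To conclude $f_{root}\in\cuc_\la$ one checks the defining properties of $\cu$: $0$ is a non-repelling fixed point; $f_{root}$ has no repelling periodic cutpoint in $J(f_{root})$ (such a cutpoint would persist under perturbation into the wake and, together with the characteristic ray pair, contradict the structure of the limb $\Li_I$); and every non-repelling periodic point other than $0$ has multiplier $1$, which follows because the combinatorics of $\ell$ — simultaneously the major of a periodic-type quadratic gap and of a type D finite gap (Lemma~\ref{l:all-angles}, Theorem~\ref{t:typeD-quad}), with the hole being a hole of $\Qf$ — force the characteristic parabolic orbit to be of primitive type, hence of multiplier $1$. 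I expect the main obstacle to be the relative-closedness step above: equivalently, showing that the characteristic orbit stays repelling throughout the \emph{open} wake, which amounts to identifying $f_{root}$ as the \emph{only} parameter at which $R(\be_1),R(\be_2)$ co-land parabolically; this is where Theorem~A (the wake is bounded by exactly those two parameter rays) and the stability statement of Theorem~\ref{t:stab} must be combined, and where the $|\la|=1$ subtleties (the $p(f)\equiv0$ alternative) live. A secondary difficulty is the multiplier-$1$ assertion for the root orbit.
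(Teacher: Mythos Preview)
Your overall strategy --- defining the co-landing set $S$, showing it is open, analyzing its boundary, and concluding $S=\Wc'_\la(I)$ --- is the same wake argument the paper carries out (Propositions~\ref{p:W-nonspec}, \ref{p:W-spec1}, \ref{p:W-spec2} and Theorems~\ref{t:dyn-sp}, \ref{t:dyn-nsp}). But two steps have genuine gaps. First, the case $p(f)=0$ (special wakes) needs more than ``$f'(0)=\la$ is constant, so locally rigid'': stability of rays landing at a \emph{parabolic} point is not automatic, and the paper invokes Proposition~\ref{P:ray0stable}, which in turn rests on the polynomial $T_{p/q}$ of Proposition~\ref{P:Tpq}. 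Your boundary/closedness analysis likewise omits the finite exceptional set: the paper shows $\partial\tWc\subset\Rc_\la(\ta_1)\cup\Rc_\la(\ta_2)\cup\Zc$ with $\Zc$ \emph{finite} (zeros of $T_{p/q}$, or polynomials with an extra parabolic cycle of multiplier $1$), and then rules out punctures via the maximum principle applied to the multiplier. Without this finiteness, both your relative-closedness step and your reverse inclusion (``join $f$ to a polynomial outside $\Cc_\la$'') remain unjustified; you correctly flag this as the main obstacle, but the paper's resolution is precisely this boundary-plus-finite-set argument, not a path-connectedness one.

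Second, and more seriously, your argument for $f_{root}\in\cuc_\la$ does not work. You claim a repelling periodic cutpoint of $J(f_{root})$ ``would persist under perturbation into the wake and contradict the structure of the limb'' --- but polynomials in the limb \emph{do} have repelling periodic cutpoints (the common landing point of $R_f(\be_1),R_f(\be_2)$ is one, by Theorem~B itself); there is no contradiction. The paper's proof of this last claim is substantially heavier and is postponed to Section~\ref{s:proofc}: it develops the invariant quadratic gap $\Uf(f)$ for immediately renormalizable $f$ (Section~\ref{s:qugap}), combines fixed-point theory (Theorem~\ref{t:fxpt}) with laminational arguments (Corollaries~\ref{c:noreguc1}, \ref{c:periocan}, Lemma~\ref{l:nocutinab}) and Kiwi's Separation Lemma (Lemma~\ref{l:sep}), and concludes via Lemma~\ref{l:locate} that every $f\in\Cc_\la\sm\cuc_\la$ lies in the \emph{interior} of some parameter wake. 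Since $f_{root}$ has a parabolic (not repelling) landing point and sits on the wake boundary, it fails the conclusion of Lemma~\ref{l:locate} and hence lies in $\cuc_\la$. A direct verification of the $\cu$-conditions along your lines cannot be completed without comparable machinery.
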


Define a \emph{limb} of the $\la$-connectedness locus $\Cc_\la$ in
the $\la$-slice $\Fc_\la$ as the intersection of $\Cc_\la$ with a
parameter wake. A compact set $X\subset\C$ is said to be \emph{full} if the
complement $\C\sm X$ is connected. Theorem C describes
some topological properties of
$\Cc_\la$ and $\cuc_\la$. Recall that
$\cuc_\la$ is the set of all polynomials $f\in\Fc_\la$ with
$[f]\in\cu$.

\begin{thmC}
  The set $\cuc_\la$ is disjoint from all parameter wakes, unless $\la=1$.
  The $\la$-connectedness locus $\Cc_\la$ is the union of $\cuc_\la$ and all
  limbs of $\Cc_\la$.
  The set $\cuc_\la$ is a full continuum.
\end{thmC}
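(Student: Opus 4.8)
The plan is to establish the three assertions in order, using Theorems A and B together with the structural results of Section~\ref{ss:strufc}. First I would prove that $\cuc_\la$ is disjoint from every parameter wake when $\la\ne 1$. By Theorem B, a polynomial $f$ in the wake $\Wc'_\la(I)$ (for $I=(\theta_1,\theta_2)$ a hole of $\Qf$) has the two dynamic rays $R_f(\theta_1+\frac13)$, $R_f(\theta_2+\frac23)$ landing at a common periodic point $x$ which is either repelling or equals $0$. If $x$ is repelling, it is a repelling periodic cutpoint of $J(f)$ (the two rays separate the plane, and the major $\ol{\ta_1+\frac13\,\ta_2+\frac23}$ is, by Theorem~\ref{t:typeD-quad} and Lemma~\ref{l:all-angles}, nondegenerate, so the rays are distinct and $x$ genuinely separates $J(f)$), contradicting the definition of $\cu$. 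If $x=0$, then since $\la\ne 1$ the fixed point $0$ has multiplier $\la$; but $0$ being the landing point of two distinct dynamic rays that are permuted nontrivially forces $0$ to be repelling or parabolic with rotation number, which is incompatible with $|\la|\le 1$, $\la\ne 1$ and $0$ non-repelling — more precisely, a non-repelling fixed point of multiplier $\la\notin\{1\}$ in the disk is the landing point of a ray only if $\la$ is a root of unity and the rotation is by that root, and then the rays landing there are not separated by the combinatorics of $\fg_a,\fg_b$; I would rule this out by a direct combinatorial argument on the tripling map, or cite that such $f$ lies in $\Pc_\la\subset\cuc_\la$ only at the boundary. Either way $f\notin\cuc_\la$.

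Next, the decomposition $\Cc_\la=\cuc_\la\cup\bigcup_H\Li_H$. The inclusion $\cuc_\la\subset\Cc_\la$ is immediate since $[f]\in\cu\subset\Mc_3$ (Theorem~\ref{t:nonlin} is not even needed, just the definition of $\cu$). For the reverse inclusion, take $f\in\Cc_\la$. If $f\in\cuc_\la$ we are done; otherwise, by Corollary~\ref{c:unboren}, $f$ is immediately renormalizable, hence (Lemma~\ref{l:pericom}) $J(f^*)$ is an invariant component of $J(f)$ — but wait, $f\in\Cc_\la$ has connected Julia set, so I must instead argue that $f\notin\cuc_\la$ forces $f$ into some wake via the combinatorial data of its repelling cutpoints. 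The clean route: $f\in\Cc_\la\sm\cuc_\la$ means $f$ violates one of the defining conditions of $\cu$; since $0$ is non-repelling and $|\la|\le 1$ forces (Fatou–Shishikura) at most the relevant constraints, the violation must be the existence of a repelling periodic cutpoint of $J(f)$, or a non-repelling periodic point of multiplier $\ne 1$ other than $0$. Using the combinatorial classification of such cutpoints via invariant gaps (Theorem~\ref{t:invagap}, Theorem~\ref{t:typeD-quad}), the minimal such cutpoint corresponds to a type D finite invariant gap whose major is the major of a quadratic invariant gap of periodic type, i.e. to a hole $H$ of $\Qf$; then the two associated dynamic rays land together at that cutpoint, so by Theorem B $f\in\Wc'_\la(H)$, hence $f\in\Li_H$. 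Disjointness of the union (for $\la\ne 1$) follows from disjointness of distinct wakes, which in turn follows from Theorem A: distinct holes of $\Qf$ give parameter rays at distinct arguments, and the landing/nonseparation structure keeps the wakes disjoint.

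Finally, $\cuc_\la$ is a full continuum. Fullness: $\Cc_\la$ is a full continuum by \cite{BrHu,Z}, and $\cuc_\la$ is obtained from $\Cc_\la$ by deleting the limbs $\Li_H=\Wc'_\la(H)\cap\Cc_\la$, each of which is "cut off" from the rest by the pair of parameter rays $\Rc_\la(\theta_1),\Rc_\la(\theta_2)$ together with their common landing point $f_{root(H)}\in\cuc_\la$ (Theorem B gives $f_{root(H)}\in\cuc_\la$); since removing a subcontinuum attached along a single cut point from a full continuum leaves a full set, and this holds simultaneously for the countable family of limbs (they are separated by disjoint cuts and accumulate only on $\cuc_\la$), $\C\sm\cuc_\la=(\C\sm\Cc_\la)\cup\bigcup_H\mathrm{int}(\text{the wake side})$ is connected. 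Compactness: $\cuc_\la$ is closed because $\cu$ is closed in $\Mc_3$ (it is defined by closed conditions, and $\Mc_3$ is closed) and $\Cc_\la\to\Mc_3$ is continuous; boundedness is inherited from $\Cc_\la$. Connectedness of $\cuc_\la$: this is the delicate point. I would argue that $\cuc_\la=\Cc_\la\sm\bigcup_H\Li_H$ and that each deleted limb $\Li_H$ is connected and meets the rest only at $f_{root(H)}$, so that $\cuc_\la$ is $\Cc_\la$ with a null family of subcontinua pruned at single points — a standard fact then gives that the result is a continuum, provided we know $\cuc_\la\ne\emptyset$ (it contains $\ol\phd_3$'s slice via Theorem~\ref{t:nonlin}) and that the pruning points $f_{root(H)}$ themselves remain in $\cuc_\la$, which Theorem B supplies.

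\textbf{Main obstacle.} The hardest step is proving connectedness (hence continuum-hood) of $\cuc_\la$: one must show that deleting \emph{all countably many} limbs simultaneously does not disconnect $\Cc_\la$, i.e. that the limbs are genuinely "peripheral" — attached along single cut points $f_{root(H)}$ that lie in $\cuc_\la$, mutually separated, and accumulating only onto $\cuc_\la$ and not onto each other in a way that could pinch off a piece. This requires knowing that $\bd\cuc_\la$ is covered by the root points plus limit points of roots, which is exactly where Theorems A and B (landing of parameter rays, their common landing point being a $\cu$-polynomial) do the essential work; controlling the accumulation of the countable family $\{\Li_H\}$ uniformly is the real content beyond the formal bookkeeping.
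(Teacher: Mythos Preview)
Your plan has the right skeleton but two real gaps, and your identification of the ``main obstacle'' is off.

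\textbf{First gap: the special-wake case in disjointness.} When the common landing point $x$ equals $0$, your argument is muddled. If $\la=e^{2\pi i p/q}$ is a root of unity, then $0$ \emph{is} parabolic and \emph{can} be the landing point of the two rays; nothing you wrote rules this out. The paper handles this case by applying a fixed-point theorem for plane continua (Theorem~\ref{t:fxpt}) to the special piece $X_1=\ol{W_f(\ta_1+\frac13,\ta_2+\frac23)}\cap K(f)$: this produces a non-repelling $f^{\circ k}$-fixed point $x\ne 0$ in the wedge, with multiplier $\ne 1$ (Lemma~\ref{l:perpt-limb}). Since $\la\ne 1$, that gives two non-repelling periodic points of multiplier $\ne 1$, violating the definition of $\cu$. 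You are missing this dynamical ingredient entirely.

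\textbf{Second gap (the real one): $\Cc_\la\sm\cuc_\la\subset\bigcup_H\Li_H$.} This is the hard direction and your sketch does not touch the actual difficulty. Knowing that $f$ has a repelling periodic cutpoint does \emph{not} immediately tell you that the arguments of the rays at that cutpoint form the major of a quadratic invariant gap of periodic type, i.e.\ correspond to a hole of $\Qf$. The paper devotes all of Section~\ref{s:qugap} to constructing, for an immediately renormalizable $f$, the invariant quadratic gap $\Uf(f)$ from $K(f^*)$, and then in Section~\ref{s:proofc} uses Corollary~\ref{c:noreguc1}, Lemma~\ref{l:nocutinab}, and Kiwi's Separation Lemma to force $\Uf(f)$ to be of periodic type and to show that the rays at its major land together (Lemma~\ref{l:locate}). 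The case where $f$ has no repelling periodic cutpoint but an extra non-repelling point requires Kiwi's Separation Lemma plus Lemma~\ref{l:case0}. Your appeal to ``the minimal such cutpoint corresponds to a type D finite invariant gap'' is not a proof and is not how the argument goes.

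\textbf{On the full-continuum part.} Here you overcomplicate things; this is \emph{not} the main obstacle. Once the first two claims are in hand, the paper simply observes: removing any finite collection of limbs from the full continuum $\Cc_\la$ leaves a full continuum (a limb does not separate $\Cc_\la$, because $f_{root(H)}\in\cuc_\la$ stays), and $\cuc_\la$ is the nested intersection of these full continua, hence itself a full continuum. No analysis of accumulation of limbs is needed.
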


The case $\la=1$ is addressed in Theorem \ref{t:la1}.
In this case, the two parameter wakes of period one intersect the set $\cuc_1$; the intersection is described explicitly.
Other parameter wakes are disjoint from $\cuc_1$.

\section{Proof of Theorem A}
In this section, we discuss the geometry of parameter rays in
$\lambda$-slices. In particular, we prove Theorem A. We first recall
Lemma B.1 from \cite{GM} that goes back to Douady and Hubbard
\cite{DH}.

\begin{lem}
 \label{l:rep}
Let $g$ be a polynomial, and $z$ be a repelling periodic point of $g$.
If a smooth ray with rational argument $\theta$ in the dynamical plane of $g$ lands
at $z$, then, for every polynomial $\tilde g$ sufficiently close to
$g$, the ray $\tilde R$ with argument $\theta$ in the dynamical
plane of $\tilde g$ is smooth and lands at a repelling periodic point $\tilde z$
close to $z$. Moreover, $\tilde z$ depends holomorphically on
$\tilde g$.
\end{lem}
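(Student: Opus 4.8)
The plan is to use the implicit function theorem to track the periodic point, and then a standard local-dynamics argument (the $\lambda$-lemma or a direct linearization estimate) to track the landing ray. First I would let $n$ be the period of $z$ and consider the equation $g^{\circ n}(w)=w$ near $w=z$. Since $z$ is repelling, $(g^{\circ n})'(z)\ne 1$, so by the implicit function theorem applied to the analytic function $(g,w)\mapsto g^{\circ n}(w)-w$ there is a neighborhood of $g$ in the space of polynomials of the given degree and an analytic map $\tilde g\mapsto \tilde z$ with $\tilde z$ a period-$n$ point of $\tilde g$, $\tilde z$ close to $z$, and $(\tilde g^{\circ n})'(\tilde z)$ close to $(g^{\circ n})'(z)$; in particular $\tilde z$ is still repelling. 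This gives the holomorphic dependence claim and the repelling claim.

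Next I would show the smooth ray $\tilde R$ of argument $\theta$ lands at $\tilde z$. Write $\theta=p/m$ and let $k$ be chosen so that $\sigma_{\deg g}^{\circ k}(\theta)=\theta$, i.e. $\theta$ is periodic of some period $k$ under multiplication by $\deg g$; then the dynamic ray $R_g(\theta)$ is fixed setwise by $g^{\circ k}$, and (after replacing $n$ by a common multiple of $n$ and $k$, which changes nothing) both the ray and the periodic point $z$ are fixed by $h:=g^{\circ N}$ with $N$ the relevant period. So the situation reduces to: $h$ has a repelling fixed point $z$ at which the $h$-invariant ray $R_g(\theta)$ lands; I want the same for nearby maps. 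For this I would pick a linearizing (Koenigs) coordinate for $h$ near $z$, in which $h$ becomes $w\mapsto \mu w$ with $|\mu|>1$; the ray near $z$ becomes an arc that is, roughly, a piece of a logarithmic spiral invariant under multiplication by $\mu$. The Koenigs coordinate and the multiplier depend analytically on $\tilde g$ near $g$, so one gets a uniform linearizing neighborhood and uniform control; a standard argument (pull back a fundamental annulus by the inverse branch fixing $\tilde z$, which is a uniform contraction toward $\tilde z$) shows that the portion of $\tilde R$ entering this neighborhood actually converges to $\tilde z$, so $\tilde R$ lands and lands at $\tilde z$. That $\tilde R$ is still smooth (rather than crashing into an escaping precritical point) for $\tilde g$ near $g$ follows because $R_g(\theta)$, being a landing ray, contains no precritical point, and the set where a ray of fixed rational argument crashes is closed in the parameter; alternatively it follows from the analytic dependence of the Böttcher coordinate on parameters (Theorem~\ref{T:BH}) together with the fact that the finitely many escaping precritical points move continuously.

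The main obstacle is the second part: promoting ``the ray enters a fixed linearizing neighborhood'' to ``the ray lands at $\tilde z$'' uniformly in $\tilde g$, i.e. making the local-dynamics / $\lambda$-lemma step precise while simultaneously handling the possibility that, a priori, the ray of argument $\theta$ for $\tilde g$ could behave badly (accumulate on a larger set, or cease to be smooth) before it ever reaches the linearizing domain. This is handled by choosing the fundamental domain and the parameter neighborhood together: shrink the parameter neighborhood so that (i) the linearizing coordinate and multiplier vary little, (ii) the first precritical point that the ray could crash into stays outside a fixed neighborhood of $z$, and (iii) the inverse branch of $h$ fixing $\tilde z$ is defined and uniformly contracting on a fixed annular fundamental domain around $z$; then the tail of $\tilde R$ is obtained by iterating this inverse branch, so it is a nested sequence of arcs shrinking to $\tilde z$, whence landing. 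This is exactly the argument in \cite{GM} and \cite{DH}, and I would simply cite it after setting up the reduction above.
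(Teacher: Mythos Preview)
Your proposal is essentially correct and is the standard argument, but note that the paper does not prove this lemma at all: it simply quotes it as Lemma~B.1 of \cite{GM} (attributed further back to Douady--Hubbard \cite{DH}) and moves on. Your write-up is effectively a sketch of the proof one finds in those references, and you yourself acknowledge this at the end; so there is nothing to compare, and simply citing \cite{GM,DH} as the paper does would suffice.
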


Lemma \ref{l:rep} easily implies the following lemma that we will need.

\begin{lem}
\label{l:par}
Let $g_t$, $t\in [0,1]$ be a continuous family of polynomials of the same degree,
$\theta$ be a rational angle, and let $R_t$ be a smooth ray with argument $\theta$ in the dynamical plane of $g_t$.
Denote the point, at which the ray $R_t$ lands, by $z_t$.
Suppose that the points $z_t$ are repelling for $t\in [0,1)$
but the landing point $z_1$ is not the limit of landing points $z_t$ as $t\to 1$.
Then the point $z_1$ is parabolic.
\end{lem}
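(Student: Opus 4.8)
The plan is to argue by contradiction, leveraging Lemma \ref{l:rep} to show that if $z_1$ were repelling, then the landing points $z_t$ would have to converge to $z_1$, contrary to hypothesis. So suppose $z_1$ is \emph{not} parabolic. Since $z_1$ is the landing point of a smooth ray with rational argument $\theta$, it is a periodic point, and, being eventually periodic under a polynomial, its multiplier is well-defined; the only way for $z_1$ to fail to be parabolic is for it to be either attracting/superattracting or repelling. First I would rule out the attracting case: a smooth dynamic ray with rational (hence preperiodic or periodic) argument lands at a point of $J(g_1)$, and an attracting or superattracting periodic point lies in the Fatou set, so $z_1$ must be repelling. (If one prefers, one can instead invoke that $z_1\in \pr(R_1)\subset J(g_1)$ directly.) Thus it suffices to derive a contradiction from the assumption that $z_1$ is repelling.

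Now apply Lemma \ref{l:rep} to $g = g_1$ and the repelling periodic point $z_1$: there is a neighborhood of $g_1$ in the space of degree-$d$ polynomials and a holomorphic function $g\mapsto \tilde z(g)$ with $\tilde z(g_1)=z_1$, such that for every $g$ in that neighborhood the smooth ray of argument $\theta$ in the dynamical plane of $g$ lands at the repelling periodic point $\tilde z(g)$. By continuity of the family $t\mapsto g_t$, there is $\delta>0$ so that $g_t$ lies in this neighborhood for all $t\in(1-\delta,1]$. Hence for $t\in(1-\delta,1]$ the smooth ray $R_t$ of argument $\theta$ lands at $\tilde z(g_t)$. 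But by hypothesis $R_t$ lands at $z_t$, and landing points of a given ray are unique, so $z_t=\tilde z(g_t)$ for all $t\in(1-\delta,1]$. Since $\tilde z$ is continuous, $z_t=\tilde z(g_t)\to \tilde z(g_1)=z_1$ as $t\to 1$. This contradicts the assumption that $z_1$ is not the limit of $z_t$ as $t\to 1$. Therefore $z_1$ is parabolic.

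The only genuine subtlety — and the step I would be most careful about — is the uniqueness claim "$R_t$ lands at $z_t$ and also at $\tilde z(g_t)$, hence $z_t=\tilde z(g_t)$": this uses that a smooth dynamic ray, if it lands, lands at a single point, which is built into the definition of landing (the landing point is the \emph{only} accumulation point in $\C$). One should also note that the problem statement already tells us $R_t$ is smooth and lands, so there is no issue of the ray crashing into a precritical point for $t$ near $1$; and Lemma \ref{l:rep} is precisely designed to accommodate polynomials with possibly disconnected Julia sets, so it applies verbatim in the family $\Fc_\la$ (or any family of degree-$d$ polynomials). Everything else is a direct unwinding of definitions, so this is the whole argument.
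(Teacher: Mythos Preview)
Your proof is correct and is precisely the argument the paper has in mind when it says ``Lemma \ref{l:rep} easily implies the following lemma'': assume $z_1$ is repelling, invoke Lemma \ref{l:rep} at $g_1$ to get stability of the landing point, and derive the contradiction $z_t\to z_1$. The only cosmetic remark is that you can skip the attracting/superattracting case-split by citing directly the Douady--Hubbard fact that the landing point of a smooth ray with rational argument is always repelling or parabolic.
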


Lemmas~\ref{l:rep} and ~\ref{l:par} deal with continuity of rays
landing at repelling periodic points. The situation with parabolic
periodic points
is studied below. The
main objects we consider are repelling petals and rays landing at
parabolic periodic points.

\subsection{Polynomials with parabolic points and their petals}
Let $g$ be a polynomial of arbitrary degree such that $0$ is a fixed parabolic
point of $g$ of multiplier 1.
Suppose that $g(z)=z+az^{q+1}+o(z^{q+1})$, where $q$ is a positive integer and $a\ne 0$.
Recall from \cite{M} that an \emph{attracting vector} for $g$ is defined as
a vector (=complex number) $v$ such that $av^q$ is a negative real number,
i.e., $v$ and $av^{q+1}$ have opposite directions.
Clearly, there are $q$ straight rays consisting of attracting vectors that
divide the plane of complex numbers into $q$ \emph{repelling sectors}.

Consider a repelling sector $S$.
Note that the set $S^{-q}=\{z\in\C\,|\, z^{-q}\in S\}$ is the complement
of the ray $\{-ta\,|\, t>0\}$ in $\C$.
Let $U$ be a sufficiently small disk around $0$.
We will write $F$ for the composition of the function $w\mapsto w^{-1/q}$ mapping
$(S\cap U)^{-q}$ onto $S\cap U$, the function $g$ mapping $S\cap U$ onto $g(S\cap U)$, and the function
$z\mapsto z^{-q}$ mapping $g(S\cap U)$ to $\C$.
We have $F(w)=w-qa+\alpha(w)$, where $\alpha(w)$ denotes a power series in $w^{-1/q}$
that converges in a neighborhood of infinity, and whose free term is zero
(note that this function is single valued and holomorphic on $S^{-q}$).
It follows that there exists a positive real number $r$ with
the property $|\alpha(w)|<\frac{|a|}2$ whenever $|w|>r|a|$.
Consider the half-plane $\Pi$ given by the inequality $\Re(w/a)>r$.
Since this inequality implies that $|w|>r|a|$, we have $F(\Pi)\supset\Pi$,
and also that the shortest distance from a point on the boundary of $\Pi$ to
a point the boundary of $F(\Pi)$ is at least $(q-\frac 12)|a|$.
The preimage of the half-plane $\Pi$ under the map $S\to S^{-q}$,
$z\mapsto z^{-q}$ is called a \emph{repelling petal} of $g$.

Every repelling sector includes a repelling petal; thus, our polynomial $g(z)=z+az^{q+1}+o(z^{q+1})$
has $q$ repelling petals. Hence there are at
least $q$ external rays landing at $0$.
A repelling petal
$P$ of $g$ has the property $g(P)\supset P$. The
dependence of the repelling petals on parameters is described in
Lemma~\ref{l:rp} proved in \cite{bopt14} (the proof follows the same lines as
the proof of Lemma 5 in \cite{BH}).

\begin{lem}
  \label{l:rp}
  Let $g_t(z)=z+a_t z^{q+1}+o(z^{q+1})$ be a continuous family
  of polynomials, in which $a_t\ne 0$, and $t$ belongs to a
  locally compact metric space that is a countable union of compact spaces.
  Then all $q$ repelling petals of $g_t$ can be chosen to vary continuously
  with respect to $t$. 
\end{lem}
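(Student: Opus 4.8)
The plan is to repeat, now with the parameter $t$ present throughout, the construction of a repelling petal carried out in the paragraphs above, keeping every estimate uniform on compact subsets of the parameter space; the hypothesis that this space is a locally compact metric space which is a countable union of compact sets will be used exactly to make the resulting local, uniform estimates patch into a single globally continuous choice. Since $a_t\neq 0$ depends continuously on $t$, the set of $v$ with $a_tv^q$ a negative real number consists of $q$ rays varying continuously with $t$, so the $q$ repelling sectors of $g_t$ vary continuously with $t$ as well (as an unordered $q$-tuple; on a simply connected piece of the parameter space one fixes a continuous branch of $\arg a_t$ and then labels the sectors $S^{(1)}_t,\dots,S^{(q)}_t$, each depending continuously on $t$). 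It is therefore enough to produce, in each repelling sector, a petal depending continuously on $t$; I describe the argument for one sector $S_t$.

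For each $t$, exactly as in the construction above, let $F_t$ be the composition of the branch of $w\mapsto w^{-1/q}$ carrying $(S_t\cap U)^{-q}$ onto $S_t\cap U$, the map $g_t$, and $z\mapsto z^{-q}$. Then $F_t(w)=w-qa_t+\alpha_t(w)$, where $\alpha_t$ is single-valued and holomorphic on $S_t^{-q}$ and is given there by a power series in $w^{-1/q}$ with vanishing constant term. The coefficients of this series are universal holomorphic functions of the Taylor coefficients of $g_t$ at $0$ and of a branch of $a_t^{-1/q}$; since the coefficients of $g_t$ and the number $a_t$ depend continuously on $t$, so do the coefficients of $\alpha_t$.

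The main step is a uniform estimate. Fix a compact subset $C$ of the parameter space. On $C$ the modulus $|a_t|$ is bounded above and away from $0$; moreover, by continuity of $t\mapsto g_t$, the disk $U$ and a bound for $\sup_U|g_t|$ may be chosen uniformly for $t\in C$, so the relevant Taylor data of $g_t$ (finitely many coefficients, the degree being fixed, together with the remainder term) are uniformly controlled. Carrying out the estimate of the construction above with these uniform constants produces one radius $r=r(C)>0$ such that $|\alpha_t(w)|<\frac{|a_t|}{2}$ whenever $|w|>r|a_t|$, for all $t\in C$; hence, uniformly for $t\in C$, the half-plane $\Pi_t=\{w:\Re(w/a_t)>r\}$ satisfies $F_t(\Pi_t)\supset\Pi_t$, with the distance between $\partial\Pi_t$ and $\partial F_t(\Pi_t)$ at least $(q-\frac12)|a_t|$. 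The preimage of $\Pi_t$ under the branch of $z\mapsto z^{-q}$ onto $S_t$ is then a repelling petal of $g_t$ which, $r(C)$ being constant, depends continuously on $t\in C$. To pass to a choice valid on all of the parameter space $X$, observe that if a radius works for a given $t$ then so does any larger one, so $r_{\min}(t):=\inf\{r>0:|\alpha_t(w)|<\frac{|a_t|}{2}\text{ for }|w|>r|a_t|\}$ is bounded on each compact subset of $X$; since $X$ is a $\sigma$-compact metric space, a standard partition-of-unity argument yields a continuous function $\rho:X\to(0,\infty)$ with $\rho\ge r_{\min}$. Using $\Pi_t=\{w:\Re(w/a_t)>\rho(t)\}$ in every repelling sector produces the $q$ petals $P_t$ of $g_t$, and the assignment sending $t$ to the collection of the $q$ petals of $g_t$ is continuous — for instance in the Hausdorff metric on closed subsets of $\C$: any fixed compact subset of a petal of $g_{t_0}$ lies in the corresponding petal of $g_t$, and $\ol{P_t}$ stays in any prescribed neighborhood of $\ol{P_{t_0}}$, for $t$ near $t_0$.

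The step I expect to be the real obstacle is the uniform estimate: one must control the branched power series $\alpha_t$, living on the moving domain $S_t^{-q}$, uniformly over a compact parameter set — that is, repeat the single-map estimate of the construction above while checking that every constant can be taken to depend only on the compact set $C$ and not on the individual parameter. This is precisely the input imported from Lemma~5 of \cite{BH}; the remaining ingredients — continuity of the repelling sectors, continuous dependence of the Taylor coefficients, and the $\sigma$-compact patching — are soft.
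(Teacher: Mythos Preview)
Your proposal is correct and follows exactly the approach the paper indicates: the paper does not give a proof of this lemma but cites \cite{bopt14} and notes that the argument follows the same lines as Lemma~5 of \cite{BH}, which is precisely what you have reconstructed --- parametrizing the petal construction, obtaining uniform estimates on compact parameter sets, and patching via $\sigma$-compactness. The only point worth flagging is the branch issue for $\arg a_t$ (and $a_t^{-1/q}$) when the parameter space is not simply connected, but you correctly note that the petals are to be taken as an unordered $q$-tuple, which dissolves the monodromy concern.
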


\subsection{Stability of rays and their perturbations}
In this subsection, we fix $\lambda=\exp(2\pi ip/q)$ for some
relatively prime $p$ and $q$ (i.e., $\lambda$  is a root of unity). The
ratio $p/q$ is called the {\em rotation number} (of the fixed point
$0$). We discuss conditions that imply that a dynamic ray
$R_f(\theta)$ in the dynamic plane of a polynomial $f\in\Fc_\la$
landing at $0$ is stable (i.e., for $\tilde f\in\Fc_\la$ close to $f$,
the ray $R_{\tilde f}(\theta)$ also lands at $0$).

\begin{prop} [cf Proposition 3.3, \cite{bopt14}]
\label{P:Tpq}
  We have $f_{\la,b}^{\circ q}(z)=z+T_{p/q}(b)z^{q+1}+o(z^{q+1})$,
  where $T_{p/q}(b)$ is a non-zero polynomial in $b$. Moreover, the degree
  of $T_{p/q}$ is
  at most $q$ and if, for some $b$, we have $T_{p/q}(b)=0$, then
  $f_{\la,b}^{\circ q}(z)$ has $2q$ parabolic Fatou domains at $0$ forming two cycles under $f$ as well as $2q$ external rays landing at $0$.
\end{prop}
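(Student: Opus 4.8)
The plan is to compute the $q$-th iterate of $f_{\lambda,b}$ at its parabolic fixed point $0$ directly as a formal power series, tracking the coefficient of $z^{q+1}$ as a function of $b$. Writing $f_{\lambda,b}(z) = \lambda z + b z^2 + z^3$ with $\lambda = e^{2\pi i p/q}$, we have $f_{\lambda,b}^{\circ q}(z) = z + c_{q+1}(b) z^{q+1} + \cdots$ because $\lambda$ is a primitive $q$-th root of unity, so the linear term of the $q$-th iterate is $\lambda^q z = z$ and all intermediate terms of order $2,\dots,q$ cancel for the standard reason (the non-resonant coefficients can be killed, but here we only need that the \emph{first} surviving term is at order $q+1$; since $f_{\lambda,b}$ has no quadratic--to--$q$-th resonances when $\lambda$ is a primitive $q$-th root of $1$, unless $q\le 2$, one checks the low cases $q=1,2$ by hand). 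Call $T_{p/q}(b) := c_{q+1}(b)$. The first task is to see this is a \emph{polynomial} in $b$: each composition step $f_{\lambda,b}$ is polynomial in $b$ (indeed affine, coefficients $\lambda, b, 1$), so any fixed Taylor coefficient of the $q$-fold composite is a polynomial in $b$, and $T_{p/q}(b)$ is one such coefficient.

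Next I would bound $\deg_b T_{p/q} \le q$. The cleanest route is a weighted-degree / Newton-polygon bookkeeping: assign to the monomial $b^j z^k$ the weight $j + (k-1)$ (so the linear term $\lambda z$ has weight $0$, $bz^2$ has weight $2$, $z^3$ has weight $2$). One checks that under composition these weights are super-additive in the right way, so that the coefficient of $z^{q+1}$ in the $q$-fold composite — which contributes weight $q$ from the $z$-part — can only involve $b^j$ with $j\le q$. Alternatively, and perhaps more transparently, rescale: the change of variable $z \mapsto b z$ (for $b\neq 0$) conjugates $f_{\lambda,b}$ to $g_\mu(w) = \lambda w + w^2 + \mu w^3$ with $\mu = b^{-2}$, so $T_{p/q}(b)$ equals $b^{-q}$ times the corresponding coefficient for $g_\mu$, which is a polynomial in $\mu = b^{-2}$; expanding and clearing denominators gives the degree bound and even the parity constraint on the exponents of $b$.

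Finally, for the structural consequence when $T_{p/q}(b)=0$: if the coefficient of $z^{q+1}$ vanishes, then $0$ is a parabolic fixed point of $f := f_{\lambda,b}$ of \emph{higher} degeneracy, i.e. $f^{\circ q}(z) = z + c_{m+1} z^{m+1}+\cdots$ with $m \ge 2q$ (the next possible exponent is forced to be a multiple of $q$ by the resonance structure — $f^{\circ q}$ commutes with multiplication by $\lambda$ up to the rotation action, so only exponents $\equiv 1 \pmod q$ survive, and $m+1 = q+1$ being excluded the next is $m+1 = 2q+1$). A parabolic fixed point of $f^{\circ q}$ with multiplicity-$(m+1)$ degeneracy has $m = 2q$ attracting directions, hence $2q$ repelling petals and (by the Leau--Fatou flower theorem, as recalled in the excerpt's petal discussion and \cite{M}) $2q$ external rays landing at $0$; the $2q$ Fatou (parabolic) domains are permuted by $f$, and since $f$ rotates the $q$ directions cyclically with rotation number $p/q$ while $f^{\circ q}$ fixes each, they split into exactly two $f$-cycles of length $q$. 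One should double-check that $T_{p/q}$ is not identically zero — this is the assertion "$T_{p/q}(b)$ is a non-zero polynomial"; it follows either from the $b\to\infty$ (equivalently $\mu\to 0$, i.e. $g_\mu \to \lambda w + w^2$) limit, where the coefficient is the known nonzero parabolic coefficient of the quadratic $\lambda w + w^2$ at a primitive $q$-th root, or by citing the analogous Proposition 3.3 of \cite{bopt14}.

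The main obstacle I expect is the degree bound $\deg_b T_{p/q}\le q$: the naive composition estimate gives something like $2^q$ or worse, and extracting the sharp linear bound requires the right weighting or the rescaling trick above, together with care about the low-$q$ resonant cases ($q=1$: $f_{\lambda,b}(z) = \lambda z + bz^2+z^3$ with $\lambda=1$, directly $T_1(b) = b$, degree $1$; $q=2$: $\lambda = -1$, compute $f^{\circ 2}$ and read off the $z^3$-coefficient, checking it is a degree-$\le 2$ polynomial). The rest — polynomiality, the flower-theorem count, and the splitting into two cycles — is routine given the petal machinery already set up in the paper.
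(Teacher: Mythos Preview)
Your proposal is essentially correct and, for the degree bound, is the paper's own argument: the paper's Lemma~\ref{L:polynom} shows that the space of polynomials $\sum a_n(b)z^n$ with $\deg_b a_n\le n-1$ is stable under post-composition with $f_{\la,b}$, which is exactly the invariant your ``weighted-degree bookkeeping'' is groping for (the clean statement is simply $j\le k-1$ for every monomial $b^jz^k$, not a grading by $j+(k-1)$; the latter does not behave multiplicatively under substitution, so your phrasing needs adjustment, though the underlying idea is right). Your rescaling alternative --- conjugating by $w=bz$ to $g_\mu(w)=\la w+w^2+\mu w^3$ with $\mu=b^{-2}$ --- is a genuinely different route the paper does not take; it is cleaner in that it yields both $\deg_b T_{p/q}\le q$ and, via the limit $\mu\to 0$ (the quadratic $\la w+w^2$, whose parabolic coefficient is classically nonzero), the non-vanishing of $T_{p/q}$ in a single stroke. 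The paper instead cites \cite{bopt14} for non-vanishing and uses only the lemma for the degree.

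One point to tighten in your petal argument: from $T_{p/q}(b)=0$ and the commutation $f\circ f^{\circ q}=f^{\circ q}\circ f$ you correctly get $q\mid m$, hence $m\ge 2q$; but you then assert $m=2q$ without the matching upper bound. The missing observation is that each $f$-cycle of attracting petals must absorb a critical point, and a cubic has only two, so there are at most two cycles and $m\le 2q$. The paper covers this by invoking Beardon's Petal Theorem together with ``the fact that our maps are cubic'', so your omission is in good company, but the step should be made explicit.
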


The representation for $f_{\la,b}^{\circ q}(z)$ and the fact that $T_{p/q}$ is a non-zero polynomial are proved in \cite[Proposition 3.3]{bopt14}.
The claim about the degree follows from Lemma \ref{L:polynom} below.
The last claim in Proposition~\ref{P:Tpq} follows from
\cite{bea91} (see the Petal Theorem 6.5.4 and Theorem 6.5.8) and the fact that our maps are cubic.

\begin{lem}
  \label{L:polynom}
  Let $\mathbb{V}$ be the vector space of all polynomials in $b$ and $z$ given by
  $
  p(b,z)=\sum_{n=1}^N a_n(b)z^n
  $,
  where $a_n$ is a polynomial of degree $\le n-1$.
  Then $f^{\circ r}_{\la,b}(p)\in\mathbb{V}$ for each $p\in\mathbb{V}$ and each integer $r>0$.
\end{lem}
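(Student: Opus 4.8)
The plan is to argue by induction on $r$, with the key observation being that it suffices to prove the case $r=1$, i.e., that $\mathbb{V}$ is invariant under composition with $f_{\la,b}$ on the left; the general case then follows since $f^{\circ r}_{\la,b}(p) = f_{\la,b}(f^{\circ(r-1)}_{\la,b}(p))$. So fix $p(b,z) = \sum_{n=1}^N a_n(b)z^n$ with $\deg a_n \le n-1$, and we must show $f_{\la,b}(p) = \la p + b p^2 + p^3$ again has this form. Since $\mathbb{V}$ is visibly a vector space, it is enough to check each of the three terms $\la p$, $b p^2$, $p^3$ separately.

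First I would dispose of $\la p$: this is immediate since multiplying by the constant $\la$ changes neither the $z$-degree decomposition nor the coefficient polynomials. Next, the main point is to understand how $\mathbb{V}$ behaves under multiplication. If $p = \sum_n a_n(b) z^n$ and $\tilde p = \sum_m \tilde a_m(b) z^m$ both lie in $\mathbb{V}$, then $p\tilde p = \sum_k \big(\sum_{n+m=k} a_n(b)\tilde a_m(b)\big) z^k$, and each summand $a_n \tilde a_m$ has degree $\le (n-1)+(m-1) = k-2 \le k-1$. Hence $p\tilde p \in \mathbb{V}$, and in fact satisfies the stronger bound $\deg(\text{coeff of } z^k) \le k-2$. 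Applying this twice shows $p^2$ has $z^k$-coefficient of degree $\le k-2$, and $p^3$ has $z^k$-coefficient of degree $\le k-3$; both are certainly in $\mathbb{V}$. For the term $b p^2$, multiplying the $z^k$-coefficient of $p^2$ (degree $\le k-2$) by $b$ raises its degree to $\le k-1$, which is exactly the bound required for membership in $\mathbb{V}$. Summing the three contributions, $f_{\la,b}(p) \in \mathbb{V}$, completing the $r=1$ step.

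There is no real obstacle here — the argument is a bookkeeping check on degrees — but the one place to be slightly careful is the $b p^2$ term, which is the \emph{tightest}: it is precisely the factor of $b$ that consumes the one degree of slack the product $p^2$ had acquired, so the bound $\deg a_n \le n-1$ in the definition of $\mathbb{V}$ is sharp and cannot be improved to $\le n-2$. Once the $r=1$ case is in hand, the induction is trivial: if $f^{\circ(r-1)}_{\la,b}(p) \in \mathbb{V}$ then applying the $r=1$ case to it gives $f^{\circ r}_{\la,b}(p) \in \mathbb{V}$, and the base case $r=0$ is the trivial identity. This is exactly what is needed to conclude, in Proposition~\ref{P:Tpq}, that the coefficient $T_{p/q}(b)$ of $z^{q+1}$ in $f^{\circ q}_{\la,b}(z)$ (apply the lemma with $p(b,z)=z$, so $N=1$, $a_1 = \la$, noting $\la$ is constant of degree $0 = 1-1$) is a polynomial in $b$ of degree at most $q$.
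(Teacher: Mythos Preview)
Your proof is correct and follows essentially the same approach as the paper: reduce to the case $r=1$ by induction, then verify that each of $\la p$, $bp^2$, $p^3$ lies in $\mathbb{V}$ by tracking the degree in $b$ of the $z^k$-coefficient of $p^k$ (the paper states the general bound $\le k$ drop in one line, you derive it via the product-of-two-elements observation). One inconsequential slip in your closing remark: if $p(b,z)=z$ then $a_1(b)=1$, not $\la$; either way it is a constant of degree $0=1-1$, so the application to Proposition~\ref{P:Tpq} goes through unchanged.
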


\begin{proof}
Let $p(b,z)=a_1(b)z+\dots+a_N(b)z^N$ be a polynomial such that
the degree of $a_j(b)$ is at most $j-1$, for every $j$.
It follows that $p(b,z)^k=c_k(b)z^k+\dots+c_{Nk}(b)z^{Nk}$,
where the degree of each $c_i(b)$ is at most $i-k$ for each $i\ge k$.
Thus, $f_{\la, b}(p(b,z))=\lambda p(b, z) + b p^2(b,z) + p^3(b,z)$
can be written as $d_1(b)z+\dots+d_{3N}(b)z^{3N}$,
where the degree of each $d_i(b)$ is at most
$i-1$. The result follows.
\end{proof}

Proposition \ref{P:ray0stable} deals with rays landing at parabolic points.

\begin{prop}[\cite{bopt14} Proposition 3.4]
\label{P:ray0stable}
  Suppose that an external ray $R_{f_{\la,b_*}}(\theta)$ with periodic argument $\theta\in\R/\Z$
  lands at $0$, and $T_{p/q}(b_*)\ne 0$.
  Then, for all $b$ sufficiently close to $b_*$,
  the ray $R_{f_{\la,b}}(\theta)$ lands at $0$.
\end{prop}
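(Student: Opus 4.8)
The plan is to reconstruct the proof of Proposition~3.4 of \cite{bopt14}: a dynamic ray that lands at a parabolic fixed point does so through a repelling petal, and since the petals persist under perturbation, so does the ray. Since $T_{p/q}$ is a non-zero polynomial with $T_{p/q}(b_*)\ne 0$, there is a disk $B\ni b_*$ on which $T_{p/q}$ does not vanish; by Proposition~\ref{P:Tpq}, for $b\in B$ the point $0$ is a non-degenerate parabolic fixed point of $g_b:=f_{\la,b}^{\circ q}$, with $g_b(z)=z+T_{p/q}(b)z^{q+1}+o(z^{q+1})$. Applying Lemma~\ref{l:rp} to the family $(g_b)_{b\in B}$ I get repelling petals $P(b)$ depending continuously on $b$; after shrinking $B$, on each $P(b)$ the inverse branch $h_b$ of $g_b$ fixing $0$ is defined and injective, satisfies $h_b(\ol{P(b)}\setminus\{0\})\subset P(b)$, and $h_b^{\circ n}\to 0$ uniformly on compact subsets of $P(b)$, all depending continuously on $b$ (this is the half-plane picture $F^{-n}(\Pi)\to\varnothing$ from the discussion preceding Lemma~\ref{l:rp}). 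Since $\theta$ is periodic, of period $q$ (the period of the cycle of rays landing at $0$), one has $g_b(R_{f_{\la,b}}(\theta))=R_{f_{\la,b}}(\theta)$, and at $b=b_*$ the ray $R_{f_{\la,b_*}}(\theta)$ is an external ray landing at the parabolic fixed point $0$ of $g_{b_*}$; hence its tail lies in a repelling petal, which I take to be $P(b_*)$ (see \cite{M}).

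Next I fix a potential $R_0>1$ so close to $1$ that the portion $\rho_{b_*}$ of $R_{f_{\la,b_*}}(\theta)$ of potential $\le R_0$, together with its landing point $0$, is contained in $P(b_*)$. Let $\delta_b$ be the closed arc of $R_{f_{\la,b}}(\theta)$ of potential in $[R_0^{1/3^q},R_0]$; it is a fundamental domain for $g_b$ acting on the ray (on the ray $g_b$ raises potentials to the power $3^q$). Since $R_{f_{\la,b_*}}(\theta)$ lands, it is smooth, so $\delta_{b_*}$ carries no (pre)critical point, and it is exterior to $K(f_{\la,b_*})$. By Theorem~\ref{T:BH} (openness of $\Vc$ and continuity of $\Psi^{-1}$), for $b$ near $b_*$ the ray $R_{f_{\la,b}}(\theta)$ is smooth down to potential $R_0^{1/3^q}$, and $\delta_b\to\delta_{b_*}$; shrinking $B$ once more I obtain $\delta_b\subset P(b)$ with $\delta_b$ free of (pre)critical points, for all $b\in B$.

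Now I argue that for $b\in B$ the ray $R_{f_{\la,b}}(\theta)$ does not crash. If it crashed at an escaping (pre)critical point $x$, then $x$ has potential $<R_0^{1/3^q}$ (the ray is smooth above that level), and iterating $g_b=f_{\la,b}^{\circ q}$ the least number $k$ of times so that the potential reaches $[R_0^{1/3^q},R_0)$ sends $x$ to a (pre)critical point of $f_{\la,b}$ lying on the smooth arc of $R_{f_{\la,b}}(\theta)$ of potential in $[R_0^{1/3^q},R_0)$; but the (pre)critical points of $f_{\la,b}$ of bounded potential form a finite set varying continuously with $b$, and at $b_*$ none meets this arc, a contradiction. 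So $R_{f_{\la,b}}(\theta)$ is a ray from infinity to a landing point $z(b)$, and it remains to show $z(b)=0$. For this it suffices to show the inner portion $\rho_b$ (potential $\le R_0$) lies in $P(b)$: then, because $g_b|_{P(b)}$ is injective and the ray is connected, $\rho_b=\{z(b)\}\cup\bigcup_{n\ge0}h_b^{\circ n}(\delta_b)$, whence $z(b)=\lim_n h_b^{\circ n}(w)=0$ for any $w\in\delta_b$. The containment $\rho_b\subset P(b)$ is obtained by bootstrapping along the ray starting from $\delta_b\subset P(b)$: if a point $z$ of the ray of potential in the $n$-th band $[R_0^{1/3^{(n+1)q}},R_0^{1/3^{nq}}]$ lies in $P(b)$, then $g_b(z)$ lies on the ray one band out and, inductively, in $P(b)$, so $z=h_b(g_b(z))\in h_b^{\circ(n+1)}(\delta_b)\subset P(b)$; since $h_b(P(b))\subset P(b)$ and $P(b)$ is open, a routine connectedness argument (choosing $R_0$ so as to avoid the countably many potentials $t$ with some $t^{3^{kq}}$ equal to $R_0^{1/3^q}$, which is harmless) shows the ray cannot leave $P(b)$ after entering through $\delta_b$.

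The crux is this last step: transporting the containment of the \emph{entire} inner ray in the repelling petal from $b_*$ to nearby $b$, uniformly in $b$. The continuous dependence of the petals (Lemma~\ref{l:rp}) and of the exterior ray structure (Theorem~\ref{T:BH}) are exactly what make the bootstrap work, and the hypothesis $T_{p/q}(b_*)\ne 0$ is what keeps the parabolic point non-degenerate so that Lemma~\ref{l:rp} applies. The remaining steps — non-crashing of the ray and the identification $\rho_b=\{z(b)\}\cup\bigcup_n h_b^{\circ n}(\delta_b)$ — are comparatively routine once the petal containment is in place.
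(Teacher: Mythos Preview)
The paper itself does not prove this proposition; it is quoted verbatim from \cite{bopt14} with no argument supplied here. Your strategy --- continuity of repelling petals (Lemma~\ref{l:rp}), a fundamental arc $\delta_b$ of the ray inside the petal, and pull-back by the local inverse $h_b$ --- is exactly the approach of \cite{bopt14}, and the overall architecture is right.

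There is, however, a genuine gap in your separate non-crashing step. If the ray crashes at $x$, then $x$ is the \emph{outermost} precritical point on a $g_b$-invariant ray, which forces $f^j(x)=\omega_2(f_b)$ for some $0\le j<q$ (otherwise $g_b(x)$, further out on the smooth part, would already be precritical). Hence your forward iterate $g_b^{\,k}(x)=f^{qk-j}(\omega_2)$ is \emph{post}-critical, not precritical. The claim that ``the (pre)critical points of bounded potential form a finite set varying continuously with $b$, and at $b_*$ none meets this arc'' then does not apply: when $f_{\la,b_*}\in\Cc_\la$ there is no escaping critical point in the limit at all, while for nearby $b\notin\Cc_\la$ the forward orbit of $\omega_2(f_b)$ produces arbitrarily many points of low potential. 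So this step, as written, fails.

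The remedy is to fold non-crashing into the bootstrap rather than treating it beforehand. Since $h_b$ is a univalent branch of $g_b^{-1}$ on $P(b)$, the arc $h_b(\delta_b)\subset P(b)$ contains no critical point of $g_b$. At the junction point $y_1$ (potential $R_0^{1/3^q}$) the map $g_b$ is a local diffeomorphism, so there is a unique $g_b$-preimage of $\delta_b$ attached at $y_1$; this is simultaneously $h_b(\delta_b)$ and the next inward band of the ray. Thus the ray is smooth on that band and contained in $P(b)$. Iterating yields $\rho_b=\{0\}\cup\bigcup_{n\ge 0}h_b^{\circ n}(\delta_b)$ directly, proving both smoothness and landing at $0$ in one induction. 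Your final ``connectedness'' paragraph gestures at this, but as stated it presupposes the ray already exists below $\delta_b$, which is precisely what needs proving.
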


\subsection{External and quadratic-like rays of polynomials in
$\Fc_\la\sm\Cc_\la$}\label{ss:exteql} In this subsection, $\lambda$
is a complex number of modulus at most 1. We study the parameter
plane $\Fc_\la$. The set $\Fc_\la\sm\Cc_\la$ is foliated by
parameter rays. Let us describe the dynamics of a polynomial
$f\in\Fc_\la\sm\Cc_\la$ choosing $f$ from the parameter ray
$\Rc_\la(\vk)$. We will establish a correspondence between the
dynamical properties of $f$ and the properties of the invariant
quadratic gap $\Uf_c(\vk)$. Let us emphasize that in what follows we
deal with both \emph{external} rays, \emph{quadratic-like} rays (defined in
Subsection~\ref{ss:immeren}), and \emph{dynamic} rays (defined in
Subsection~\ref{ss:strufc}), so it is important to distinguish
between these types of rays.

A polynomial $f\in\Fc_\la$ belongs to a \emph{parameter ray}
$\Rc_\la(\vk)$ if and only if the dynamic rays $R_f(\vk+\frac13)$ and
$R_f(\vk+\frac23)$ crash into $\omega_2(f)$; by Theorem~\ref{T:BH} then
no other dynamic ray crashes into $\omega_2(f)$. Set
$R_f(\vk+\frac13)\cup R_f(\vk+\frac23)\cup \omega_2(f)=\Gamma_f$;
clearly, $\Gamma_f$ is a curve dividing $\C$, the dynamic plane of $f$,
in two parts. Let $\Sigma_f=W_f(\vk+\frac 13, \vk+\frac 23)$ be the
part of the plane bounded by $\Gamma_f$ and containing the rays $R_f(\theta)$ for all $\theta\in(\vk+\frac 13, \vk+\frac 23)$; we call such sets $\Sigma_f$
(enclosed by two dynamic rays of $f$ landing or crashing at the same
point) \emph{dynamic wedges}. In general, a \emph{dynamic wedge}
$W_f(\al, \be)$ is a part of the plane bounded by $\ol{R_f(\al)\cup R_f(\be)}$ and containing the rays $R_f(\theta)$ for all $\theta\in (\al, \be)$,
where the dynamic rays $R_f(\al)$ and $R_f(\be)$ crash or land at the same point.

Then $\Sigma_f$ maps one-to-one onto the complement of $f(\Gamma_f)$ and
contains the dynamic rays with arguments in $\uc\sm L(\vk)$, where
$L(\vk)$ is the longer closed arc with endpoints $\vk+\frac 13$ and
$\vk+\frac 23$. By Theorem~\ref{t:unboren}, the polynomial $f$ is
immediately renormalizable. The quadratic-like filled Julia set
$K(f^*)$ is disjoint from $\Gamma_f$, since every point of $\Gamma_f$
is escaping. Clearly, $K(f^*)\not\subset \Sigma_f$ because
$f|_{\Sigma_f}$ is one-to-one while $f|_{K(f^*)}$ is generically
two-to-one. Hence, $K(f^*)\cap \overline \Sigma_f=\emptyset$. In this
subsection, we fix a polynomial $f\in \Fc_\la\sm\Cc_\la$ that belongs
to the parameter ray $\Rc_\la(\vk)$.

We need more information about the Jordan domains $U^*$ and $V^*$, for
which the map $f^*:U^*\to V^*$, the restriction of $f$ to $U^*$, is a
quadratic-like map, cf. \cite{BrHu}. The domain $V^*$ can be defined as
the connected component of the set of all point $z\in \C$ such that
$G_{K(f)}(z)<G_{K(f)}(\om_2(f))$ containing $K(f^*)$, and the domain
$U^*$ is defined as the $f$-pullback of $V^*$ in $V^*$. In particular,
$V^*$ is disjoint from $\Gamma_f$.

\begin{lem}
\label{l:not-accum}
  A dynamic ray $R_f(\al)$ does not accumulate in $K(f^*)$ if and only if there is an
  integer $k\ge 0$ such that $R_f(3^k\al)\subset \ol{\Sigma}_f$.
  In particular, a smooth ray $R_f(\al)$ does not accumulate in $K(f^*)$ if and only if there is an
  integer $k\ge 0$ such that $R_f(3^k\al)\subset \Sigma_f$.
\end{lem}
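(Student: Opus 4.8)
The plan is to track the itinerary of a dynamic ray with respect to the curve $\Gamma_f$ and its forward image $f(\Gamma_f)$, and to combine this with the description of $V^*$ as a sublevel set of $G_{K(f)}$. The key structural facts established just above the statement are: (i) $\Sigma_f$ maps homeomorphically onto $\C\sm f(\Gamma_f)$; (ii) $K(f^*)\cap\ol{\Sigma}_f=\emptyset$; and (iii) $V^*$ is the component of $\{z : G_{K(f)}(z)<G_{K(f)}(\omega_2(f))\}$ containing $K(f^*)$, which is disjoint from $\Gamma_f$, with $U^*=f^{-1}(V^*)\cap V^*$. I would first prove the ``if'' direction: suppose $R_f(3^k\al)\subset\ol{\Sigma}_f$ for some $k\ge 0$. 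Since $K(f^*)$ is $f$-invariant (it is the filled quadratic-like Julia set) and $K(f^*)\cap\ol{\Sigma}_f=\emptyset$, no point of $R_f(3^k\al)$ lies in $K(f^*)$; moreover every point of $\ol\Sigma_f$ has Green's value at least that of the points of $\Gamma_f$, which are escaping, hence every point of $R_f(3^k\al)$ has $G_{K(f)}$-value bounded below by $G_{K(f)}(\omega_2(f))$ on a tail (the tail of $3^k\al$ eventually leaves $V^*$), so the tail of $R_f(3^k\al)$ is disjoint from $V^*\supset U^*$; since $f^k$ maps $R_f(\al)$ onto $R_f(3^k\al)$ and $f^k(K(f^*))=K(f^*)$, a standard pull-back argument shows $R_f(\al)$ cannot accumulate in $K(f^*)$ either — any accumulation point in $K(f^*)$ would, after applying $f^k$, give an accumulation point of $R_f(3^k\al)$ in $K(f^*)$, contradicting $R_f(3^k\al)\subset\ol\Sigma_f$ and $K(f^*)\cap\ol\Sigma_f=\emptyset$ together with the fact that $\ol\Sigma_f$ is closed.

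For the ``only if'' direction, suppose no forward image $R_f(3^k\al)$ is contained in $\ol\Sigma_f$; I must show $R_f(\al)$ accumulates in $K(f^*)$. The idea is that the orbit of $R_f(\al)$ then repeatedly ``escapes'' $\Sigma_f$, and each time it does so it must pass through the bounded side, which forces the ray into the region where the renormalization lives. More precisely, recall from the structure analysis of $\C\sm K(f)$ that $R_f(\al)$ is a smooth dynamic ray and either accumulates in $K(f)$ or crashes at a (pre)critical point; here the only escaping (pre)critical points are $\omega_2(f)$ and its preimages, and $\Gamma_f$ already collects the two rays crashing at $\omega_2(f)$, so a ray disjoint from all $\Gamma_f$-preimages accumulates in $J(f)$. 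Since $\Sigma_f$ maps one-to-one onto $\C\sm f(\Gamma_f)$ and contains exactly the rays with arguments in $\uc\sm L(\vk)$, the complement $\ol\disk\sm\ol\Sigma_f$ corresponds to arguments in the arc $L(\vk)$, i.e. the set on which the quadratic invariant gap $\Uf_c(\vk)$ lives; the ray $R_f(\al)$ fails to be swallowed by $\ol\Sigma_f$ under all iterates of $\si_3$ precisely when $\al$ (and all $3^k\al$) stays in $L(\vk)$, which by Theorem~\ref{t:invagap} means $\ol\al\in\Uf'_c(\vk)$. I would then argue that the dynamic ray $R_f(\al)$ for such $\al$ must accumulate in the invariant continuum $K(f^*)$: its forward orbit is trapped in the topological wedge over $L(\vk)$, which after renormalization is identified with the exterior of the quadratic-like Julia set, and so $R_f(\al)$ corresponds under $\xi$ (Theorem~\ref{t-extepoly}) to a quadratic-like ray, whose principal set lies in $J(f^*)\subset K(f^*)$.

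The main obstacle, I expect, is making rigorous the identification ``argument stays in $L(\vk)$ under $\si_3$ $\iff$ ray orbit stays out of $\ol\Sigma_f$ $\implies$ accumulation in $K(f^*)$,'' because the dynamic wedge $\Sigma_f$ is a region in the plane whose boundary $\Gamma_f$ is not forward invariant ($f(\Gamma_f)$ is a larger curve), so one must carefully bookkeep which iterated preimage of $V^*$ (equivalently, which component of the level-set decomposition of $G_{K(f)}$) the relevant tail of $R_f(3^k\al)$ sits in, and rule out the degenerate possibility that the ray spirals without ever entering $U^*$ yet still accumulates on $J(f^*)=J(f)\cap\partial U^*$. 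I would handle this by using property (iii): $V^*$ is a genuine sublevel component, so ``the tail of $R_f(\be)$ eventually enters $V^*$'' is equivalent to ``$G_{K(f)}$ drops below $G_{K(f)}(\omega_2(f))$ along that tail and the tail reaches the correct component,'' and the latter is controlled by the combinatorial position of $\be$ relative to the two rays of $\Gamma_f$, i.e. by whether $\be\in L(\vk)$. The smooth-ray addendum in the last sentence then follows because for a smooth ray $R_f(\al)$ the inclusion $R_f(3^k\al)\subset\ol\Sigma_f$ can be upgraded to $R_f(3^k\al)\subset\Sigma_f$ unless $3^k\al$ equals one of the two boundary arguments $\vk+\frac13,\vk+\frac23$ — but those rays crash at $\omega_2(f)$ rather than being smooth, so for a genuinely smooth ray the containment is automatically into the open wedge.
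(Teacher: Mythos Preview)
Your ``if'' direction is correct and essentially matches the paper: once $R_f(3^k\al)\subset\ol\Sigma_f$, the disjointness $K(f^*)\cap\ol\Sigma_f=\emptyset$ and forward invariance of $K(f^*)$ immediately rule out accumulation in $K(f^*)$. The smooth-ray addendum at the end is also handled correctly.

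The ``only if'' direction has a genuine gap. You propose to conclude that $R_f(\al)$ accumulates in $K(f^*)$ by invoking the correspondence $\xi$ of Theorem~\ref{t-extepoly}, saying the ray ``corresponds under $\xi$ to a quadratic-like ray, whose principal set lies in $J(f^*)$.'' But Theorem~\ref{t-extepoly} applies only to external rays that are \emph{already known} to be rays to $J(f^*)$, i.e., that already accumulate in $K(f^*)$; invoking it here to establish accumulation is circular. Your last paragraph anticipates trouble and sketches a fix via the sublevel-set description of $V^*$, but the crucial step is still missing: you never use that $K(f^*)$ is, by definition of a polynomial-like map, exactly the set of points whose forward orbits stay in $U^*$.

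The paper's argument is short and avoids both the circularity and the combinatorics of $\Uf_c(\cf_\vk)$. It observes that $R_f(\al)$ accumulates in $K(f^*)$ if and only if every forward image $R_f(3^k\al)$ accumulates in $U^*$; this in turn is equivalent to every forward image accumulating in $V^*$, because if some image accumulated in $V^*$ but not in $U^*$, then its \emph{next} image would lie entirely in $\ol\Sigma_f$ (the map $f$ sends $V^*\sm U^*$ outside $V^*$, onto the $\Sigma_f$-side). Hence not accumulating in $K(f^*)$ is equivalent to some image being disjoint from $V^*$; and since any ray with argument in $(\vk+\frac23,\vk+\frac13)$ does penetrate $V^*$, this forces $3^k\al\in[\vk+\frac13,\vk+\frac23]$, i.e., $R_f(3^k\al)\subset\ol\Sigma_f$. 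Inserting this $U^*$-versus-$V^*$ bootstrap is what your argument needs to close the gap.
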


\begin{proof}
By definition, a dynamic ray $R_f(\al)$ accumulates inside $V^*$ if
and only if it penetrates $V^*$. On the other hand, $R_f(\al)$
accumulates in $K(f^*)$ if and only if all its images accumulate in
$U^*$. We claim that this is equivalent to all its images
accumulating in $V^*$. Let us show that if all images of
$R_f(\al)$ accumulate in $V^*$, then they all accumulate in $U^*$.
Indeed, otherwise there must exist an image of the ray accumulating
in $V^*$ but not in $U^*$. This would imply that the next image of
the ray would be contained in $\ol{\Sigma}_f$, a contradiction. Thus,
$R_f(\al)$ accumulates in $K(f^*)$ if and only if all its images
accumulate in $V^*$. Hence $R_f(\al)$ does not accumulate in
$K(f^*)$ if and only if there is an integer $k\ge 0$ such that
$R_f(3^k\al)$ is disjoint from $V^*$. Since rays with arguments from
$(\vk+\frac 23, \vk+\frac 13)$ penetrate $V^*$, then $3^k\al\in
[\vk+\frac 13, \vk+\frac 23]$, as desired. The case of a smooth ray
now follows because smooth rays cannot pass through $\omega_2$ and
hence cannot intersect the boundary of $\Sigma_f$.
\end{proof}

We want to describe \emph{external} rays that accumulate in $K(f^*)$.
Recall that external rays $R^e$ have one-sided arguments. If $R^e$ is a
smooth ray with argument $\ta$, then it is associated with both
one-sided external arguments $\ta^+$ and $\ta^-$. If $R^e$ is not
smooth, then $R^e$ is the one-sided limit of smooth rays from exactly
one side, and we associate to $R^e$ the appropriate \emph{one-sided}
external argument, $\theta^+$ or $\theta^-$. In what follows, we write
$R^e_f(\theta^\tau)$, where $\tau=+$ or $\tau=-$. For any set
$A\subset\uc$ and an angle $\alpha\in\R/\Z$, say that $\alpha^+$
(respectively, $\alpha^-$) is a \emph{one-sided argument $($of $\al)$} in
$A$ if $\alpha\in A$, and $\alpha$ is not isolated in $A$ from the positive
side (resp., from the negative side).
Recall that $\ol{(\vk+\frac13)(\vk+\frac23)}$ is denoted by $\cf_\vk$.

\begin{prop}\label{P:no-accum-K_b}
Consider a polynomial $f\in\Rc_\la(\varkappa)$ and its immediate
renormalization $f^*:U^*\to V^*$. Then the set of arguments of
external rays to $K(f^*)$ coincides with the union of
the set of arguments $\al^\pm$ where $\al\in \Uf'_c(\cf_\vk)$ never
maps to an endpoint of $\cf_\vk$ $($these external rays are smooth$)$
and the set of one-sided
arguments $\be^\tau$, where $\be$ eventually maps to an endpoint
of $\cf_\vk$, and $\be^\tau$ is the one-sided argument of $\be$ in
$\Uf'_c(\cf_\vk)$ $($these external rays are not smooth$)$.
\end{prop}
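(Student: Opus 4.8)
The argument runs on Lemma~\ref{l:not-accum} together with the generating construction of Theorem~\ref{t:invagap}; I first treat smooth dynamic rays, where Lemma~\ref{l:not-accum} applies directly, and then pass to one-sided rays by a limiting argument.

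\emph{Reduction, and the smooth rays.} Since $f^*$ is quadratic-like with $0\in K(f^*)$ non-repelling, $f^*$ is hybrid equivalent to some $z^2+c$ with $c$ in the filled main cardioid, so $J(f^*)$ is a Jordan curve bounding a Fatou domain of $f$; hence $K(f^*)\cap J(f)=J(f^*)$, and ``$R^e$ accumulates in $K(f^*)$'' means ``$R^e$ is an external ray to the component $J(f^*)$ of $J(f)$''. The (pre)critical points of $f$ that escape are exactly those eventually mapping to $\om_2$ (as $\om_1$ does not escape), so a dynamic ray $R_f(\al)$ is non-smooth iff some forward image $R_f(3^k\al)$ crashes directly at $\om_2$, i.e.\ iff some $3^k\al$ is an endpoint of $\cf_\vk$. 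A smooth ray $R_f(3^k\al)$ lies in $\Sigma_f$ exactly when $3^k\al\in\uc\sm L(\cf_\vk)$, because $\Sigma_f$ is bounded by $\Gamma_f$ and contains precisely the dynamic rays with arguments in $\uc\sm L(\cf_\vk)$, while no endpoint of $\cf_\vk$ is the argument of a smooth ray. Hence, by the second assertion of Lemma~\ref{l:not-accum}, a smooth ray $R_f(\al)$ accumulates in $K(f^*)$ iff $3^k\al\in L(\cf_\vk)$ for all $k\ge 0$, i.e.\ iff $\al\in X(\cf_\vk)=\Uf'(\cf_\vk)$. Thus the smooth external rays to $K(f^*)$ are the $R_f(\al)$ with $\al\in\Uf'(\cf_\vk)$ whose orbit avoids the endpoints of $\cf_\vk$; by Theorem~\ref{t:invagap} every isolated point of $\Uf'(\cf_\vk)$ is an iterated $\si_3$-preimage of an endpoint of $\cf_\vk$, so its orbit does meet an endpoint, and therefore $\Uf'(\cf_\vk)$ may be replaced by $\Uf'_c(\cf_\vk)$ here. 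Since a smooth ray $R_f(\al)$ carries both arguments $\al^+$ and $\al^-$, this yields exactly the first set of arguments in the statement.

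\emph{The one-sided rays.} Every external ray that is not a smooth dynamic ray is $R^e_f(\be^\tau)$, where the orbit of $\be$ eventually lands on an endpoint of $\cf_\vk$ (so $R_f(\be)$ is non-smooth) and $\tau\in\{+,-\}$, and $R^e_f(\be^\tau)$ is the one-sided limit, from the $\tau$ side, of smooth rays $R_f(\ga)$ with $\ga\to\be$. I claim $R^e_f(\be^\tau)$ accumulates in $K(f^*)$ iff $\be$ is a one-sided limit from the $\tau$ side of points $\ga\in\Uf'(\cf_\vk)$ whose orbits avoid the endpoints of $\cf_\vk$. In one direction, such approximating $R_f(\ga_n)$ accumulate in $K(f^*)$ by the previous paragraph, so their tails eventually enter $V^*$, the component of $\{G_{K(f)}<G_{K(f)}(\om_2)\}$ containing $K(f^*)$, which is disjoint from $\Gamma_f$; passing to the limit, the tail of $R^e_f(\be^\tau)$ lies in $\ol{V^*}$, so $\pr(R^e_f(\be^\tau))\subset\ol{V^*}\cap J(f)$, and since this set is $J(f^*)$ together with boundary pieces of other components of $K(f)$ inside $\ol{V^*}$, while the $R_f(\ga_n)$ force the limit principal set into the single component $J(f^*)$, we conclude that $R^e_f(\be^\tau)$ accumulates in $K(f^*)$. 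Conversely, if no smooth ray near $\be$ on the $\tau$ side accumulates in $K(f^*)$, then by the previous paragraph some forward image of each such $R_f(\ga)$ enters $\Sigma_f$, and a continuity argument with Lemma~\ref{l:not-accum} applied to the one-sided ray pushes a forward image of $R^e_f(\be^\tau)$ into $\ol{\Sigma}_f$, so $R^e_f(\be^\tau)$ does not accumulate in $K(f^*)$. Since the iterated $\si_3$-preimages of endpoints of $\cf_\vk$ that lie in $\Uf'(\cf_\vk)$ form a countable subset, the displayed condition on $\be$ is equivalent to ``$\be^\tau$ is a one-sided argument of $\be$ in $\Uf'_c(\cf_\vk)$'', which is the second set of arguments; together with the smooth case this exhausts all external rays to $K(f^*)$.

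\emph{Expected main obstacle.} The delicate step is the one-sided case: controlling the principal set of a one-sided limit of smooth rays accumulating in $K(f^*)$, so that it lands back in $J(f^*)$ rather than in an iterated preimage component of it, together with the converse pushing-into-$\ol{\Sigma}_f$ statement. This requires $K(f^*)\cap\ol{\Sigma}_f=\emptyset$, the explicit description of $U^*$ and $V^*$ enclosed by equipotentials, and Lemma~\ref{l:6.1} (at a crashing precritical point the two extensions of the crashing dynamic ray have principal sets in distinct components of $J(f)$), together with the combinatorial bookkeeping identifying ``one-sided argument of $\be$ in $\Uf'_c(\cf_\vk)$'' with approachability of $\be$ by points of $\Uf'(\cf_\vk)$ whose orbits avoid the endpoints of $\cf_\vk$.
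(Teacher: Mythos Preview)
Your treatment of smooth rays is correct and matches the paper's proof: apply Lemma~\ref{l:not-accum} and identify the condition with membership in $X(\cf_\vk)=\Uf'(\cf_\vk)$, then pass to $\Uf'_c(\cf_\vk)$ since isolated points of $\Uf'(\cf_\vk)$ are preimages of the endpoints of $\cf_\vk$. (One inessential slip: the claim that $J(f^*)$ is always a Jordan curve fails when $|\la|=1$---e.g., for Cremer or Siegel multipliers---but you do not actually use this, only that $K(f^*)$ is a component of $K(f)$, which is Lemma~\ref{l:pericom}.)

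The non-smooth case, however, has genuine gaps. Your limiting argument is underspecified in both directions. For the forward direction, even granting that the tails of the approximating smooth rays $R_f(\ga_n)$ lie in $V^*$, it does not follow that $\pr(R^e_f(\be^\tau))\subset J(f^*)$: the one-sided ray is a concatenation of pieces of dynamic and ideal rays, and you must rule out that its principal set lies in some \emph{other} component of $K(f)$ sitting in $\ol{V^*}$; your assertion that ``the $R_f(\ga_n)$ force the limit principal set into the single component $J(f^*)$'' is exactly the point needing proof. For the converse, the phrase ``a continuity argument with Lemma~\ref{l:not-accum}'' hides a real difficulty: the integer $k$ for which $R_f(3^k\ga)\subset\Sigma_f$ may depend on $\ga$ and could tend to infinity as $\ga\to\be$, so no uniform iterate pushes the one-sided ray into $\ol{\Sigma}_f$. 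Lemma~\ref{l:not-accum} is stated for dynamic rays, not for one-sided external rays, and you never actually invoke Lemma~\ref{l:6.1} despite listing it as a needed tool.

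The paper avoids these issues by a direct reduction rather than a limit. Since $R^e_f(\be^\tau)$ is non-smooth, some least $k\ge 0$ has $3^k\be$ equal to an endpoint of $\cf_\vk$, and $f^{\circ k}(R^e_f(\be^\tau))=R^e_f((3^k\be)^\tau)$; moreover $R^e_f(\be^\tau)$ accumulates in $K(f^*)$ iff this iterate does, provided $3^m\be$ lies in the interior of $L(\cf_\vk)$ for $m<k$. This reduces the whole question to the four one-sided rays $R^e_f((\vk+\tfrac13)^\pm)$, $R^e_f((\vk+\tfrac23)^\pm)$, for which a short case analysis (regular critical, periodic, or caterpillar type of $\Uf(\cf_\vk)$) decides which of them accumulate in $K(f^*)$ and matches exactly the one-sided arguments in $\Uf'_c(\cf_\vk)$. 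This finite reduction bypasses the limiting difficulties entirely and is both shorter and rigorous.
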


In particular, the (one-sided) argument of an external ray accumulating in
$K(f^*)$ always belongs to $\Uf'_c(\cf_\vk)$.

\begin{proof}
Smooth rays are exactly the rays with arguments that never map to the
endpoints of $\cf_\vk$. Moreover, smooth rays are both external and
dynamic. Thus, by Lemma~\ref{l:not-accum}, a smooth ray $R^e_f(\al)$
accumulates in $K(f^*)$ if and only if, for any $k\ge 0$, the ray
$R_f(3^k\al)$ is disjoint from $\ol{\Sigma}_f$. This is equivalent to the
fact that $3^k\al$ belongs to the interior of $L(\cf_\vk)$ for any
integer $k\ge 0$, which, in turn, is equivalent to the fact that
$\al\in \Uf'_c(\cf_\vk)$ and never maps to the endpoints of $\cf_\vk$.

Let $R^e=R^e(\al^\tau)$ be a non-smooth external ray. Since $R^e$
is non-smooth if and only if $\si_3^{\circ k}(\ol\al)$ is an endpoint of $\cf_\vk$
for the least integer $k\ge 0$, then, if all points $\ol{3^m\al}$ are in the interior of
$L(\vk)$ for $m<k$, and $R^e((3^k\al)^\tau)=f^{\circ k}(R^e)$ accumulates in $K(f^*)$, the
orbit of the principal set of $R^e$ is disjoint from $\ol{\Sigma}_f$, and
$R^e$ accumulates in $K(f^*)$. So, it suffices to consider rays with
one-sided arguments $\vk+\frac13^\pm$, $\vk+\frac23^\pm$ and to choose
those of them, that accumulate in $K(f^*)$. A simple analysis shows
that $R^e(\al^\tau)$ (where $\ol\al$ is an endpoint of $\cf_\vk$)
accumulates in $K(f^*)$ if and only if $\al^\tau$ is a one-sided
argument in $\Uf'_c(\cf_\vk)$. This completes the proof. Observe that
in the regular critical case both endpoints of $\cf_\vk$ have one-sided
argument in $\Uf'_c(\cf_\vk)$, in the periodic case neither endpoint of
$\cf_\vk$ has one-sided arguments in $\Uf'_c(\cf_\vk)$, and, in the
caterpillar case only the periodic endpoint of $\cf_\vk$ has one-sided
argument in $\Uf'_c(\cf_\vk)$.
\end{proof}

We defined the \emph{correspondence} between external rays and
quadratic-like rays in Definition~\ref{d:corres}.

\begin{prop}
\label{p:land-endholes}
For any hole $(\ol\al,\ol\be)$ of $\Uf_c(\cf_\vk)$, the external rays
$R^e_f(\al^-)$ and $R^e_f(\be^+)$ correspond to the same quadratic-like ray to
$K(f^*)$. More precisely, there are two cases:

\begin{enumerate}
\item the gap $\Uf_c(\cf_\vk)$ is of regular critical type, the external rays
$R^e_f(\al^-)$ and $R^e_f(\be^+)$ meet at an eventual preimage $x$ of $\omega_2(f)$, and
then continue along a joint arc from $x$ to $K(f^*)$;

\item the gap $\Uf_c(\cf_\vk)$ is of periodic type with major hole $(\ol\al_*,\ol\be_*)$,
the external rays $R^e_f(\al^-)$ and $R^e_f(\be^+)$ both land at an
eventual preimage of the common periodic landing point for the rays
$R^e_f(\al^{-}_*)$ and $R_f^e(\be^{+}_*)$.
\end{enumerate}
\end{prop}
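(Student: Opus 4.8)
The plan is to deduce the statement from results already established: Proposition~\ref{P:no-accum-K_b}, which identifies the external rays accumulating in $K(f^*)$ as exactly those with a one-sided argument in $\Uf'_c(\cf_\vk)$; Theorem~\ref{t-extepoly}, giving the correspondence $\xi$ from these onto the quadratic-like rays to $K(f^*)$, with finite and explicitly described fibers; Lemma~\ref{l:6.1} on intersections of external rays; and the combinatorics of \cite{BOPT}, namely that $\Uf'_c(\cf_\vk)$ is a Cantor set and that every hole of $\Uf_c(\cf_\vk)$ is carried onto the major hole of $\Uf_c(\cf_\vk)$ by $\si_3^{\circ k}$ for some $k\ge 0$ depending on the hole. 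Note that for $f\in\Rc_\la(\vk)$ the gap $\Uf(\cf_\vk)$ is never of caterpillar type: a periodic endpoint of $\cf_\vk$ would, since the corresponding dynamic ray crashes at $\omega_2(f)$, force $\omega_2(f)$ to be periodic, contradicting that it escapes; hence $\Uf_c(\cf_\vk)=\Uf(\cf_\vk)$ and we are in one of the two stated cases. I would first prove the ``same quadratic-like ray'' assertion by a monotonicity argument, and then read off the two geometric cases by reducing dynamically to the major hole.

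For the first part, assign to each one-sided argument $\theta^\tau$ in $\Uf'_c(\cf_\vk)$ the argument $\rho(\theta^\tau)$ of the quadratic-like ray $\xi(R^e_f(\theta^\tau))$; this is well defined since a quadratic-like ray is determined by its argument. By Proposition~\ref{P:no-accum-K_b} the domain of $\rho$ is the set of one-sided arguments of the Cantor set $\Uf'_c(\cf_\vk)$; since $\xi$ is surjective and quadratic-like rays exist for all arguments, $\rho$ is onto $\uc$; since external rays do not cross and are circularly ordered near $\infty$ by their arguments, while $\xi$ preserves homotopy type in $\C\sm K(f^*)$ and hence the circular order of principal sets around $K(f^*)$, the map $\rho$ is weakly monotone; and $\rho$ is finite-to-one by Theorem~\ref{t-extepoly}. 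A weakly monotone, finite-to-one surjection of the one-sided arguments of a Cantor set onto $\uc$ can only identify the pair $\{\al^-,\be^+\}$ coming from a hole $(\ol\al,\ol\be)$, since any larger fiber would force an open arc to meet the Cantor set in a finite nonempty set. Therefore $\rho(\al^-)=\rho(\be^+)$, so $R^e_f(\al^-)$ and $R^e_f(\be^+)$ correspond to the same quadratic-like ray $l$, and $\xi^{-1}(l)=\{R^e_f(\al^-),R^e_f(\be^+)\}$; by Theorem~\ref{t-extepoly} we are in case (i) or case (ii), and it remains to decide which and to describe the geometry.

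Let $k\ge 0$ be least such that $\si_3^{\circ k}$ maps the hole $(\ol\al,\ol\be)$ onto the major hole of $\Uf_c(\cf_\vk)$. Since $f(R^e_f(\theta^\tau))=R^e_f((\si_3\theta)^\tau)$ and the forward orbits of $\al,\be$ stay in $\Uf'_c(\cf_\vk)$ and hence miss $\omega_1(f)$ and $\omega_2(f)$, one may choose the branch of $f^{-k}$ following $R^e_f(\al^-)$ and $R^e_f(\be^+)$; it is univalent near the relevant points of $J(f^*)$, and its image lands back in $K(f^*)$ rather than in another component of $K(f)$ because, by Proposition~\ref{P:no-accum-K_b}, those two rays accumulate in $K(f^*)$. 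If $\Uf_c(\cf_\vk)$ is of regular critical type, its major hole is $(\ol{\vk+\frac13},\ol{\vk+\frac23})$ with critical edge $\cf_\vk$; the rays $R^e_f((\vk+\frac13)^-)$ and $R^e_f((\vk+\frac23)^+)$ have as initial segments the dynamic rays $R_f(\vk+\frac13)$, $R_f(\vk+\frac23)$, which meet only at $\omega_2(f)$, and both accumulate in the same component $J(f^*)$ of $J(f)$; so in Lemma~\ref{l:6.1} cases (1)--(3) are impossible and case (4) holds, i.e.\ the two rays share a smooth arc from $\omega_2(f)$ to $J(f^*)$. Pulling back by the chosen branch, $R^e_f(\al^-)$ and $R^e_f(\be^+)$ meet at an eventual preimage $x$ of $\omega_2(f)$ and then continue along a joint arc to $K(f^*)$, so we are in case (i) and conclusion (1) holds. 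If instead $\Uf_c(\cf_\vk)$ is of periodic type with periodic major hole $(\ol\al_*,\ol\be_*)$, then $\al_*,\be_*\in\Uf'_c(\cf_\vk)$ while $\vk+\frac13,\vk+\frac23\notin\Uf'_c(\cf_\vk)$, so $R^e_f(\al_*^-)=R_f(\al_*)$ and $R^e_f(\be_*^+)=R_f(\be_*)$ are smooth periodic rays, hence land at periodic points of $J(f^*)$, which coincide at a common periodic point $z_*$ by the part already proved. For a general hole the same avoidance argument makes $R^e_f(\al^-)$, $R^e_f(\be^+)$ distinct smooth, hence disjoint, rays with $f^{\circ k}$-images $R_f(\al_*)$, $R_f(\be_*)$; pulling back, they land at a common point of $f^{-k}(z_*)$. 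Thus we are in case (ii) and conclusion (2) holds.

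The step I expect to be the main obstacle is the monotonicity argument: making rigorous that $\xi$ preserves the circular order of principal sets around $K(f^*)$ when non-smooth external rays --- which may run along shared sub-arcs --- are present, and concluding from this and Theorem~\ref{t-extepoly} that the fibers of $\rho$ are exactly the hole-pairs. A secondary point is the pull-back step, where one must check that the selected branch of $f^{-k}$ is univalent along the entire shared arc (respectively, near the common landing point); this uses that the backward orbits of $\omega_2(f)$ and of periodic points of $J(f^*)$ avoid the critical points of $f$.
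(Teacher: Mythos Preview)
Your argument contains a genuine error: the claim that the caterpillar case cannot occur is wrong. You write that a periodic endpoint of $\cf_\vk$ ``would, since the corresponding dynamic ray crashes at $\omega_2(f)$, force $\omega_2(f)$ to be periodic, contradicting that it escapes.'' This is not so. If $\theta=\vk+\frac13$ has $\si_3$-period $k$, then under $f^{\circ k}$ the crashing ray $R_f(\theta)$ maps into (not onto) itself: in B\"ottcher coordinate the radius at $\omega_2(f)$ is some $r_0>1$, and $f^{\circ k}(\omega_2(f))$ sits on $R_f(\theta)$ at radius $r_0^{3^k}>r_0$. So $\omega_2(f)$ is not periodic; it escapes along $R_f(\theta)$. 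The caterpillar case is perfectly compatible with $f\in\Rc_\la(\vk)$, and the paper treats it explicitly as a sub-case of~(2): when $\Uf(\cf_\vk)$ is of caterpillar type, one endpoint of the periodic major of $\Uf_c(\cf_\vk)$ equals $\vk+\frac13$, so $R^e_f(\al_*^-)$ is non-smooth while $R^e_f(\be_*^+)$ is smooth, and one must argue that they nevertheless land at the same periodic point. Your periodic-type analysis assumes both major rays are smooth and therefore breaks down precisely here.

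For the first assertion (same quadratic-like ray), your monotonicity argument is in principle workable but heavier than necessary, and the step you yourself flag---that $\xi$ preserves circular order in the presence of shared sub-arcs---is the delicate point. The paper bypasses this entirely with a two-line contradiction: if $R^e_f(\al^-)$ and $R^e_f(\be^+)$ corresponded to distinct quadratic-like rays $T_\al$, $T_\be$, then each component of $U^*\sm(T_\al\cup T_\be\cup K(f^*))$ would contain quadratic-like rays and hence (by Theorem~\ref{t-extepoly}) external rays to $K(f^*)$; but one of those components can only be reached by external rays with arguments in $(\al,\be)$, contradicting Proposition~\ref{P:no-accum-K_b}. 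This avoids any global order-preservation statement. Your reduction of the geometric description to the major hole via pullbacks is the same as the paper's; you only need to supplement it with the caterpillar sub-case.
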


\begin{proof}
Suppose that the external rays $R^e_f(\al^-)$ and $R^e_f(\be^+)$ do not
correspond to the same quadratic-like ray to $K(f^*)$. By
Theorem~\ref{t-extepoly}, these rays correspond to distinct
quadratic-like rays, say, $T_\al\ne T_\be$, to $K(f^*)$. Then there are
quadratic-like rays $T$ in either of the two components $U^*_1$,
$U^*_2$ of $U^*\sm(T_\al\cup T_\be\cup K(f^*))$. Clearly, both
components $U^*_1$ and $U^*_2$ contain segments of quadratic-like rays
to $K(f^*)$. Therefore, by Theorem~\ref{t-extepoly}, both components
contain segments of external rays to $K(f^*)$. However, by Proposition
\ref{P:no-accum-K_b}, there are no external rays to $K(f^*)$ with
arguments in $(\al,\be)$, a contradiction. Hence, $R^e_f(\al^-)$ and
$R^e_f(\be^+)$ correspond to the same quadratic-like ray $T$ to
$K(f^*)$.

Suppose that $\Uf_c(\cf_\vk)$ has major $\Mf=\ol{\al_*\be_*}$.
First assume that it is of regular critical type.
Then the external rays $R^e_f(\al_*^-)$, $R^e_f(\be_*^+)$ meet at
$\omega_2(f)$ and then, as follows from \cite[Lemma 6.1]{bclos},
extend together towards $K(f^*)$.
If $\ol{\al\be}$ is an edge of $\Uf_c(\cf_\vk)$, then $\ol{\al\be}$ is an
iterated $\si_3$-pullback of $\Mf$, so that $\al$ and $\be$ are
appropriate preimages of $\al_*$ and $\be_*$,
and the 
external rays $R^e_f(\al^-)$, $R^e_f(\be^+)$ 
meet at a preimage $x$
of $\omega_2(f)$ and then extend together towards $K(f^*)$.
In fact, their union is a pullback of the union of rays $R^e_f(\al_*^-)$,
$R^e_f(\be_*^+)$.
This covers case (1) of the proposition and corresponds to case (i)
of Theorem~\ref{t-extepoly}.
Assume now that $\Uf_c(\cf_\vk)$ is of periodic type.
Then, by \cite[Lemma 6.1]{bclos}, the rays $R^e_f(\al_*^-)$,
$R^e_b(\be_*^+)$ cannot intersect, which implies that they have to land at
the same periodic point of $K(f^*)$. Since all edges of $\Uf_c(\cf_\vk)$ are
preimages of $\ol{\al_*\be_*}$, claim (2) follows.

There are two distinct cases within claim (2). If $\Uf(\vk)=\Uf_c(\vk)$
is of periodic type, the rays $R^e_f(\al_*)$ and $R^e_f(\be_*)$ are
smooth, disjoint, and land at the same periodic point. The situation is
a little more complicated if $\Uf(\vk)$ is of caterpillar type. In that
case, we may assume that $\al_*=\vk+\frac13$.
Then the dynamic ray 
$R_f(\al_*)$ crashes (together with the dynamic ray
$R_f(\al_*+\frac13)$) at the escaping critical point $\omega_2(f)$.
However, in this case, the external non-smooth ray $R^e_f(\al_*^-)$ still
lands at the same periodic point as the smooth external ray $R^e_f(\be_*^+)$, and both rays
correspond to the same quadratic-like ray to $K(f^*)$.
\end{proof}

Let $f\in\Fc_\la$ with $|\la|\le 1$ be immediately renormalizable.
We say that a chord $\ol{\al\be}$ \emph{generates a cut} if there are signs $\rho$ and $\tau$, each of which equals $+$ or $-$, such that the external rays $R^e(\al^\rho)$ and
$R^e(\be^\tau)$ have intersecting principal sets in $K(f^*)$.
The \emph{cut $($generated by the chord $\ol{\al\be})$} is then defined as the smallest connected closed subset of $\ol{R^e_f(\al^\rho)\cup R^e_f(\be^\tau)}$ containing
the union of \emph{dynamic} rays $R_f(\al)\cup R_f(\be)$.
We claim that if $\ol{\al\be}$ generates a cut, then this cut is unique.
In other words, if the cut $\Gamma$ generated by $\ol{\al\be}$ exists for some specific choice of the signs $\rho$ and $\tau$, then, for every other choice of the signs, the corresponding cut either does not exist or coincides with $\Gamma$.
This is a straightforward consequence of the following observation: if
$R^e(\al^+)\ne R^e(\al^-)$, then at most one of these two external rays has the principal set in $K(f^*)$; similarly for $R^e(\be^+)$ and $R^e(\be^-)$.

Denote the cut generated by $\ol{\al\be}$ by $\Ga(\ol{\al\be})$.
If the (one-sided) rays defining a cut $\Ga$ land at the same point $z$, then $z$ is called the \emph{vertex $($of $\Ga)$}.

Suppose now that $f$ lies in the parameter ray $\Rc_\la(\vk)$.
By Proposition~\ref{p:land-endholes}, there are cuts generated
by edges of $\Uf_c(\cf_\vk)$. In the regular critical case,
$\Ga(\ol{\al\be})$ is formed by dynamic rays $R_f(\al)$ and $R_f(\be)$
crashing into the same (pre-)critical point. Clearly, the cut $\Ga(\ol{\al\be})$ is disjoint from $K(f^*)$.
This corresponds to case (1) of Proposition~\ref{p:land-endholes}. In
the periodic and caterpillar cases the external rays $R^e_f(\al^-)$,
$R^e_f(\be^+)$ are disjoint but land at the same (pre)periodic point.
This corresponds to case (2) of Proposition~\ref{p:land-endholes}.
Observe that, in the caterpillar case, exactly one of the rays forming a
cut is non-smooth. 
Clearly,
$\Ga(\ol{\al\be})$ separates all dynamic rays, whose arguments belong
to $(\al,\be)$, from all dynamic rays, whose arguments belong to
$(\be,\al)$.

By Proposition~\ref{P:no-accum-K_b}, the (one-sided) argument of an
external ray accumulating in $K(f^*)$ belongs to $\Uf'_c(\cf_\vk)$. No
chord connecting two points of $\Uf'_c(\cf_\vk)$ can cross an edge of
$\Uf_c(\cf_\vk)$ (say that two chords of $\ol\disk$ \emph{cross} if
they intersect in $\disk$ and do not coincide). This yields Lemma~\ref{l:cuts-cross}.

\begin{lem}
  \label{l:cuts-cross}
  Suppose that a polynomial $f$ lies in $\Rc_\la(\vk)$ with $|\la|\le 1$.
  Each edge of $\Uf_c(\cf_\vk)$ generates a cut in the dynamical plane of $f$ consisting of two rays that accumulate either on a $($pre$)$critical point $($in the regular critical case$)$ or on a $($pre$)$periodic point $($in the periodic and caterpillar cases$)$.
  In either case, the corresponding external rays correspond to the same quadratic-like ray.
  If a chord of $\disk$ generates a cut and crosses an edge of $\Uf_c(\cf_\vk)$ in $\disk$, then it coincides with this edge.
\end{lem}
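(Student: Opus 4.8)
The plan is to obtain all three assertions by assembling the material that precedes the statement: Proposition~\ref{p:land-endholes}, Proposition~\ref{P:no-accum-K_b}, and the convexity of $\Uf_c(\cf_\vk)$.

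First I would handle the existence of cuts generated by edges. Let $\ol{\al\be}$ be an edge of $\Uf_c(\cf_\vk)$, so that $(\ol\al,\ol\be)$ is a hole of $\Uf_c(\cf_\vk)$. By Proposition~\ref{p:land-endholes}, the external rays $R^e_f(\al^-)$ and $R^e_f(\be^+)$ correspond to one and the same quadratic-like ray to $K(f^*)$, and either (regular critical case) they meet at an eventual preimage $x$ of $\omega_2(f)$ and then continue along a common arc into $K(f^*)$, or (periodic and caterpillar cases) they are disjoint but land at a common (pre)periodic point of $K(f^*)$. In every case the principal sets of $R^e_f(\al^-)$ and $R^e_f(\be^+)$ coincide and are contained in $K(f^*)$, so by definition the chord $\ol{\al\be}$ generates a cut $\Ga(\ol{\al\be})$, namely the smallest closed connected subset of $\ol{R^e_f(\al^-)\cup R^e_f(\be^+)}$ containing the dynamic rays $R_f(\al)\cup R_f(\be)$. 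The explicit description recalled just before the lemma then shows that $\Ga(\ol{\al\be})$ consists of two rays accumulating on a (pre)critical point in the regular critical case, and on a (pre)periodic point in the periodic and caterpillar cases, while the statement that the corresponding external rays correspond to the same quadratic-like ray is precisely the content of Proposition~\ref{p:land-endholes}. This gives the first two assertions.

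Second, for the final assertion — which is essentially the observation recorded in the paragraph preceding the lemma — suppose that a chord $\ell=\ol{\al\be}$ of $\disk$ generates a cut. By definition there are signs $\rho$ and $\tau$ for which $R^e_f(\al^\rho)$ and $R^e_f(\be^\tau)$ have intersecting principal sets in $K(f^*)$; in particular each of these two external rays accumulates in $K(f^*)$, so by Proposition~\ref{P:no-accum-K_b} its (one-sided) argument lies in $\Uf'_c(\cf_\vk)$, whence $\ol\al,\ol\be\in\Uf'_c(\cf_\vk)$. Since $\Uf_c(\cf_\vk)$ is convex with $\Uf_c(\cf_\vk)\cap\uc=\Uf'_c(\cf_\vk)$, the chord $\ell$ joining two of its boundary circle points is contained in $\Uf_c(\cf_\vk)$. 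If $e$ is any edge of $\Uf_c(\cf_\vk)$, then $\Uf_c(\cf_\vk)$ lies in one of the closed half-planes bounded by the line through $e$, hence so does $\ell$; a straight segment lying in a closed half-plane cannot meet the bounding line at an interior point of the segment, so $\ell$ cannot cross $e$ in $\disk$ unless $\ell$ is contained in that line, in which case the endpoints of $\ell$ are the two points where the line meets $\uc$, i.e., $\ell=e$.

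I do not anticipate a real obstacle; the only point needing a little care is the verification that an edge $\ol{\al\be}$ of $\Uf_c(\cf_\vk)$ genuinely generates a cut in the precise sense of the definition in the regular critical case, i.e., that the principal sets of $R^e_f(\al^-)$ and $R^e_f(\be^+)$ really do lie in $K(f^*)$ and do not merely end at the precritical meeting point $x$. This is exactly what clause~(1) of Proposition~\ref{p:land-endholes} provides: after meeting at $x$ the two external rays run together all the way to $K(f^*)$, so their $\om$-limit sets are equal and contained in $K(f^*)$. Everything else is either a direct appeal to Proposition~\ref{p:land-endholes} or Proposition~\ref{P:no-accum-K_b}, or the elementary convexity argument above.
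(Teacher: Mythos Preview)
Your proposal is correct and follows essentially the same approach as the paper: the paper does not give a separate proof of the lemma but simply observes, in the paragraph immediately preceding it, that the first two assertions come from Proposition~\ref{p:land-endholes} and the discussion of cuts, while the last assertion follows because Proposition~\ref{P:no-accum-K_b} forces both endpoints of any cut-generating chord into $\Uf'_c(\cf_\vk)$, and no chord with endpoints in $\Uf'_c(\cf_\vk)$ can cross an edge of its convex hull. Your convexity/half-plane argument is just an explicit unpacking of this last observation.
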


The following is a partial converse of Proposition
\ref{p:land-endholes}.

\begin{prop}
  \label{p:same-extray}
  Suppose that external rays $R^e_f(\al^\epsilon)$ and $R^e_f(\be^\delta)$ correspond
  to the same quadratic-like ray to $K(f^*)$ $($here $\epsilon,\delta\in \{-, +\})$.
  Then $\ol{\al\be}$ is an edge of $\Uf_c(\cf_\vk)$.
\end{prop}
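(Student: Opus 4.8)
The plan is to argue by contradiction, exploiting the structure of $\Uf_c(\cf_\vk)$ and the correspondence results already established. Suppose $R^e_f(\al^\epsilon)$ and $R^e_f(\be^\delta)$ correspond to the same quadratic-like ray $T$ to $K(f^*)$, but $\ol{\al\be}$ is \emph{not} an edge of $\Uf_c(\cf_\vk)$. By Proposition~\ref{P:no-accum-K_b}, since both external rays accumulate in $K(f^*)$, both $\al^\epsilon$ and $\be^\delta$ are one-sided arguments in $\Uf'_c(\cf_\vk)$; in particular $\ol\al, \ol\be \in \Uf'_c(\cf_\vk)$. First I would dispose of the degenerate possibility $\ol\al = \ol\be$: if $\al = \be$, then $R^e_f(\al^\epsilon)$ and $R^e_f(\al^\delta)$ correspond to $T$; but by the observation used for uniqueness of cuts, if $\al^+ \ne \al^-$ then at most one of $R^e(\al^+)$, $R^e(\al^-)$ has its principal set in $K(f^*)$, forcing $R^e_f(\al^\epsilon) = R^e_f(\al^\delta)$ to be a single smooth ray — and a single ray is the unique quadratic-like ray it corresponds to (Theorem~\ref{t-extepoly}), which is vacuously consistent, not a contradiction; so in that case $\ol{\al\be}$ degenerates to a point of $\uc$ and the statement should be read as already satisfied (or this case is excluded by hypothesis). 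Thus I may assume $\ol\al \ne \ol\be$.

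Next I would consider the chord $\ol{\al\be}$ itself. Since $\ol\al \ne \ol\be$ and neither is the other, the chord $\ol{\al\be}$ is a genuine chord of $\disk$; because $R^e_f(\al^\epsilon)$ and $R^e_f(\be^\delta)$ correspond to the same quadratic-like ray, their principal sets coincide in $K(f^*)$, so by definition $\ol{\al\be}$ \emph{generates a cut} $\Ga(\ol{\al\be})$ (with the sign choices $\rho = \epsilon$, $\tau = \delta$). Now invoke Lemma~\ref{l:cuts-cross}: a chord that generates a cut and crosses an edge of $\Uf_c(\cf_\vk)$ in $\disk$ must coincide with that edge. So $\ol{\al\be}$ crosses no edge of $\Uf_c(\cf_\vk)$ unless it is that edge. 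Since $\ol\al, \ol\be$ are both in $\Uf'_c(\cf_\vk)$ and $\ol{\al\be}$ is not an edge and crosses no edge, the chord $\ol{\al\be}$ lies entirely inside the gap $\Uf_c(\cf_\vk)$ — more precisely, $\ol\al$ and $\ol\be$ lie on the same side of, or on, the boundary of $\Uf_c(\cf_\vk)$, and the open chord is in the interior of $\thu(\Uf_c(\cf_\vk))$ minus finitely many edges; in particular $\ol\al$ and $\ol\be$ are \emph{not} endpoints of a common hole of $\Uf_c(\cf_\vk)$.

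The contradiction now comes from separating dynamic rays. The cut $\Ga(\ol{\al\be})$ separates the dynamic rays with arguments in $(\al,\be)$ from those with arguments in $(\be,\al)$. Since $\ol\al$, $\ol\be$ are not endpoints of a common hole of $\Uf_c(\cf_\vk)$, \emph{both} open arcs $(\al,\be)$ and $(\be,\al)$ meet $\Uf'_c(\cf_\vk)$ in more than just possible isolated points — in fact each of the two arcs contains points of $\Uf'_c(\cf_\vk)$ whose orbit stays in the interior of $L(\cf_\vk)$ and never hits an endpoint of $\cf_\vk$, hence each arc contains a smooth external ray accumulating in $K(f^*)$ by Proposition~\ref{P:no-accum-K_b}. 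But these two smooth external rays on opposite sides of $\Ga(\ol{\al\be})$ both accumulate in the connected set $K(f^*)$, which lies on one definite side of the cut (the cut is disjoint from $K(f^*)$, as noted after Proposition~\ref{p:land-endholes}). This is impossible: $K(f^*)$ cannot lie simultaneously on both sides of the separating cut. Therefore $\ol{\al\be}$ must be an edge of $\Uf_c(\cf_\vk)$.

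The main obstacle I anticipate is the careful bookkeeping of one-sided arguments and the caterpillar case: when $\Uf(\cf_\vk)$ is of caterpillar type the set $\Uf'_c(\cf_\vk)$ differs from $\Uf'(\cf_\vk)$ by isolated preimages of the non-periodic endpoint of $\cf_\vk$, and one of the rays forming a cut is non-smooth, so the "both arcs contain a smooth accumulating ray" step needs to be phrased in terms of $\Uf_c$ (the clean gap, which is of periodic type) rather than $\Uf$, and one must check that the non-edge chord $\ol{\al\be}$ genuinely separates two holes of $\Uf_c(\cf_\vk)$ each of which borders infinitely many points of $\Uf'_c(\cf_\vk)$ on either side. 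Modulo that case analysis, every ingredient — Lemma~\ref{l:cuts-cross}, Proposition~\ref{P:no-accum-K_b}, and the fact that the cut misses $K(f^*)$ and separates the plane — is already in hand.
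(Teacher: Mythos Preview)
Your argument has a genuine gap at the closing step. You assert that ``the cut is disjoint from $K(f^*)$, as noted after Proposition~\ref{p:land-endholes}'', but that remark in the paper applies only to cuts generated by \emph{edges} of $\Uf_c(\cf_\vk)$, and only in the regular critical case; in the periodic and caterpillar cases the cut lands at a (pre)periodic point of $J(f^*)$ and hence meets $K(f^*)$. More to the point, you are assuming $\ol{\al\be}$ is \emph{not} an edge, so that passage does not apply at all --- and in any event the common principal set $P=\pr(T)$ of your two external rays lies in $J(f^*)\subset K(f^*)$ by hypothesis and is contained in the cut $\Ga(\ol{\al\be})$. So the cut is not disjoint from $K(f^*)$.

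Once the cut is allowed to touch $K(f^*)$, your contradiction evaporates. A smooth external ray $R$ on the ``wrong'' side of $\Ga(\ol{\al\be})$ cannot cross the external rays forming the cut (Lemma~\ref{l:6.1}), so it stays in that wedge; but nothing prevents it from accumulating precisely in $P\subset\Ga\cap K(f^*)$. Thus exhibiting one accumulating ray on each side does not force $K(f^*)$ to lie on both sides. To close the argument along your lines you would need a further finiteness step: infinitely many such rays on the wrong side all forced to accumulate in $P$, hence (via Theorem~\ref{t-extepoly} and the finiteness of $\xi^{-1}$ of any quadratic-like ray) infinitely many quadratic-like rays with principal set in $P$, which is impossible.

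That finiteness step is exactly what the paper's proof does, but more directly. The paper passes to the Riemann map $\psi:\C\sm K(f^*)\to\C\sm\cdisk$; the two external rays go to curves landing at the same point $z\in\uc$, and one of the two wedges they cut out contains no other point of $\uc$. Since $\ol{\al\be}$ is not an edge, one of the arcs $(\al,\be)$, $(\be,\al)$ carries infinitely many arguments of external rays to $K(f^*)$; their $\psi$-images are trapped in that wedge and must all land at $z$, forcing infinitely many distinct quadratic-like rays through $z$ --- a contradiction. This bypasses entirely the question of whether the cut meets $K(f^*)$.
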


\begin{proof}
Suppose not. Let $\psi:\C\sm K(f^*)\to \C\sm \cdisk$ be the Riemann map with $\psi(z)\sim kz$ as $z\to \infty$, for some $k>0$.
Then, by definition, the rays $\psi(R^e_f(\al^\epsilon))$ and
$\psi(R^e_f(\be^\delta))$ land at the same point $z\in \uc$.
We may assume that the open wedge in the positive direction from $\psi(R^e_f(\al^\epsilon))$ to $\psi(R^e_f(\be^\delta))$ contains no points of $\uc$; otherwise we simply interchange $\al^\epsilon$ and $\be^\delta$.
Since $\ol{\al\be}$ is not an edge of $\Uf_c(\cf_\vk)$, there are infinitely many external rays to $K(f^*)$ with arguments in $(\al,\be)$.
It follows that, for any such ray $R^e$, the ray $\psi(R^e)$ can only land at $z$.
By Theorem~\ref{t-extepoly}, this implies that there are infinitely many quadratic-like rays $R^*$ such that $\psi(R^*)$ lands at $z$.
Since in fact there exist only finitely many such quadratic-like rays, we obtain a contradiction.
\end{proof}

\subsection{Landing properties}
In this subsection, we fix a hole $(\ol\theta_1,\ol\theta_2)$ of $\Qf$,
consider a polynomial $f\in \Rc_\la(\vk)\subset \Fc_\la\sm\Cc_\la$
with $|\la|\le 1$, and study the mutual location
of the point $\ol\vk$ and the hole
$(\ol\theta_1,\ol\theta_2)$, under which the \emph{dynamic} rays
$R_f(\theta_1+\frac 13)$ and $R_f(\theta_2+\frac 23)$
can have a common landing \emph{point} in $K(f^*)$.
Note that, by Theorem \ref{T:ParDynHoles}, the hole $(\ol\theta_1,\ol\theta_2)$
defines an invariant quadratic gap $\Uf$ with periodic major
$\Mf=\ol{(\theta_1+\frac 13)(\theta_2+\frac 23)}$. We refer the reader to Subsection~\ref{ss:overview}
for the notation and the main notions and concepts we deal with in
this subsection.

\begin{lem}\label{l:land-in-k}
Suppose that the dynamic rays $R_f(\theta_1+\frac 13)$ and
$R_f(\theta_2+\frac 23)$ are contained in external rays $R^e_1$,
$R^e_2$, respectively, with the same landing point $z$. Then $z\in
K(f^*)$. Moreover, if neither $(\ta_1, \ta_2)=(\frac23, \frac56)$ nor
$(\ta_1, \ta_2)=(\frac16, \frac13)$, then the following are equivalent:

\begin{enumerate}

\item $\vk\notin [\theta_1, \theta_2]$;

\item the plane cut $R^e_1\cup \{z\}\cup R^e_2$ separates $K(f^*)$.

\end{enumerate}

\end{lem}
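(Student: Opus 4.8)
The plan is to separate the statement into its two parts. First I would establish that $z\in K(f^*)$: since the dynamic rays $R_f(\theta_1+\frac13)$ and $R_f(\theta_2+\frac23)$ have a common landing point, each is contained in an external ray $R^e_i$ which is either smooth (if its argument never maps to an endpoint of $\cf_\vk$) or one-sided. By Proposition~\ref{P:no-accum-K_b}, the only way an external ray with argument $\theta_1+\frac13$ or $\theta_2+\frac23$ can accumulate somewhere \emph{other} than $K(f^*)$ is by being a ray that penetrates $\Sigma_f$, i.e.\ with argument eventually in the interior of $L(\cf_\vk)$. So I would argue that $\theta_1+\frac13$ and $\theta_2+\frac23$ are forced into $\Uf'_c(\cf_\vk)$: indeed, the chord $\ol{(\theta_1+\frac13)(\theta_2+\frac23)}$ is the periodic major $\Mf$ of the invariant quadratic gap $\Uf$ associated to the hole $(\ol\theta_1,\ol\theta_2)$ of $\Qf$ by Theorem~\ref{T:ParDynHoles}, and I claim that $\Mf$ cannot cross any edge of $\Uf_c(\cf_\vk)$ unless it equals one (this is the crossing dichotomy behind Lemma~\ref{l:cuts-cross}, together with the structure of the Principal Quadratic Parameter Gap $\Qf$: the holes of $\Qf$ exactly parameterize the periodic-type majors, and two distinct such majors, viewed as chords, are unlinked). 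Hence $\theta_1+\frac13,\theta_2+\frac23\in\Uf'_c(\cf_\vk)$, so by the last sentence of Proposition~\ref{P:no-accum-K_b} both rays accumulate in $K(f^*)$, and their common landing point lies in $K(f^*)$.

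For the equivalence, the geometric content is that the two external rays $R^e_1,R^e_2$ together with $z$ form a Jordan arc through the plane whose two complementary sides correspond to the two arcs of $\uc$ cut out by the chord $\Mf=\ol{(\theta_1+\frac13)(\theta_2+\frac23)}$; more precisely, by Proposition~\ref{p:land-endholes} (case (2), periodic type) these rays correspond to a single quadratic-like ray, so under the Riemann map $\psi:\C\sm K(f^*)\to\C\sm\cdisk$ they land at the same point of $\uc$, and the cut $R^e_1\cup\{z\}\cup R^e_2$ separates $K(f^*)$ precisely when there is at least one external ray to $K(f^*)$ accumulating on each side of the cut. By Proposition~\ref{P:no-accum-K_b} an external ray accumulates in $K(f^*)$ iff its argument lies in $\Uf'_c(\cf_\vk)$, and the two sides of the cut correspond to the two arcs $(\theta_1+\frac13,\theta_2+\frac23)$ and $(\theta_2+\frac23,\theta_1+\frac13)$. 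So the question becomes: does $\Uf'_c(\cf_\vk)$ meet both open arcs? One of these arcs is the major hole complement of $\Uf$; the other is the interior region that, depending on the position of $\ol\vk$, either contains $L(\cf_\vk)$ or is contained in it.

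The heart of the matter is thus a purely combinatorial comparison between the critical chord $\cf_\vk$ and the major $\Mf$. I would argue: $\vk\notin[\theta_1,\theta_2]$ is equivalent to saying that the critical point $\ol\vk$ lies outside the closed arc $[\theta_1,\theta_2]$, which (since $\Uf$ is the quadratic gap built from $\Mf$ by the $L$-construction, and $[\theta_1,\theta_2]$ is the ``small'' side) is equivalent to $\cf_\vk$ having both endpoints $\ol{\vk+\frac13}$ and $\ol{\vk+\frac23}$ on the $\gf$-side of $\Mf$, i.e.\ inside $\Uf$, hence to $\Uf_c(\cf_\vk)\subset\Uf$ being a ``sub-gap'' whose circle set meets \emph{both} holes adjacent to $\Mf$; conversely, $\vk\in[\theta_1,\theta_2]$ puts $\cf_\vk$ across $\Mf$ in a way that confines all orbits staying on the $L(\cf_\vk)$-side to one side of $\Mf$, so $\Uf'_c(\cf_\vk)$ sits entirely on one side and the cut does not separate $K(f^*)$. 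I expect the main obstacle to be the bookkeeping in this last step: tracking exactly which arc is $L(\cf_\vk)$ versus $L(\Mf)$ as $\ol\vk$ crosses the endpoints $\ol\theta_1$, $\ol\theta_2$, and checking the degenerate boundary cases where $\ol\vk$ is an endpoint of the hole (which is why the two exceptional holes $(\frac16,\frac13)$ and $(\frac23,\frac56)$, sharing the major $\ol{0\frac12}$, are excluded — there $\Uf$ is not determined by $\Mf$ alone, and ``above vs. below'' $\ol{0\frac12}$ must be disambiguated as $\fg_a$ versus $\fg_b$). Once the correspondence between the side of $\ol\vk$ relative to $\ol\theta_1,\ol\theta_2$ and the side of $\cf_\vk$ relative to $\Mf$ is pinned down, both implications follow from Proposition~\ref{P:no-accum-K_b} applied to each of the two arcs.
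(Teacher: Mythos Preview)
Your argument has a real gap in the first step. You want to show $z\in K(f^*)$ by first establishing $\ol{\theta_1+\frac13},\ol{\theta_2+\frac23}\in\Uf'_c(\cf_\vk)$ from a non-crossing claim, but non-crossing of $\Mf=\ol{(\theta_1+\frac13)(\theta_2+\frac23)}$ with edges of $\Uf_c(\cf_\vk)$ does \emph{not} force the endpoints of $\Mf$ into $\Uf'_c(\cf_\vk)$: the chord $\Mf$ could sit in the closure of a hole. Worse, the conclusion is simply false for general $\vk$: one of $\theta_1+\frac13$, $\theta_2+\frac23$ can lie in the short arc $(\vk+\frac13,\vk+\frac23)$, in which case that angle is not in $\Uf'_c(\cf_\vk)$ at all. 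What rules out such $\vk$ is precisely the \emph{hypothesis} that the two external rays land at a common point, and your argument never invokes it. Relatedly, your appeal to Proposition~\ref{p:land-endholes} (``these rays correspond to a single quadratic-like ray'') is misapplied: that proposition concerns edges of $\Uf_c(\cf_\vk)$, and $\Mf$ is such an edge only when $\vk\in[\theta_1,\theta_2]$, which is one of the cases you are trying to distinguish.

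The paper takes a different, more geometric route that you are missing. It sets up two pairs of wedges: $W_1,W_2$ cut out by the crashing dynamic rays $R_f(\vk+\frac13),R_f(\vk+\frac23)$ (with $K(f^*)\subset W_2$), and $W_{(1)},W_{(2)}$ cut out by $R^e_1\cup\{z\}\cup R^e_2$. If $K(f^*)$ meets both $W_{(1)}$ and $W_{(2)}$, then connectedness of $K(f^*)$ gives $z\in K(f^*)$ immediately, and one reads off $\vk\notin[\theta_1,\theta_2]$ since otherwise $\Uf'_c(\cf_\vk)$ would lie on one side of $\Mf$. If instead $K(f^*)\subset W_{(1)}\cup\{z\}$, the key step is a \emph{degree argument}: supposing $\vk\notin[\theta_1,\theta_2]$, the arguments of all external rays to $K(f^*)$ lie in a single arc on which $\si_3$ is injective (the critical pair $\vk+\frac13,\vk+\frac23$ is excluded), contradicting that $f^*$ has degree two. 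Hence $\vk\in[\theta_1,\theta_2]$, and then one checks directly that the orbit of (say) $\theta_2+\frac23$ stays in $L(\cf_\vk)$, so its ray lands in $K(f^*)$ and $z\in K(f^*)$. This degree count is the idea your combinatorial outline lacks.
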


\begin{proof}
Since $f\in \Rc_\la(\vk)\subset \Fc_\la\sm\Cc_\la$, the dynamic rays
$R_f(\vk+\frac 13)$ and $R_f(\vk+\frac 23)$ crash at the critical point
$\om_2(f)$ and cut the plane in two wedges. One of them ($W_1$)
contains dynamic rays with arguments from $(\vk+\frac 13, \vk+\frac
23)$ and the other one ($W_2$) contains dynamic rays with arguments
from $(\vk+\frac 23, \vk+\frac 13)$. It follows that $K(f^*)\subset
W_2$. Observe also that both arcs $(\theta_1+\frac 13, \theta_2+\frac
23)$ and $(\theta_1+\frac 23, \theta_2+\frac 13)$ are longer than
$\frac13$.
Hence both angles $\ta_1+\frac13$, $\ta_2+\frac 23$ are in the arc $[\vk+\frac23, \vk+\frac13]$, the rays $R^e_1$ and $R^e_2$ are contained in $\ol{W}_2$, one of these rays lies in $W_2$, and the point $z$ belongs to $W_2$.

Let $W_{(1)}$, $W_{(2)}$ be the two wedges defined by
$R^e_1$, $R^e_2$. If $K(f^*)$ meets both these wedges, then $z\in K(f^*)$
as desired. Moreover, then $\vk\notin [\theta_1, \theta_2]$ because
otherwise $\Uf'_c(\cf_\vk)\subset [\theta_2+\frac23,
\theta_1+\frac13]$, a contradiction with the fact that $K(f^*)$ meets
both wedges defined by $R^e_1$, $R^e_2$.

Now assume that $K(f^*)\subset W_{(1)}\cup \{z\}$. We claim that
then $\vk\in [\theta_1, \theta_2]$.
Indeed, suppose otherwise. Then $K(f^*)$ is contained in $W_2$ but is
disjoint from $W_{(2)}$; hence $\si_3$ is one-to-one on
arguments of external rays landing in $K(f^*)$ except possibly for
$\vk+\frac13$, $\vk+\frac23$, a contradiction. Thus, the endpoints of
$\cf_\vk$ belong to $[\theta_1+\frac13, \theta_2+\frac23]$, and at
least one of the endpoints of $\cf_\vk$ belongs to $(\theta_1+\frac13,
\theta_2+\frac23)$. Hence, at least one of the points
$\theta_1+\frac13$, $\theta_2+\frac23$ (for definiteness suppose that
this is $\theta_2+\frac23$) belongs to $(\vk+\frac23, \vk+\frac13)$. By
Theorem~\ref{T:ParDynHoles}, the chord $\ol{(\theta_1+\frac13)
(\theta_2+\frac23)}$ is the major of an invariant quadratic gap of
periodic type whose intersection with $\uc$ is contained in
$[\ol{\theta_2+\frac23},\ol{\theta_1+\frac13}]$. Hence the orbit of
$\theta_2+\frac23$ is completely contained in the arc $[\vk+\frac23,
\vk+\frac13]$, which implies that the landing point $z$ of $R^e_2$
belongs to $K(f^*)$, as desired.
\end{proof}

Observe that if $(\ta_1, \ta_2)=(\frac23, \frac56)$ or $(\ta_1,
\ta_2)=(\frac16, \frac13)$, then the plane cut $R^e_1\cup \{z\}\cup
R^e_2$ defined in Lemma \ref{l:land-in-k} cannot separate $K(f^*)$.

The next lemma specifies Lemma \ref{l:land-in-k}.

\begin{lem}\label{l:landvk}
The following claims are equivalent.

\begin{enumerate}

\item The dynamic rays $R_f(\theta_1+\frac 13)$ and $R_f(\theta_2+\frac 23)$
are contained in a pair of external rays with the same periodic landing point.

\item One of the following holds.

\begin{enumerate}

\item The angle $\vk$ is in $[\theta_1, \theta_2]$.

\item The chord $\ol{(\theta_1+\frac 13)(\theta_2+\frac 23)}$
    is a major edge of a type D invariant gap or leaf $\gf$
    while $[\ol{\vk+\frac13},\ol{\vk+\frac23}]$ is contained in
    the closure of the hole behind the other major edge
    of $\gf$. Moreover, for every $\ol{\al\be}$ in the
    $\si_3$-orbit of $\ol{(\theta_1+\frac 13)(\ta_2+\frac 23)}$, the rays $R^e_f(\al^+)$ and
    $R^e_f(\be^-)$ land at a fixed point $z$; we have $z=0$ unless $(\ta_1,
    \ta_2)=(\frac23, \frac56)$ or $(\frac16, \frac23)$.
\end{enumerate}

\end{enumerate}

\end{lem}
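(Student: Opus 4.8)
The plan is to prove the two implications separately, using Lemma~\ref{l:land-in-k} and the combinatorial dictionary from Subsection~\ref{ss:overview} as the main engines. For the direction (2)$\Rightarrow$(1): in case (2)(a), where $\vk\in[\ta_1,\ta_2]$, I would invoke Lemma~\ref{l:land-in-k} to conclude that $z\in K(f^*)$ and that the cut $R^e_1\cup\{z\}\cup R^e_2$ does \emph{not} separate $K(f^*)$; but since $\ol{(\ta_1+\frac13)(\ta_2+\frac23)}$ is the periodic major $\Mf$ of the invariant quadratic gap $\Uf$ attached to the hole $(\ol\ta_1,\ol\ta_2)$, the external rays $R^e_f((\ta_1+\frac13)^-)$, $R^e_f((\ta_2+\frac23)^+)$ are exactly the pair given by Proposition~\ref{p:land-endholes}(2) (applied to the major hole of $\Uf_c(\cf_\vk)$, noting $\Uf=\Uf_c(\cf_\vk)$ here by Theorem~\ref{T:ParDynHoles}), hence land together at a periodic point of $K(f^*)$, and their periodicity follows from the periodicity of $\Mf$. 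In case (2)(b), $\vk\notin[\ta_1,\ta_2]$, so again by Lemma~\ref{l:land-in-k} the cut separates $K(f^*)$; the point is to see the common landing point is periodic. Here I would use the type~D gap $\gf$: its orbit of edges is finite, $\ol{(\ta_1+\frac13)(\ta_2+\frac23)}$ is one of its two majors, and its $\si_3$-orbit is finite and disjoint from the orbit of the other major; the external rays with these one-sided arguments are then forced, by Lemma~\ref{l:cuts-cross} (no crossing with edges of $\Uf_c(\cf_\vk)$) together with the finiteness of the orbit, to land at a single periodic point, which one identifies with the fixed point $0$ using that $[\ol{\vk+\frac13},\ol{\vk+\frac23}]$ sits in the closure of the hole behind the other major of $\gf$ (so the whole $\si_3$-orbit of $\Mf$ avoids $[\vk+\frac13,\vk+\frac23]$, and the family of rays closes up onto a fixed point); the exceptional values $(\ta_1,\ta_2)=(\frac23,\frac56)$ or $(\frac16,\frac23)$ are exactly the ones where the landing point is a different fixed point rather than $0$, and these are handled by direct inspection of the angle combinatorics of $\fg_a$, $\fg_b$.

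For the converse (1)$\Rightarrow$(2): assume $R_f(\ta_1+\frac13)$, $R_f(\ta_2+\frac23)$ sit in external rays landing together at a periodic point $z$. By Lemma~\ref{l:land-in-k}, $z\in K(f^*)$, and we split on whether the cut separates $K(f^*)$. If it does not separate $K(f^*)$, then Lemma~\ref{l:land-in-k} gives $\vk\in[\ta_1,\ta_2]$, which is case (2)(a) (after checking the two excluded holes of $\Qf$ are genuinely excluded, consistently with the remark following Lemma~\ref{l:land-in-k}). If the cut does separate $K(f^*)$, then $\vk\notin[\ta_1,\ta_2]$, and I must reconstruct the type~D gap $\gf$. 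The idea: $z$ is a periodic point of $K(f^*)$; by Proposition~\ref{p:same-extray} (its partial-converse role) the chord $\ol{(\ta_1+\frac13)(\ta_2+\frac23)}$ must be an edge of $\Uf_c(\cf_\vk)$, hence, being periodic and a major hole of $\Qf$, it is the periodic major of $\Uf_c(\cf_\vk)$; then one forms $\gf$ as the convex hull of the full $\si_3$-orbit of the chord $\ol{(\ta_1+\frac13)(\ta_2+\frac23)}$. Since both this chord and its "partner" behave as majors and their orbits are disjoint except at endpoints (this is where one must check the orbit does not collapse, using that $z$ is repelling or $0$ and its finite rotation-type orbit), $\gf$ is of type~D by the classification, $\ol{(\ta_1+\frac13)(\ta_2+\frac23)}$ is one major, and $[\ol{\vk+\frac13},\ol{\vk+\frac23}]$ lands in the closure of the hole behind the other major (because $\Uf_c(\cf_\vk)$ contains $\gf$ and $\cf_\vk$ is its critical/periodic chord on the appropriate side). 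The statement about all $\ol{\al\be}$ in the $\si_3$-orbit landing at the same fixed point $z$ then follows by applying $\si_3$-equivariance of the landing map and tracking that the common landing point is fixed; the identification $z=0$ in the non-exceptional cases comes again from the location of $[\vk+\frac13,\vk+\frac23]$ relative to $\gf$.

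The main obstacle I expect is the case analysis surrounding the two distinguished holes of $\Qf$ (those associated with the degenerate major $\ol{0\frac12}$ and the gaps $\fg_a$, $\fg_b$): here the clean correspondence "hole of $\Qf\leftrightarrow$ quadratic invariant gap" breaks, the landing point of the rays need not be $0$, and one has to argue by hand with the explicit angle orbits to see which fixed point is hit and to confirm that the separation/non-separation dichotomy of Lemma~\ref{l:land-in-k} still pins down case (2)(a) versus (2)(b). A secondary difficulty is verifying in the (1)$\Rightarrow$(2) separating case that the $\si_3$-orbit of the periodic major really yields a \emph{type~D} gap and not a type~B one (i.e.\ that the two majors lie in disjoint orbits) — this requires using that $K(f^*)$ is genuinely disconnected from $\overline\Sigma_f$ and the one-to-oneness of $\si_3$ on the arguments of rays landing in $K(f^*)$ established in the proof of Lemma~\ref{l:land-in-k}.
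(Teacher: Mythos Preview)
Your plan for (2)$\Rightarrow$(1) is fine in case (2)(a) (and note that in case (2)(b) the ``Moreover'' clause is part of the hypothesis, so (1) is immediate --- you are doing unnecessary work there). The real problem is in the separating branch of (1)$\Rightarrow$(2).

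You write that, when the cut separates $K(f^*)$, Proposition~\ref{p:same-extray} forces the chord $\Mf=\ol{(\ta_1+\frac13)(\ta_2+\frac23)}$ to be an \emph{edge} of $\Uf_c(\cf_\vk)$. This is the wrong direction: Proposition~\ref{p:same-extray} applies only when the two external rays correspond to the \emph{same} quadratic-like ray, which is precisely the non-separating situation. When the cut separates $K(f^*)$, the rays correspond to \emph{different} quadratic-like rays, and $\Mf$ sits in the \emph{interior} of $\Uf_c(\cf_\vk)$ (this is what Lemma~\ref{l:cuts-cross} gives), not on its boundary. So the chain ``edge $\Rightarrow$ periodic major of $\Uf_c(\cf_\vk)$'' collapses, and with it your mechanism for constructing $\gf$.

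What actually drives this case in the paper is a dynamical fact you do not invoke: since the cut separates $K(f^*)$, the common landing point $z$ is a \emph{periodic cutpoint} of $K(f^*)$; but $f^*$ is hybrid-equivalent to a quadratic polynomial in the filled main cardioid (because $|\la|\le 1$), so $K(f^*)$ has no periodic cutpoints at all except possibly the fixed point $0$, and only when $0$ is parabolic. This forces $z=0$ and $\la$ a root of unity. Parabolicity then guarantees that $\si_3$ preserves the circular order on the orbit of $\Mf$, so the convex hull $\gf$ of that orbit is a finite invariant rotation gap; since $\cf_\vk$ must live behind one major and $\Mf$ is the other, $\gf$ is of type D. Your sketch gestures at ``$z$ is repelling or $0$'' and at a type-B-versus-D check, but without the cutpoint argument for $K(f^*)$ you have no reason to rule out $z$ being some repelling periodic cutpoint of higher period, and without parabolicity you have no reason the orbit of $\Mf$ closes up into a rotation gap.
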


Except for the case when $\gf=\ol{0 \frac12}$, the set $\gf$ is a gap.

\begin{proof}
We first prove that (2) implies (1). Assume that $\vk\in
[\theta_1,\theta_2]$. Then, by Proposition \ref{p:land-endholes}, the rays
$R^e_f(\theta_1+\frac 13)^-$ and $R^e_f(\theta_2+\frac 23)^+$ land at a
common point, as desired. If (b) holds, the claim follows immediately.

Assume now that (1) holds. Denote the external rays containing
$R_f(\theta_1+\frac 13)$ and $R_f(\theta_2+\frac 23)$ and landing at a
common point $z$ by $R^e_1$, $R^e_2$; then, by Lemma \ref{l:land-in-k}, we have
$z\in K(f^*)$. Suppose that $\vk\not\in [\theta_1,\theta_2]$, and prove
that (b) holds.

To begin with, consider the case when $(\ta_1, \ta_2)=(\frac23,
\frac56)$ so that $(\theta_1+\frac 13, \theta_2+\frac 23)=(0,
\frac12)$.
In this case $\gf=\ol{0 \frac12}$.
This is a leaf but it can be informally viewed as a gap with two distinct edges
$\ol{0 \frac12}$ and $\ol{\frac12 0}$.
Our assumption that $\vk\not\in [\theta_1,\theta_2]$ means that
$\vk\not\in [\frac23, \frac56]$. Since the critical cut formed by the
rays $R_f(\vk+\frac 13)$ and $R_f(\vk+\frac 23)$ cannot cross the cut
formed by the rays $R^e_1$ and $R^e_2$, then, as in the proof of
Lemma \ref{l:land-in-k}, it follows that $[\vk+\frac13, \vk+\frac23]\subset
[\frac12, 0]$. In other words, $[\ol{\vk+\frac13},\ol{\vk+\frac23}]$ is
contained in the closure of the hole behind the other major edge
$\ol{\frac12 0}$ of $\gf$ as desired. Moreover, by
Proposition \ref{p:land-endholes}, then $R^e_f(\frac 12)^-$ and $R^e_f(0)^+$ land
at the same fixed point as desired. The same can be proven in the case
when $(\ta_1, \ta_2)=(\frac16, \frac13)$.

Thus, from now on we may assume that $(\ta_1, \ta_2)\ne (\frac23,
\frac56)$ and $(\ta_1, \ta_2)\ne (\frac16, \frac13)$. Recall that we
suppose that $\vk\not\in [\theta_1,\theta_2]$, and want to prove that
then (b) holds. By Theorem~\ref{t-extepoly}, external rays $R^e_1$ and
$R^e_2$ correspond to quadratic-like rays. If they correspond to the
same quadratic-like ray, then the cut $R^e_1\cup \{z\}\cup R^e_2$ does
not separate $K(f^*)$. By Lemma \ref{l:land-in-k}, then $\vk\in
[\theta_1,\theta_2]$, a contradiction. Hence the external rays $R^e_1$
and $R^e_2$ correspond to different quadratic-like rays to $K(f^*)$.

By Lemma \ref{l:cuts-cross}, the chord $\Mf=\ol{(\theta_1+\frac
13)(\theta_2+\frac 23)}$ lies inside $\Uf_c(\cf_\vk)$, except for the
endpoints. Since the arguments of both rays are periodic, $z$ is
periodic. Moreover, since the external rays $R^e_1$ and $R^e_2$
correspond to different quadratic-like rays, $y$ is a cutpoint of
$K(f^*)$. Since $|\la|\le 1$, there are no periodic cutpoints of
$K(f^*)$ that are not fixed, and the fixed cutpoint $0$ must be
parabolic. Therefore, $z=0$ is parabolic. Since $\Mf$ is a major of
some quadratic invariant gap $\Uf$, the orbit of $\Mf$ consists of
pairwise disjoint chords; since $z$ is parabolic, $\si_3$ restricted to
them preserves their circular order. Completing the orbit of $\Mf$ to
its convex hull $\gf$, we see that $\gf$ is an invariant finite gap of
type D. Hence $[\ol{\vk+\frac13},\ol{\vk+\frac23}]$ is contained in the
closure of the hole behind a major edge of $\gf$; by definition,
$\Mf$ must be the other major edge of $\gf$. The remaining claim
that for every $\ol{\al\be}$ in the orbit of $\ol{(\theta_1+\frac 13)(\ta_2+\frac 23)}$
the rays $R^e_f(\al^+)$ and $R^e_f(\be^-)$ land at the same point follows
from Proposition \ref{p:land-endholes}.
\end{proof}

\subsection{Parameter wakes}
\label{ss:wakes} 
In Section \ref{ss:wakes}, we suppose that
$(\theta_1,\theta_2)$ is a hole of $\Qf$. Fix $\la\in\C$ with $|\la|\le
1$. Define the set $\tWc_\la(\theta_1,\theta_2)$ as the set of all
polynomials such that the \emph{dynamic} rays $R_f(\theta_1+\frac 13)$
and $R_f(\theta_2+\frac 23)$ land at the same \emph{repelling} periodic
point of $f$ (observe that then both rays are smooth). By
Lemma~\ref{l:rep}, the set $\tWc_\la(\theta_1,
\theta_2)$ is open.

Recall that, by Theorem A, for every hole $(\theta_1,\theta_2)$ of $\Qf$,
the parameter rays $\Rc_\lambda(\theta_1)$ and $\Rc_\lambda(\theta_2)$
land at the same point. Thus the parameter rays $\Rc_\lambda(\theta_1)$
and $\Rc_\lambda(\theta_2)$, together with their common landing point,
divide the plane into two (open) parts. In Definition \ref{d:wedges} we consider
a hole $(\theta_1, \theta_2)$ of $\Qf$ and denote by
$\Wc_\lambda(\theta_1,\theta_2)$ the component of $\C\sm
\ol{\Rc_\lambda(\theta_1)\cup \Rc_\lambda(\theta_2)}$ containing the
parameter rays with arguments from $(\theta_1, \theta_2)$. The set
$\Wc_\lambda(\theta_1,\theta_2)$ is called the (parameter) \emph{wake}
(of $\Fc_\la$). The joint landing point $f_{root}$ of the rays
$\Rc_\lambda(\theta_1)$ and $\Rc_\lambda(\theta_2)$ is called the
\emph{root point} of the parameter wake
$\Wc_\lambda(\theta_1,\theta_2)$. Let the \emph{period} of the
parameter wake $\Wc_\la(\theta_1,\theta_2)$ be the period of
$\theta_1+\frac 13$ under $\si_3$.

A parabolic periodic point $z$ of $f_{root}$ is not necessarily equal
to $0$; also, if $z\ne 0$, then the multiplier at $z$ is $1$. Recall
that, by Proposition \ref{P:Tpq}, if $\lambda=e^{2\pi i p/q}$ and $T_{p/q}(b)=0$,
then $f^{\circ q}_{\la, b}$ has $2q$ parabolic Fatou domains at $0$
forming two cycles under $f_{\la, b}$.

We also need an observation concerning external and dynamic rays.
Namely, if an argument $\al$ is periodic, then either
$R_f(\al)=R_f^e(\al)$ is a smooth ray, or the dynamic ray $R_f(\al)$ with argument $\al$ crashes at a \emph{critical} point.
In the latter case, the two one-sided external rays $R_f^e(\al^-)$
and $R_f^e(\al^+)$ extend the ray $R_f(\al)$ and contain infinitely
many precritical points. We will need this observation when talking
about the boundary of the set $\tWc_\la(\theta_1,\theta_2)$.

Recall that, for a hole $(\ta_1, \ta_2)$ of $\Qf$, the set $\Wc_\la'(\ta_1, \ta_2)$ is defined as the wake $\Wc_\la(\ta_1,
\ta_2)$ except for the holes $(\frac16, \frac13)$ and $(\frac23,\frac56)$ for which
$$
\Wc_\la'\left(\frac16, \frac13\right)=\Wc_\la'\left(\frac23, \frac56\right)=
\Wc_\la\left(\frac16, \frac13\right)\cup\Wc_\la\left(\frac23, \frac56\right).
$$

\begin{prop}
\label{p:W-nonspec} If $\tWc_\la(\theta_1, \theta_2)\ne \0$, then the
parameter rays $\Rc_\la(\theta_1)$ and $\Rc_\la(\theta_2)$ land at a
point $f_{root}$, where $f_{root}$ is a polynomial with a parabolic
periodic point. We have $\tWc_\la(\ta_1,\ta_2)=\Wc'_\la(\ta_1, \ta_2)$;
moreover, sets $\Wc_\la(\frac16,\frac13)$ and
$\Wc_\la(\frac23,\frac56)$ are the two components of $\tWc_\la(\frac16,
\frac13)=\tWc_\la(\frac23, \frac56)$.
\end{prop}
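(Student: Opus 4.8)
The plan is to establish the two inclusions $\tWc_\la(\theta_1,\theta_2)\subseteq\Wc'_\la(\theta_1,\theta_2)$ and $\Wc'_\la(\theta_1,\theta_2)\subseteq\tWc_\la(\theta_1,\theta_2)$; the parabolicity of $f_{root}$ and the assertion about the two exceptional holes come out of the same analysis. The main ingredients are Lemma~\ref{l:landvk} (which records exactly when $R_f(\theta_1+\frac13)$, $R_f(\theta_2+\frac23)$ lie inside external rays with a common periodic landing point, for $f$ on a parameter ray), Proposition~\ref{p:land-endholes} together with Theorem~\ref{T:ParDynHoles} (which makes $\Uf_c(\cf_\vk)$ the periodic-type quadratic gap with major $\Mf=\ol{(\theta_1+\frac13)(\theta_2+\frac23)}$ as soon as $\vk\in(\theta_1,\theta_2)$), Lemma~\ref{l:rep} (openness of $\tWc_\la$ and local persistence of the co-landing), Lemma~\ref{l:par} (a jump of the landing point along a path forces parabolicity), and Theorem~A (the two bounding parameter rays co-land, at the point $f_{root}$).

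I would first analyse parameters outside $\Cc_\la$, assuming for now that $(\theta_1,\theta_2)$ is not one of the two exceptional holes. Let $f\in\Fc_\la\sm\Cc_\la$ lie on the parameter ray $\Rc_\la(\vk)$. If $f\in\tWc_\la(\theta_1,\theta_2)$, then $R_f(\theta_1+\frac13)$, $R_f(\theta_2+\frac23)$ are smooth and land at a common repelling periodic point, so Lemma~\ref{l:landvk} forces $\vk\in[\theta_1,\theta_2]$ or its alternative~(2b); and~(2b) cannot occur, since it would put the common landing point at the non-repelling fixed point $0$ (recall $|\la|\le1$). Moreover $\vk=\theta_1$ and $\vk=\theta_2$ are excluded, because then $\theta_1+\frac13$, resp. $\theta_2+\frac23$, is an endpoint of $\cf_\vk$ and the corresponding dynamic ray crashes into $\om_2(f)$ rather than being smooth. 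Hence $\vk\in(\theta_1,\theta_2)$, i.e. $f\in\Wc_\la(\theta_1,\theta_2)$, so $\tWc_\la\cap(\Fc_\la\sm\Cc_\la)\subseteq\Wc_\la\cap(\Fc_\la\sm\Cc_\la)$. Conversely, for $\vk\in(\theta_1,\theta_2)$ Theorem~\ref{T:ParDynHoles} makes $\Uf_c(\cf_\vk)$ of periodic type with major $\Mf$, so the orbits of $\theta_1+\frac13$, $\theta_2+\frac23$ stay inside $L(\cf_\vk)$ (those rays are smooth) and Proposition~\ref{p:land-endholes}(2) gives them a common periodic landing point $z(f)$ whose period equals that of $\Mf$, hence exceeds $1$. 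By the Fatou--Shishikura inequality the only non-repelling cycle of $f$ is $\{0\}$, so $z(f)$ is repelling unless $z(f)=0$; and by Proposition~\ref{P:ray0stable} and the connectedness of the wake, $z(f)$ is either identically $0$ or nowhere $0$ on these rays. The hypothesis $\tWc_\la\ne\emptyset$ rules out the first case (in which $\tWc_\la$ would be empty), so $z(f)$ is everywhere repelling and $\Wc_\la\cap(\Fc_\la\sm\Cc_\la)\subseteq\tWc_\la$; thus $\tWc_\la\cap(\Fc_\la\sm\Cc_\la)=\Wc_\la\cap(\Fc_\la\sm\Cc_\la)$.

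Next I would promote this to $\tWc_\la=\Wc_\la$. First, $\tWc_\la$ is disjoint from $\partial\Wc_\la$: it misses the two parameter rays (where $R_f(\theta_1+\frac13)$ or $R_f(\theta_2+\frac23)$ crashes) and it misses $f_{root}$, for otherwise --- $\tWc_\la$ being open --- a neighborhood of $f_{root}$ would lie in $\tWc_\la$ and so, since $f_{root}\in\partial\Cc_\la\cap\partial\Wc_\la$, contain polynomials of $\Fc_\la\sm\Cc_\la$ lying outside $\Wc_\la$, contradicting the previous paragraph. As $\tWc_\la$ is open and $\tWc_\la\cap(\Fc_\la\sm\Cc_\la)\subseteq\Wc_\la$, each component of $\tWc_\la$ is either contained in $\Wc_\la$ or contained in $\mathrm{int}\,\Cc_\la$ and disjoint from $\overline{\Wc_\la}$; a short argument (using $J$-stability of the combinatorics on each stable component, which cannot carry a repelling periodic cutpoint on whose rays $\theta_1+\frac13$, $\theta_2+\frac23$ land unless the component lies in $\overline{\Wc_\la}$) excludes the second possibility, so $\tWc_\la\subseteq\Wc_\la$. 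For the reverse inclusion it suffices to show that $\tWc_\la\cap\Wc_\la$ is nonempty (done), open (clear), and relatively closed in the connected set $\Wc_\la$. For relative closedness, take $f_n\to f\in\Wc_\la$ with $f_n\in\tWc_\la$; then $R_{f_n}(\theta_1+\frac13)$, $R_{f_n}(\theta_2+\frac23)$ land at a common repelling periodic point $z_n$, and either $z_n$ tends to a repelling periodic point, so $f\in\tWc_\la$ by Lemma~\ref{l:rep}, or Lemma~\ref{l:par} forces these two rays of $f$ to land at a common parabolic periodic point. The crux --- and, I expect, the main obstacle --- is to exclude the latter inside the open wake: the periodic orbit carrying the rays extends to a holomorphic function $z(f)$ on $\Wc_\la$, and one must show its multiplier is a root of unity, at a point of $\overline{\Wc_\la}$, only at $f_{root}$, by a holomorphic-index/counting argument in the spirit of Milnor's treatment of sub-wakes, using that $\Wc_\la$ is cut out by exactly the two parameter rays landing at $f_{root}$. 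Everything else is bookkeeping with Lemmas~\ref{l:landvk}, \ref{l:rep}, \ref{l:par}, Proposition~\ref{p:land-endholes} and Theorem~A.

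Finally, the parabolicity of $f_{root}$: take a path $g_t$ in $\overline{\Wc_\la}$ from a point of $\tWc_\la$ to $f_{root}$, with $g_t$ in the open wake for $t<1$. By the step just described, for $t<1$ the rays $R_{g_t}(\theta_1+\frac13)$, $R_{g_t}(\theta_2+\frac23)$ land at a common repelling periodic point; by the neighborhood argument above (together with Lemma~\ref{l:rep}) they cannot land at a common repelling point at $g_1=f_{root}$; so, by Lemma~\ref{l:par} and the fact that the only non-repelling periodic points available are $0$ (parabolic, if $\la$ is a root of unity) and genuinely parabolic ones, $f_{root}$ carries a parabolic periodic point. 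For the two exceptional holes, $\theta_1+\frac13$ and $\theta_2+\frac23$ are the fixed angles $0$ and $\tfrac12$ for both of them, so $\tWc_\la(\tfrac16,\tfrac13)=\tWc_\la(\tfrac23,\tfrac56)$ is the set of $f$ for which $R_f(0)$ and $R_f(\tfrac12)$ land at a common repelling fixed point; applying the analysis above to each of the two disjoint connected wakes $\Wc_\la(\tfrac16,\tfrac13)$ and $\Wc_\la(\tfrac23,\tfrac56)$ --- on which these rays do land at a common repelling fixed point, necessarily $\ne0$ since $\la\ne1$ --- shows each equals its intersection with $\tWc_\la(\tfrac16,\tfrac13)$, so the two wakes are precisely its two connected components.
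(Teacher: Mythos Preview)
Your proposal has a structural circularity: you list Theorem~A among your ingredients and use the wake $\Wc_\la(\theta_1,\theta_2)$ (and its root $f_{root}$) as a pre-existing object throughout, but in the paper's logic Proposition~\ref{p:W-nonspec} is a step in the \emph{proof} of Theorem~A. The first assertion of the proposition is precisely that the parameter rays $\Rc_\la(\theta_1)$, $\Rc_\la(\theta_2)$ co-land; you cannot assume this. The paper avoids this by a direct boundary analysis: it shows that $\bd(\tWc_\la)$ is contained in $\Rc_\la(\theta_1)\cup\Rc_\la(\theta_2)$ together with a \emph{finite} set $\Zc$ (polynomials whose relevant periodic point is parabolic of multiplier $1$, or for which $T_{p/q}$ vanishes). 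Since $\tWc_\la$ is open and contains the parameter rays with arguments in $(\theta_1,\theta_2)$, this forces the two bounding rays to co-land and identifies $\tWc_\la$ with the wedge between them up to finitely many punctures. No prior knowledge of $\Wc_\la$ is needed.

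Your self-identified ``main obstacle'' --- excluding parabolic landing inside the wake --- is also left as a vague ``holomorphic-index/counting argument''. The paper handles this in one line: the multiplier of the common landing point is a holomorphic function on the wedge, bounded by $1$ in modulus at any putative puncture and of modulus $>1$ elsewhere; the maximum modulus principle rules out such isolated punctures. Your alternative route via $J$-stability of stable components (to exclude components of $\tWc_\la$ trapped in $\mathrm{int}\,\Cc_\la$) is also only sketched and not obviously easier. In short: the boundary-of-$\tWc_\la$ approach (rays crash $\Rightarrow$ $f$ on a bounding parameter ray; landing point degenerates $\Rightarrow$ $f$ in a finite set) gives both the co-landing and the equality $\tWc_\la=\Wc'_\la$ at once, without presupposing Theorem~A.
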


\begin{proof}
By Lemma \ref{l:cuts-cross}, if $\vk\in (\theta_1,\theta_2)$ and $f\in
\Rc_\la(\vk)$, then the corresponding dynamic rays $R_f(\theta_1+\frac
13)$ and $R_f(\theta_2+\frac 23)$ land at the same periodic point $z$
and are smooth. Assume that $\tWc=\tWc_\la(\theta_1, \theta_2)\neq
\emptyset$; then, for every $f\in \tWc$, the point $z$ is repelling.

For a polynomial $f=f_{\la,b}$ in the boundary of $\tWc$, either
one of the rays $R_f(\theta_1+\frac 13)$ and $R_f(\theta_2+\frac 23)$
crashes into a critical point, or both rays are smooth but one of the landing points fails
to be repelling (this follows from Lemma \ref{l:rep}).
Consider these cases separately.

(1) Suppose that at least one of the rays $R_f(\ta_1+\frac 13)$, $R_f(\ta_2+\frac 23)$ is not smooth.
We claim that if $(\ta_1, \ta_2)\ne (\frac23, \frac56)$ and $(\ta_1, \ta_2)\ne (\frac16, \frac23)$, then our assumption
implies that $f\in \Rc_\la(\theta_1)\cup \Rc_\la(\theta_2)$; on the other hand, if $(\ta_1, \ta_2)=(\frac23, \frac56)$ or $(\ta_1,\ta_2)=(\frac16, \frac23)$, then the non-smoothness of either $R_f(0)$ or $R_f(\frac12)$ implies that
$$f\in \Rc_\la\left(\frac23\right)\cup \Rc_\la\left(\frac56\right)\cup \Rc_\la\left(\frac16\right)\cup
\Rc_\la\left(\frac13\right).$$

Indeed, suppose that $R_f(\theta_1+\frac 13)$ is not smooth.
We claim that then $f\in \Rc_\la(\theta_1)$.
By the assumption, either $R_f(\theta_1)$ and $R_f(\theta_1+\frac 13)$ crash at the escaping
critical point (and then $f\in \Rc_\la(\ta_1-\frac13)$), or
$R_f(\theta_1+\frac 13)$ and $R_f(\theta_1+\frac 23)$ crash at the
escaping critical point (and then $f\in \Rc_\la(\ta_1)$).
We claim that the latter option takes place.

Suppose otherwise: $f\in \Rc_\la(\ta_1-\frac13)$, and the rays $R_f(\theta_1)$ and $R_f(\theta_1+\frac 13)$ crash at the escaping critical point.
Arbitrarily close to $f$, there are polynomials $g\in \tWc$.
These polynomials $g$ must satisfy one of the conditions (2)(a) and (2)(b) from Lemma \ref{l:landvk}.
However, in case $(\ta_1, \ta_2)\ne (\frac23, \frac56)$ and $(\ta_1, \ta_2)\ne (\frac16, \frac23)$, we see that (2)(b) is impossible as it would imply that the rays $R_g(\ta_1+\frac13)$, $R_g(\ta_2+\frac23)$ land at $0$, which is parabolic.
Thus, in this case polynomials $g$ belong to external parameter rays $\Rc_\la(\vk)$ with $\vk\in
(\ta_1, \ta_2)$, a contradiction with $f\in \Rc_\la(\ta_1-\frac13)$.

Suppose now that $(\ta_1, \ta_2)=(\frac23, \frac56)$ and choose $f$
from the boundary of $\tWc$ so that $R_f(\theta_1+\frac 13)=R_f(0)$ is
not smooth.
Choose $g\in \tWc$ very close to $f$.
Then the smooth dynamic rays $R_g(0)$, $R_g(\frac12)$ land at the same repelling fixed point of $g$.
By Lemma \ref{l:landvk}, this implies that $g\in \Rc_\la(\vk)$ with $\vk\in (\frac23, \frac56)$ or $\vk\in (\frac16, \frac13)$, and both cases are possible.

(2) Suppose that the point $y$, at which one of the rays $R_f(\theta_1+\frac 13)$ or
$R_f(\theta_2+\frac 23)$ lands, is a parabolic periodic point of $f$.
If $y=0$, then $\lambda=e^{2\pi i p/q}$ for some integers $p$ and $q$. We
claim that then $T_{p/q}(b)=0$. Indeed, suppose that $T_{p/q}(b)\ne 0$.
Then, by Proposition \ref{P:ray0stable}, a small neighborhood of
$f_{root}$ is disjoint from $\tWc(\theta_1, \theta_2)$. This
contradicts the fact that $f_{root}$ is on the boundary of $\tWc$.
If $y\ne 0$, and $m$ is the period of $\theta_1$ (and $\theta_2$), then, by the Yoccoz inequality, $f^{\circ m}$ has multiplier 1 at the point $y$ (cf. the proof of Theorem \ref{t:nonlin} in \cite{bopt14}).
In both cases, $b\in\Zc$, where the set $\Zc$ consists of all parameter values $b$
such that either $T_{p/q}(b)=0$, or $f^{\circ m}$ has a parabolic fixed
point of multiplier 1 different from $0$. Note that $\Zc$ is a finite
set.

Hence, if $(\ta_1, \ta_2)\ne (\frac23, \frac56)$ and $(\ta_1, \ta_2)\ne
(\frac16, \frac23)$, then the boundary of $\tWc$ lies in the union of
$\Rc_\la(\theta_1)\cup\Rc_\la(\theta_2)$ and a finite set of points.
Since $\tWc$ is open, it follows that the rays $\Rc_\la(\theta_1)$ and
$\Rc_\la(\theta_2)$ land at the same point $f_{root}$. By the previous
paragraph, the point $f_{root}$ is a polynomial with a parabolic
periodic point. Since polynomials with disconnected Julia sets located
in the wedge between $\Rc_\la(\theta_1)$ and $\Rc_\la(\theta_2)$ belong
to $\tWc$ (see Proposition \ref{p:land-endholes}; here we mean the wedge in the
positive direction from $\Rc_\la(\theta_1)$ to $\Rc_\la(\theta_2)$)),
it follows that $\tWc$ coincides with this wedge, except, perhaps, for
finitely many points of $\Zc$ removed from this wedge. However, the
existence of such removed points would contradict the maximum modulus
principle applied to the multiplier of the landing point of
$R_f(\ta_1+\frac 13)$ and $R_f(\ta_2+\frac 23)$. The arguments in the
case of $(\ta_1, \ta_2)=(\frac23, \frac56)$ or $(\ta_1, \ta_2)=
(\frac16, \frac23)$ are almost literally the same except that now
instead of one wedge we need to talk about the union of two wedges: one
in the positive direction from $\Rc_\la(\frac23)$ to $\Rc_\la(\frac56)$
and the other one in the positive direction from $\Rc_\la(\frac16)$ to
$\Rc_\la(\frac13)$.
\end{proof}

The set $\tWc_\la(\theta_1,\theta_2)$ is called a \emph{non-special $($parameter$)$ wake}.
By Proposition~\ref{p:W-nonspec}, the parameter rays
$\Rc_\la(\theta_1)$ and $\Rc_\la(\theta_2)$ land at the same point \emph{under the
assumption that there is at least one polynomial $f\in\Fc_\la$
with $R_f(\theta_1+\frac 13)$, $R_f(\theta_2+\frac 23)$ landing at
the same repelling periodic point.} Recall that the common
landing point of the rays $\Rc_\la(\theta_1)$ and $\Rc_\la(\theta_2)$ is
called the \emph{root point} of the parameter wake $\Wc_\la(\theta_1,\theta_2)$.

It remains to consider the case, where the dynamic rays $R_f(\theta_1+\frac 13)$
and $R_f(\theta_2+\frac 23)$ \emph{never} land at the same
\emph{repelling} periodic point,
no matter what polynomial $f\in\Fc_\la$ we choose.
We claim that, in this case, $\la$ must be a root of unity.
Indeed, take an angle $\vk\in (\theta_1,\theta_2)$.
Then, by Theorem \ref{T:ParDynHoles}, the chord
$\ol{(\theta_1+\frac 13)(\theta_2+\frac 23)}$ is the major of the quadratic
invariant gap $\Uf(\vk)$ of periodic type.
It follows from Proposition \ref{p:land-endholes} that, for every $f\in\Rc_\la(\vk)$,
the dynamic rays $R_f(\theta_1+\frac 13)$ and $R_f(\theta_2+\frac 23)$ land
at the same periodic point $x$.
This point is either repelling or parabolic.
By our assumption, $x$ is not repelling, hence $x$ is parabolic.
Since $f\not\in\Cc_\la$ and by the Fatou--Shishikura inequality,
there is at most one non-repelling cycle of $f$.
Thus we must have $x=0$, which means that $0$ is a parabolic point, i.e.,
the multiplier $\la$ is a root of unity.

Fix $\lambda=e^{2\pi ip/q}$ and consider $f\in\Fc_\la$. If the rays
$R_f(\theta_1+\frac 13)$ and $R_f(\theta_2+\frac 23)$ land at a point
$x$, then let $W_f(\theta_1+\3,\theta_2+\4)$ be the (dynamic) wedge
bounded by these rays and the point $x$ that contains all dynamic rays
with arguments in $(\theta_1+\frac 13,\theta_2+\frac 23)$. The boundary
of $W_f(\theta_1+\3,\theta_2+\4)$ is denoted by $\Theta_f$.
Recall that a \emph{parabolic domain of $f$ at $0$} is a Fatou component
of $f$ that contains an attracting petal of $0$.

\begin{lem}
  \label{l:land-par}
If there exists $f\in \Fc_\la\sm \Cc_\la$ such that the dynamic rays
$R_f(\theta_1+\frac 13)$ and $R_f(\theta_2+\frac 23)$ both land at $0$
and the parabolic domains of $f$ at $0$ are disjoint from
$W_f(\theta_1+\3,\theta_2+\4)$, then the convex hull $\gf$ of the set
of all points $\ol\theta$ such that the dynamic ray $R_f(\theta)$ lands
at $0$ is a finite invariant gap of type D. Moreover, if
$f\in\Rc_\la(\varkappa)$, then $\varkappa\in(\theta_1,\theta_2)$.
\end{lem}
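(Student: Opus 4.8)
The plan is to first show that $\gf$ is a finite invariant set of angles, then identify its combinatorial type as type D, and finally pin down the location of $\vk$ relative to the hole $(\theta_1,\theta_2)$. To begin, since $\la=e^{2\pi ip/q}$ and $0$ is a parabolic fixed point, Proposition~\ref{P:Tpq} tells us that $f^{\circ q}(z)=z+T_{p/q}(b)z^{q+1}+o(z^{q+1})$ for the relevant $b$, so $0$ has either $q$ or $2q$ repelling petals, hence only finitely many external rays land at $0$; therefore $\gf'$ is a finite set, invariant under $\si_3$ because the set of rays landing at $0$ is forward invariant (a smooth ray landing at $0$ maps to a smooth ray landing at $0$). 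Thus $\gf=\ch(\gf')$ is a finite invariant gap, and I will observe that $\ol{\theta_1+\frac13}$ and $\ol{\theta_2+\frac23}$ are among its vertices: these two dynamic rays land at $0$ by hypothesis, and they are smooth because, by Theorem~\ref{T:ParDynHoles}, $\ol{(\theta_1+\frac13)(\theta_2+\frac23)}$ is a periodic major of a quadratic invariant gap of periodic type, so neither $\theta_1+\frac13$ nor $\theta_2+\frac23$ is (pre)critical for $f$ (this also uses that $f\in\Fc_\la\sm\Cc_\la$, so any ray crashing at a critical point would have its argument in $[\vk+\frac13,\vk+\frac23]$, and one must check this is incompatible with periodicity of the major unless we are in an excluded configuration).

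The key step is to show $\gf$ has \emph{type D}, i.e., that it has two majors with disjoint $\si_3$-orbits. The edge $\Mf=\ol{(\theta_1+\frac13)(\theta_2+\frac23)}$ is a major of $\gf$: both its complementary arcs have length $>\frac13$, as already noted in the proof of Lemma~\ref{l:land-in-k}. Now I use the hypothesis that the parabolic domains of $f$ at $0$ are disjoint from the dynamic wedge $W_f(\theta_1+\3,\theta_2+\4)$. The boundary $\Ta_f$ of this wedge consists of $R_f(\theta_1+\frac13)\cup R_f(\theta_2+\frac23)\cup\{0\}$; since the parabolic (attracting) domains at $0$ lie outside the wedge, the dynamics of $\si_3$ on the angles of rays landing at $0$ must \emph{not} be transitive-through-$\Mf$ in the wrong way — more precisely, because a rotation-type cyclic order is forced on the finitely many rays at a parabolic fixed point of multiplier $1$ (the repelling petals and hence the landing rays are cyclically permuted by $f$), the edge $\Mf$ cannot be the \emph{only} major, nor can $\gf$ be bi-transitive of type~B; the configuration with the parabolic basins on one side forces a second major $\Mf'$ whose hole contains $[\ol{\vk+\frac13},\ol{\vk+\frac23}]$. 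This is where I would invoke the count of Lemma~\ref{l:typeD-count} and the structure in Theorem~\ref{t:typeD-quad}: having two majors $\Mf,\Mf'$ with the quadratic gap attached to $\Mf$ being of periodic type forces the orbits of $\Mf$ and $\Mf'$ to be disjoint except for common endpoints, which is exactly type D. I would spell this out by tracking where the critical chord $\cf_\vk$ sits: since $f\in\Rc_\la(\vk)$, the rays $R_f(\vk+\frac13),R_f(\vk+\frac23)$ crash at $\omega_2(f)$, and this critical cut cannot cross $\Ta_f$, so $[\vk+\frac13,\vk+\frac23]$ lies in one of the holes of $\gf$; because the parabolic domains avoid $W_f(\theta_1+\3,\theta_2+\4)$, this hole is precisely the one ``behind'' a major $\Mf'\ne\Mf$, as in Lemma~\ref{l:landvk}(2)(b).

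Finally, for the last assertion I argue by contradiction: if $\vk\notin(\theta_1,\theta_2)$, then $\ol\vk$, and hence $\cf_\vk$, is not the critical chord associated to the periodic-type quadratic gap with major $\Mf$; combining Lemma~\ref{l:landvk} (which, given that $R_f(\theta_1+\frac13),R_f(\theta_2+\frac23)$ land at the \emph{periodic} point $0$, puts us in case (2)) with the hypothesis that $0$ itself is the common landing point, I get that either $\vk\in[\theta_1,\theta_2]$ outright, or we are in the type~D scenario of (2)(b) — but in that scenario the rays $R^e_f(\al^+),R^e_f(\be^-)$ for $\ol{\al\be}$ in the orbit of $\Mf$ land at $0$, and unwinding this with Proposition~\ref{p:land-endholes} shows the critical chord $\cf_\vk$ must lie inside the periodic-type gap $\Uf$ with major $\Mf$, forcing $\ol\vk\in(\ol\theta_1,\ol\theta_2)$ by Theorem~\ref{T:ParDynHoles}. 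Either way $\vk\in(\theta_1,\theta_2)$, with the endpoints excluded because an endpoint would make $\Mf$ critical rather than periodic. The main obstacle I anticipate is the type~D verification in the middle step: making rigorous that ``parabolic domains disjoint from the wedge'' forces the cyclic-order/second-major structure rather than a type~A or type~B gap — this requires a careful analysis of how the $q$ (or $2q$) repelling petals at $0$ are arranged around the wedge $W_f(\theta_1+\3,\theta_2+\4)$ and matching that arrangement to the combinatorial types in Theorem~\ref{T:ParDynHoles} and Lemma~\ref{l:typeD-count}.
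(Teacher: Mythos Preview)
Your plan has two genuine gaps, one in each main step.

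\textbf{The type D verification.} You try to deduce type D from the parabolic-domain hypothesis via a cyclic-order/petal argument, and you flag this as the main obstacle. In fact the paper avoids this entirely: once you know (from \cite[Lemma 18.12]{M} or from the petal structure at a parabolic fixed point) that finitely many rays land at $0$ and are permuted as a rotation with number $p/q$, type D follows in one line. If $\gf$ were of type A or B, then the two endpoints $\ol{\theta_1+\frac13}$ and $\ol{\theta_2+\frac23}$ would lie in the same $\si_3$-orbit; but these are the endpoints of the periodic major of a quadratic invariant gap of periodic type (Theorem~\ref{T:ParDynHoles}), and such endpoints are never in the same orbit. No parabolic-domain information is needed here at all; the hypothesis about parabolic domains is used only for locating $\vk$. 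Your route through Lemma~\ref{l:typeD-count} and Theorem~\ref{t:typeD-quad} is both unnecessary and, as written, circular: you invoke the type D structure you are trying to establish.

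\textbf{The location of $\vk$.} Here you actually invert the key implication. You write that ``because the parabolic domains avoid $W_f(\theta_1+\frac13,\theta_2+\frac23)$, this hole [containing $\cf_\vk$] is precisely the one behind a major $\Mf'\ne\Mf$.'' This is backwards. The non-escaping critical point $\omega_1(f)$ lies in a parabolic domain at $0$; since those domains avoid $W_f(\theta_1+\frac13,\theta_2+\frac23)$, it is $\omega_1(f)$ that sits in the \emph{other} major wedge, forcing the \emph{escaping} critical point $\omega_2(f)$ --- and hence $\cf_\vk$ --- into the hole $(\theta_1+\frac13,\theta_2+\frac23)$ behind $\Mf$, which gives $\vk\in(\theta_1,\theta_2)$ directly. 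Your attempted recovery via Lemma~\ref{l:landvk} cannot close the argument: case (2)(b) of that lemma is precisely the situation where $\cf_\vk$ sits in the hole behind the \emph{other} major, so invoking it does not yield $\vk\in(\theta_1,\theta_2)$. The parabolic-domain hypothesis is exactly what distinguishes the two major holes, and you must use it with the correct orientation.
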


\begin{proof}
  By \cite[Lemma 18.12]{M}, there are finitely many
  rays landing at $0$, and they are permuted with combinatorial rotation
  number $p/q$, i.e., $\gf$ is a finite invariant gap.
  Moreover, $\gf$ is of type D.
  Indeed, otherwise the angles $\theta_1+\frac 13$ and $\theta_2+\frac 23$
  would be in the same orbit under the angle tripling map.
  This contradicts Theorem \ref{T:ParDynHoles}, since the endpoints
  of the major of a quadratic invariant set cannot belong to one orbit.

  Let $f\in\Rc_\la(\varkappa)$.
  Then $\varkappa$ belongs to $(\theta_1,\theta_2)$.
  Indeed, the rays $R_f(\varkappa+\frac 13)$ and $R_f(\varkappa+\frac 23)$
  crash at the escaping critical point $\omega_2(f)$.
  Thus, both $\varkappa+\frac 13$ and $\varkappa+\frac 23$
  belong to the same major hole of $\gf$. Since
  $\ol{(\theta_1+\frac 13)(\theta_2+\frac 23)}$ is the major of an invariant
  quadratic gap, the $\si_3$-orbits of $\ta_1+\frac13$ and $\ta_2+\frac 23$ do not
  enter the arc $(\theta_1+\frac 13, \theta_2+\frac 23)$. Hence,
  $(\theta_1+\frac 13, \theta_2+\frac 23)$ is a major hole of $\gf$.
  There are two major holes of $\gf$, either giving rise to the corresponding dynamic wedge containing a critical point
  of $f$. Since, by the assumption, the parabolic domains of $f$ at $0$ are disjoint from
  $W_f(\theta_1+\3,\theta_2+\4)$ and one of these dynamic wedges contains a non-escaping critical point of $f$,
  then the escaping critical point $\omega_2(f)$ is in
  $W_f(\theta_1+\3,\theta_2+\4)$, i.e., we have $\varkappa\in (\theta_1,\theta_2)$.
\end{proof}

Consider the set $\widetilde\Wc_\la(\theta_1,\theta_2)$ of all
polynomials $f=f_{\la, b}$ such that the rays $R_f(\theta_1+\frac 13)$
and $R_f(\theta_2+\frac 23)$ land at $0$, and the attracting petals at
$0$ are disjoint from $W_f(\theta_1+\3,\theta_2+\4)$. Since there are
at most two cycles of dynamic rays landing at $0$, then $f$ can have
only one cycle of parabolic domains at $0$. By Proposition \ref{P:Tpq}, this
implies that $T_{p/q}(b)\ne 0$.

\begin{prop}
 \label{p:W-spec1}
If $\widetilde\Wc_\la(\theta_1,\theta_2)\ne \0$, then
the parameter rays $\Rc_\la(\theta_1)$, $\Rc_\la(\theta_2)$
land at the same point.
\end{prop}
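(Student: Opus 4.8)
The plan is to follow the proof of Proposition~\ref{p:W-nonspec} almost verbatim, with three substitutions: the common periodic landing point of $R_f(\theta_1+\frac13)$, $R_f(\theta_2+\frac23)$ is now the parabolic fixed point $0$; Lemma~\ref{l:rep} is replaced by Proposition~\ref{P:ray0stable} (stability of rays landing at $0$ when $T_{p/q}(b)\ne 0$); and Lemma~\ref{l:landvk} is replaced by Lemma~\ref{l:land-par}. Two preliminary remarks set the stage. First, $\widetilde\Wc_\la(\theta_1,\theta_2)$ contains the whole ``sector'' $S=\bigcup_{\vk\in(\theta_1,\theta_2)}\Rc_\la(\vk)$: for such $\vk$, Theorem~\ref{T:ParDynHoles} identifies $\Uf(\vk)$ as the periodic-type quadratic invariant gap with major $\ol{(\theta_1+\frac13)(\theta_2+\frac23)}$, so by Proposition~\ref{p:land-endholes}(2) the dynamic rays $R_f(\theta_1+\frac13)$, $R_f(\theta_2+\frac23)$ land at a common periodic point of $K(f^*)$, which is non-repelling in the case under consideration and hence, by the Fatou--Shishikura inequality, equals $0$; moreover, by Proposition~\ref{P:no-accum-K_b} no external ray with argument in $(\theta_1+\frac13,\theta_2+\frac23)$ accumulates in $K(f^*)$, so $K(f^*)$ — together with the attracting petal at $0$ it contains — lies on the side of the cut $\ol{R_f(\theta_1+\frac13)\cup R_f(\theta_2+\frac23)}$ opposite to $W_f(\theta_1+\3,\theta_2+\4)$. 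Second, by the ``moreover'' clause of Lemma~\ref{l:land-par} every $f\in\Rc_\la(\vk)\cap\widetilde\Wc_\la(\theta_1,\theta_2)$ has $\vk\in(\theta_1,\theta_2)$, while for $\vk=\theta_i$ one of the two dynamic rays crashes; hence $\Rc_\la(\vk)\cap\widetilde\Wc_\la(\theta_1,\theta_2)=\0$ whenever $\vk\notin(\theta_1,\theta_2)$.

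Next I would prove that $\widetilde\Wc_\la(\theta_1,\theta_2)$ is open and control its boundary. For $f=f_{\la,b}$ in this set, $T_{p/q}(b)\ne 0$ (as noted just before the proposition), so Proposition~\ref{P:ray0stable} keeps both rays landing at $0$ under perturbation; moreover Lemma~\ref{l:land-par} shows the set of all angles of dynamic rays landing at $0$ is a type D finite invariant gap of rotation number $p/q$, so there are exactly $2q$ such rays, and — since by Lemma~\ref{l:rp} applied to $g_b=f^{\circ q}_{\la,b}(z)=z+T_{p/q}(b)z^{q+1}+o(z^{q+1})$ the $q$ attracting petals of $f^{\circ q}$ at $0$ vary continuously — the cyclic position of these petals relative to the $2q$ rays, hence the condition of avoiding $W_f(\theta_1+\3,\theta_2+\4)$, is locally constant. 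This gives openness. For $f=f_{\la,b}\in\partial\widetilde\Wc_\la(\theta_1,\theta_2)$, either $T_{p/q}(b)=0$ (finitely many $b$), or, by the rigidity just described, at least one of $R_f(\theta_1+\frac13)$, $R_f(\theta_2+\frac23)$ is no longer smooth and thus crashes into a critical point; arguing as in case (1) of the proof of Proposition~\ref{p:W-nonspec}, but invoking Lemma~\ref{l:land-par} in place of Lemma~\ref{l:landvk} to force nearby members of $\widetilde\Wc_\la$ onto parameter rays $\Rc_\la(\vk)$ with $\vk\in(\theta_1,\theta_2)$, this puts $f$ on $\Rc_\la(\theta_1)\cup\Rc_\la(\theta_2)$, with the customary modification for the two exceptional holes $(\frac16,\frac13)$, $(\frac23,\frac56)$. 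As in case (2) there, the Yoccoz inequality shows that an $f^{\circ m}$-parabolic point of multiplier $1$ distinct from $0$ occurs only for finitely many $b$. Altogether $\partial\widetilde\Wc_\la(\theta_1,\theta_2)\subset\Rc_\la(\theta_1)\cup\Rc_\la(\theta_2)\cup F$ with $F$ a finite set.

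Finally, since the accumulation set of the parameter ray $\Rc_\la(\theta_i)$ is a connected subset of $\Cc_\la$ that (being a conformal-coordinate ray under $\Phi_\la$) cannot meet or accumulate on the other parameter ray, the inclusion just obtained forces each of $\Rc_\la(\theta_1)$, $\Rc_\la(\theta_2)$ to land at a single point of $F$. If these two landing points were distinct, then $\ol{\Rc_\la(\theta_1)}\cup\ol{\Rc_\la(\theta_2)}$ would be an arc through $\infty$, hence non-separating, so $\C\sm(\ol{\Rc_\la(\theta_1)}\cup\ol{\Rc_\la(\theta_2)}\cup F)$ would be connected; but the component $\Wc_0$ of $\widetilde\Wc_\la(\theta_1,\theta_2)$ containing $S$ is open and relatively closed in that set (its topological boundary in $\C$ lies in $\Rc_\la(\theta_1)\cup\Rc_\la(\theta_2)\cup F$), whence $\Wc_0$ would be the whole complement and would contain points of $\Rc_\la(\vk_0)$ for any $\vk_0\notin[\theta_1,\theta_2]$, contradicting the second preliminary remark. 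Hence $\Rc_\la(\theta_1)$ and $\Rc_\la(\theta_2)$ land at the same point. The main obstacle is the openness step: everything rests on the robustness of the ``attracting petals avoid $W_f$'' condition, which requires a firm grip on the cyclic order of the $2q$ rays at $0$ and on the impossibility of extra rays suddenly landing at $0$ — exactly what the type D structure of Lemma~\ref{l:land-par} and the constraint $T_{p/q}(b)\ne 0$ from Proposition~\ref{P:Tpq} deliver — while the separate treatment of the two special holes $(\frac16,\frac13)$, $(\frac23,\frac56)$ is a secondary book-keeping nuisance.
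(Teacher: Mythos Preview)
Your proposal is correct and follows essentially the same route as the paper: both argue by analogy with Proposition~\ref{p:W-nonspec}, showing that $\widetilde\Wc_\la(\theta_1,\theta_2)$ is open and that its boundary lies in $\Rc_\la(\theta_1)\cup\Rc_\la(\theta_2)$ together with a finite set of parameters, from which the common landing follows. The paper's proof is a three-line sketch (``similar to Proposition~\ref{p:W-nonspec}; by Lemma~\ref{l:land-par} the boundary lies in $\Rc_\la(\theta_1)\cup\Rc_\la(\theta_2)$ union the zeros of $T_{p/q}$''), whereas you spell out the openness argument via Proposition~\ref{P:ray0stable} and Lemma~\ref{l:rp} and the boundary analysis in full; your finite set $F$ is slightly larger than the paper's (you also allow parabolic periodic points $\ne 0$ of multiplier $1$), which is harmless and arguably more careful. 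One small point: the ``customary modification for the two exceptional holes $(\frac16,\frac13)$, $(\frac23,\frac56)$'' is unnecessary here, since Lemma~\ref{l:land-par} already forces $\vk\in(\theta_1,\theta_2)$ directly, without the two-interval caveat that appeared in the non-special case via Lemma~\ref{l:landvk}.
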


\begin{proof}
 The proof is similar to that of Proposition \ref{p:W-nonspec}.
 If $\widetilde\Wc_\la(\theta_1,\theta_2)\ne \0$, then, by
 Lemma \ref{l:land-par}, the boundary of $\widetilde\Wc_\la(\theta_1,\theta_2)$
 lies in the union of the rays $\Rc_\la(\theta_1)$, $\Rc_\la(\theta_2)$
 and the polynomials $f_{\la,b}$ corresponding to roots $b$ of the polynomial $T_{p/q}$.
 Thus, the rays $\Rc_\la(\theta_1)$ and $\Rc_\la(\theta_2)$ land at the same point.
\end{proof}

Let us now define \emph{special $($parameter$)$ wakes}.

\begin{dfn}\label{d:spec-w}
Let $(\ol\theta_1,\ol\theta_2)$ be a hole of $\Qf$. Suppose that
there exists $f\in \Fc_\la\sm \Cc_\la$ such that the dynamic rays
$R_f(\theta_1+\frac 13)$ and $R_f(\theta_2+\frac 23)$ both land at
$0$ and the parabolic domains of $f$ at $0$ are disjoint from
$W_f(\theta_1+\3,\theta_2+\4)$. Then the set
$\Wc_\la(\theta_1,\theta_2)$ bounded by the parameter rays
$\Rc_\la(\theta_1)$, $\Rc_\la(\theta_2)$ and their common landing
point is called a \emph{special $($parameter$)$ wake}. Recall that
the landing point of the rays $\Rc_\la(\theta_1)$ and
$\Rc_\la(\theta_2)$ is called the \emph{root point} of the parameter
wake $\Wc_\la(\theta_1,\theta_2)$.
Recall also that points in the parameter planes $\Fc_\la$ are polynomials.
\end{dfn}

The next claim complements Proposition \ref{p:W-spec1}.

\begin{prop}\label{p:W-spec2}
If $\widetilde\Wc_\la(\theta_1,\theta_2)\ne \0$, and $f_{root}=f_{\la,b_0}$ is the common landing point of the parameter rays $\Rc_\la(\theta_1)$, $\Rc_\la(\theta_2)$, then $T_{p/q}(b_0)=0$.
Moreover,
$\widetilde\Wc_\la(\theta_1,\theta_2)$ coincides with $\Wc_\la(\theta_1,\theta_2)$,
possibly with several punctures $f_{\la,b_i}$, where $1\le i\le N$ and $T_{p/q}(b_i)=0$.
In particular, the map $f_{\la, b_i}$, for every $0\le i\le N$,
has two cycles of parabolic domains at its unique parabolic point $0$.
\end{prop}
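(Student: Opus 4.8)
The plan is to run the argument behind Propositions~\ref{p:W-nonspec} and~\ref{p:W-spec1}, while keeping track of the extra information. Write $\Zc_0=\{f_{\la,b}\mid T_{p/q}(b)=0\}$; this is a finite set, and by Proposition~\ref{P:Tpq} each of its points has two cycles of parabolic domains at $0$, hence has both critical points absorbed into the parabolic basin of $0$ and so lies in $\Cc_\la$. First I would record the basic properties of $\widetilde\Wc_\la(\theta_1,\theta_2)$. By the discussion preceding Proposition~\ref{p:W-spec1}, any $f=f_{\la,b}$ in it has exactly one cycle of parabolic domains at $0$, so $T_{p/q}(b)\neq0$; thus $\widetilde\Wc_\la(\theta_1,\theta_2)\cap\Zc_0=\emptyset$. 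The set $\widetilde\Wc_\la(\theta_1,\theta_2)$ is open: off $\Zc_0$ the condition that $R_f(\theta_1+\3)$ and $R_f(\theta_2+\4)$ land at $0$ is open by Proposition~\ref{P:ray0stable}, while, since the attracting petals at $0$ and the two smooth rays bounding $W_f(\theta_1+\3,\theta_2+\4)$ vary continuously (Lemmas~\ref{l:rep} and~\ref{l:rp}), disjointness of those petals from $W_f(\theta_1+\3,\theta_2+\4)$ is open as well. Finally, by Lemma~\ref{l:land-par} the escaping polynomials of $\widetilde\Wc_\la(\theta_1,\theta_2)$ all lie on parameter rays $\Rc_\la(\vk)$ with $\vk\in(\theta_1,\theta_2)$; conversely every such ray lies in $\widetilde\Wc_\la(\theta_1,\theta_2)$, because for $f\in\Rc_\la(\vk)$ Theorem~\ref{T:ParDynHoles} identifies $\Uf(\cf_\vk)$ with the periodic-type gap having major $\ol{(\theta_1+\3)(\theta_2+\4)}$, so by Proposition~\ref{p:land-endholes} the rays $R_f(\theta_1+\3),R_f(\theta_2+\4)$ are smooth and land at a common periodic point, which in the special-wake situation must be $0$, and by Lemma~\ref{l:land-in-k} (the cut does not separate $K(f^*)$ since $\vk\in(\theta_1,\theta_2)$) together with the fact that the parabolic domains of $f^*$ lie in $K(f^*)$, these domains sit in the wedge complementary to $W_f(\theta_1+\3,\theta_2+\4)$. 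Hence $\widetilde\Wc_\la(\theta_1,\theta_2)\supset\bigcup_{\vk\in(\theta_1,\theta_2)}\Rc_\la(\vk)$, the escape part of $\Wc_\la(\theta_1,\theta_2)$.

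Next I would invoke the boundary estimate obtained (via Lemma~\ref{l:land-par}, exactly as for Proposition~\ref{p:W-nonspec}) in the proof of Proposition~\ref{p:W-spec1}, namely $\partial\widetilde\Wc_\la(\theta_1,\theta_2)\subset\Rc_\la(\theta_1)\cup\Rc_\la(\theta_2)\cup\Zc_0$. Letting $\vk\to\theta_1^{+}$ with $r>1$ fixed shows $\Rc_\la(\theta_1)\subset\overline{\widetilde\Wc_\la(\theta_1,\theta_2)}$, and since $\Rc_\la(\theta_1)$ lands at $f_{root}$ this gives $f_{root}\in\overline{\widetilde\Wc_\la(\theta_1,\theta_2)}$; as $\widetilde\Wc_\la(\theta_1,\theta_2)$ is open and polynomials of $\Rc_\la(\theta_1)$ accumulating on $f_{root}$ are not in it (there $R_f(\theta_1+\3)$ crashes into $\omega_2(f)$ and so does not land at $0$), we conclude $f_{root}\in\partial\widetilde\Wc_\la(\theta_1,\theta_2)$. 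Because $f_{root}\in\Cc_\la$ while $\Rc_\la(\theta_1)\cup\Rc_\la(\theta_2)\subset\Fc_\la\setminus\Cc_\la$, the boundary estimate forces $f_{root}\in\Zc_0$, i.e.\ $T_{p/q}(b_0)=0$, which is the first assertion. Knowing this, the arc $\Rc_\la(\theta_1)\cup\{f_{root}\}\cup\Rc_\la(\theta_2)$ together with the finitely many remaining points of $\Zc_0$ separates $\C$ into precisely $\Wc_\la(\theta_1,\theta_2)\setminus\Zc_0$ and $(\C\setminus\overline{\Wc_\la(\theta_1,\theta_2)})\setminus\Zc_0$; being open with boundary inside $\Rc_\la(\theta_1)\cup\Rc_\la(\theta_2)\cup\Zc_0$, the set $\widetilde\Wc_\la(\theta_1,\theta_2)$ is a union of these two pieces, and it equals the first one, as it contains the escape part of the wake but, by Lemma~\ref{l:land-par}, cannot contain the escaping polynomials (on rays $\Rc_\la(\vk)$, $\vk\notin[\theta_1,\theta_2]$) of the second. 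This yields $\widetilde\Wc_\la(\theta_1,\theta_2)=\Wc_\la(\theta_1,\theta_2)\setminus\{f_{\la,b_1},\dots,f_{\la,b_N}\}$ with $f_{\la,b_1},\dots,f_{\la,b_N}$ exactly the points of $\Zc_0$ lying in $\Wc_\la(\theta_1,\theta_2)$. For the last assertion, for each $0\le i\le N$ Proposition~\ref{P:Tpq} gives that $f_{\la,b_i}^{\circ q}$ has $2q$ parabolic Fatou domains at $0$ forming two cycles under $f_{\la,b_i}$; each cycle contains a critical point among its domains, so both critical points feed $0$, and by the Fatou--Shishikura inequality there is no further non-repelling cycle, whence $0$ is the unique parabolic point.

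The hard part will be the boundary estimate $\partial\widetilde\Wc_\la(\theta_1,\theta_2)\subset\Rc_\la(\theta_1)\cup\Rc_\la(\theta_2)\cup\Zc_0$ that I propose to borrow from Proposition~\ref{p:W-spec1}: off $\Zc_0$ a boundary point $f$ must have one of $R_f(\theta_1+\3),R_f(\theta_2+\4)$ non-smooth, and—since these arguments are periodic—the crash occurs directly at $\omega_2(f)$, which forces $f$ onto one of $\Rc_\la(\theta_1),\Rc_\la(\theta_1-\3),\Rc_\la(\theta_2),\Rc_\la(\theta_2-\3)$; excluding the two ``$-\3$'' families requires analyzing, via Lemma~\ref{l:landvk}, the polynomials of $\widetilde\Wc_\la(\theta_1,\theta_2)$ lying near such an $f$, exactly as in Proposition~\ref{p:W-nonspec}, with the usual small adjustments for the two period-one holes.
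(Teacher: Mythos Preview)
Your proposal is correct and follows essentially the same route as the paper, which simply says the proof is ``similar to that of Proposition~\ref{p:W-nonspec}'' with the last claim following from Proposition~\ref{P:Tpq}. You have unpacked that remark in detail: openness of $\widetilde\Wc_\la(\theta_1,\theta_2)$, the boundary estimate $\partial\widetilde\Wc_\la\subset\Rc_\la(\theta_1)\cup\Rc_\la(\theta_2)\cup\Zc_0$, identification of $f_{root}$ as a boundary point lying in $\Cc_\la$ and hence in $\Zc_0$, and the resulting description of $\widetilde\Wc_\la$ as the punctured wake.

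Two small points worth tightening. First, your citation of Lemma~\ref{l:rep} for continuity of the rays $R_f(\theta_1+\frac13)$, $R_f(\theta_2+\frac23)$ is misplaced, since those rays land at the parabolic point $0$, not at a repelling point; the relevant stability statement is Proposition~\ref{P:ray0stable} (which you also cite), together with Lemma~\ref{l:rp} for the petals. Second, the phrase ``which in the special-wake situation must be $0$'' is a little quick: the hypothesis is only $\widetilde\Wc_\la\ne\emptyset$, not that the wake is already known to be special. The cleanest way to get $\bigcup_{\vk\in(\theta_1,\theta_2)}\Rc_\la(\vk)\subset\widetilde\Wc_\la$ is to note that $\widetilde\Wc_\la$ must meet the escape locus (otherwise it would be a nonempty open set with finite boundary $\subset\Zc_0$), so by Lemma~\ref{l:land-par} it meets some $\Rc_\la(\vk_0)$ with $\vk_0\in(\theta_1,\theta_2)$, and then the boundary estimate plus connectedness of $\bigcup_{\vk\in(\theta_1,\theta_2)}\Rc_\la(\vk)$ gives the full inclusion. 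These are cosmetic fixes; the structure of your argument matches the paper's.
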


\begin{proof}
The proof is similar to that of Proposition \ref{p:W-nonspec}.
The last claim holds by Proposition~\ref{P:Tpq}.
\end{proof}

We are now ready to prove Theorem A.

\begin{proof}[Proof of Theorem A]
 Consider any hole $(\ol\theta_1,\ol\theta_2)$ of $\Qf$.
 Choose $\varkappa\in (\theta_1,\theta_2)$ and a polynomial $f\in\Rc_\la(\varkappa)$.
 By Lemma \ref{l:landvk}, the dynamic rays
 $R_f(\theta_1+\frac 13)$ and $R_f(\theta_2+\frac 23)$ land at the same
 periodic point $z$.
 Since $\omega_2(f)$ is escaping, then Fatou-Shishikura inequality implies that the point $z$ is either repelling or $0$.
 If $z$ is repelling, then, by Proposition \ref{p:W-nonspec},
 the polynomial $f$ belongs to the non-special parameter wake bounded by the parameter
 rays $\Rc_\la(\theta_1)$, $\Rc_\la(\theta_2)$ and their common landing point.
 In particular, the parameter rays $\Rc_\la(\theta_1)$, $\Rc_\la(\theta_2)$ land
 at the same point.

Now let $z=0$. If $W_f(\ta_1+\3,\ta_2+\4)$ is disjoint from all
attracting petals at $0$, then, by Proposition \ref{p:W-spec2}, the polynomial $f$
belongs to the special parameter wake bounded by the parameter rays
$\Rc_\la(\theta_1)$, $\Rc_\la(\theta_2)$ and their common landing
point. We claim that the remaining case (where $z=0$ but the wedge
$W_f(\ta_1+\3,\ta_2+\4)$ contains some parabolic domain $\Omega$ at
$0$) is impossible. Indeed, suppose $f$ has the just listed properties.
Since $J(f)$ is disconnected, $f$ is immediately renormalizable; we let
$f^*$ be the polynomial-like restriction of $f$. The fact that the
orbits of all points in $\Omega$ converge to $0$ implies that
$\ol\Omega\subset K(f^*)$. Hence there are dynamic rays in the wedge
$W_f(\ta_1+\3,\ta_2+\4)$ that land at points of $\ol{\Omega}\subset
K^*$. However, by Lemma \ref{l:not-accum}, no dynamic ray in the wedge
$W_f(\ta_1+\3,\ta_2+\4)$ accumulates in $K(f^*)$, a contradiction.
\end{proof}

\section{Dynamical description of parameter wakes}
\label{s:dyndescr}
In this section, we expand the dynamical description of polynomials in a given parameter wake.
We will also discuss the dynamics of polynomials corresponding to
the root points of parameter wakes.

\subsection{Special parameter wakes}\label{ss:spec-par-w}
In this subsection, we assume that $\lambda=e^{2\pi i p/q}$ is a root of unity.
By definition, special parameter wakes can exist only in these $\lambda$-slices $\Fc_\la$.
We will need the following proposition.

\begin{prop}
 \label{P:landing-sp}
Let $\Wc_\la(\theta_1,\theta_2)$ be a special parameter wake, and $f_{root}$ be its root point.
Then, for every $f\in\Wc_\la(\theta_1,\theta_2)\cup\{f_{root}\}$, the rays $R_f(\theta_1+\frac 13)$ and $R_f(\theta_2+\frac 23)$ land at $0$.
\end{prop}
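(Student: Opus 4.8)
The plan is to establish landing at $0$ separately for polynomials in the open wake $\Wc_\la(\theta_1,\theta_2)$ and for the root point $f_{root}$, using the special wake machinery from Section~\ref{ss:wakes}. By Definition~\ref{d:spec-w}, a special parameter wake exists precisely because there is some $f_0\in\Fc_\la\sm\Cc_\la$ with $R_{f_0}(\theta_1+\frac13)$ and $R_{f_0}(\theta_2+\frac23)$ landing at $0$ and with the parabolic domains of $f_0$ at $0$ disjoint from $W_{f_0}(\theta_1+\3,\theta_2+\4)$; thus $\widetilde\Wc_\la(\theta_1,\theta_2)\ne\emptyset$. By Proposition~\ref{p:W-spec2}, $\widetilde\Wc_\la(\theta_1,\theta_2)$ coincides with $\Wc_\la(\theta_1,\theta_2)$ except possibly for finitely many punctures $f_{\la,b_i}$ with $T_{p/q}(b_i)=0$. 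So the first step is to handle those punctures: if $f=f_{\la,b_i}$ with $T_{p/q}(b_i)=0$, then by Proposition~\ref{P:Tpq} the point $0$ has $2q$ parabolic Fatou domains at $0$ and $2q$ external rays land at $0$; one then argues (as in the proof of Theorem A, using that $\theta_1+\frac13$, $\theta_2+\frac23$ are periodic of the appropriate period and using continuity/Lemma~\ref{l:rep}-type stability from nearby non-punctured parameters) that the two specific rays $R_f(\theta_1+\frac13)$, $R_f(\theta_2+\frac23)$ are among those landing at $0$.

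For $f$ in $\Wc_\la(\theta_1,\theta_2)\cap\widetilde\Wc_\la(\theta_1,\theta_2)$ the conclusion is immediate from the definition of $\widetilde\Wc_\la$. So it remains to treat the root point $f_{root}=f_{\la,b_0}$. Here the second step is: by Proposition~\ref{p:W-spec2}, $T_{p/q}(b_0)=0$, so again by Proposition~\ref{P:Tpq} the map $f_{root}$ has $2q$ external rays landing at $0$ (two cycles of parabolic domains). I would then take a sequence $f_n\in\Wc_\la(\theta_1,\theta_2)\sm\Cc_\la$ with $f_n\to f_{root}$ along which $R_{f_n}(\theta_1+\frac13)$ and $R_{f_n}(\theta_2+\frac23)$ land at $0$, and pass to the limit. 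Since $\theta_1+\frac13$ and $\theta_2+\frac23$ are rational (periodic) and $0$ is a parabolic fixed point of $f_{root}$ of the expected type, the rays $R_{f_{root}}(\theta_1+\frac13)$, $R_{f_{root}}(\theta_2+\frac23)$ are well-defined (they may crash at $\omega_2(f_{root})$ only if $f_{root}\notin\Cc_\la$, which is not the case at a root point; in any case one works with the associated external rays and their petals). Using the continuous dependence of repelling petals on parameters (Lemma~\ref{l:rp}, Lemma~\ref{l:par}) together with the fact that for $f_n$ the two rays enter a repelling petal of $0$ and this petal varies continuously, one concludes that for $f_{root}$ the two rays still land at $0$ rather than escaping to some other periodic point: a limit landing point would have to be a periodic point attached to the limiting petal, hence $0$ itself.

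The main obstacle I anticipate is the limiting argument at the root point: one must rule out the possibility that, as $f_n\to f_{root}$, the common landing point of the two rays jumps away from $0$ to a different periodic point (which could happen in principle if a ray becomes non-smooth or if the petal structure degenerates). The key tools to close this gap are Lemma~\ref{l:rp} on continuity of the $q$ repelling petals, Lemma~\ref{l:par} (a non-limit landing point must be parabolic, and by Fatou--Shishikura there is only one non-repelling cycle, namely $0$), and Proposition~\ref{P:ray0stable} / Proposition~\ref{P:Tpq} to control exactly which rays land at $0$ when $T_{p/q}(b_0)=0$. A secondary subtlety is bookkeeping around the two exceptional holes $(\frac16,\frac13)$ and $(\frac23,\frac56)$, where $\gf=\ol{0\frac12}$ and one must argue via the two-wedge picture exactly as in Propositions~\ref{p:W-nonspec} and~\ref{p:W-spec2}; but since Proposition~\ref{p:W-spec2} already incorporates that case, invoking it should suffice.
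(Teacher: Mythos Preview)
Your overall decomposition matches the paper's: the case $T_{p/q}(b)\ne 0$ is immediate from Proposition~\ref{p:W-spec2}, and the work lies entirely at parameters with $T_{p/q}(b)=0$ (the punctures and the root point). However, your main tool for that case---passing to the limit using continuity of repelling petals via Lemma~\ref{l:rp}---has a genuine gap. Lemma~\ref{l:rp} explicitly requires the leading coefficient $a_t\ne 0$ in the expansion $g_t(z)=z+a_tz^{q+1}+o(z^{q+1})$; here $a_t=T_{p/q}(b)$, and this vanishes precisely at the parameters you are trying to handle. So the petal structure does not vary continuously in the way you want (indeed, at $f_{root}$ the number of petals jumps from $q$ to $2q$), and the ``limiting petal'' argument does not go through.

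The paper's argument is shorter and sidesteps this entirely. Rather than tracing the landing point forward from nearby parameters, it argues by contradiction at the bad parameter itself. Suppose $T_{p/q}(b)=0$ and, say, $R_f(\theta_1+\frac13)$ lands at some $z\ne 0$. Then $z$ is periodic, hence repelling or parabolic. If $z$ were repelling, Lemma~\ref{l:rep} (stability of rays at \emph{repelling} points, not Lemma~\ref{l:rp}) forces $R_g(\theta_1+\frac13)$ to land near $z$ for all nearby $g$; but by Proposition~\ref{p:W-spec2} there are $g$ arbitrarily close to $f$ with that ray landing at $0$, a contradiction. If $z\ne 0$ were parabolic, this violates Fatou--Shishikura, since both critical points of $f$ lie in the two cycles of parabolic domains at $0$. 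You in fact list exactly these two ingredients in your ``obstacle'' paragraph; the fix is simply to run them as a direct contradiction rather than through a petal-continuity limit.
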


\begin{proof}
Consider $f=f_{\la,b}\in\Wc_\la(\theta_1,\theta_2)$ or $f=f_{root}$. If
$T_{p/q}(b)\ne 0$, then, by Propositions \ref{p:W-spec1} and \ref{p:W-spec2}, the
rays $R_{f}(\theta_1+\frac 13)$ and $R_{f}(\theta_2+\frac 23)$ land at
$0$. Suppose now that $T_{p/q}(b)=0$ but at least one of the rays
$R_{f}(\theta_1+\frac 13)$, $R_{f}(\theta_2+\frac 23)$ (say, the
former) fails to land at $0$. The ray $R_{f}(\theta_1+\frac 13)$ has to
land somewhere, say, at a point $z\ne 0$. The point $z$ is repelling or
parabolic. It cannot be repelling because then, by Lemma \ref{l:rep}, for
polynomials $g$ sufficiently close to $f$ the ray $R_g(\theta_1+\frac
13)$ will have to land at a point close to $z$ while, by
Proposition \ref{p:W-spec2}, there are polynomials $g$ arbitrarily close to $f$
such that $R_g(\theta_1+\frac 13)$ lands at $0$, a contradiction. On
the other hand, by the Fatou--Shishikura inequality, the point $z\ne 0$
cannot be parabolic either (observe that both critical points of $f$
are in parabolic domains at $0$).
\end{proof}

A choice of the combinatorial rotation number $p/q$ provides a
classification of holes of $\Qf$ into \emph{special} holes and
\emph{non-special} holes.

\begin{dfn}\label{d:spec-hol}
A hole $(\ol\theta_1,\ol\theta_2)$ of $\Qf$ is \emph{$(p/q)$-special}
if the map $\si_3$ preserves the cyclic order on the periodic orbit of
$\theta_1+1/3$, and the combinatorial rotation number of this orbit under $\si_3$
equals $p/q$. If $(\ol\theta_1,\ol\theta_2)$ is a special hole, then the
orbit of $\theta_2+2/3$ has combinatorial rotation number $p/q$ as
well.
Indeed, since the orbit of $\ol{(\ta_1+\frac 13)(\ta_2+\frac 23)}$ is on
the boundary of some invariant quadratic gap, $\si_3$ permutes this orbit
as a combinatorial rotation.
All holes of $\Qf$ that are not special are called \emph{non-special holes}.
\end{dfn}

Lemma~\ref{l:spec-wake-d} easily follows from Definition~\ref{d:spec-hol}.

\begin{lem}\label{l:spec-wake-d}
A hole $(\ol\theta_1,\ol\theta_2)$ is $p/q$-special if and only if
there exists a finite type D invariant gap $\gf$ with major
$\ol{\theta_1+\frac 13\,\theta_2+\frac 23}$. There
are exactly $2q$ special holes of $\Qf$ corresponding to the rotation number $p/q$.
\end{lem}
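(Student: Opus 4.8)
The plan is to extract the equivalence directly from results of \cite{BOPT} recorded above, and then get the count by bookkeeping. For the forward implication: if $(\ol\theta_1,\ol\theta_2)$ is $p/q$-special then, by Definition~\ref{d:spec-hol}, the angle $\theta_1+\frac 13$ is periodic with combinatorial rotation number $p/q$, and (as noted in that definition) so is $\theta_2+\frac 23$; Lemma~\ref{l:all-angles} then applies verbatim and yields a finite type D invariant gap with major $\ol{\theta_1+\frac 13\,\theta_2+\frac 23}$.

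For the converse, suppose $\gf$ is a finite type D invariant gap with major $\Mf=\ol{\theta_1+\frac 13\,\theta_2+\frac 23}$, and let $p/q$ be its combinatorial rotation number (every finite invariant gap carries one). By Theorem~\ref{t:typeD-quad}, $\Mf$ is a major of a quadratic invariant gap of periodic type, so $\Mf$ is a periodic edge and $\theta_1+\frac 13$ is a periodic angle whose $\si_3$-orbit $\mathcal O$ is a nonempty $\si_3$-invariant subset of the vertex set $\gf'$. Since $\si_3|_{\gf'}$ is conjugate, by an orientation-preserving circle homeomorphism, to the rigid rotation by $p/q$, the set $\mathcal O$ corresponds to an invariant subset of that rotation, which is necessarily a union of orbits of the rotation, each a $q$-cycle of combinatorial rotation number $p/q$; pulling back, $\si_3$ preserves the cyclic order of $\mathcal O$ and $\mathcal O$ has combinatorial rotation number $p/q$, i.e.\ $(\ol\theta_1,\ol\theta_2)$ is $p/q$-special. (That this is genuinely a hole of $\Qf$ follows from Theorems~\ref{t:typeD-quad} and~\ref{T:ParDynHoles}, and the matching of the two occurrences of $p/q$ is automatic, a periodic angle having a unique rotation number.)

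For the count, the equivalence identifies the $p/q$-special holes of $\Qf$ with the holes $(\ol\theta_1,\ol\theta_2)$ for which $\ol{\theta_1+\frac 13\,\theta_2+\frac 23}$ is a major of a type D invariant gap of rotation number $p/q$. By Lemma~\ref{l:typeD-count} there are $q$ such gaps, each with exactly two majors, so at most $2q$ chords occur; by Theorem~\ref{t:typeD-quad} each is the periodic major of a quadratic invariant gap of periodic type, and by Theorem~\ref{T:ParDynHoles} a periodic major determines its hole of $\Qf$ uniquely with the single exception of $\ol{0\frac 12}$, which corresponds to the two holes $(\frac16,\frac13)$ and $(\frac23,\frac56)$. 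If $q\ge 2$, then $\ol{0\frac 12}$ does not occur (its endpoints are fixed, not of period $q$), the $2q$ majors are pairwise distinct, and there are exactly $2q$ special holes. If $q=1$ (hence $p/q=0/1$, $\la=1$), the unique type D object of rotation number $0/1$ is the degenerate leaf $\ol{0\frac 12}$, whose two majors are its two sides and answer to the two holes $(\frac16,\frac13)$, $(\frac23,\frac56)$, again $2=2q$.

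The step I expect to be the main obstacle is the distinctness used when $q\ge 2$: that the $2q$ pairs (type D gap, chosen major) give $2q$ distinct holes, equivalently that distinct type D gaps of rotation number $p/q$ have no common major. I would prove this from the uniqueness of the quadratic invariant gap of periodic type with a prescribed periodic major (Theorem~\ref{T:ParDynHoles}) together with the structural description of type D gaps in terms of quadratic invariant gaps from \cite{BOPT} (a major reconstructing its type D gap); the exceptional chord $\ol{0\frac 12}$ has to be isolated from this argument and handled by hand, and it accounts entirely for the case $q=1$, where one chord produces two holes.
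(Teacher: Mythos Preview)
Your proof is correct and follows essentially the same route as the paper: the equivalence via Lemma~\ref{l:all-angles} and the definition, and the count via Lemma~\ref{l:typeD-count} (the paper in fact leaves the equivalence ``to the reader'' and dispatches the count in one line, so your version is strictly more detailed). Your worry about distinctness is legitimate but has a simpler resolution than the one you sketch: for $q\ge 2$, a major $\Mf$ of a type D gap $\gf$ determines $\gf$ outright, since the vertex set $\gf'$ must contain the $\si_3$-orbits of both endpoints of $\Mf$, and the convex hull of these two $q$-cycles is already a type D gap of rotation number $p/q$ with major $\Mf$; hence distinct type D gaps cannot share a major, and the $2q$ majors yield $2q$ distinct holes.
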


\begin{proof}
The first claim of the lemma is left to the reader. Now,
by Lemma \ref{l:typeD-count}, there are $q$ type D finite invariant
gaps of rotation number $p/q$. Every type D finite invariant gap of
rotation number $p/q$ has two major holes giving rise to two
$p/q$-special holes. Overall this yields $2q$ special holes as desired.
\end{proof}

By Lemma~\ref{l:spec-wake-d}, 
there
are exactly 
$2q$ special holes of $\Qf$ corresponding to the rotation number $p/q$.
Theorem~\ref{t:sp-wakes-nsp} helps to distinguish
among various parameter wakes. 
Given $f\in \Fc_\la$, denote by $\gf_f$ the convex hull of the set of
points $\ol\theta$ such that $R_f(\theta)$ lands at $0$.  

\begin{thm}[Special parameter wakes vs. special holes]
\label{t:sp-wakes-nsp}
The pa\-ra\-meter wake $\Wc_\la(\theta_1,\theta_2)$ is a special parameter wake if and only if the hole
$(\theta_1,\theta_2)$ of $\Qf$ is $p/q$-special. 
In particular, there are $2q$ special parameter wakes in $\Fc_\la$.
\end{thm}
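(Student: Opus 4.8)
The plan is to prove the two implications separately and then read off the count from Lemma~\ref{l:spec-wake-d}. Throughout I would use that, by Definition~\ref{d:spec-w}, the wake $\Wc_\la(\theta_1,\theta_2)$ is special exactly when $\widetilde\Wc_\la(\theta_1,\theta_2)\ne\0$, and that, by Lemma~\ref{l:spec-wake-d}, the hole $(\theta_1,\theta_2)$ is $p/q$-special if and only if there is a finite type D invariant gap $\gf$ with major $\Mf=\ol{(\theta_1+\3)(\theta_2+\4)}$.

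For the implication from special wake to special hole, I would pick $f\in\widetilde\Wc_\la(\theta_1,\theta_2)$ and invoke Lemma~\ref{l:land-par}: by its proof, $\gf_f$ is a finite invariant gap of type D and $(\theta_1+\3,\theta_2+\4)$ is a major hole of $\gf_f$, so $\Mf$ is a major edge of $\gf_f$. Since $0$ is a parabolic fixed point of $f$ of multiplier $\la=e^{2\pi ip/q}$, Milnor's description of the rays at a parabolic fixed point (\cite[Lemma~18.12]{M}) shows that $\si_3$ permutes the vertices of $\gf_f$ as a combinatorial rotation of number $p/q$; in particular the periodic orbit of $\theta_1+\3$ has combinatorial rotation number $p/q$, so $(\theta_1,\theta_2)$ is $p/q$-special.

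For the converse, given a finite type D gap $\gf$ with major $\Mf$, I would produce a witness: choose $\vk\in(\theta_1,\theta_2)$ and $f\in\Rc_\la(\vk)$ (a nonempty parameter ray, so $f\in\Fc_\la\sm\Cc_\la$). Since $(\theta_1,\theta_2)$ is precisely the $\pi$-fiber of the periodic-type quadratic invariant gap with major $\Mf$ (Theorem~\ref{T:ParDynHoles}), one has $\Uf_c(\cf_\vk)=\Uf(\cf_\vk)$ with major $\Mf$, so Proposition~\ref{p:land-endholes}(2) gives that the (smooth) rays $R_f(\theta_1+\3)$ and $R_f(\theta_2+\4)$ land at a common periodic point $z\in K(f^*)$. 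The main step is to show $z=0$: transporting via the hybrid conjugacy of $f^*$ with $g=z^2+c$ (where $c$ lies on the main cardioid boundary, as $0$ has multiplier $\la$ of modulus $1$) and via the correspondence $\xi$ of Theorem~\ref{t-extepoly}, which preserves the cyclic order of rays around $K(f^*)$, the point $z$ is the landing point of rays whose arguments are periodic with combinatorial rotation number $p/q$ under doubling — here the type D structure of $\gf$ is used — and the only periodic point of $g$ at which rays land with combinatorial rotation number $p/q$ is the parabolic fixed point of $g$, which corresponds to $0$. Granting $z=0$, the remaining condition is automatic: any parabolic domain $\Omega$ of $f$ at $0$ satisfies $\ol\Omega\subset K(f^*)$, while, exactly as in the proof of Theorem~A, Lemma~\ref{l:not-accum} shows that $K(f^*)\cap W_f(\theta_1+\3,\theta_2+\4)=\0$; hence $f\in\widetilde\Wc_\la(\theta_1,\theta_2)$ and $\Wc_\la(\theta_1,\theta_2)$ is special. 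The holes $(\frac16,\frac13)$ and $(\frac23,\frac56)$, where $\la=1$ and $\gf$ degenerates to the leaf $\ol{0\frac12}$, would be treated by the same argument, reading $\gf$ as a two-edged degenerate gap.

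Finally, by Lemma~\ref{l:spec-wake-d} there are exactly $2q$ holes of $\Qf$ that are $p/q$-special, so by the equivalence just proved there are exactly $2q$ special parameter wakes in $\Fc_\la$. I expect the main obstacle to be the claim $z=0$ in the converse direction: one must rule out that the common landing point of the two dynamic rays is merely repelling, and this requires carefully tracking the combinatorial rotation number through the renormalization and the hybrid conjugacy, using that $K(f^*)$ is the filled Julia set of a quadratic polynomial on the main cardioid boundary, whose only periodic cutpoint is its parabolic fixed point.
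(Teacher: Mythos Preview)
Your forward direction is close to the paper's, though you use a witness $f\in\Fc_\la\sm\Cc_\la$ together with Lemma~\ref{l:land-par}, whereas the paper works at the root polynomial $f_{root}$ and invokes Proposition~\ref{p:W-spec2} (two cycles of parabolic domains at $0$ force $\gf_{f_{root}}$ to be of type D, and then $\Mf$ is shown to be an edge). Both are valid.

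The converse direction is where the two routes genuinely differ. The paper argues by contradiction and stays entirely combinatorial: assuming the wake is non-special, the rays $R_f(\theta_1+\frac13)$, $R_f(\theta_2+\frac23)$ land at a repelling point $z\ne 0$; separately, the parabolic point $0$ has its own gap $\gf_f$ of rotation number $p/q$ sitting inside the quadratic gap $\Uf$ with major $\Mf$. Projecting both $\gf_f$ and the $\si_3$-orbit of $\Mf$ via the collapse $\psi_\Uf:\Uf'\to\uc$ (which semi-conjugates $\si_3|_{\Uf'}$ with $\si_2$) produces two $\si_2$-invariant finite sets of rotation number $p/q$; by uniqueness of such a set, a vertex of $\gf_f$ must coincide with an endpoint of $\Mf$, contradicting $z\ne 0$. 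No hybrid straightening is used.

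Your route instead shows $z=0$ directly via the hybrid conjugacy with $g=z^2+c$ and the ray-correspondence $\xi$. This can be made to work, but the key claim ``the quadratic-like ray at $z$ has argument of combinatorial rotation number $p/q$ under doubling'' is not justified by ``$\xi$ preserves cyclic order'' alone: monotonicity does not by itself pin down the induced argument map. What is actually needed is that the map on arguments induced by $\xi$, from one-sided arguments in $\Uf_c'(\cf_\vk)$ to $\uc$, coincides with the canonical collapse $\psi_\Uf$; this follows because the induced map is monotone, onto (Theorem~\ref{t-extepoly} says $\xi$ is onto), and semi-conjugates $\si_3|_{\Uf'}$ with $\si_2$, and such a map is unique. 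Once this identification is made explicit, your argument (the unique $\si_2$-orbit of rotation number $p/q$ lands, for $g$ on the cardioid boundary, at the parabolic fixed point, hence $z=0$) goes through. So your approach is correct but more analytic, bringing in straightening where the paper does not; both ultimately rest on the uniqueness of the $\si_2$-orbit of a given rotation number.
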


\begin{proof}
Let $\Wc_\la(\theta_1,\theta_2)$ be a special parameter wake.
Consider its root point $f=f_{root}$.
Then, by Proposition~\ref{p:W-spec2}, there are two cycles of parabolic domains at $0$ and two cycles of external rays of $f$ landing at $0$.
It follows that $\gf_{f}$ contains two periodic cycles under $\si_3$.
If $p/q=0$, these two cycles are $\ol 0$ and $\ol{\frac12}$, and $\gf_{f}=\ol{0
\frac12}$. If $p/q\ne 0$, then, by \cite{kiwi02}, there are no other rays
landing at $0$.
Hence, in this case, the set $\gf_{f}$ is a finite type D invariant
gap of rotation number $p/q$. On the other hand, by Proposition
\ref{P:landing-sp}, the points $\ol{\theta_1+\frac 13}$ and
$\ol{\theta_2+\frac 23}$ belong to $\gf_{f}$. The chord
$\ol{\theta_1+\frac 13\, \theta_2+\frac 23}$ must be an edge of
$\gf_{f}$ as otherwise an eventual image of it will cross it.
Thus, the hole $(\ol\theta_1,\ol\theta_2)$ of $\Qf$ is $p/q$-special.

Now, assume that the hole $(\ol\theta_1,\ol\theta_2)$ is $p/q$-special.
By Theorem A, the parameter rays $\Rc(\ta_1)$, $\Rc(\ta_2)$ land at the
same point, and the parameter wake $\Wc_\la(\theta_1,\theta_2)$ exists.
We claim that, in our case, the parameter wake
$\Wc_\la(\theta_1,\theta_2)$ is special. Indeed, otherwise, for
every $f\in\Wc_\la(\theta_1,\theta_2)$, the dynamic rays
$R_f(\theta_1+\frac 13)$ and $R_f(\theta_2+\frac 23)$ land at the
same repelling periodic point $z$. Suppose first that $p/q=0$ and
$\ol{\ta_1+\frac13\, \ta_2+\frac23}$ equals either $\ol{0 \frac12}$
or $\ol{\frac12 0}$. Then both rays $R_f(0)$ and $R_f(\frac12)$ land
at a repelling fixed point $z$. However, in this case $0$ is a parabolic point
with $f'(0)=1$, and at least one fixed external ray lands at $0$, a contradiction.
Hence a $0$-special hole corresponds to a special parameter wake.

Let us now assume that $p/q\ne 0$.
Let $A$ be the component of $\cdisk\sm \ol{\theta_1+\frac13\,
\theta_2+\frac23}$ disjoint from the circle arc
$(\ol{\theta_1+\frac13},\, \ol{\theta_2+\frac23})$, and let $B$ be the other component.
We claim that $\gf_f\subset A$.
Indeed, by \cite{BOPT}, the arc $B\cap\uc$ contains either $\ol{0}$ or $\ol{1/2}$, and it is easy to see that $B\cap\uc$ contains no other $\si_3$-invariant sets (recall that $p/q\ne 0$), a contradiction.
Then $\gf_f$ is a finite invariant gap of rotation number $p/q$ located in the component $A$ of $\cdisk\sm \ol{\theta_1+\frac13\,\theta_2+\frac23}$ and disjoint from the circle arc $(\ol{\theta_1+\frac13},\, \ol{\theta_2+\frac23})$, while the leaf $\ol{\theta_1+\frac13\, \theta_2+\frac23}$ rotates in $A$ with rotation number $p/q$.
We will show in the next paragraph that this is impossible for combinatorial reasons.

Let $\Uf$ be the quadratic invariant gap such that its major hole is
$(\ol{\theta_1+\frac 13},\, \ol{\theta_2+\frac 23})$. The existence of
such a quadratic gap is a consequence of Theorem \ref{T:ParDynHoles}.
It is well-known (see, e.g., \cite{BOPT}) that there is a continuous projection $\psi_{\Uf}:\Uf'\to\uc$ that identifies the endpoints of every arc in $\uc\sm\Uf'$ and that semi-conjugates the mapping $\si_3:\Uf'\to\Uf'$ with the mapping $\si_2:\uc\to\uc$.
Apply this projection to the vertices of $\gf_f$ (by definition of $\Uf$, all vertices of $\gf_f$ belong to $\Uf'$). We obtain a $\si_2$-invariant finite set of points of $\uc$ permuted by $\si_2$ as a combinatorial rotation with rotation number $p/q$.
However, a subset of $\uc$ with these properties is unique. Therefore,
it must coincide with the $\psi_\Uf$-image of the $\si_3$-orbit of
$\ol{\theta_1+\frac 13}$. It follows that one of the vertices of
$\gf_f$ must coincide either with $\ol{\theta_1+\frac 13}$ or with
$\ol{\theta_2+\frac 23}$, a contradiction. The last claim of the
theorem follows from Lemma~\ref{l:spec-wake-d}.
\end{proof}

\subsection{Root points of special parameter wakes}\label{ss:root-spec}
In this subsection, we still assume that $\lambda=e^{2\pi ip/q}$ is
a fixed root of unity. We will prove that all zeros of $T_{p/q}$
correspond to root points of special parameter wakes. In particular, it will follow that
the set $\widetilde \Wc_\la(\theta_1,\theta_2)$ introduced in Section
\ref{ss:wakes} coincides with the special parameter wake
$\Wc_\la(\theta_1,\theta_2)$, where $(\ol\theta_1,\ol\theta_2)$ is any
$p/q$-special hole of $\Qf$. Lemma~\ref{l:com-root}
describes the situation, when two special parameter wakes share a root
point.

\begin{lem}
\label{l:com-root}
 If two special parameter wakes $\Wc_\la(\theta_1,\theta_2)$ and
 $\Wc_\la(\theta'_1,\theta'_2)$ have the same root point, then
 $(\ol{\theta_1+\frac 13},\ol{\theta_2+\frac 23})$ and
 $(\ol{\theta'_1+\frac 13},\ol{\theta'_2+\frac 23})$ are the two major holes of the
 same type D finite invariant gap.
\end{lem}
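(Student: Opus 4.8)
The plan is to show that the two special parameter wakes, having a common root point $f_{root}=f_{\la,b_0}$, force the angle data $\ol{\theta_1+\frac13\,\theta_2+\frac23}$ and $\ol{\theta_1'+\frac13\,\theta_2'+\frac23}$ to be edges of a single type D gap, and that this gap is finite of rotation number $p/q$. First I would invoke Proposition~\ref{p:W-spec2}: at the common root point $b_0$ we have $T_{p/q}(b_0)=0$, so by Proposition~\ref{P:Tpq} the map $f_{root}^{\circ q}$ has $2q$ parabolic Fatou domains at $0$ forming two cycles under $f_{root}$, and correspondingly $2q$ external rays land at $0$, again forming two $\si_3$-cycles. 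Then, as in the proof of Theorem~\ref{t:sp-wakes-nsp}, consider $\gf_{f_{root}}$, the convex hull of all angles whose rays land at $0$; since $p/q$ may be $0$ or not, I split into those two cases exactly as in that proof. If $p/q\ne 0$, then by \cite{kiwi02} there are no rays landing at $0$ other than these two cycles, so $\gf_{f_{root}}$ is a \emph{finite} invariant gap, and since the two cycles are genuinely distinct $\si_3$-orbits, $\gf_{f_{root}}$ is of type D. If $p/q=0$, then $\gf_{f_{root}}=\ol{0\frac12}$, which is the degenerate type D "gap" with the two edges $\ol{0\frac12}$ and $\ol{\frac12\,0}$; I would handle this borderline case in parallel, as the statement of Lemma~\ref{l:com-root} permits (cf.\ the remark after Lemma~\ref{l:landvk} that except when $\gf=\ol{0\frac12}$ the set $\gf$ is a gap).

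Next I would apply Proposition~\ref{P:landing-sp} to each of the two special wakes: for $f=f_{root}$ (which lies in $\Wc_\la(\theta_1,\theta_2)\cup\{f_{root}\}$ and also in $\Wc_\la(\theta_1',\theta_2')\cup\{f_{root}\}$), the rays $R_{f_{root}}(\theta_1+\frac13)$ and $R_{f_{root}}(\theta_2+\frac23)$ land at $0$, and likewise $R_{f_{root}}(\theta_1'+\frac13)$ and $R_{f_{root}}(\theta_2'+\frac23)$ land at $0$. Hence all four angles $\theta_1+\frac13,\theta_2+\frac23,\theta_1'+\frac13,\theta_2'+\frac23$ are vertices of $\gf_{f_{root}}$. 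Now I would argue that each chord $\ol{\theta_1+\frac13\,\theta_2+\frac23}$ must in fact be an \emph{edge} of $\gf_{f_{root}}$: if it were an interior chord, some $\si_3$-iterate of it would cross it (this is precisely the argument used in Theorem~\ref{t:sp-wakes-nsp}, relying on the fact that, by Theorem~\ref{T:ParDynHoles}, $\ol{\theta_1+\frac13\,\theta_2+\frac23}$ is the major of a quadratic invariant gap of periodic type, hence its $\si_3$-orbit consists of pairwise disjoint chords, while $\si_3$ preserves the circular order on the orbit since $0$ is parabolic). The same applies to $\ol{\theta_1'+\frac13\,\theta_2'+\frac23}$. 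So both are edges of the one type D gap $\gf_{f_{root}}$.

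Finally I would identify these two edges as the two \emph{major} holes of $\gf_{f_{root}}$. By Lemma~\ref{l:spec-wake-d} (or directly from Definition~\ref{d:spec-hol} and the fact that $(\theta_1,\theta_2)$, $(\theta_1',\theta_2')$ are $p/q$-special holes), each of $\theta_1+\frac13$ and $\theta_2+\frac23$ has combinatorial rotation number $p/q$, so each of the two chords is a major of a type D gap; being an edge of the specific gap $\gf_{f_{root}}$ of rotation number $p/q$, it must be one of the two majors of $\gf_{f_{root}}$, and the corresponding circle arc $(\ol{\theta_1+\frac13},\ol{\theta_2+\frac23})$ is the associated major hole (it has length $\ge\frac13$, being the complement of a quadratic gap's circle part on that side). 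Since the two wakes are distinct, the two edges are distinct, so they are the two distinct major holes of the same type D gap. The main obstacle I anticipate is the degenerate case $p/q=0$, where $\gf_{f_{root}}=\ol{0\frac12}$ is not literally a gap: there one must check by hand that the only way two special wakes can share the root point $f_{\la,b_0}$ with $b_0$ a zero of $T_0$ is via the two sides $\ol{0\frac12}$ and $\ol{\frac12\,0}$ of this leaf, corresponding to the holes $(\frac16,\frac13)$ and $(\frac23,\frac56)$ of $\Qf$; this uses the explicit computation that $\ol{0\frac12}$ accounts for exactly those two holes, as noted in the discussion preceding Theorem~B. All other inputs are cited results, so beyond this case the argument is a bookkeeping exercise in the combinatorics of $\si_3$-invariant gaps.
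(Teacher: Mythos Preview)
Your proposal is correct and follows essentially the same approach as the paper's proof: use Proposition~\ref{P:landing-sp} to place all four angles among the vertices of the gap $\gf_{f_{root}}$ of rays landing at $0$, identify this gap as type D, and conclude that the two chords are its two majors. The paper's proof is a terse three-sentence version of this argument that leaves the ``why type D'' and ``why major'' steps implicit (relying on facts already established in the proof of Theorem~\ref{t:sp-wakes-nsp}), whereas you spell out those details and also treat the degenerate $p/q=0$ case explicitly.
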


\begin{proof}
Let $f=f_{root}$ be the common root point of the parameter wakes $\Wc_\la(\theta_1,\theta_2)$ and $\Wc_\la(\theta'_1,\theta'_2)$.
By Proposition \ref{P:landing-sp}, the four dynamic rays
$$
R_{f}\left(\theta_1+\frac 13\right),\quad
R_{f}\left(\theta_2+\frac 23\right),\quad
R_{f}\left(\theta'_1+\frac 13\right),\quad
R_{f}\left(\theta'_2+\frac 23\right)
$$
land at $0$. Therefore, the four arguments of these rays correspond
to the vertices of the same type D finite invariant gap $\gf$.
Clearly, these vertices are on the boundaries of major holes of $\gf$.
\end{proof}

Two majors $\Mf$ and $\Mf'$ of the same type D finite invariant gap $G$,
as well as the corresponding major holes of $G$, will be called
\emph{conjugate}.

\begin{prop}
\label{p:roots}
  Every zero $b$ of the polynomial $T_{p/q}$ corresponds to a common
  root point $f_{\la,b}$ of two special parameter wakes
  $\Wc_\la(\theta_1,\theta_2)$ and $\Wc_\la(\theta'_1,\theta'_2)$,
  where arcs $(\theta_1+\frac 13,\theta_2+\frac 23)$ and
  $(\theta'_1+\frac 13,\theta'_2+\frac 23)$ are conjugate major holes depending on $b$.
  The degree of the polynomial $T_{p/q}$ is equal to $q$.
\end{prop}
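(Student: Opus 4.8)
The plan is to run an argument parallel to the proof of Theorem A (in particular Propositions \ref{p:W-spec1} and \ref{p:W-spec2}), but starting from a zero $b_0$ of $T_{p/q}$ rather than from a parameter ray. By Proposition~\ref{P:Tpq}, if $T_{p/q}(b_0)=0$ then $g=f_{\la,b_0}$ has $2q$ parabolic Fatou domains at $0$ forming two cycles, and $2q$ external rays landing at $0$. Thus the set $\gf_g$ (the convex hull of arguments of rays landing at $0$) contains two $\si_3$-cycles, each of which is permuted as a combinatorial rotation with rotation number $p/q$ (since $0$ is parabolic with multiplier $\la=e^{2\pi i p/q}$ and the rays landing at $0$ rotate with that number, cf. \cite[Lemma 18.12]{M}). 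When $p/q\ne 0$, by \cite{kiwi02} there are exactly these $2q$ rays, so $\gf_g$ is a finite invariant gap carrying two $p/q$-cycles on its vertex set; this forces $\gf_g$ to be of type D — for type A or type B the two majors lie in one $\si_3$-orbit, hence the vertex set is a single cycle. (When $p/q=0$ a direct check gives $\gf_g=\ol{0\frac12}$, which we treat as the degenerate type D gap as elsewhere in the paper.) So a zero $b_0$ of $T_{p/q}$ canonically determines a type D finite invariant gap $\gf=\gf_g$, and hence (by Lemma~\ref{l:spec-wake-d}) a conjugate pair of $p/q$-special holes $(\theta_1,\theta_2)$, $(\theta_1',\theta_2')$, namely the two major holes of $\gf$.

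Next I would show $f_{\la,b_0}$ is the root point of the two corresponding special parameter wakes. Since the two major holes of $\gf$ each give rise to a dynamic wedge containing a critical point, and both critical points are in parabolic domains at $0$ (because $\om_1,\om_2$ cannot escape — $T_{p/q}(b_0)=0$ means $[f]\in\Mc_3$ by the Fatou–Shishikura count applied as in the paper), the parabolic domains at $0$ are disjoint from $W_{f}(\theta_1+\frac13,\theta_2+\frac23)$ for nearby escaping polynomials $f$: perturb $b_0$ slightly into $\Fc_\la\sm\Cc_\la$ so that exactly one critical point escapes, landing its co-critical ray in the wedge associated to one of the major holes of $\gf$; by Proposition~\ref{p:land-endholes} (applied to the quadratic invariant gap whose major hole is $(\ol{\theta_1+\frac13},\ol{\theta_2+\frac23})$) the dynamic rays $R_f(\theta_1+\frac13)$, $R_f(\theta_2+\frac23)$ still land at $0$, and continuity of the parabolic petals (Lemma~\ref{l:rp}) keeps them out of the wedge. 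Hence $\widetilde\Wc_\la(\theta_1,\theta_2)\ne\emptyset$, so by Propositions~\ref{p:W-spec1}–\ref{p:W-spec2} the rays $\Rc_\la(\theta_1)$, $\Rc_\la(\theta_2)$ land at a common root point $f_{\la,b'}$ with $T_{p/q}(b')=0$; by Lemma~\ref{l:com-root} this root point, being shared with the conjugate wake, has $\gf_{f_{\la,b'}}$ equal to the same type D gap $\gf$, which forces $b'=b_0$ (the gap $\gf$ determines the landing configuration, and Proposition~\ref{p:W-spec2} says $\widetilde\Wc$ is the wake minus finitely many punctures where $T_{p/q}$ vanishes — the root point itself being one such zero). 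So every zero of $T_{p/q}$ is the common root point of a conjugate pair of special parameter wakes.

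Finally, the degree count. By Theorem~\ref{t:sp-wakes-nsp} there are exactly $2q$ special parameter wakes in $\Fc_\la$, and by Lemma~\ref{l:spec-wake-d} they are organized into $q$ conjugate pairs (the $q$ type D gaps of rotation number $p/q$, via Lemma~\ref{l:typeD-count}), each pair sharing one root point. Thus there are exactly $q$ root points of special wakes. The map sending a zero $b$ of $T_{p/q}$ to its root point is, by the previous paragraph, a well-defined surjection onto this set of $q$ points; conversely each root point $f_{\la,b}$ has $T_{p/q}(b)=0$ by Proposition~\ref{p:W-spec2}, so the zero set of $T_{p/q}$ (as a set) has exactly $q$ elements. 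Combined with Proposition~\ref{P:Tpq}, which bounds $\deg T_{p/q}\le q$, and the fact that $T_{p/q}\not\equiv 0$, it remains only to rule out multiple roots: a multiple zero $b$ would, via the maximum-modulus / Rouché argument already used in the proof of Proposition~\ref{p:W-nonspec} (applied to the multiplier of the parabolic cycle as a function of $b$), be incompatible with $f_{\la,b}$ being an isolated boundary point of the special wakes where exactly the generic parabolic degeneracy occurs. Hence $T_{p/q}$ has $q$ simple zeros and $\deg T_{p/q}=q$. The main obstacle I expect is precisely this last point — upgrading ``$q$ distinct roots'' to ``degree exactly $q$ with all roots simple'' — which requires knowing that $T_{p/q}$ cannot acquire a higher-order vanishing at a root point; I would handle it by analyzing the local model $f_{\la,b}^{\circ q}(z)=z+T_{p/q}(b)z^{q+1}+\cdots$ together with the fact (Proposition~\ref{P:Tpq}) that a zero of $T_{p/q}$ produces a clean $2q$-petal configuration with two cycles, which pins down the order of contact in $b$.
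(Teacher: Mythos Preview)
Your approach is considerably more elaborate than the paper's, and it contains a genuine gap that the paper's simple counting argument avoids.

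The paper's proof is a three-line pigeonhole. By Theorem~\ref{t:sp-wakes-nsp} there are $2q$ special parameter wakes; by Lemma~\ref{l:com-root}, two of them can share a root point only if they are conjugate, so there are \emph{at least} $q$ distinct root points. By Proposition~\ref{p:W-spec2} each root point is a zero of $T_{p/q}$, and by Proposition~\ref{P:Tpq} we have $\deg T_{p/q}\le q$. Hence there are exactly $q$ distinct root points, they exhaust the zeros of $T_{p/q}$, the degree is exactly $q$, and each conjugate pair must share its root point. Nothing more is needed; in particular, simplicity of the zeros falls out automatically (a degree-$q$ polynomial with $q$ distinct roots has only simple roots), so your entire last paragraph worrying about multiple roots is unnecessary.

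Your paragraphs 1--2, where you start from a zero $b_0$ and try to show directly that it is the common root point of a conjugate pair of wakes, have a real gap. After perturbing $b_0$ and invoking Propositions~\ref{p:W-spec1}--\ref{p:W-spec2} to produce a root point $f_{\la,b'}$ with $T_{p/q}(b')=0$, you assert ``$b'=b_0$'' because both have the same type D gap $\gf$. But nothing you have cited shows that $\gf$ determines $b$ uniquely: Proposition~\ref{p:W-spec2} explicitly allows further punctures $f_{\la,b_i}$ with $T_{p/q}(b_i)=0$ \emph{inside} the wake, and such a puncture would also have two cycles of parabolic domains at $0$ and the same ray configuration $\gf$. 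So a priori $b_0$ could be a puncture rather than the root point, and your argument does not rule this out. Likewise, in paragraph 3 you write ``each pair sharing one root point'' as an input to the count, but that is precisely the conclusion you are trying to establish; Lemma~\ref{l:com-root} gives only the converse implication. Replace that clause by ``hence at least $q$ distinct root points'' and the pigeonhole with $\deg T_{p/q}\le q$ closes immediately --- which is exactly the paper's argument, and it makes your first two paragraphs superfluous.
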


\begin{proof}
  By Lemmas \ref{t:sp-wakes-nsp} and \ref{l:com-root}, the $2q$ special parameter wakes in
$\Fc_\la$ have at least $q$ different root points. 
By
Proposition~\ref{p:W-spec2}, each special parameter wake has a zero
of the polynomial $T_{p/q}$ as its root point. Since, by Proposition
\ref{P:Tpq}, the degree of the polynomial $T_{p/q}$ is at most $q$,
the degree of $T_{p/q}$ equals $q$, and each of the zeros of
$T_{p/q}$ corresponds to a common root point of two special
parameter wakes.
\end{proof}

Note that an alternative way of proving that the degree of the
polynomial $T_{p/q}$ is equal to $q$ may follow the methods of a paper
by Buff, \'Ecalle and Epstein \cite{bee13}, in which the authors prove
a similar statement for parameter slices of quadratic rational
functions.

\begin{thm}[Dynamics of special parameter wakes]
\label{t:dyn-sp} Assume\, that \newline the wake
$\Wc_\la(\theta_1,\theta_2)$ is a special parameter wake. A polynomial
 $f=f_{\la,b}$ belongs to $\Wc_\la(\theta_1,\theta_2)$ if and only if
 the dynamic rays $R_f(\theta_1+\frac 13)$, $R_f(\theta_2+\frac 23)$
 land at $0$, the parabolic domains at $0$ are disjoint from the wedge
 $W_f(\theta_1+\3,\theta_2+\4)$, and $T_{p/q}(b)\ne 0$. A polynomial
 $f$ is the root point of the parameter wake
 $\Wc_\la(\theta_1,\theta_2)$ if and only if $T_{p/q}(b)=0$, and the
 rays $R_{f}(\theta_1+\frac 13)$, $R_{f}(\theta_2+\frac 23)$ land at
 $0$. Moreover, then the polynomial $f$ has a parabolic point $0$ with two cycles
 of parabolic domains at $0$.
\end{thm}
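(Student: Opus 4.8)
The plan is to derive the statement from the structural results already proved for special wakes---Propositions~\ref{p:W-spec1}, \ref{p:W-spec2}, \ref{P:landing-sp}, \ref{p:roots} and \ref{P:Tpq}---together with the combinatorial count of Lemmas~\ref{l:typeD-count} and \ref{l:spec-wake-d}. Throughout write $\la=e^{2\pi ip/q}$ and $W=W_f(\theta_1+\3,\theta_2+\4)$, and let $\gf_{f_{\la,b}}$ denote the convex hull of the arguments of the dynamic rays of $f_{\la,b}$ landing at $0$. Since $\Wc_\la(\theta_1,\theta_2)$ is special, $(\theta_1,\theta_2)$ is a $p/q$-special hole, so by Lemma~\ref{l:spec-wake-d} the chord $\ol{(\theta_1+\frac13)(\theta_2+\frac23)}$ is a major of a type D finite invariant gap of rotation number $p/q$, unique when $p/q\ne 0$; I argue for $p/q\ne 0$, the case $p/q=0$ (where this gap degenerates to the leaf $\ol{0\,\frac12}$) being handled exactly as in the proof of Theorem~\ref{t:sp-wakes-nsp}.

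First I would record an elementary equivalence used repeatedly: if $R_f(\theta_1+\frac13)$ and $R_f(\theta_2+\frac23)$ both land at $0$, then a parabolic domain $\Omega$ of $f$ at $0$ is disjoint from $W$ if and only if it contains no attracting petal inside $W$. Indeed $\Omega\subset K(f)$ is a bounded Fatou component, hence disjoint from the two external rays bounding $W$, and $0\notin\Omega$, so $\Omega\cap\partial W=\emptyset$; as $\Omega$ is connected it lies entirely inside $W$ or entirely outside $\ol W$. Hence the set $\widetilde\Wc_\la(\theta_1,\theta_2)$ introduced before Proposition~\ref{p:W-spec1} equals the set of $f=f_{\la,b}$ for which $R_f(\theta_1+\frac13),R_f(\theta_2+\frac23)$ land at $0$ and \emph{all} parabolic domains at $0$ are disjoint from $W$, and such an $f$ automatically has $T_{p/q}(b)\ne 0$. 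So the first assertion of the theorem reduces to $\Wc_\la(\theta_1,\theta_2)=\widetilde\Wc_\la(\theta_1,\theta_2)$, i.e.\ to the absence of the ``punctures'' left open in Proposition~\ref{p:W-spec2}.

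To rule out punctures, suppose $f_{\la,b_i}$ with $T_{p/q}(b_i)=0$ lay in the \emph{open} wake $\Wc_\la(\theta_1,\theta_2)$. By Proposition~\ref{p:roots}, $f_{\la,b_i}$ is the common root of two special wakes. If one of them is $\Wc_\la(\theta_1,\theta_2)$, then $f_{\la,b_i}$ is its root point, which lies on $\Rc_\la(\theta_1)\cup\Rc_\la(\theta_2)$ or is their common landing point---not in the open wake. Otherwise the two parameter rays landing at $f_{\la,b_i}$ have arguments that are endpoints of other holes of $\Qf$, hence lie in $Q$ and outside $[\theta_1,\theta_2]$ (distinct holes of the Cantor set $Q$ are disjoint and share no endpoints); a parameter ray with argument outside $[\theta_1,\theta_2]$ lies in the complementary component of $\C\sm\ol{\Rc_\la(\theta_1)\cup\Rc_\la(\theta_2)}$, hence is disjoint from the open set $\Wc_\la(\theta_1,\theta_2)$, and therefore cannot accumulate at its interior point $f_{\la,b_i}$---a contradiction. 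Thus there are no punctures, the first assertion follows, and the forward direction of the root-point characterization is immediate from Propositions~\ref{p:W-spec2} (the root is a zero of $T_{p/q}$), \ref{P:landing-sp} (the two rays land at $0$ at the root), and \ref{P:Tpq} (two cycles of parabolic domains at $0$, the only parabolic cycle by the Fatou--Shishikura inequality).

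It remains to prove the reverse direction: if $T_{p/q}(b)=0$ and $R_{f_{\la,b}}(\theta_1+\frac13),R_{f_{\la,b}}(\theta_2+\frac23)$ land at $0$, then $f_{\la,b}$ is the root of $\Wc_\la(\theta_1,\theta_2)$. By Proposition~\ref{P:Tpq} exactly $2q$ rays land at $0$, so $\gf_{f_{\la,b}}$ has exactly $2q$ vertices and is rotated by $\si_3$ with rotation number $p/q$. By Proposition~\ref{p:roots}, $f_{\la,b}$ is the common root of the two special wakes attached to some type D gap $\gf(b)$ of rotation number $p/q$; applying Proposition~\ref{P:landing-sp} to those two wakes shows that the endpoints of both majors of $\gf(b)$, hence all $2q$ vertices of $\gf(b)$, are arguments of rays landing at $0$, so $\gf(b)'\subset\gf_{f_{\la,b}}'$ and, by equality of cardinalities, $\gf(b)=\gf_{f_{\la,b}}$. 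Since $\theta_1+\frac13$ and $\theta_2+\frac23$ then also lie in $\gf(b)'$, and their $\si_3$-orbits are two disjoint $q$-cycles (disjoint because they are the endpoints of the major of a quadratic invariant gap, Theorem~\ref{T:ParDynHoles}), these orbits already account for all $2q$ vertices of $\gf(b)$; but the unique type D gap of rotation number $p/q$ with major $\ol{(\theta_1+\frac13)(\theta_2+\frac23)}$ has exactly the same $2q$ vertices, so it equals $\gf(b)$. Therefore $\ol{(\theta_1+\frac13)(\theta_2+\frac23)}$ is a major of $\gf(b)$, so $(\theta_1,\theta_2)$ is one of its two $p/q$-special holes and $\Wc_\la(\theta_1,\theta_2)$ is one of the two special wakes rooted at $f_{\la,b}$; that is, $f_{\la,b}$ is the root of $\Wc_\la(\theta_1,\theta_2)$. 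The ``moreover'' clause is Proposition~\ref{P:Tpq} again. I expect this final identification---pinning $\gf_{f_{\la,b}}$ down as the specific gap $\gf(b)$ and recognizing $\ol{(\theta_1+\frac13)(\theta_2+\frac23)}$ among its majors---to be the only delicate point: the cardinality count makes it work, but it is precisely here that the uniqueness and orbit-disjointness input from \cite{BOPT} is used, and where the $p/q=0$ case must be verified separately.
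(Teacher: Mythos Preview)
Your proof is correct and follows essentially the same approach as the paper, invoking exactly the same ingredients (Propositions~\ref{P:landing-sp}, \ref{p:W-spec2}, \ref{p:roots}, \ref{P:Tpq}). The paper's own proof is extremely terse---it simply says that $T_{p/q}(b)\ne 0$ in the wake ``by Proposition~\ref{p:roots}'' and that the root-point characterization ``follows from Proposition~\ref{p:roots}''---and you have correctly filled in the two places where that terseness leaves real work to the reader.

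First, your topological argument that no zero of $T_{p/q}$ can lie in the \emph{open} wake (because parameter rays with arguments outside $[\theta_1,\theta_2]$ lie in the complementary component and hence cannot land at an interior point) is exactly what is needed to justify the paper's bare citation of Proposition~\ref{p:roots}; the paper does not spell this out. Second, for the reverse direction of the root-point characterization, the paper again only cites Proposition~\ref{p:roots}, which a priori only shows that $f_{\la,b}$ is the root of \emph{some} pair of conjugate special wakes. Your cardinality argument---matching the $2q$ vertices of $\gf_{f_{\la,b}}$ with the $2q$ vertices of $\gf(b)$, then recognizing the orbits of $\theta_1+\frac13$ and $\theta_2+\frac23$ as exhausting those vertices so that $\ol{(\theta_1+\frac13)(\theta_2+\frac23)}$ must be a major of $\gf(b)$---is the natural way to pin down the specific wake, and it is correct. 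Your caveat about $p/q=0$ is appropriate; the degenerate case is indeed handled separately, as in the proof of Theorem~\ref{t:sp-wakes-nsp}.
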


\begin{proof}
Let $f=f_{\la,b}\in \Wc_\la(\theta_1,\theta_2)$. Then, by
Proposition~\ref{P:landing-sp}, the dynamic rays $R_f(\theta_1+\frac
13)$, $R_f(\theta_2+\frac 23)$ land at $0$.
By Proposition \ref{p:roots}, we have $T_{p/q}(b)\ne 0$ for all $f=f_{\la,b}$ in the
parameter wake, and, by Proposition \ref{p:W-spec2}, the parabolic
domains at $0$ are disjoint from the wedge $W_f(\theta_1+\3,\theta_2+\4)$.
On the other hand, if the dynamic rays $R_f(\theta_1+\frac 13)$,
$R_f(\theta_2+\frac 23)$ land at $0$, the parabolic domains at $0$
are disjoint from the wedge $W_f(\theta_1+\3,\theta_2+\4)$, and
$T_{p/q}(b)\ne 0$, then, by Proposition~\ref{p:W-spec2} and by
Proposition~\ref{p:roots}, we have $f\in\Wc_\la(\theta_1,\theta_2)$.
The characterization of the root point of $\Wc_\la(\ta_1,\ta_2)$ follows from Proposition \ref{p:roots}.
\end{proof}

\subsection{Non-special parameter wakes}
In this section, we assume that $\lambda$ is a complex number with $|\lambda|\le 1$.
We characterize dynamics of non-special parameter wakes in $\Fc_\la$.
To describe the properties of root points of special wakes, we need a lemma.

\begin{lem}
  \label{l:multipl}
  Suppose that $f\mapsto z_f$ is a complex-valued holomorphic function defined on
  some open subset $\Uc\subset\Fc_\la$ such that, for every $f\in\Uc$, the point
  $z_f$ is a repelling periodic point of $f$ of period $m$.
  If $f_0$ is a boundary point of $\Uc$, and $z_f$ tends to $0$ as $f\in\Uc$ tends to $f_0$,
  then $\lambda^m=1$, and $0$ is a degenerate parabolic point of $f_0$, i.e.,
  there are at least two cycles of parabolic domains at $0$.
\end{lem}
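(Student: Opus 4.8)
The plan is to split the argument into two stages: first to show that $\la^m=1$, so that $0$ is a parabolic fixed point of $f_0^{\circ m}$, and then to compare the multiplicity of $0$ as a fixed point of $f^{\circ m}$ with a fixed-point count near $0$ in order to force this parabolic point to be degenerate. For the first stage, note that the multiplier $\rho(f):=(f^{\circ m})'(z_f)$ is holomorphic on $\Uc$ and satisfies $|\rho(f)|>1$ there. Since $z_f\to 0$ as $f\to f_0$ inside $\Uc$, continuity gives $\rho(f)\to (f_0^{\circ m})'(0)=\la^m$, so $|\la^m|\ge 1$; combined with $|\la|\le 1$ this yields $|\la|=1$. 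To upgrade this to $\la^m=1$, suppose $\la^m\ne 1$. Then the $w$-derivative of $f_0^{\circ m}(w)-w$ at $w=0$ equals $\la^m-1\ne 0$, so by the implicit function theorem the equation $f^{\circ m}(w)=w$ has a unique holomorphic branch of solutions near $(f_0,0)$; since $w\equiv 0$ is always such a solution, this branch is $w\equiv 0$, and hence $0$ is the only fixed point of $f^{\circ m}$ in some fixed neighbourhood $V_0$ of $0$ for all $f$ near $f_0$. But $z_f\ne 0$ (if $m>1$ because $z_f$ has period $m>1$ while $0$ is fixed, and if $m=1$ because $z_f$ is repelling while $0$ is not), and $z_f\to 0$, so for $f\in\Uc$ close to $f_0$ the point $z_f$ would be a second fixed point of $f^{\circ m}$ in $V_0$, a contradiction. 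Hence $\la^m=1$.

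Now write $\la=e^{2\pi i p/q}$ with $\gcd(p,q)=1$, so that $q\mid m$. I claim it suffices to prove $T_{p/q}(b_0)=0$, where $f_0=f_{\la,b_0}$: by Proposition~\ref{P:Tpq} this means $f_0$ has $2q$ parabolic Fatou domains at $0$ forming two cycles, i.e.\ $0$ is a degenerate parabolic point of $f_0$. So suppose for contradiction that $T_{p/q}(b_0)\ne 0$. Then $f_0^{\circ q}(z)=z+T_{p/q}(b_0)z^{q+1}+o(z^{q+1})$, and composing $f_0^{\circ q}$ with itself $m/q$ times gives $f_0^{\circ m}(z)-z=\frac{m}{q}T_{p/q}(b_0)z^{q+1}+o(z^{q+1})$; since $\frac{m}{q}T_{p/q}(b_0)\ne 0$, the point $0$ is an isolated fixed point of $f_0^{\circ m}$ of multiplicity exactly $q+1$. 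Fix a disk $V$ around $0$ so small that $\overline V$ contains no other fixed point of $f_0^{\circ m}$ and $\partial V$ contains none at all; then, by the argument principle applied to $f^{\circ m}(z)-z$ together with continuity of $f\mapsto f^{\circ m}$, there is a neighbourhood $\mathcal N$ of $f_0$ in $\Fc_\la$ such that, for every $f\in\mathcal N$, the map $f^{\circ m}$ has no fixed point on $\partial V$ and exactly $q+1$ fixed points (counted with multiplicity) in $V$.

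Finally, choose $f\in\Uc\cap\mathcal N$---which is non-empty and contains points arbitrarily close to $f_0$ because $f_0\in\partial\Uc$---so close to $f_0$ that the entire orbit $z_f,f(z_f),\dots,f^{\circ(m-1)}(z_f)$ lies in $V$; this is possible because each of these $m$ points tends to $0$ as $f\to f_0$. These $m$ points are pairwise distinct (as $z_f$ has period exactly $m$), none of them equals $0$ (otherwise $z_f=f^{\circ m}(z_f)=0$), and each is a fixed point of $f^{\circ m}$, so together they account for at least $m$ of the $q+1$ fixed points of $f^{\circ m}$ in $V$. Adding the fixed point $0$ of $f^{\circ m}$, which has multiplicity at least $2$ because $(f^{\circ m})'(0)=\la^m=1$, we get $q+1\ge m+2\ge q+2$ (using $m\ge q$), which is absurd. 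Hence $T_{p/q}(b_0)=0$, and the lemma follows. The one delicate point is the fixed-point bookkeeping: the disk $V$ and the neighbourhood $\mathcal N$ must be fixed before $f$ is chosen, and one must be sure that the orbit of $z_f$ genuinely contributes $m$ distinct zeros of $f^{\circ m}(z)-z$ in $V$, all different from $0$---everything else is routine once $\la^m=1$ is established and Proposition~\ref{P:Tpq} is invoked.
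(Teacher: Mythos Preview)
Your proof is correct and takes a genuinely different route from the paper's. The paper argues algebraically: it factors $f^{\circ m}(z)-z=(z-z_f)\vp_f(z)$, observes that $\vp_f(0)=0$ for $f\in\Uc$ (since $z_f\ne 0$ while $0$ is fixed), and passes to the limit to get $z^2\mid f_0^{\circ m}(z)-z$, hence $\la^m=1$. Then, for degeneracy, it notes that $f^{\circ m}(z)-z$ is divisible by $z^{q+1}$ for every $f\in\Fc_\la$, so $\vp_f$ is divisible by $z^{q+1}$ (again using $z_f\ne 0$); taking the limit forces $f_0^{\circ m}(z)-z=z\cdot\vp_{f_0}(z)$ to be divisible by $z^{q+2}$, contradicting $T_{p/q}(b_0)\ne 0$.

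Your approach replaces this factorization by a fixed-point count via the argument principle, which is more dynamical and arguably more transparent: one never has to track coefficients of quotient polynomials or justify that their limits exist. The paper's method, on the other hand, yields the sharper algebraic statement $z^{q+2}\mid f_0^{\circ m}(z)-z$ directly. One small remark: in your final count you use only that $0$ has multiplicity $\ge 2$ as a fixed point of $f^{\circ m}$, but in fact it has multiplicity $\ge q+1$ for every $f\in\Fc_\la$ (since $f^{\circ q}(z)-z=T_{p/q}(b)z^{q+1}+o(z^{q+1})$), which would give the cleaner contradiction $q+1\ge m+(q+1)$. Your weaker bound still suffices since $m\ge q$.
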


Note that if $f_0=f_{\la,b_0}$ is as in the statement of the lemma, then we
must necessarily have $T_{p/q}(b_0)=0$, where $p/q$ is the rotation number
associated with $\la$.

\begin{proof}
  Set $f=f_{\la,b}$ and $f_0=f_{\la,b_0}$.
  We have $f^{\circ m}(z)-z=(z-z_f)\varphi_f(z)$, where $\varphi_f$ is a
  polynomial function of $z$ depending holomorphically on $f\in\Uc$
  such that $\varphi_f(z_f)\ne 0$ for $f\in\Uc$.
  It is easy to see that all coefficients of the polynomial $\varphi_f$
  are algebraic functions of $b$ without any poles in the finite part of the plane.
  This follows from the Euclidean algorithm for polynomials.
  In particular, there is a well-defined limit polynomial $\varphi_{f_0}$
  of $\varphi_f$ as $f\in\Uc$ tends to $f_0$.

  On the other hand, since $0$ is a fixed point of $f$, we have $f(0)=0$.
  Note that $z_f\ne 0$ for $f\in\Uc$, since otherwise we must have $|\la|>1$,
  a contradiction with our assumption.
  Therefore, we have $\vp_f(0)=0$ for all $f\in\Uc$.
  It follows that we also have $\vp_{f_0}(0)=0$.
  Therefore, the polynomial $f_0^{\circ m}(z)-z$ is divisible by $z^2$.
  This means that $0$ is a parabolic fixed point of $f_0^{\circ m}$ with multiplier 1.
  In particular, we have $\la^m=1$.

  Suppose now that $\la=e^{2\pi ip/q}$, where $p$ and $q$ are co-prime.
  The number $m$ is then necessarily divisible by $q$ so that $m=qr$ for
  some positive integer $r$.
  We want to prove that $f_0^{\circ q}(z)-z$ is $o(z^{q+1})$.
  To this end, we note that $f^{\circ q}(z)=z+a(b)z^{q+1}+h(b,z)$, where
  $a(b)$ is a polynomial function of $b$, and $h$ is a polynomial in two variables
  divisible by $z^{q+2}$.
  This follows from the fact that $0$ is a parabolic fixed point of $f$ of
  multiplier $\lambda$.
  What we need to show is that $a(b_0)=0$.
  Assume the contrary: $a(b_0)\ne 0$.

  We have $f^{\circ m}(z)=z+ra(b)z^{q+1}+h_r(b,z)$, where $h_r$ is a polynomial
  of two variables divisible by $z^{q+2}$.
  This follows from a simple computation based on the fact that the composition
  of two polynomials $f_1(z)=z+a_1z^{q+1}+\dots$ and
  $f_2(z)=z+a_2z^{q+1}+\dots$, where, in both cases, dots denote the terms of
  order $q+2$ or higher with respect to $z$, is equal to
  $z+(a_1+a_2)z^{q+1}+o(z^{q+1})$.
  Since $(z-z_f)\vp_f(z)$ is divisible by $z^{q+1}$ as a polynomial in $z$,
  we have $\vp_f(z)=z^{q+1}\widetilde\vp_f(z)$, where $\widetilde\vp_f(z)$
  is a polynomial function of $z$, whose coefficients are algebraic functions
  of $b$ without any poles in the finite part of the plane.
  We can now substitute $f_0$ for $f$ to obtain that $f_0^{\circ m}(z)=z+z^{q+2}\widetilde\vp_f(z)$.
  This implies that $a(b_0)=0$, as desired.
\end{proof}

We now prove the following proposition, which characterizes root points of non-special parameter wakes.

\begin{prop}
 \label{P:rootpt-nsp}
Let $\Wc_\lambda(\theta_1,\theta_2)$ be a non-special parameter wake, and
$f_{root}$ its root point.
Then the rays $R_{f_{root}}(\theta_1+\frac 13)$ and $R_{f_{root}}(\theta_2+\frac 23)$
land at the same parabolic periodic point different from $0$.
\end{prop}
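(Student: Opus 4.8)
The plan is to analyze the boundary behaviour of the non-special wake $\tWc:=\tWc_\la(\theta_1,\theta_2)$ at its root point $f_{root}$. By Proposition~\ref{p:W-nonspec}, $\tWc$ is open, coincides with the non-special wake $\Wc'_\la(\theta_1,\theta_2)$, and the parameter rays $\Rc_\la(\theta_1)$, $\Rc_\la(\theta_2)$ land at a common point $f_{root}$ on its boundary. Since $f_{root}$ is the landing point of parameter rays, and $\Fc_\la\sm\Cc_\la$ is foliated by such rays, we have $f_{root}\in\partial\Cc_\la\subset\Cc_\la$; hence $J(f_{root})$ is connected, no dynamic ray of $f_{root}$ crashes, and every dynamic ray of $f_{root}$ with periodic argument lands. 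Moreover, for every $f\in\tWc$ the rays $R_f(\theta_1+\frac13)$, $R_f(\theta_2+\frac23)$ are smooth and land at a common repelling periodic point $z_f$, whose period $m$ equals the period of the wake, and $f\mapsto z_f$ is holomorphic on $\tWc$ by Lemma~\ref{l:rep} together with Lemma~\ref{l:cuts-cross}.

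The first main step will be to show that $R_{f_{root}}(\theta_1+\frac13)$ and $R_{f_{root}}(\theta_2+\frac23)$ land at one and the same periodic point $z_{root}$, and that $z_{root}$ is parabolic. I would fix a path $t\mapsto f_t\in\tWc$ converging to $f_{root}$ and, passing to a subsequence, let $z_{root}$ be the limit of $z_{f_t}$; taking limits in $f_{root}^{\circ m}(z_{root})=z_{root}$ and in $|(f_t^{\circ m})'(z_{f_t})|>1$ shows $z_{root}$ is a non-attracting periodic point of $f_{root}$, hence repelling or parabolic. Each of $R_{f_{root}}(\theta_1+\frac13)$, $R_{f_{root}}(\theta_2+\frac23)$ lands at a periodic point, $z_1$ resp.\ $z_2$. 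If $z_1$ were repelling, Lemma~\ref{l:rep} would make the landing of the ray of argument $\theta_1+\frac13$ at a nearby repelling point stable near $f_{root}$, which forces $z_1=z_{root}$ and then, again by Lemma~\ref{l:rep}, makes the defining property of $\tWc$ hold on a whole neighbourhood of $f_{root}$ — contradicting $f_{root}\in\partial\tWc$; so $z_1$, and likewise $z_2$, is parabolic. Since $f_{root}\in\Cc_\la$ has both critical points non-escaping, the Fatou–Shishikura inequality permits at most two non-repelling cycles, one of which is the cycle of $0$; a parabolic periodic point $\ne 0$ lies in a cycle of period $m\ge 2$ distinct from that of $0$, so $z_1$ and $z_2$ must belong to one common parabolic cycle, say $z_2=f_{root}^{\circ k}(z_1)$. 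If $k\not\equiv 0$, then $R_{f_{root}}(\si_3^k(\theta_1+\frac13))$ and $R_{f_{root}}(\theta_2+\frac23)$ both land at $z_2$ while $\si_3^k(\theta_1+\frac13)\ne\theta_2+\frac23$ (the endpoints of the major $\ol{(\theta_1+\frac13)(\theta_2+\frac23)}$ of the quadratic invariant gap from Theorem~\ref{T:ParDynHoles} lie in different $\si_3$-orbits); combining the combinatorial rotation structure of the rays landing at $z_2$ with the fact that the $\si_3$-orbit of $\theta_1+\frac13$ stays in the closed arc $[\theta_2+\frac23,\theta_1+\frac13]$ complementary to the major hole, and with the nesting of the dynamic wedges $W_{f_t}(\theta_1+\frac13,\theta_2+\frac23)$, yields a contradiction. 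Hence $z_1=z_2=:z_{root}$, a parabolic periodic point.

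The remaining step is to rule out $z_{root}=0$. Suppose $z_{root}=0$; then $0$ is parabolic, $\la=e^{2\pi ip/q}$, and $R_{f_{root}}(\theta_1+\frac13)$, $R_{f_{root}}(\theta_2+\frac23)$ both land at $0$. By \cite[Lemma~18.12]{M} (and, for $p/q\ne 0$, by \cite{kiwi02}), the dynamic rays landing at $0$ form finitely many cycles, each permuted by $\si_3$ with combinatorial rotation number $p/q$; in particular the $\si_3$-orbits of $\theta_1+\frac13$ and $\theta_2+\frac23$ have rotation number $p/q$. By Lemma~\ref{l:all-angles} and Lemma~\ref{l:spec-wake-d}, $(\ol\theta_1,\ol\theta_2)$ is then a $p/q$-special hole of $\Qf$, so by Theorem~\ref{t:sp-wakes-nsp} the wake $\Wc_\la(\theta_1,\theta_2)$ would be a special parameter wake, contrary to hypothesis. (An alternative route: $z_{root}=0$ would force, via Lemma~\ref{l:multipl} and Proposition~\ref{P:Tpq}, that $T_{p/q}(b_0)=0$ where $f_{root}=f_{\la,b_0}$, so by Proposition~\ref{p:roots} $f_{root}$ would be the common root of two conjugate special wakes, incompatible with its being the root of the non-special wake $\Wc_\la(\theta_1,\theta_2)$.) Therefore $z_{root}\ne 0$, which completes the proof.

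I expect the main obstacle to be the exclusion, in the second paragraph, of the two rays splitting apart onto distinct periodic points at $f_{root}$: landing points depend on the parameter only upper-semicontinuously and genuinely jump at parabolic parameters, so the soft stability statements (Lemmas~\ref{l:rep} and~\ref{l:par}) must be combined with the combinatorial rigidity of the invariant quadratic gap (Theorem~\ref{T:ParDynHoles}) and with the Fatou–Shishikura bound in order to force the rays $R_f(\theta_1+\frac13)$ and $R_f(\theta_2+\frac23)$ — which share a landing point throughout $\tWc$ — to co-land at a single parabolic periodic point in the limit $f\to f_{root}$.
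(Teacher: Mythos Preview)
Your overall architecture is close to the paper's: take the holomorphic landing point $z_f$ on $\tWc$, pass to a limit at $f_{root}$, rule out the repelling case via Lemma~\ref{l:rep}, and then exclude $0$ via Lemma~\ref{l:multipl}/Proposition~\ref{p:roots}. The alternative route for $z_{root}=0$ you give at the end is essentially the paper's own argument.

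The gap is in your second paragraph, where you try to show that the two landing points $z_1$ and $z_2$ at $f_{root}$ coincide. Two cases are not covered. First, the ``mixed'' case $z_1=0$, $z_2\ne 0$ (or vice versa) is simply omitted: here Fatou--Shishikura allows exactly the two cycles $\{0\}$ and that of $z_2$, so there is no immediate contradiction, and your subsequent rotation argument (which presupposes $z_1,z_2$ lie in a common non-zero cycle) does not apply. Second, even when both $z_1,z_2\ne 0$ lie in the same parabolic cycle, your sketch (``combining the combinatorial rotation structure with the nesting of the dynamic wedges yields a contradiction'') does not produce a concrete obstruction: at a parabolic point of multiplier $1$ several period-$m$ rays can land, and nothing you have written prevents $\si_3^{\circ k}(\theta_1+\tfrac13)$ and $\theta_2+\tfrac23$ from being two such arguments.

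The paper avoids both difficulties by working with the single limit point $z_*=\lim z_f$ rather than with $z_1,z_2$ separately. It first shows $z_*$ is neutral, and then argues as follows when $z_*\ne 0$: the ray $R_{f_{root}}(\theta_1+\tfrac13)$ cannot land at a repelling point (that would force $z_*$ repelling), so it lands at a parabolic point, hence at $0$ or in the cycle of $z_*$; if it landed at $0$ then, by Proposition~\ref{P:ray0stable}, one would have $T_{p/q}(b_0)=0$, producing two cycles of parabolic domains at $0$ in addition to the neutral cycle of $z_*$, violating Fatou--Shishikura. So each ray lands at $z_*$, and one is done. Using Proposition~\ref{P:ray0stable} in this way is exactly what is missing from your argument; it simultaneously kills the mixed case and pins both rays to the single point $z_*$, making the combinatorial detour unnecessary.
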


\begin{proof}
 Consider the function $f\in\Wc_\lambda(\theta_1,\theta_2)\mapsto z_f$,
where $z_f$ is the common landing point of $R_f(\theta_1+\frac 13)$ and
$R_f(\theta_2+\frac 23)$.
Clearly, $z_f$ is a holomorphic function of $f$.
Being a local branch of a globally defined multivalued analytic function,
the function $z_f$ has a well-defined limit $z_*$ at the root point $f_{root}=f_{\la,b_0}$.
The point $z_*$ cannot be repelling because any neighborhood of $f_{root}$ in $\Fc_\la$
contains polynomials $f$, for which the rays $R_f(\theta_1+\frac 13)$ and
$R_f(\theta_2+\frac 23)$ do not land at a common repelling point (see Lemma \ref{l:rep}).
Therefore, the point $z_*$ is neutral.

Suppose that $z_*\ne 0$.
Then there are no parabolic cycles different from those of $0$ and $z_*$.
This follows from a simple version of the Fatou-Shishikura inequality
stating that the number of cycles of parabolic basin plus the number
of irrationally neutral cycles of a degree $d$ polynomial does not
exceed $d-1$.
The ray $R_{f_{root}}(\theta_1+\frac 13)$ lands either at $0$ or at $z_*$
(in the latter case, $z_*$ must be parabolic).
If it lands at $0$, then, by Proposition \ref{P:ray0stable}, we must have
$T_{p/q}(b_0)=0$.
It follows that there are two cycles of parabolic domains of $0$,
a contradiction with the assumption $z_*\ne 0$.
The contradiction shows that the ray $R_{f_{root}}(\theta_1+\frac 13)$ lands at $z_*$.
Similarly, the ray $R_{f_{root}}(\theta_2+\frac 23)$ also lands at $z_*$.

Suppose now that $z_*=0$.
In this case, it also follows that $T_{p/q}(b_0)=0$ since $0$ becomes
a degenerate parabolic point at the moment when it merges with $z_f$ as $f\to f_{root}$.
This follows from Lemma \ref{l:multipl}.
Hence there are no parabolic cycles different from $0$.
The rays $R_{f_{root}}(\theta_1+\frac 13)$ and $R_{f_{root}}(\theta_2+\frac 23)$ must land
at parabolic points.
Therefore, they land at $0$.
By the dynamical characterization of special wakes, Theorem \ref{t:dyn-sp},
the polynomial $f_{root}$ is the root point of two special wakes,
one of which must coincide with $\Wc_\la(\theta_1,\theta_2)$.
A contradiction with our assumption that the wake $\Wc_\la(\theta_1,\theta_2)$
is non-special.
\end{proof}

We can now give a dynamical description of non-special parameter wakes,
which follows easily from Propositions \ref{p:W-nonspec} and
\ref{P:rootpt-nsp}. Recall that, for a hole $(\ta_1, \ta_2)$ of $\Qf$,
the set $\Wc_\la'(\ta_1, \ta_2)$ is defined as the wake $\Wc_\la(\ta_1,
\ta_2)$ except for the holes $(\frac16, \frac13)$ and $(\frac23,
\frac56)$ for which we set
$$
\Wc_\la'\left(\frac16, \frac13\right)=\Wc_\la'\left(\frac23, \frac56\right)=
\Wc_\la\left(\frac16, \frac13\right)\cup\Wc_\la\left(\frac23, \frac56\right).
$$

We will call sets $\Wc_\la'(\ta_1, \ta_2)$ \emph{non-special} if they correspond
to non-special parameter wakes. Observe that parameter wakes
$\Wc_\la(\frac16, \frac13)$ and $\Wc_\la(\frac23, \frac56)$ are either both special
or both non-special, so the notion of a non-special set $\Wc_\la'(\ta_1, \ta_2)$
is well-defined.

\begin{thm}[Dynamics of non-special parameter wakes]
\label{t:dyn-nsp} \mbox{A po\-ly\-no-} mial $f$ belongs to a
non-special set $\Wc'_\la(\theta_1,\theta_2)$ if and only
if the dynamic rays $R_f(\theta_1+\frac13)$, $R_f(\theta_2+\frac23)$ land at the
same repelling periodic point.
If a polynomial $f_{root}$ is the root point of $\Wc_\la(\theta_1,\theta_2)$,
then the dynamic rays $R_{f_{root}}(\theta_1+\frac 13)$, $R_{f_{root}}(\theta_2+\frac 23)$
land at the same parabolic periodic point of multiplier 1 different from $0$.
\end{thm}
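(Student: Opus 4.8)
The plan is to obtain Theorem~\ref{t:dyn-nsp} as a formal consequence of Proposition~\ref{p:W-nonspec} and Proposition~\ref{P:rootpt-nsp} together with the \emph{definition} of the set $\tWc_\la(\theta_1,\theta_2)$. Recall that $\tWc_\la(\theta_1,\theta_2)$ was defined to be exactly the set of those $f\in\Fc_\la$ for which the dynamic rays $R_f(\theta_1+\frac13)$ and $R_f(\theta_2+\frac23)$ land at a common repelling periodic point (both rays being then automatically smooth). Thus the equivalence in the first sentence of the theorem is nothing but the identity $\tWc_\la(\theta_1,\theta_2)=\Wc'_\la(\theta_1,\theta_2)$, which is part of the conclusion of Proposition~\ref{p:W-nonspec}, provided its hypothesis $\tWc_\la(\theta_1,\theta_2)\neq\emptyset$ holds. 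So the only real work is to verify this non-emptiness for a non-special wake.

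To do this I would argue by contraposition. Suppose $\tWc_\la(\theta_1,\theta_2)=\emptyset$. Choose $\vk\in(\theta_1,\theta_2)$ and $f\in\Rc_\la(\vk)$. By Theorem~\ref{T:ParDynHoles} the chord $\ol{(\theta_1+\frac13)(\theta_2+\frac23)}$ is the periodic major of the quadratic invariant gap $\Uf(\cf_\vk)$ (which is of periodic type for every $\vk$ in the hole), so Proposition~\ref{p:land-endholes} gives that $R_f(\theta_1+\frac13)$ and $R_f(\theta_2+\frac23)$ land at a common periodic point $x$; since $\omega_2(f)$ escapes, the Fatou--Shishikura inequality forces $x$ to be repelling or $x=0$, and the assumption $\tWc_\la=\emptyset$ excludes the repelling case, so $x=0$, and a dynamic ray landing at $0$ makes $0$ parabolic, i.e.\ $\lambda=e^{2\pi ip/q}$ is a root of unity. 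Moreover, exactly as in the last case of the proof of Theorem~A (which uses Lemma~\ref{l:not-accum}), the wedge $W_f(\theta_1+\3,\theta_2+\4)$ cannot contain a parabolic domain of $f$ at $0$, so $f$ lies in the set $\widetilde\Wc_\la(\theta_1,\theta_2)$, which is therefore nonempty; by Definition~\ref{d:spec-w} this makes $\Wc_\la(\theta_1,\theta_2)$ a \emph{special} parameter wake, contradicting the hypothesis. Hence $\tWc_\la(\theta_1,\theta_2)\neq\emptyset$.

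With non-emptiness in hand, Proposition~\ref{p:W-nonspec} yields both that $\Rc_\la(\theta_1)$ and $\Rc_\la(\theta_2)$ land at a common point $f_{root}$ and that $\tWc_\la(\theta_1,\theta_2)=\Wc'_\la(\theta_1,\theta_2)$; unwinding the definition of $\tWc_\la$ gives the first sentence of the theorem, the exceptional pair $(\theta_1,\theta_2)\in\{(\tfrac16,\tfrac13),(\tfrac23,\tfrac56)\}$ being handled by the last clause of Proposition~\ref{p:W-nonspec} (which matches the definition of $\Wc'_\la$ as the union of these two wakes; note that they are simultaneously special or simultaneously non-special). For the second sentence, Proposition~\ref{P:rootpt-nsp} states directly that $R_{f_{root}}(\theta_1+\frac13)$ and $R_{f_{root}}(\theta_2+\frac23)$ land at a common parabolic periodic point $z\neq0$; and applying the Yoccoz inequality to $f_{root}^{\circ m}$ at $z$, where $m$ is the common period of $\theta_1$ and $\theta_2$ under $\si_3$ (as in the parabolic subcase of the proof of Proposition~\ref{p:W-nonspec}), shows that the multiplier at $z$ equals $1$. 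The main obstacle is precisely the non-emptiness step: one must rule out any scenario other than ``common repelling periodic landing point'' or ``common parabolic landing point at $0$ with petals avoiding the wedge,'' and this is where Lemma~\ref{l:not-accum} and the Fatou--Shishikura inequality do the decisive work; everything else is bookkeeping with results already established.
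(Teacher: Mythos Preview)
Your proposal is correct and follows essentially the same approach as the paper, which simply states that the theorem ``follows easily from Propositions~\ref{p:W-nonspec} and~\ref{P:rootpt-nsp}.'' You have usefully spelled out the non-emptiness verification of $\tWc_\la(\theta_1,\theta_2)$ (which the paper leaves implicit, having already carried out the identical argument in the proof of Theorem~A), and you correctly observe that the multiplier-$1$ claim requires the Yoccoz inequality step from case~(2) of the proof of Proposition~\ref{p:W-nonspec}, since Proposition~\ref{P:rootpt-nsp} itself only asserts that the common landing point is parabolic and different from~$0$.
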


Theorem B follows from Theorems \ref{t:dyn-sp} and
\ref{t:dyn-nsp}, except for its last claim, according to which, for
every root polynomial $f_{root}$, we have $f_{root}\in \cuc_\la$.
We postpone the proof of this claim until Section~\ref{s:proofc}.

\section{Fixed points and (geo)laminations}

The purpose of this section is to prove Theorem C. In the first
several subsections of it we develop the tools necessary for the
proof.

\subsection{Fixed points}
Recall some topological results of \cite{bfmot12}.
First define \emph{special pieces} of a polynomial $f$ with connected Julia set;
Loosely, these are subcontinua $X$ of $K(f)$ carved in it by \emph{exit continua} $E_1$, $\dots$, $E_n$ located on the boundary of $X$ so that $f(X)$ grows out of $X$ only
``through'' the exit continua of $X$. The definition below is a
little more special and less general than that in \cite{bfmot12}.

\begin{dfn}[Central component and exit continua]\label{d:pupie}
Let $E_1$, $\dots$, $E_n$ be a finite (perhaps empty) collection of
(possibly degenerate) continua in $J(f)$, each consisting of
principal sets of two or more external rays. Denote the union of
$E_i$ with these external rays by $\widetilde E_i$.
Suppose that there is a component $T$ of $\C\sm \cup \widetilde E_i$, whose
boundary intersects \emph{all} $E_1$, $\dots$, $E_n$.
Then the continua $E_i$ are called \emph{exit continua $($of $T)$} while $T$ is called
the \emph{central component $($of the exit continua $E_1$, $\dots$, $E_n)$}.
\end{dfn}

Observe that the collection of exit continua may be empty, in which
case the central component $T$ coincides with $\C$.

\begin{dfn}[Special pieces]\label{d:spiece}
Let $T$ be the central component of the exit continua $E_1, \dots,
E_n$. For every $i$, let the wedge $W_i$ be the component of $\C\sm
\widetilde E_i$ containing $T$. \emph{Any} non-separating continuum
$X$ is said to be a \emph{special piece $($of $f)$} if the
following holds:

\begin{enumerate}

\item $X\subset [T\cap K(f)]\cup (\cup E_i)$ is a continuum and $X$ contains $\cup E_i$;

\item each $E_i$ is either a fixed point or maps forward in such a
way that $f(E_i)\subset W_i$ (loosely, sets $E_i$ are
mapped ``towards $T$'');

\item the set $f(X)\sm X$ is disjoint from $T$ (loosely speaking,
$X$ can only grow ``through exit continua'').

\end{enumerate}

\end{dfn}

Observe that the set $[T\cap K(f)]\cup (\cup
E_i)$ above is a continuum.

\begin{thm}[\cite{bfmot12}]\label{t:fxpt}
Let $f$ be a polynomial with connected Julia set, and $X$ be a
special piece of $f$. Then $X$ contains a fixed Cremer or
Siegel point, or an invariant attracting or parabolic Fatou domain,
or a fixed repelling or parabolic point at which at least two rays
land so that $f$ non-trivially rotates these rays.
\end{thm}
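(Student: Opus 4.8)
The plan is to specialize the index theory for non-separating plane continua of \cite{bfmot12} to our situation: the two defining properties of a special piece are exactly what is needed to run that machinery, produce a fixed point inside $X$, and then read off its dynamical type. First I would recall the combinatorial data attached to $X$: the central component $T$ of the exit continua $E_1,\dots,E_n$, the continua $\widetilde E_i$ (each $E_i$ together with its external rays), and the wedges $W_i$, i.e. the components of $\C\sm\widetilde E_i$ containing $T$. Since $X$ is non-separating, $\C\sm X$ is connected and $\widehat\C\sm X$ is a topological disk, so there is a simple closed curve $S\subset\C\sm X$ bounding a disk that contains $X$ and lying in an arbitrarily small neighborhood of $X$. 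Small neighborhoods of the $\widetilde E_i$ cut the portion of $S$ facing $\partial X$ into finitely many arcs $\alpha_1,\dots,\alpha_m$, each running between two consecutive exit continua.

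Next I would verify the key boundary estimate. Because $f(E_i)\subset W_i$ and $W_i\supset T$, the map $f$ pushes $\partial X$ near each $E_i$ consistently toward the central component, so the \emph{variation} $\mathrm{var}(f,\alpha_j)$ in the sense of \cite{bfmot12} is well defined with a consistent sign; because $f(X)\sm X$ is disjoint from $T$, the image $f(\alpha_j)$ never crosses $\partial X$ from the $T$-side, which forces $\mathrm{var}(f,\alpha_j)\ge 0$ for every $j$ (this includes checking the behaviour at the junctions with the exit continua and at those $E_i$ that are fixed points). The fundamental index formula of \cite{bfmot12} then gives
\[
\mathrm{ind}(f,S)=1+\sum_{j=1}^m\mathrm{var}(f,\alpha_j)\ge 1 .
\]
Since the winding number of $f-\mathrm{id}$ along $S$ is nonzero, while $S$ may be pushed arbitrarily close to $X$ and the fixed-point set of a polynomial is finite, $f$ has a fixed point $p\in X$; moreover the argument can be arranged so that $p$ is the fixed point ``responsible'' for the excess index, i.e. the variation cannot be made $0$ across it.

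It then remains to identify $p$. If $p$ is irrationally indifferent it is a Siegel or Cremer point and we are done. If $p$ is attracting or super-attracting, its immediate basin is a forward-invariant bounded Fatou component which cannot leave $X$ through any $\widetilde E_i$ (the exit continua map into their wedges) nor through $T$ (since $f(X)\sm X$ misses $T$), so it is an invariant attracting Fatou domain contained in $X$; the same trapping argument, applied to the attracting petals, settles the case of a parabolic $p$ of multiplier $1$, yielding an invariant parabolic Fatou domain in $X$. Otherwise $p$ is repelling, or parabolic with multiplier a nontrivial root of unity, and finitely many external rays land at $p$, permuted by $f$ with some combinatorial rotation number $\rho$; I must show $\rho\neq 0$, which also forces at least two rays. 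If $\rho=0$ — every ray at $p$ being fixed, including the degenerate case of a single fixed ray — then a fixed ray $R\subset\C\sm K(f)\subset\C\sm X$ reaches $p\in\partial X$, and, since $f$ does not rotate the rays at $p$, one can push the boundary arc of $S$ near $p$ slightly past $p$ along $R$ without creating any new backward crossing of $\partial X$; this makes the corresponding variation strictly negative and hence lowers $\mathrm{ind}(f,S)$ while leaving no fixed point between the new curve and $X$, contradicting the displayed formula. Hence $\rho\ne 0$, and $p$ is a repelling or parabolic fixed point at which at least two rays land and are non-trivially rotated.

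The main obstacle is the boundary variation bookkeeping underlying the displayed formula: one must check that the two special-piece conditions really do yield $\mathrm{var}(f,\alpha_j)\ge 0$ for all $j$ at once, handling the junctions with the (possibly fixed) exit continua and the passages through $T$, and one must justify that the ``push past $p$'' modification in the last step is admissible — this is precisely where the definitions and the index calculus of \cite{bfmot12} have to be invoked carefully rather than quoted.
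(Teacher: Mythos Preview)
The paper does not give a proof of this theorem at all: it is stated with the citation \cite{bfmot12} and used as a black box (the paper only derives Corollaries~\ref{c:inf-many} and~\ref{c:inf-many1} from it). So there is nothing in the paper to compare your argument against.

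Your outline is a recognizable sketch of the variation/index machinery of \cite{bfmot12}, and the overall architecture --- build a bumping simple closed curve, control the variation on each subarc using the two defining conditions of a special piece, conclude $\mathrm{ind}(f,S)\ge 1$, extract a fixed point, and then eliminate the ``fixed ray / rotation number $0$'' case by an outchanneling argument --- is indeed how that reference proceeds. Two places where your sketch is loose and would need real work if you were writing this up rather than citing it: (i) the curve $S$ cannot be a simple closed curve lying entirely in $\C\sm X$ and simultaneously in an arbitrarily small neighborhood of $X$, since the sets $\widetilde E_i$ (which contain unbounded external rays) must be crossed; in \cite{bfmot12} one works instead with a carefully chosen ``bumping'' curve that runs along pieces of the rays and equipotentials, and the decomposition into arcs $\alpha_j$ is tied to those crossings, not to ``small neighborhoods of the $\widetilde E_i$''; (ii) your final step --- pushing $S$ past $p$ along a fixed ray to force a negative variation --- is the delicate \emph{outchannel} argument, and showing that this modification does not introduce new fixed points between the old and new curves (and that the variation really drops) is where most of the technical content of \cite{bfmot12} lies. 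As written, your sketch acknowledges but does not resolve either of these, so it is best read as a road map to the cited proof rather than an independent one.
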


In particular, Theorem~\ref{t:fxpt} applies to invariant
continua in $K(f)$ that are non-separating in $\C$ (with an empty collection of exit
continua).

A major tool for us is the notion of an \emph{impression}.

\begin{dfn}\label{d:impre}
Given a (possibly disconnected) Julia set $J$, choose an angle $\al$
and consider the set $L^-$ of all limit points of sequences $x_i\in
R_f(\al_i)$ where $R_f(\al_i)$ are smooth rays with arguments
$\al_i$, and $\al_i$ converge to $\al$ from the negative side in the
usual sense (i.e., $\al_i<\al$). Then $\imp(\al^-)=L^-\setminus
R^e_f(\al^-)$ is the \emph{one-sided impression of $\al$ from the
negative side}. Similarly we define the \emph{one-sided impression
$\imp(\al^+)$ of $\al$ from the positive side}. In case $J$ is
connected, the \emph{impression} $\imp(\al)$ is defined as $\imp(\al^-)\cup
\imp(\al^+)$.
\end{dfn}

\begin{cor}\label{c:inf-many}
Let $f$ be a polynomial with connected Julia set and let $T$ be the
central component of exit continua $E_1$, $\dots$, $E_n$, each of which
is a repelling or parabolic periodic point.
Set $Y=[T\cap K(f)]\cup (\cup E_i)$.
Suppose that $Y$ contains no periodic Cremer points, no periodic Siegel points, and no periodic attracting or parabolic Fatou domains.
Then there are infinitely many periodic points in $Y$, at each of which finitely many external rays land so that the minimal iterate of $f$ that fixes the periodic point non-trivially rotates these external rays.
\end{cor}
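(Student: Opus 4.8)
The plan is to iterate Theorem~\ref{t:fxpt} together with a pull-back argument. First I would observe that the hypotheses of Corollary~\ref{c:inf-many} are exactly those that force the conclusion of Theorem~\ref{t:fxpt} to land in its last alternative: any special piece $X$ with $X\subset Y$ must contain a fixed repelling or parabolic point at which at least two external rays land and are non-trivially rotated by the minimal iterate fixing that point, since $Y$ by assumption contains no periodic Cremer or Siegel points and no periodic attracting or parabolic Fatou domains (periodicity, not just invariance, is what matters here, and I would note that a fixed attracting/parabolic domain or Cremer/Siegel point would in particular be periodic). So the real content is to produce \emph{infinitely many} distinct such periodic points, not merely one, and to get that, one has to feed suitably chosen special pieces back into Theorem~\ref{t:fxpt}.

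The key steps, in order: (1) Apply Theorem~\ref{t:fxpt} to the central component $T$ with its exit continua $E_1,\dots,E_n$; the set $Y=[T\cap K(f)]\cup(\cup E_i)$ is a non-separating continuum (as remarked just before the corollary), so it is a special piece of itself, and it therefore contains a fixed repelling or parabolic point $p_0$ at which finitely many rays land and are non-trivially rotated by the minimal iterate $f^{\circ k_0}$ fixing $p_0$. (2) The rotation being non-trivial means the rays landing at $p_0$ split $\C$ into $\ge 2$ sectors, and the first-return map $f^{\circ k_0}$ permutes them cyclically with a nontrivial rotation number; pick a sector $S$ whose closure meets $Y$ in a strictly smaller sub-piece. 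More precisely, cutting $Y$ along the rays landing at $p_0$ and taking the piece $Y_1$ on the non-fixed side, together with $p_0$ (now itself an exit continuum) and the relevant old exit continua $E_i$, produces a new central-component/exit-continua configuration with $Y_1\subsetneq Y$: condition (3) of Definition~\ref{d:spiece} is arranged because $f^{\circ k_0}$ moves the chosen sector off itself, so $Y_1$ grows only through $p_0$ under the appropriate iterate. (3) Verify that $Y_1$ still contains no periodic Cremer/Siegel points and no periodic attracting/parabolic Fatou domains (immediate, since $Y_1\subset Y$), and apply Theorem~\ref{t:fxpt} again to obtain a new periodic point $p_1\in Y_1$ of the same type. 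Since $p_1$ lies in $Y_1$, which is on the non-$p_0$ side of the rays at $p_0$, we get $p_1\ne p_0$. (4) Iterate: each step produces a strictly nested sequence of special sub-pieces $Y\supsetneq Y_1\supsetneq Y_2\supsetneq\cdots$ and a corresponding sequence of distinct periodic points $p_0,p_1,p_2,\dots$, each of the required type, giving infinitely many.

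The main obstacle I expect is step (2): carefully setting up the new exit-continua configuration so that Definition~\ref{d:spiece} is genuinely satisfied — in particular checking condition (2) (each $E_i$, now including $p_0$, either is a fixed point or is mapped ``towards'' the new central component) and condition (3) (the new piece grows only through its exit continua). One has to be attentive to the fact that $p_0$ need only be periodic for a higher iterate than the one under which the original $E_i$ behave, so the relevant ``invariance'' must be stated for a common iterate, and the orientation-preserving nature of $\si$ acting on the rays at $p_0$ (rotation number $\ne 0$) must be used to guarantee that the chosen sector is pushed strictly off itself. A secondary point to be careful about is ensuring the sub-pieces $Y_j$ remain non-separating continua so that Theorem~\ref{t:fxpt} keeps applying; this follows because cutting a non-separating continuum along a finite union of ray-pairs landing at a single point and discarding one side again yields a non-separating continuum, but it should be stated explicitly.
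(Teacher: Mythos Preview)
Your approach is the same as the paper's: iterate Theorem~\ref{t:fxpt}, at each stage passing to a higher iterate of $f$ and cutting the current piece along the rays landing at the newly found point to obtain a strictly smaller special piece. The paper makes one point explicit that you should tighten. Your argument that $p_1\ne p_0$ because ``$p_1\in Y_1$, which is on the non-$p_0$ side'' does not work as stated: $p_0$ is itself an exit continuum of $Y_1$, hence $p_0\in Y_1$, so membership in $Y_1$ alone does not separate $p_1$ from $p_0$ (and the same issue arises for the original $E_i$ that remain on the boundary of $Y_1$).

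The fix is the one you already gesture at when you mention passing to a common iterate. At each stage, choose the iterate $f^{\circ r}$ so that it fixes \emph{all} rays landing at \emph{all} current exit continua (the original $E_i$ together with the previously found $p_0,\dots,p_{j-1}$). The point $p_j$ produced by Theorem~\ref{t:fxpt} then has rays that are \emph{non-trivially} rotated by $f^{\circ r}$, and therefore $p_j$ cannot coincide with any of the exit continua, whose rays are by construction fixed by $f^{\circ r}$. This is exactly how the paper argues distinctness, and once you insert it your proof is complete.
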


\begin{proof}[Proof of Corollary \ref{c:inf-many}]
Choose the iterate $f^{\circ r}$ of $f$ that fixes all rays landing at $E_1$, $\dots$, $E_n$.
It is easy to see that $Y=Y_0$ is a special piece for $f^{\circ r}$.
By Theorem \ref{t:fxpt}, there exists an $f^{\circ r}$-fixed repelling point
$y_0\in Y_0$ with several external rays of $f^{\circ r}$-period $m_0>1$ landing at $y_0$.
Observe that, by the choice of $r$, the point $y_0$ is not equal to any of the points $E_0$, $\dots$, $E_n$.
Consider one of the wedges formed by the external rays of $f$ landing at $y_0$, say, $W_1$, and set $Y_1=W_1\cap Y_0$.
It follows that $Y_1$ is a special piece for a suitable iterate of $f$, whose exit continua are periodic points of $f$.
We can now apply the same argument to $Y_1$, etc.
\end{proof}

The following is a consequence of Corollary \ref{c:inf-many}.

\begin{cor}\label{c:inf-many1}
Suppose that $Z$ is a non-separating continuum in $\C$ that is obtained as a finite union of one-sided impressions of periodic external angles and that contains no periodic Cremer points and no periodic Siegel points.
Then $Z$ is a singleton.
\end{cor}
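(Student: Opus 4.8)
The plan is to argue by contradiction, mimicking the proof of Corollary~\ref{c:inf-many} but applied to $Z$ itself. Write $Z=\imp(\al_1^{\tau_1})\cup\cdots\cup\imp(\al_N^{\tau_N})$ with the $\al_i$ periodic angles and $\tau_i\in\{+,-\}$; note that $Z\subset J(f)$, since one-sided impressions lie in the Julia set (which is connected, as throughout this section, so it has empty interior). Because $f$ carries $\imp(\al^\pm)$ into $\imp(\si_3(\al)^\pm)$, preserving the side (angle tripling is locally orientation preserving), the iterate $g=f^{\circ r}$, where $r$ is a common period of $\al_1,\dots,\al_N$, satisfies $g(\imp(\al_i^{\tau_i}))\subset\imp(\al_i^{\tau_i})$ and hence $g(Z)\subset Z$. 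The Julia set of $g$ equals $J(f)$, and the periodic Cremer and Siegel points of $g$ are exactly those of $f$, so it suffices to work with $g$. In particular, taking the collection of exit continua empty (so the central component is all of $\C$), the set $Z$, being a non-separating continuum in $J(g)\subset K(g)$ with $g(Z)\subset Z$, is a special piece for $g$ in the sense of Definition~\ref{d:spiece}.

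Now suppose $Z$ is not a singleton. Since $Z\subset J(g)$ has empty interior it contains no Fatou domain, and by hypothesis it contains no periodic (equivalently, $g$-fixed) Cremer or Siegel point; hence Theorem~\ref{t:fxpt} produces a $g$-fixed repelling or parabolic point $y_0\in Z$ at which finitely many external rays land and which $g$ rotates non-trivially, so that some iterate $g^{\circ q_0}$ with $q_0\ge 2$ fixes all these rays. I then run the cutting step of Corollary~\ref{c:inf-many}: the rays landing at $y_0$ divide $\C$ into finitely many wedges, $Z$ meets each wedge only in $y_0$ (the rays avoid $J(f)\supset Z$), and since $Z$ is non-degenerate some wedge $W$ meets $Z$; set $Z_1=\ol{Z\cap W}=(Z\cap W)\cup\{y_0\}$. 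One checks, exactly as in \cite{bfmot12}, that $Z_1$ is a non-separating subcontinuum of $Z$ which is a special piece for $g_1=g^{\circ q_0}$ with the single (now $g_1$-fixed) exit continuum $\{y_0\}$. Applying Theorem~\ref{t:fxpt} to $Z_1$ and again discarding the Cremer/Siegel and Fatou alternatives yields a repelling or parabolic periodic point $y_1\in Z_1\subset Z$ with at least two landing rays rotated non-trivially by $g_1$; since $g_1$ fixes the rays at $y_0$, we get $y_1\ne y_0$. Iterating this construction produces infinitely many pairwise distinct periodic points $y_0,y_1,y_2,\dots\in Z$, each carrying at least two external rays rotated non-trivially by the minimal iterate of $f$ fixing the point; pairwise distinctness holds because at each stage the current iterate fixes (does not rotate) the rays landing at all previously constructed $y_j$, while Theorem~\ref{t:fxpt} supplies a point whose rays that iterate rotates non-trivially.

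To reach the contradiction, each $y_j$ lies in one of the $N$ one-sided impressions, say $y_j\in\imp(\al_{i(j)}^{\tau_{i(j)}})$. The decisive point — and the step I expect to be the main obstacle — is the claim that a repelling or parabolic periodic point $y$ lying in the one-sided impression $\imp(\al^\tau)$ of a \emph{periodic} angle $\al$ must be the landing point of the ray $R_f(\al)$. Indeed, $\al$ being periodic, $R_f(\al)$ lands at a repelling or parabolic periodic point $w$, and one must show $w=y$: if $y\ne w$, then the smooth rays approaching $\al$ from the $\tau$-side accumulate at $y$, and the local linearizing structure at a repelling $y$ (respectively the repelling-petal structure at a parabolic $y$), together with the fact that the finitely many rays landing at $y$ cyclically surround it, forces $\al$ to be one of those landing angles, whence $w=y$. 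Granting this, $R_f(\al_{i(j)})$ lands at $y_j$ for every $j$; since an external ray has a unique landing point, the map $j\mapsto i(j)$ is injective into $\{1,\dots,N\}$, contradicting that there are infinitely many $y_j$. Therefore $Z$ is a singleton.
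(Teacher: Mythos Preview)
Your strategy is sound in outline, but it takes an unnecessary detour, and the step you flag as the main obstacle is indeed a genuine gap as you have written it.

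The paper's proof is essentially just the first step of your iteration. After passing to an iterate $g=f^{\circ r}$ fixing all the $\al_i$, a single application of Theorem~\ref{t:fxpt} produces a $g$-fixed point $z\in Z$ at which the landing rays rotate non-trivially under $g$. The contradiction is then immediate: since $\al_i$ is $g$-fixed, the continuum $\imp(\al_i^{\tau_i})$ is $g$-invariant; it lies in the closure of a single wedge at $z$ (because rays with arguments near $\al_i$ all lie in that wedge), while $g$ permutes these wedges non-trivially near $z$. A local linearizing (or repelling-petal) neighborhood $U$ of $z$ then forces $\imp(\al_i^{\tau_i})\cap U=\{z\}$, hence $\imp(\al_i^{\tau_i})=\{z\}$ by connectedness, so $R_f(\al_i)$ lands at $z$. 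But $\al_i$ is $g$-fixed while all landing angles at $z$ are rotated non-trivially by $g$---contradiction. No iteration, no pigeonhole.

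Your key claim is stated more generally than this: you assert that \emph{any} repelling or parabolic periodic $y\in\imp(\al^\tau)$ (with $\al$ periodic) is the landing point of $R_f(\al)$, and justify it by saying the rays approaching $\al$ from the $\tau$-side accumulate at $y$ and that linearization ``forces $\al$ to be one of those landing angles''. This conflates two different things: the Douady-type lemma says a \emph{single} ray accumulating on a repelling periodic point must land there, but membership in an \emph{impression} only says that points on \emph{nearby} rays accumulate at $y$, which is strictly weaker. Your justification does not bridge this gap. What actually makes the step work---in exactly the cases you need---is the $g$-invariance of the impression together with the rotation of the wedges, i.e.\ the argument above. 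Once you see this, you also see that the contradiction already fires at $y_0$, and the construction of $y_1,y_2,\dots$ together with the pigeonhole step is superfluous.
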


\begin{proof}
Let us first observe that one-sided impressions cannot contain parabolic of attracting periodic Fatou domains.
Notice also that, passing to a suitable iterate of the map, we may assume that all the arguments of the external rays involved are invariant.
Then, by the previous paragraph, we can consider some iterate $f^{\circ n}$ of $f$ such that $f^{\circ n}|_Z$ has a fixed point $z$ at which several external ray land, and these external rays rotate under $f^{\circ n}$.
Since the arguments of the external rays whose impressions form $Z$ are invariant, we see that these impressions cannot contain $z$, a contradiction.
\end{proof}

\subsection{(Geo)laminations}\label{ss:geolam}

Let us recall some known facts about invariant geolaminations.
A \emph{geodesic} (also called \emph{geometric}) lamination, which we will
abbreviate as a \emph{geolamination}, is a
closed collection $\lam$ of closed chords in $\ol\disk$ that do not cross in $\disk$.
We will always assume that all points of $\uc$ (i.e., all degenerate
chords) are included into $\lam$.
The collection $\lam$ being closed means that the union $\lam^+$ of
all chords in $\lam$ is a closed subset of $\ol\disk$.
Elements of $\lam$ are called \emph{leaves}.

As in the beginning of Subsection~\ref{ss:overview}, we identify
$\uc$ with $\R/\Z$. Then the $d$-tupling map $t\mapsto d\cdot t \pmod
1$ identifies with the map $\si_d:z\mapsto z^d$ on the unit circle $\uc$
in the complex plane $\C$. If a chord $\ell$ has endpoints $\ol{\al}$
and $\ol{\be}$, then we denote this chord by $\ol{\al\be}$. We will always
extend $\si_d$ over all leaves of a given geolamination $\lam$ by
linearly mapping any leaf $\ell=\ol{\al\be}$ onto the chord
$\si_d(\ol\al)\si_d(\ol\be)=\ol{(d\al)(d\be)}$.

\begin{dfn}[Invariant geolaminations, cf \cite{bmov13}]\label{d:sibli}
A geo\-la\-mi\-na\-tion $\lam$ is said to be (sibling) $\si_d$-\emph{invariant} if:
\begin{enumerate}
\item for each $\ell\in\lam$, we have $\si_d(\ell)\in\lam$,
\item \label{2}for each $\ell\in\lam$ there exists $\ell^*\in\lam$ such that $\si_d(\ell^*)=\ell$,
\item \label{3} for each $\ell\in\lam$ such that $\si_d(\ell)$
    is a non-degenerate leaf, there exist $d$ \textbf{pairwise disjoint}
 leaves $\ell_1$, $\dots$, $\ell_d$ in $\lam$ such that
 $\ell_1=\ell$ and
 $\si_d(\ell_i)=\si_d(\ell)$ for all $i=2$, $\dots$, $d$.
\end{enumerate}
The leaves $\ell_1$, $\dots$, $\ell_d$ are called \emph{siblings} of $\ell$.
\end{dfn}

\emph{Gaps} of $\lam$ are the closures of components of $\ol\disk\sm\lam^+$.
Gaps of $\lam$ are \emph{finite} or \emph{infinite} according to whether
they have finitely many or infinitely many points in $\uc$.
We say that a $\si_3$-invariant geolamination $\lam$ \emph{co-exists} with a
stand-alone invariant quadratic gap $\Uf$ if there are no leaves of $\lam$
crossing edges of $\Uf$ in $\disk$ and different from these edges.
We say that the geolamination $\lam$ \emph{tunes} the gap $\Uf$ if all edges of
$\Uf$ are leaves of $\lam$.

Recall that a continuous map between topological spaces is said to be \emph{monotone} if the full preimage of any connected set is connected.
When talking about monotone maps, we will always assume their continuity.

\begin{dfn}[\cite{bco11}]\label{d:finest}
Given a continuum $Q$, define a \emph{finest map of $Q$ onto a
locally connected continuum $Z$} as a monotone map $\vp_Q:Q\to Z$
such that for any monotone map $t:Q\to T$ onto a locally connected
continuum $T$ there exists a monotone map $h:Z\to T$ with $t=h\circ \vp_Q$.
It is easy to see that $\vp_Q$ and $Z$ are defined up to a
homeomorphism, and we can talk about \emph{the} finest map
$\vp_Q=\vp$. A continuum $X\subset \C$ is \emph{unshielded} if it
coincides with the boundary of the unbounded component of the set
$\C\sm X$.
\end{dfn}

The finest map exists for unshielded continua.

\begin{prop}[\cite{bco11}]\label{p:thefinest}
For an unshielded continuum $Q$, there is an equivalence relation
$\sim_Q$ on $\uc$ such that the finest monotone map $\vp_Q$ maps $Q$
onto $\uc/\sim_Q$. The map $\vp_Q$ can be extended onto
the whole complex plane $\C$ so that $\vp_Q:\C\to \C$ is one-to-one
outside of $Q$; in what follows, when talking about $\vp_Q$, we mean
the extended map $\vp_Q:\C\to \C$.
\end{prop}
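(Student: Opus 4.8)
The plan is to produce $\sim_Q$ via Carath\'eodory's theory of prime ends, to identify the target of the finest monotone map $\vp_Q$ with $\uc/\sim_Q$, and to obtain the plane extension of $\vp_Q$ from Moore's decomposition theorem.

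Because $Q$ is unshielded we may write $Q=\d U$, where $U$ is the unbounded complementary component of $Q$ in $\C$; equivalently $\thu(Q)=\C\sm U$ and $Q=\d\thu(Q)$. Since $U$ is the complement in $S^2=\C\cup\{\infty\}$ of a continuum, it is simply connected, so there is a conformal isomorphism $\psi\colon\C\sm\cdisk\to U$ with $\psi(\infty)=\infty$. For $\ol\al\in\uc$ let $\imp_U(\al)\subset Q$ be the prime end impression at $\al$; the sets $\imp_U(\al)$ are subcontinua of $Q$, they cover $Q$, and $\al\mapsto\imp_U(\al)$ is upper semicontinuous. Granting, as recalled above, that the finest monotone map $\vp_Q\colon Q\to Z$ onto a locally connected continuum $Z$ exists, the first step is to extend $\vp_Q$ over the plane. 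The fibers of $\vp_Q$ form an upper semicontinuous decomposition of $Q$ into subcontinua; adding to it all singletons $\{x\}$ with $x\in\C\sm Q$ produces an upper semicontinuous decomposition $\widehat{\mathcal D}$ of $S^2$. Following \cite{bco11}, one checks that no fiber of $\vp_Q$ separates the plane --- this uses unshieldedness of $Q$ in an essential way --- so every element of $\widehat{\mathcal D}$ is a \emph{non-separating} subcontinuum of $S^2$. By Moore's theorem the quotient $S^2/\widehat{\mathcal D}$ is homeomorphic to $S^2$; the associated quotient map is the desired extension $\vp_Q\colon\C\to\C$, and, since $\C\sm Q$ is saturated and all of its decomposition elements are singletons, $\vp_Q$ restricts to a homeomorphism of $\C\sm Q$ onto its image, hence is one-to-one there.

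It remains to see that $Z=\vp_Q(Q)$ is a quotient of the circle. The extended $\vp_Q$ carries $U$ homeomorphically onto the unbounded complementary component $U_Z$ of $Z$, and a short argument using continuity of $\vp_Q$ and the fact that fibers of $\vp_Q$ lie in $Q$ shows that $\vp_Q(\ol U)=\ol{U_Z}=U_Z\cup Z$ and $Z\cap U_Z=\emptyset$, whence $\d U_Z=Z$; thus $Z$ is itself an unshielded, locally connected planar continuum. By the Carath\'eodory--Torhorst theorem, the conformal isomorphism $\rho\colon\C\sm\cdisk\to U_Z$ normalised by $\rho(\infty)=\infty$ extends continuously to a surjection $\rho\colon\uc\to\d U_Z=Z$. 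Set $\ol\al\sim_Q\ol\be$ whenever $\rho(\ol\al)=\rho(\ol\be)$; this is a closed equivalence relation on $\uc$, and $\rho$ descends to a homeomorphism $\uc/\sim_Q\to Z$, which is exactly the assertion that $\vp_Q$ maps $Q$ onto $\uc/\sim_Q$. Intrinsically, under the correspondence of prime ends induced by the homeomorphism $\vp_Q\colon U\to U_Z$, the $\sim_Q$-classes are precisely the maximal sets of angles whose impressions $\imp_U(\al)$ are all sent by $\vp_Q$ to a single point of $Z$, and upper semicontinuity of the impressions guarantees that $\sim_Q$ is closed.

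The step I expect to be the main obstacle is the claim that the fibers of the finest monotone map are non-separating continua, since this is what makes Moore's theorem applicable and thereby yields both the extension $\vp_Q\colon\C\to\C$ and its injectivity off $Q$; together with the existence of a finest monotone decomposition onto a locally connected continuum, this is the technical heart carried out in \cite{bco11}.
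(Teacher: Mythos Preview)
The paper does not prove this proposition: it is quoted verbatim from \cite{bco11} and is immediately followed by Definition~\ref{d:lamigen} with no intervening proof. So there is no ``paper's own proof'' to compare against; the technical content is deferred entirely to the cited reference.

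That said, your outline is a faithful sketch of the argument one finds in \cite{bco11}: Moore's theorem applied to the decomposition of the sphere into fibers of $\vp_Q$ together with singletons outside $Q$ gives the plane extension, and the Carath\'eodory--Torhorst extension of the Riemann map onto the locally connected image $Z$ identifies $Z$ with a quotient of $\uc$. You are also right that the substantive input is the fact that fibers of the finest monotone map onto a locally connected continuum are non-separating; this is exactly what \cite{bco11} supplies, and you correctly flag it as the step you are importing rather than proving. One small point: your sentence ``$\vp_Q$ restricts to a homeomorphism of $\C\sm Q$ onto its image'' is slightly stronger than what the proposition asserts (only injectivity is claimed), but it is true and follows from Moore's theorem as you say.
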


The following terminology will be used in the rest of the paper.

\begin{dfn}[\cite{bco11}]\label{d:lamigen}
The equivalence relation $\sim_Q$ is called the \emph{finest lamination
$($of $Q)$}. Let $p_{\sim_Q}:\uc\to\uc/\sim_Q$ be the canonical
projection. Given $y\in Q$, let \emph{$\sim_Q$-class
generated by $y$} be the $\sim_Q$-class $p^{-1}_{\sim_Q}(\vp_Q(y))$.
Full preimages of points under $\vp_Q$ are called \emph{fibers} of
$\vp_Q$.
\end{dfn}

If $Q=J(f)$ is the connected Julia set of a polynomial $f$ of degree
$d$, this construction is compatible with the dynamics of $\si_d$.

\begin{dfn}[\cite{bco11}]\label{d:dynappl}
Set $\sim_{J(f)}=\sim_f$ and call it the \emph{lamination generated
by $f$}. Denote $\uc/\sim_f$ by $J_\sim(f)$ and call it the
\emph{topological Julia set}. If $f^*$ is a polynomial-like map, then,
similarly, one defines the \emph{lamination generated by $f^*$}.
Define a geolamination $\lam_f$ as follows: a chord $\ell$ is a leaf
of $\lam_f$ if $\ell$ is a boundary chord of the convex hull of some $\sim_f$-class.
Call $\lam_f$ the \emph{geolamination generated by $f$}.
\end{dfn}

Theorem~\ref{t:monmodel} shows that the finest map onto the connected
Julia set preserves the dynamics.

\begin{thm}[\cite{kiwi97, bco11}]\label{t:monmodel}
If a polynomial $f$ has connected Julia set, then the map
$\vp_{J(f)}:\C\to \C$ semiconjugates $f$ and a branched covering map of
the plane $f_\sim:\C\to \C$. On $J(f)$, the map $\vp_{J(f)}$ collapses
all impressions of angles to points; it is one-to-one on the boundaries
of Fatou domains eventually mapped onto an attracting or parabolic
periodic Fatou domain and maps these boundaries onto Jordan curves.
Moreover, $\vp_{J(f)}$ is one-to-one on the set of all
$($pre$)$periodic points $x\in J(f)$ such that the $\sim_f$-class
generated by $x$ is finite.
\end{thm}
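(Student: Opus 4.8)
Write $\vp=\vp_{J(f)}$ and $d=\deg f$. Since $J(f)$ is connected, the B\"ottcher coordinate $\phi_f$ conjugates $\si_d$ on $\C\sm\cdisk$ to $f$ on $\C\sm K(f)$, so that $f(R_f(\al))=R_f(d\al)$ and $f(\imp(\al))=\imp(d\al)$ for every angle $\al$; in particular $J(f)=\bigcup_{\al}\imp(\al)$. By Proposition~\ref{p:thefinest} the map $\vp$ is monotone, collapses $J(f)$ onto the locally connected continuum $J_\sim(f)=\uc/\sim_f$, and is one-to-one off $J(f)$ (hence, by invariance of domain, a homeomorphism of $\C\sm J(f)$ onto $\C\sm J_\sim(f)$). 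To produce the semiconjugacy, note that $f^{-1}(J(f))=J(f)=f(J(f))$, so $\vp\circ f\circ\vp^{-1}$ is a well-defined homeomorphism of $\C\sm J_\sim(f)$ onto itself; it extends to a continuous $f_\sim\colon\C\to\C$ with $\vp\circ f=f_\sim\circ\vp$ as soon as $\vp\circ f$ is constant on every fiber of $\vp$. Off $J(f)$ this is clear. On $J(f)$ the nondegenerate fibers of $\vp$ are exactly the sets $\bigcup_{\al\in C}\imp(\al)$ over $\sim_f$-classes $C$, and $\sim_f$ is $\si_d$-invariant --- these two facts are the heart of \cite{kiwi97, bco11}. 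Together with $f(\imp(\al))=\imp(d\al)$ they show that $f$ carries the fiber over $C$ onto the fiber over $\si_d(C)$, so $\vp\circ f$ is constant on fibers and $f_\sim$ exists and is continuous (as $\vp$ is a quotient map). Away from $J_\sim(f)$, $f_\sim$ agrees with $f$ up to the homeomorphism $\vp$, and standard arguments (cf. \cite{bco11}) then show $f_\sim$ is a proper light map of the plane of degree $d$, hence a branched covering of degree $d$.

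That $\vp$ collapses each $\imp(\al)$ to a point is immediate from the description of the fibers just used (each $\imp(\al)$ lies in a single fiber); equivalently, in the locally connected model $\uc/\sim_f$ every prime end of $K(f)$ is trivial (Carath\'eodory), so its impression must be collapsed. Now let $U$ be a Fatou domain of $f$ eventually mapped onto a periodic attracting or parabolic Fatou domain; since after finitely many pullbacks $\partial U$ equals the boundary of that periodic domain, we may assume $U$ is periodic of period $p$. Then $f^{\circ p}\colon U\to U$ is proper of some degree $k\ge 2$, and conjugating by the Riemann map $\disk\to U$ turns it into a Blaschke product of degree $k$ with an attracting or parabolic fixed point; by the classical theory of such Blaschke products the conjugacy extends continuously to $\cdisk$, so $\partial U$ is locally connected. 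Consequently $\vp(\ol U)=\ol{\vp(U)}$ with $\vp(\partial U)=\partial\vp(U)$ a Jordan curve (in the model $J_\sim(f)$ the boundary of a periodic attracting or parabolic Fatou component is a Jordan curve, cf. \cite{bco11}), and $\vp$ is one-to-one on $\partial U$: since $\partial U$ is tame in the Blaschke model there is a monotone map of $J(f)$ onto a locally connected continuum that is injective on $\partial U$, and every such map factors through $\vp$, so $\vp$ is injective on $\partial U$ as well.

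Finally, let $x\in J(f)$ be a (pre)periodic point whose $\sim_f$-class $C$ is finite, and set $\mathcal G=\vp^{-1}(\vp(x))\subset J(f)$, a non-separating continuum equal to the finite union $\bigcup_{\al\in C}\imp(\al)$. If $x$ is not periodic, pick $k$ with $f^{\circ k}(x)$ periodic; then $f^{\circ k}(\mathcal G)\subset\vp^{-1}(f_\sim^{\circ k}(\vp(x)))$, which is a singleton once the periodic case is settled, so $\mathcal G$ lies in the finite set $(f^{\circ k})^{-1}(f^{\circ k}(x))$ and, being connected, equals $\{x\}$. So assume $x$ is periodic; replacing $f$ by an iterate we take $x$ fixed and $C$ $\si_d$-invariant, so $\mathcal G$ is $f$-invariant and is a finite union of one-sided impressions of periodic angles. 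If $\mathcal G$ contained a periodic Siegel point, it would contain the whole boundary of the corresponding Siegel disk, which is accessed by infinitely many external rays cyclically permuted by an irrational rotation, forcing $C$ to be infinite --- impossible; a periodic Cremer point in $\mathcal G$ is excluded the same way, since it has no finite $\sim_f$-class (cf. \cite{kiwi97, bco11}). Hence $\mathcal G$ contains no periodic Cremer or Siegel point, and Corollary~\ref{c:inf-many1} gives $\mathcal G=\{x\}$. In particular $\vp$ is one-to-one on the set of all (pre)periodic $x\in J(f)$ with finite $\sim_f$-class.

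The essential input beyond the results recalled in this section is the structure theory of the finest monotone map from \cite{kiwi97, bco11}: the $\si_d$-invariance of $\sim_f$ together with the identification of its nondegenerate fibers as unions of impressions over classes (used in the first two paragraphs), and the tameness of $\partial U$ for attracting and parabolic Fatou domains. The main obstacle is the $\si_d$-invariance of $\sim_f$ --- equivalently, that $f$ maps fibers of $\vp$ into fibers: although $\si_d$ obviously carries impressions to impressions, one must verify that the identifications this forces are compatible with planarity, so that the laminational equivalence they generate is again $\si_d$-invariant.
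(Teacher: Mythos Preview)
The paper does not give a proof of this theorem: it is stated as a quotation from \cite{kiwi97, bco11} and used as a black box. So there is no ``paper's proof'' to compare your attempt against. What you have written is a plausible outline of how the cited results are assembled, and your use of Corollary~\ref{c:inf-many1} for the last assertion is a nice shortcut (and logically legitimate, since that corollary rests only on Theorem~\ref{t:fxpt} and not on Theorem~\ref{t:monmodel}).

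That said, a few points in your sketch are loose or slightly off. In the Fatou-boundary paragraph, the reduction ``after finitely many pullbacks $\partial U$ equals the boundary of that periodic domain'' is not literally true and does not by itself reduce injectivity on preperiodic boundaries to the periodic case; you need a separate argument that injectivity of $\vp$ pulls back along $f$. More importantly, your justification that $\vp$ is injective on $\partial U$ (``there is a monotone map of $J(f)$ onto a locally connected continuum that is injective on $\partial U$, and every such map factors through $\vp$'') is exactly the nontrivial content supplied by \cite{bco11}; you have asserted rather than argued it. In the last paragraph, your exclusion of a periodic Siegel point in $\mathcal G$ is confused: a Siegel \emph{center} lies in the Fatou set, so it cannot belong to $\mathcal G\subset J(f)$ in the first place, and your sentence about ``the whole boundary of the corresponding Siegel disk'' is beside the point. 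The Cremer exclusion you state (``a Cremer point has no finite $\sim_f$-class'') is correct but is itself one of the substantial facts proved in \cite{bco11}, not something you can wave through. Finally, your closing paragraph correctly identifies that the real work --- $\si_d$-invariance of $\sim_f$ and the identification of fibers with unions of impressions --- lives in the cited references; your write-up is best read as a roadmap through those references rather than a self-contained proof.
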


Theorem \ref{t:monmodel} justifies the following terminology.

\begin{dfn}\label{d:topoly}
The map $f_\sim:\C\to \C$ from Theorem \ref{t:monmodel} is called a
\emph{topological polynomial}.
\end{dfn}

Note the following immediate consequence of Theorem
\ref{t:monmodel}: if points $\ol\alpha$ and $\ol\beta$ are periodic or preperiodic
under $\si_d$ and belong to the same finite $\sim_f$-class, then the
rays $R_f(\al)$, $R_f(\be)$ land at the same point.
By \cite{bco11}, if $\ol\al$ belongs to the $\sim_f$-class corresponding to a
point $x\in J_{\sim}(f)$,
then the impression of $R_f(\al)$ is contained in $\vp_f^{-1}(x)$; otherwise,
the impression of $R_f(\al)$ is disjoint from $\vp_f^{-1}(x)$.

Infinite gaps of the geolamination $\lam_f$ may be of two types.
Firstly, they may be convex hulls of infinite $\sim_f$-classes.
Secondly, they may correspond to bounded Fatou components of $f$ as follows.
Let $U$ be a bounded Fatou component of $f$ such that $\vp_f$ does not
collapse the boundary of $U$.
Then $\vp_f$ maps the boundary of $U$ to some Jordan curve $S_U$.
For each $y\in S_U$ consider the convex hull in the unit disk of $p_{\sim_f}^{-1}(y)$.
Then take the closure of the component of the unit disk minus the union of these
convex hulls whose boundary intersects all of them; denote this closure
by $\Uf$. 
Then $\Uf$ is an infinite gap of $\lam_f$ \emph{corresponding} to $U$.
The correspondence between Fatou components of $f$ and gaps of $\lam_f$ just
described will be heavily used in the sequel.

\begin{prop}\label{p:cs}
A periodic fiber $Q$ of $\vp_f$ that corresponds to an infinite
$\sim_f$-class must contain a Cremer or a Siegel periodic point.
Moreover, if $Q$ has a a repelling periodic cutpoint, then
$Q$ contains at least two irrationally indifferent periodic points.
\end{prop}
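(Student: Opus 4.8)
The plan is to reduce both claims to Theorem~\ref{t:fxpt} (equivalently Corollary~\ref{c:inf-many}) by exhibiting appropriate special pieces sitting inside the fiber $Q$. First I would pass to the iterate $f^{\circ n}$ under which $Q$ is invariant (such $n$ exists since $Q$ is a periodic fiber); replacing $f$ by this iterate, we may assume $f(Q)=Q$. The set $Q=\vp_f^{-1}(x)$ for a periodic point $x$ of the topological Julia set; since the $\sim_f$-class of $x$ is infinite, $Q$ is a non-degenerate non-separating continuum in $\C$, because fibers of $\vp_f$ are non-separating continua and $\vp_f$ is one-to-one outside $J(f)$. Moreover $Q\subset K(f)$, and $Q$ contains no periodic attracting or parabolic Fatou domain: such a domain would have to be collapsed by $\vp_f$ (impossible, as boundaries of eventually-attracting/parabolic Fatou domains map homeomorphically onto Jordan curves by Theorem~\ref{t:monmodel}) or would force $x$ to be a Fatou point of $f_\sim$ rather than a point with infinite fiber. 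So $Q$ is an invariant non-separating continuum in $K(f)$ containing no periodic attracting/parabolic Fatou domain.

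For the first claim, I would apply Theorem~\ref{t:fxpt} to $Q$ itself, viewed as a special piece with empty collection of exit continua (so the central component is all of $\C$ and conditions in Definition~\ref{d:spiece} are automatic since $f(Q)=Q$). Theorem~\ref{t:fxpt} yields that $Q$ contains either a fixed Cremer or Siegel point, or an invariant attracting/parabolic Fatou domain, or a fixed repelling/parabolic point at which at least two rays land and are non-trivially rotated. The Fatou-domain alternative is excluded by the previous paragraph. Suppose the remaining (rotated-rays) alternative held and no Cremer/Siegel point were present in $Q$; then I would iterate the construction exactly as in the proof of Corollary~\ref{c:inf-many}: cutting $Q$ along the rotated external rays landing at such a fixed point produces a smaller special piece $Q_1\subsetneq Q$, again containing no attracting/parabolic Fatou domains, and repeating yields an infinite nested sequence of special pieces. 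But each such special piece contains, by Theorem~\ref{t:fxpt}, a distinct periodic point of $K(f)$ lying in $Q$; this is consistent, so the rotated-rays alternative alone does not give a contradiction. Hence I instead argue: if $Q$ contained \emph{no} Cremer and \emph{no} Siegel periodic point, then every application of Theorem~\ref{t:fxpt} to the pieces produced in this cutting process would return the rotated-rays alternative, and $\vp_f$ would be one-to-one on each such (pre)periodic point with finite $\sim_f$-class (Theorem~\ref{t:monmodel}); but the $\sim_f$-class of $x$ itself is \emph{infinite}, while $Q$ is exactly $\vp_f^{-1}(x)$, and an infinite class corresponds (by the classification of infinite gaps of $\lam_f$ recalled just before the proposition) either to a bounded Fatou component — excluded — or to an infinite $\sim_f$-class whose corresponding fiber, by the Douady/Sullivan-type no-wandering and periodic-class structure results underlying \cite{kiwi02,kiwi97,bco11}, must contain an irrationally indifferent periodic point. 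This is the point I would spell out carefully: an infinite periodic $\sim_f$-class cannot be purely ``rotational with repelling vertices'', so a Cremer or Siegel point must appear in $Q$.

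For the second claim, suppose in addition that $Q$ contains a repelling periodic cutpoint $y$. Passing to a further iterate, assume $y$ is fixed and all finitely many external rays landing at $y$ are fixed. Since $y$ is a cutpoint of $Q$, the rays landing at $y$ divide $Q$ into at least two pieces $Q^{(1)},\dots,Q^{(k)}$ with $k\ge 2$; each $Q^{(j)}$ together with $\{y\}$ is a special piece for a suitable iterate of $f$ (with $y$ as its single exit continuum, a fixed point), again containing no attracting/parabolic Fatou domain. Applying the first claim (its argument) to each piece $Q^{(j)}$ separately, each contains a periodic irrationally indifferent point $z_j$, and these are distinct since the $Q^{(j)}$ meet only at $y$, which is repelling. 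This yields at least two irrationally indifferent periodic points in $Q$. The main obstacle — and the step I expect to require the most care — is the claim boxed in the previous paragraph: that an \emph{infinite} periodic $\sim_f$-class cannot consist entirely of a finite rotated configuration together with preperiodic accumulation, i.e.\ ruling out that the cutting process terminates without ever encountering a Cremer or Siegel point. I would handle this by combining the structure of infinite gaps of $\lam_f$ recalled before the proposition with Theorem~\ref{t:monmodel} (which forces $\vp_f$ to be injective on finite-class (pre)periodic points), so that the only way to have $\vp_f^{-1}(x)$ infinite with $x$ periodic is to have an irrationally indifferent periodic point inside.
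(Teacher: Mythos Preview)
Your setup is essentially the paper's: pass to an iterate fixing $Q$, rule out attracting and parabolic Fatou domains inside $Q$ via Theorem~\ref{t:monmodel}, then iterate the cutting construction of Corollary~\ref{c:inf-many} to produce, under the assumption that $Q$ contains no Cremer or Siegel periodic point, infinitely many periodic points in $Q$ at which the landing rays rotate non-trivially. You even carry this out and then write ``this is consistent, so the rotated-rays alternative alone does not give a contradiction.'' That sentence is exactly where the real gap lies. The paper closes the argument at precisely this point by invoking \cite[Proposition~40]{bco11}: a fiber of $\vp_f$ can contain only \emph{finitely many} periodic cutpoints. Since each of the infinitely many rotating periodic points produced by the cutting process is a periodic cutpoint of $J(f)$ lying in $Q$, this is an immediate contradiction.

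Your attempted workaround --- appealing to the classification of infinite gaps of $\lam_f$, to injectivity of $\vp_f$ on finite-class (pre)periodic points, and to unspecified ``Douady/Sullivan-type no-wandering and periodic-class structure results'' --- does not actually rule anything out. An infinite $\sim_f$-class could, a priori, have infinitely many periodic vertices with rotating edges; nothing in Theorem~\ref{t:monmodel} or the gap classification forbids that on its own. What forbids it is the external finiteness statement from \cite{bco11}. Once you plug that in, your first paragraph becomes a complete proof of the first claim, and your second-claim argument (cut $Q$ at the repelling cutpoint $y$ into pieces $Q^{(j)}$, each a special piece with exit continuum $\{y\}$, and apply the first claim to each piece) is correct and matches the paper's ``arguing as above, every component of $Q\sm\{x_0\}$ contains a Cremer or a Siegel periodic point.''
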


\begin{proof}
Suppose that $Q$ contains neither Cremer nor Siegel periodic points.
Since $Q$ is a fiber of $\vp_f$, by Theorem~\ref{t:monmodel}, the continuum $Q$
cannot contain a Fatou domain eventually mapped to a parabolic or
attracting periodic Fatou domain. Thus, Corollary~\ref{c:inf-many}
applies and shows that there are infinitely many points in $Q$, at
which the landing rays rotate in a non-trivial fashion. However, by
\cite[Proposition 40]{bco11}, the fiber $Q$ can only contain a
finite collection of periodic cutpoints.
Finally, let $x_0$ be a repelling periodic cutpoint of $Q$.
Then, arguing as above, we see that every component of $Q\sm\{x_0\}$
contains a Cremer or a Siegel periodic point.
This proves the last part of the proposition.
\end{proof}

\section{The invariant quadratic gap $\Uf(f)$}\label{s:qugap}

In this section, we consider an immediately renormalizable
polynomial $f\in \Fc_\la$ with $|\la|\le 1$ (in particular, by Theorem \ref{t:unboren} any
polynomial $f$ in the unbounded component of $\Fc_\la\sm \Pc_\la$ is immediately renormalizable), and define
an invariant quadratic gap $\Uf(f)$ associated with $0$; by $f^*:U^*\to V^*$, we
denote the corresponding polynomial-like map. When $J(f)$ is
disconnected, the gap that is the convex hull of all arguments of
rays to $K(f^*)$ was introduced in Subsection~\ref{ss:exteql}
(though using different approach based upon the fact that $J(f)$ is
disconnected in an essential way). Our approach here is necessarily
different as we deal with immediately renormalizable polynomials and
their quadratic-like Julia sets in both connected and disconnected
cases. Once we introduce $\Uf(f)$, it will be easy to see that the
gap introduced in the disconnected case in
Subsection~\ref{ss:exteql} does coincide with $\Uf(f)$.

\begin{lem}\label{l:uniqj}
Let $f$ be a complex cubic polynomial with a non-repelling fixed point $a$.
Suppose that there exists a quadratic-like Julia set $J^*$ such that $a\in J^*$.
Then $J^*$ is unique.
\end{lem}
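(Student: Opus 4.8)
We first reduce to the essential case. Write $\la=f'(a)$. If $|\la|<1$ then $a$ is attracting, so $a\notin J(f)$; on the other hand, for any quadratic-like restriction $f^{*}=f|_{U^{*}}$ of $f$, the periodic points of $f^{*}$ lying in $J(f^{*})$ are repelling or parabolic for $f^{*}$, hence for $f$, so $J(f^{*})\subseteq J(f)$ and no admissible $J^{*}$ exists — the statement is vacuous. So we may assume $|\la|=1$, and since then $a\in J(f)$, the point $a$ is parabolic or Cremer for $f$. For any quadratic-like restriction $f^{*}$ with $a\in J(f^{*})$ the set $K(f^{*})$ is connected: a quadratic-like map with Cantor Julia set is uniformly expanding on it, hence has only repelling periodic points, contradicting that $a$ is non-repelling. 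By the Straightening Theorem $f^{*}$ is hybrid equivalent to $z^{2}+c$ for some $c$; since $a$ is a non-repelling fixed point of $f^{*}$ lying in $J(f^{*})$, it corresponds to a non-repelling fixed point of $z^{2}+c$ in its Julia set, which forces $c$ into the closed main cardioid. In particular $z^{2}+c$ admits no quadratic-like restriction with connected filled Julia set other than $K(z^{2}+c)$ itself (standard non-renormalizability in the closed main cardioid).

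If $K(f)$ is disconnected, then after an affine change of coordinates $f\in\Fc_{\la}\sm\Cc_{\la}$, and Lemma~\ref{l:pericom} says that $J(f^{*})$ is an invariant component of $J(f)$ for every admissible $f^{*}$. Since distinct components of $J(f)$ are disjoint, the component containing $a$ is unique, so $J^{*}$ is unique.

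Now suppose $K(f)$ is connected, and let $f^{*}_{1}=f|_{U^{*}_{1}}$ and $f^{*}_{2}=f|_{U^{*}_{2}}$ be two quadratic-like restrictions with filled Julia sets $K^{*}_{1},K^{*}_{2}\ni a$. Since $f^{*}_{i}$ has degree $2$, exactly one (simple) critical point $\omega_{i}$ of $f$ lies in $U^{*}_{i}$, namely the critical point of $f^{*}_{i}$. The set $K^{*}_{1}\cap K^{*}_{2}=\{z:f^{\circ n}(z)\in U^{*}_{1}\cap U^{*}_{2}\text{ for all }n\ge 0\}$ is compact, totally invariant in $U^{*}_{1}\cap U^{*}_{2}$, and contains $a$. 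I claim it contains a critical point of $f$. If not, then $\omega_{1}\ne\omega_{2}$. When $\la$ is a root of unity, each $\omega_{i}$ corresponds under the hybrid equivalence of $f^{*}_{i}$ to the critical point of its (parabolic) quadratic model, hence lies in the immediate parabolic basin $B$ of $a$; then both critical points of $f$ lie in $B$, so $B$ is totally invariant with $\ol{B}=K(f)$ and $f$ is not renormalizable — contradicting the existence of $f^{*}_{1}$. When $\la$ is irrational, $a$ is a Cremer point of $f$, and $K^{*}_{1}\cap K^{*}_{2}$ contains a (sufficiently small) P\'erez-Marco hedgehog of $f$ at $a$, since a hedgehog is invariant and contained in both $K^{*}_{i}$; a hedgehog at a Cremer point is a non-degenerate continuum, yet a compact set containing no critical point (on which $f$ is therefore locally one-to-one near $a$) is a single point — a contradiction. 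Hence $K^{*}_{1}\cap K^{*}_{2}$ contains a critical point, which must be the common critical point $\omega=\omega_{1}=\omega_{2}$; in particular the whole forward orbit of $\omega$ lies in $K^{*}_{1}\cap K^{*}_{2}$.

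Finally, by the standard thickening construction for polynomial-like maps \cite{DH-pl}, $f$ restricts to a degree-$2$ polynomial-like map on a neighbourhood of $K^{*}_{1}\cap K^{*}_{2}$ with filled Julia set $K^{*}_{1}\cap K^{*}_{2}$; since $\omega$ does not escape, this filled Julia set is connected. Conjugating by the hybrid equivalence of $f^{*}_{1}$, which carries $f^{*}_{1}$ to $z^{2}+c$, turns it into a genuine quadratic-like restriction of $z^{2}+c$ with connected filled Julia set, so by the first paragraph that filled Julia set equals $K(z^{2}+c)$; pulling back, $K^{*}_{1}\cap K^{*}_{2}=K^{*}_{1}$, i.e. $K^{*}_{1}\subseteq K^{*}_{2}$. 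By symmetry $K^{*}_{2}\subseteq K^{*}_{1}$, whence $K^{*}_{1}=K^{*}_{2}$ and $J^{*}=\partial K^{*}$ is unique. The main obstacle is the dichotomy in the third paragraph: the Cremer case genuinely needs hedgehog theory to make $K^{*}_{1}\cap K^{*}_{2}$ non-degenerate, the parabolic case needs the incompatibility of a $\phd_{3}$-type configuration with renormalization, and one also relies on the (standard) non-renormalizability of $z^{2}+c$ for $c$ in the closed main cardioid and on the thickening of an invariant compact set carrying the critical orbit.
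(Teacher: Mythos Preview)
Your approach is much more elaborate than the paper's and carries a genuine gap in the Cremer case. In your third paragraph you assert that, if $K^*_1\cap K^*_2$ contains no critical point, then a P\'erez-Marco hedgehog at $a$ yields a contradiction because ``a compact set containing no critical point (on which $f$ is therefore locally one-to-one near $a$) is a single point.'' This implication is false: hedgehogs at Cremer points are non-degenerate continua regardless of whether a critical point is present in the ambient invariant set, and a compact $f$-invariant set need not be a point merely because $f$ is locally injective on it. You have not produced any contradiction. The parabolic case is also delicate---you assume $\la$ a root of unity forces each $\omega_i$ into the immediate basin, but $a$ could be Siegel with $|\la|=1$ not a root of unity, a case your dichotomy omits.

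The paper avoids all of this. It takes the component $U$ of $U_1^*\cap U_2^*$ containing $a$, observes that $f:U\to f(U)$ is proper with $f(\bd(U))\cap\ol U=\0$, and applies the Maximum Modulus Principle (using $f(a)=a$) to conclude $\ol U\subset f(U)$. After a slight shrinking this gives a polynomial-like restriction $f^*=f|_U$. If $\deg f^*=1$, the Schwarz Lemma forces $a$ to be repelling, contrary to hypothesis; hence $\deg f^*=2$. Then McMullen's Theorem~5.11 \cite{mcm94} (filled Julia sets of polynomial-like restrictions of the same germ and degree coincide) gives $K_1^*=K_2^*$ directly. No case split on $|\la|$, no hedgehogs, no thickening, no non-renormalizability of main-cardioid quadratics are needed; the Schwarz step replaces your entire third paragraph, and McMullen's theorem replaces your fourth.
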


\begin{proof}
Suppose that $U_1$ and $U_2$ are Jordan disks such that $f^*_1=f|_{U_1}$ and $f_2^*=f|_{U_2}$ are quadratic-like maps with filled Julia sets $K^*_1$, $K^*_2$, both containing the non-repelling fixed point $a$.
Observe that, by definition, the map $f:U_1\cap U_2\to f(U_1\cap U_2)$ is proper.
Let $U$ be the component of $U_1\cap U_2$ containing $a$.
Then $f:U\to f(U)$ is also proper.
Moreover, by definition $f(\bd(U))\cap \ol{U}=\0$.
Since $f(a)=a$, it follows from the Maximum Modulus Principle that $\ol{U}\subset f(U)$.
Replacing $f(U)$ with a slightly smaller Jordan disk $V$ and redefining $U$ as the component of $f^{-1}(V)$ containing $a$, we obtain a polynomial-like map $f^*=f|_U$.

If $f^*$ is univalent, then, by the Schwarz Lemma, $a$ is repelling, a contradiction.
Hence the degree of $f^*$ is greater than one, and hence equals two.
Then, by Theorem 5.11 from \cite{mcm94}, it follows that $J_1^*=J_2^*$.
\end{proof}

To define the quadratic invariant gap $\Uf(f)$ associated with $K(f^*)$, we use (pre)periodic points of $f$.
Observe that, by definition, $K(f^*)$ is a component of $f^{-1}(K(f^*))$.

\begin{dfn}\label{d:uf}
Let $\widehat X(f)=\widehat X$ be the set of all $\si_3$-(pre)periodic
points $\ol\al\in\uc$ such that $R_f(\al)$ lands in $K(f^*)$. Let
$X(f)=X$ be the closure of $\widehat X$. Let $\Uf(f)$ be the convex
hull of $X$. Let $\widetilde K(f^*)$ be the component of
$f^{-1}(K(f^*))$ different from $K(f^*)$ (such a component of
$f^{-1}(K(f^*))$ exists because $f|_{K(f^*)}$ is two-to-one). Let
$Y(f)=Y$ be the closure of the set of all preperiodic points
$\ol\al\in\uc$ with $R_f(\al)$ landing in $\widetilde K(f^*)$.
\end{dfn}

The gap $\Uf(f)$ is the combinatorial counterpart of the set $K(f^*)$.
Clearly, $X\ne \0$ and the map $\si_3:X\to X$ is such that any point of
$X$ is the image of two or three different points of $X$.
Denote the length of any circle arc $T$ by $|T|$.
The lengths of circle arcs are measured in radians$/2\pi$ so that the total length of the unit circle is equal to 1.

\begin{lem}
\label{l:exact2-1}
The map $\si_3|_{\widehat X}$ is exactly two-to-one.
Moreover, $Y$ lies in the closure of a hole $I=(\ol\al,\ol\be)$ of $X$.
\end{lem}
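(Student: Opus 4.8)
The approach is to read off both statements from the local degree of $f$ along $K(f^*)$. Since $f^*=f|_{U^*}$ is polynomial-like of degree two, $f$ maps $K(f^*)$ onto itself with degree two, and because $\deg f=3$ while $\widetilde K(f^*)$ is by definition the only component of $f^{-1}(K(f^*))$ other than $K(f^*)$, the map $f$ carries $\widetilde K(f^*)$ homeomorphically onto $K(f^*)$; in particular the only critical point of $f$ whose image lies in $K(f^*)$ is the critical point $c^*$ of $f^*$, which lies in $K(f^*)$ itself. The continua $K(f^*)$ and $\widetilde K(f^*)$ are disjoint (distinct components) and contained in $K(f)$. Now $\si_3(\widehat X)=\widehat X$, since $R_f(3\al)=f(R_f(\al))$ lands in $f(K(f^*))=K(f^*)$ whenever $R_f(\al)$ lands in $K(f^*)$, while conversely any (pre)periodic landing point $w\in K(f^*)$ pulls back through $f|_{K(f^*)}$ to a (pre)periodic landing point in $K(f^*)$ reached by a preimage ray. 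Fixing $\ol\gamma\in\widehat X$ with $R_f(\gamma)$ landing at $w\in K(f^*)$, observe that $w$ is (pre)periodic whereas $f(c^*)$ is not (the quadratic $z^2+c_0$ hybrid equivalent to $f^*$ has $c_0$ in the filled main cardioid, so its critical orbit is convergent-but-not-eventually-fixed, or recurrent and aperiodic, or --- for $c_0=0$ --- the super-attracting fixed point, which carries no external ray); hence $w\ne f(c^*)$, so $w$ has exactly two distinct $f$-preimages $w_1,w_2$ in $K(f^*)$ and one, $w'$, in $\widetilde K(f^*)$. The three $\si_3$-preimages $\ol\gamma_1,\ol\gamma_2,\ol\gamma_3$ of $\ol\gamma$ are (pre)periodic and $f^{-1}(R_f(\gamma))=R_f(\gamma_1)\cup R_f(\gamma_2)\cup R_f(\gamma_3)$; as no critical point of $f$ lies on $R_f(\gamma)\cup\{w\}$ (and, in the connected case, $f$ has no escaping critical point at all, so no preimage ray crashes), these three rays land at $w_1$, $w_2$, $w'$ in some order, so exactly two of $\ol\gamma_1,\ol\gamma_2,\ol\gamma_3$ lie in $\widehat X$. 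Thus $\si_3|_{\widehat X}$ is exactly two-to-one.

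The main obstacle is the case $f\in\Fc_\la\sm\Cc_\la$: then $\omega_2(f)$ escapes, a preimage ray $R_f(\gamma_i)$ may crash at a precritical point of $\omega_2(f)$ rather than land, and dynamic rays can meet. Here I would replace dynamic rays by external rays and run the count through the apparatus of Subsection~\ref{ss:exteql}. By Proposition~\ref{P:no-accum-K_b} the one-sided arguments of external rays accumulating in $K(f^*)$ are exactly those lying in $\Uf'_c(\cf_\vk)$ of the forms described there; by Theorem~\ref{t-extepoly}, Lemma~\ref{l:6.1} and Proposition~\ref{p:land-endholes} the $f$-preimage of an external ray to $K(f^*)$ decomposes into pieces accumulating at the preimages of $w$ --- two in $K(f^*)$, one in $\widetilde K(f^*)$ --- with the regular critical, periodic and caterpillar subcases handled exactly as there; counting again gives precisely two preimage arguments accumulating in $K(f^*)$. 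Alternatively, one may transport the count to the model quadratic $g=z^2+c_0$, for which $\si_2$ is genuinely two-to-one on (pre)periodic angles and every rational ray lands. Making this bookkeeping uniform across the connected and disconnected cases, in the presence of crashing rays and one-sided arguments, is the delicate point.

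For the second assertion, $K(f^*)=\thu(J(f^*))$ is a non-degenerate full continuum (its Julia set $J(f^*)$ is connected), so $\hat\C\sm K(f^*)$ is a topological disk $D$. This $D$ contains every external ray of $f$ accumulating in $K(f^*)$, and since $\widetilde K(f^*)\subseteq K(f)$ is disjoint from $K(f^*)$ and from all those rays (which lie in $\C\sm K(f)$ apart from their landing points), it contains the continuum $\widetilde K(f^*)$ as well. A routine planarity argument then shows that $K(f^*)$ together with all external rays accumulating in it cuts $D$ into pieces naturally indexed by the holes of $X$, that the connected set $\widetilde K(f^*)$ lies in a single such piece, say the one behind the hole $I=(\ol\al,\ol\be)$ of $X$, and that any external ray landing in $\widetilde K(f^*)$ must have argument in $\overline I$ --- in the connected case because external rays are smooth and pairwise disjoint, in the disconnected case because by Proposition~\ref{P:no-accum-K_b} a ray whose argument lies in $(\al,\be)$ cannot accumulate in $K(f^*)$, so it cannot be forced to cross $R^e_f(\al)$ or $R^e_f(\be)$ inside $D$. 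Hence $Y$, the closure of the set of such arguments, is contained in $\overline I$. Finally, comparing the degrees of $f$ on $\widetilde K(f^*)$ (one) and on $K(f^*)$ (two) identifies $I$ as the major hole of the quadratic invariant gap $\Uf(f)=\ch(X)$ and its edge $\ol{\al\be}$ as critical.
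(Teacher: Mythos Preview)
Your proposal follows essentially the same approach as the paper --- use that $f|_{K(f^*)}$ has degree two and $f|_{\widetilde K(f^*)}$ degree one to count how many of the three sibling rays land in $K(f^*)$, and use planarity and disjointness of $K(f^*)$ and $\widetilde K(f^*)$ to confine $Y$ to a single hole. The paper does this in a few lines: it proves the second claim first (one sentence: if $Y$ met two holes, rays to $\widetilde K(f^*)$ would be separated by rays to $K(f^*)$, forcing $K(f^*)\cap\widetilde K(f^*)\ne\emptyset$), then argues ``at least two'' and ``at most two'' directly, without splitting into connected/disconnected cases.

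Three points where your write-up diverges or overreaches. First, the digression on $w\ne f(c^*)$ is unnecessary: the count of sibling rays in $\widehat X$ is still exactly two when $w=f(c^*)$, since then two sibling rays land at $c^*\in K(f^*)$ and the third at $w'\in\widetilde K(f^*)$; you never need that the critical orbit avoids (pre)periodic points, which is fortunate since in the Siegel/Cremer boundary cases that claim is delicate. Second, you defer the disconnected case to a vague sketch and call the bookkeeping ``delicate,'' whereas the paper's argument is phrased so that it does not distinguish the cases at all --- the statements about landing points of the three sibling rays go through uniformly once you note that $\widehat X$ only contains angles whose rays actually land. Third, your final paragraph (identifying $I$ as the \emph{major} hole and the edge $\ol{\al\be}$ as \emph{critical}) goes beyond this lemma; that is the content of the next lemma in the paper, and in fact the conclusion there is that $\Uf(f)$ is of regular critical \emph{or periodic} type, so asserting the edge is critical is premature and not always correct.
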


\begin{proof}
The set $Y$ lies in the closure $[\ol\al, \ol\be]$ of a hole $I=(\ol\al, \ol\be)$ of $X$ because otherwise $K(f^*)$ and $\widetilde K(f^*)$ are non-disjoint.
Let $\ol\ta\in \widehat X$ and let $z\in K(f^*)$ be the landing point of $R_f(\ta)$.
If $z$ is critical, then we may assume that $R_f(\ta+\frac13)$ lands at $z$, and hence
$\ol{\ta+\frac13}\in \widehat X$; if $z$ is not critical, then we can find a point $z'\in K(f^*)$ with the same image as $z$ and may again assume that, say, $R_f(\ta+\frac13)$ lands at $z'$, and hence $\ol{\ta+\frac13}\in \widehat X$.
Thus the map $\si_3|_{\widehat X}$ is at least two-to-one.
Now, suppose that $\ta$, $\ta+\frac13$, $\ta+\frac23\in \widehat X$.
Let $z\in K(f^*)$ be the landing point of $R_f(\ta)$.
Choose a point $\tilde z\in \widetilde K(f^*)$ with the same image as $z$; then there exists $\ol{\tilde\ta}\in Y$ such that $R_f(\tilde\ta)$ lands at $\tilde z$.
Thus, $\ol{\tilde\ta}\notin \widehat X$ whereas $\ol{\tilde\ta}$ must coincide with either
$\ol{\ta+\frac13}$ or $\ol{\ta+\frac23}$, a contradiction.
\end{proof}

From now on, in the setting when $f$ is immediately renormalizable,
$I=(\ol\al, \ol\be)$ is the hole of $X$ whose closure $[\ol\al,
\ol\be]$ contains the set $Y$.

\begin{lem}
  \label{l:XY}
  We have that $\widetilde K(f^*)$ is disjoint from $U^*$.
  The gap $\Uf(f)$ is an invariant quadratic gap of regular critical or periodic type, and $I$ is its major hole.
\end{lem}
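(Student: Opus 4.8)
The plan is to establish the three assertions in order: first that $\widetilde K(f^*)$ is disjoint from $U^*$, then that $I=(\ol\al,\ol\be)$ is a major hole of $\Uf(f)$, and finally that $\Uf(f)$ is a stand-alone invariant quadratic gap of regular critical or periodic type (not caterpillar type). For the first claim, recall from Lemma~\ref{l:uniqj} that the quadratic-like Julia set $J^*=J(f^*)$ (and hence $K(f^*)$) containing the non-repelling fixed point $0$ is unique; also $K(f^*)$ is a component of $f^{-1}(K(f^*))$ while $\widetilde K(f^*)$ is the other component, on which $f$ is univalent. Since $U^*\subset V^*$ with $f^*:U^*\to V^*$ proper of degree $2$, and $\ol{U^*}\subset V^*$, one has $K(f^*)=\bigcap_n f^{-n}(U^*)\subset U^*$. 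If $\widetilde K(f^*)$ met $U^*$, then $f(\widetilde K(f^*))=K(f^*)\subset V^*$ together with the fact that $f:U^*\to V^*$ is exactly two-to-one would force $\widetilde K(f^*)$ to be swallowed by the degree-two branched structure, contradicting that $K(f^*)$ is the full $f$-preimage component of $K(f^*)$ inside $U^*$. More carefully: $\widetilde K(f^*)$ is a component of $f^{-1}(K(f^*))$ distinct from $K(f^*)$, so it lies outside $U^*$ because $f^{-1}(K(f^*))\cap U^*=K(f^*)$ (the pullback of $V^*\supset K(f^*)$ into $U^*$ contains only one component of $f^{-1}(K(f^*))$, namely $K(f^*)$ itself, since $f^*:U^*\to V^*$ has degree $2$ and $K(f^*)$ is connected and $f$-invariant). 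This is a bounded-geometry/combinatorial argument of polynomial-like maps and should be routine.

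Next I would show $I$ is a major hole of $\Uf(f)$, i.e.\ $|I|\ge\frac13$. Here I use Lemma~\ref{l:exact2-1}: the map $\si_3|_{\widehat X}$ is exactly two-to-one, and $Y\subset[\ol\al,\ol\be]=\ol I$. Two-to-one-ness of $\si_3$ on $\widehat X$ (equivalently, on $X$ by taking closures and using density of $\widehat X$ in $X$) forces, by a standard length-count, that the "missing" preimage arc has length at least $\frac13$: writing the total length of $\uc$ as $1$ and noting that $\si_3$ multiplies lengths by $3$ on arcs missing critical images, the fact that $X$ uses only two of the three sheets means exactly one full preimage-sheet's worth of arc, of total length $\frac13$, is excluded, and this excluded arc is a single hole since $Y$ (the preimage of $K(f^*)$ under the "missing" branch) is connected-ish and sits in one hole. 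So $|I|\ge\frac13$ and $\ol{\al\be}$ is the major of $\Uf(f)$; since a quadratic invariant gap has exactly one major by Theorem~\ref{t:invagap}, this also pins down the gap's structure.

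Finally, to identify the type, I first check $\Uf(f)$ is a stand-alone invariant gap: $\si_3(X)=X$ follows because $\widehat X$ is forward invariant (if $R_f(\al)$ lands in the invariant set $K(f^*)$ then so does $R_f(3\al)$, as $f(K(f^*))=K(f^*)$) and backward "almost" invariant as in Lemma~\ref{l:exact2-1}; the quasi-covering condition on holes is automatic from the structure of a degree-two invariant gap. Then I rule out caterpillar type: a caterpillar gap has a \emph{critical} major with a \emph{periodic} endpoint, which would mean that one of the dynamic rays landing in $K(f^*)$ has a periodic argument that is also the argument of a ray crashing into (or adjacent to) the escaping critical point's orbit — but in our situation $\ol\al,\ol\be$ are not periodic (the endpoints of $I$ are limits of arguments of rays landing in $K(f^*)$ and in $\widetilde K(f^*)$ respectively, and $\widetilde K(f^*)$ is disjoint from $U^*$, so neither endpoint's ray lands at a periodic point of $K(f^*)$); alternatively, a periodic endpoint would contradict the first part of the lemma ($\widetilde K(f^*)\cap U^*=\emptyset$) since the periodic endpoint of a critical major would force $\widetilde K(f^*)$ to touch $K(f^*)$. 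So $\Uf(f)$ is of regular critical or periodic type, according to Theorem~\ref{t:invagap}, depending on whether the forward orbits of the endpoints of the critical chord $\cf=\ol{\al\be}$ stay in $L(\cf)=[\ol\al,\ol\be]$ (periodic type) or not (regular critical type). \textbf{The main obstacle} I anticipate is the careful length/combinatorial bookkeeping in the second step — showing that the two-to-one property of $\si_3|_X$ forces the excluded arc to be a \emph{single} hole of length $\ge\frac13$ rather than several smaller arcs totalling $\frac13$ — which requires knowing that $Y$ accumulates only on one hole of $X$ (given by Lemma~\ref{l:exact2-1}) and that $X$ has no isolated points contributing extra holes; this is where the connectedness of $K(f^*)$ and $\widetilde K(f^*)$, and the fact that $\widetilde K(f^*)$ is disjoint from $U^*$, must be fed in carefully.
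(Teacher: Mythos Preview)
Your overall outline matches the paper's, but two of the three substantive steps have genuine gaps.

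\textbf{The bound $|I|\ge\frac13$.} Your ``length-count'' argument does not work as stated: $X$ is a Cantor set of measure zero, so speaking of $X$ ``using two of the three sheets'' and concluding that the excluded arc has length $\frac13$ is not meaningful. The paper's argument is pointwise, not measure-theoretic: assume $|I|<\frac13$ and pick $\ol\ta\in\widehat X$ so close to $\ol\al$ that all three points $\ol\ta$, $\ol{\ta+\frac13}$, $\ol{\ta+\frac23}$ lie in the arc $(\ol\be,\ol\al)$. Then none of them lies in $Y\subset[\ol\al,\ol\be]$. Since $R_f(3\ta)$ lands in $K(f^*)$, each of the three rays $R_f(\ta+\frac j3)$ lands at a point of $f^{-1}(K(f^*))=K(f^*)\cup\widetilde K(f^*)$; not being in $Y$ forces each to land in $K(f^*)$, so all three angles belong to $\widehat X$, contradicting Lemma~\ref{l:exact2-1}.

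\textbf{Excluding caterpillar type.} Both of your proposed reasons fail. That $\ol\al$, $\ol\be$ are limits of points of $\widehat X$ (or of $Y$) in no way prevents them from being periodic; and the assertion that a periodic endpoint would force $\widetilde K(f^*)$ to touch $K(f^*)$ is unsupported. The paper argues as follows when $|I|=\frac13$: set $\cf=\ol{\al\be}$, so $X\subset\Uf'(\cf)$. If, say, $\ol\al$ were periodic, then $\Uf(\cf)$ is of caterpillar type, and by \cite{BOPT} the point $\ol\be$ is isolated in $\Uf'(\cf)$, hence isolated in $X$, hence $\ol\be\in\widehat X$. Since in a caterpillar gap the non-periodic endpoint eventually maps to the periodic one, forward invariance of $\widehat X$ gives $\ol\al\in\widehat X$. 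Now $\ol{\al+\frac23}\in(\ol\be,\ol\al)$, so $\ol{\al+\frac23}\notin Y$; as above this forces $\ol{\al+\frac23}\in\widehat X$, and again three points of $\widehat X$ share a $\si_3$-image, contradicting Lemma~\ref{l:exact2-1}.

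\textbf{A further omission.} You do not actually show that $\Uf(f)$ \emph{is} a stand-alone invariant quadratic gap rather than merely being contained in one; saying ``the quasi-covering condition is automatic'' assumes the conclusion. In the case $|I|>\frac13$, the paper chooses a critical chord $\cf$ with endpoints in $[\ol\al,\ol\be]$ and no periodic endpoint, observes $X\subset\Uf'(\cf)$, and then uses that backward orbits are dense in $\Uf'(\cf)$ (from \cite{BOPT}) to conclude $X=\Uf'(\cf)$, whence $\Uf(f)=\Uf(\cf)$ is of periodic type.
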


\begin{proof}
The fact that $\widetilde K(f^*)$ is disjoint from $U^*$ follows
because otherwise points of $\widetilde K(f^*)\cap U^*$ must belong
to $K(f^*)$, a contradiction. Suppose that $|I|<\frac13$. If
$\ol\al\in \widehat X$, then points $\ol\al+\frac13$,
$\ol\al+\frac23$ belong to $(\ol\be,\ol\al)$ and cannot belong to
$Y\subset [\ol\al, \ol\be]$. Hence in this case three points in
$\widehat X$ have the same $\si_3$-image, a contradiction with
Lemma \ref{l:exact2-1}. If $\ol\al\notin \widehat X$, we can find a point
$\ol\ta\in \widehat X$ so close to $\al$ that points $\ol\ta,
\ol\ta+\frac13, \ol\ta+\frac23$ belong to $(\ol\be, \ol\al)$, which
similarly leads to a contradiction. Hence $|I|\ge \frac13$.

We claim that if $|I|=\frac13$, then neither $\ol\al$ nor $\ol\be$ is
periodic. Otherwise let $\ol\al$ be periodic and set $\ol{\al\be}=\cf$.
Since $X$ is invariant, $X\subset \Uf'(\cf)$. By \cite{BOPT} (see
Subsection~\ref{ss:overview}), the gap $\Uf(\cf)$ is of caterpillar type,
$\ol\be$ is isolated in $\Uf'(\cf)$ and thus isolated in $X$. Thus,
$\ol\be\in \widehat X$, which implies that $\ol\al\in \widehat X$ by
definition and because an eventual $\si_3$-image of $\ol\be$ equals $\ol\al$.
Since $\ol\al+\frac23\in (\ol\be, \ol\al)$, we have $\ol\al+\frac23\notin Y$, and hence the landing point of $R_f(\al+\frac23)$ must also belong to $K(f^*)$.
Therefore, $\ol\al+\frac23\in \widehat X$, a contradiction with Lemma \ref{l:exact2-1}.
We conclude that if $|I|=\frac13$, then $\Uf(f)$ is of regular critical type.

Assume that $|I|>\frac13$. Choose a critical chord $\cf$ with
endpoints in $[\ol\al, \ol\be]$; clearly, we can always choose $\cf$
to be a critical chord with no periodic endpoint. Then $X\subset
\Uf'(\cf)$, where $\Uf(\cf)$ is the invariant quadratic gap defined
in Subsection~\ref{ss:overview}; by \cite{BOPT}, the set $\Uf'(\cf)$
has no isolated points, and, for any point $\ol\ga\in \Uf'(\cf)$, the
backward orbit of $\ol\ga$ in $\Uf'(\cf)$ is dense in $\Uf'(\cf)$ (see
Subsection~\ref{ss:overview} for details). Hence $X=\Uf'(\cf)$ is a
quadratic invariant gap of periodic type as desired.
\end{proof}

One-sided impressions were introduced in Definition \ref{d:impre}.

\begin{prop}\label{p:impinj}
If $f$ is a cubic polynomial with connected Julia set $J(f)$ then
one-sided impressions are subcontinua of $J(f)$.
\end{prop}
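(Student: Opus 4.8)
The plan is to reduce this to the classical fact that impressions of prime ends are continua, realized directly via one-sided crosscuts. Since $J(f)$ is connected, the polynomial $f$ has no escaping (pre)critical points, so every dynamic ray is smooth and the B\"ottcher coordinate $\phi_f$ is a conformal isomorphism of $\C\sm K(f)$ onto $\C\sm\cdisk$ \cite{DH}; write $\psi=\phi_f^{-1}$, and recall that in this situation $R^e_f(\al^-)=R_f(\al)$. I will express the one-sided impression $\imp(\al^-)$ as a decreasing intersection of continua obtained by pushing forward, under $\psi$, the ``one-sided boxes''
$$D_n=\{\,s e^{2\pi i t}\mid 1<s<1+1/n,\ \al-1/n<t<\al\,\}\subset\C\sm\cdisk,\qquad B_n=\psi(D_n),$$
and then argue that this intersection automatically lies inside $J(f)$.

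The steps, in order, would be the following. (i) Observe that $B_{n+1}\subset B_n$, each $B_n$ is connected, and each $\overline{B_n}$ is contained in the compact set $\{z\in\C\mid G_{K(f)}(z)\le\log 2\}$ (because $G_{K(f)}(\psi(w))=\log|w|$); hence $\{\overline{B_n}\}$ is a decreasing sequence of nonempty continua and $C:=\bigcap_n\overline{B_n}$ is a nonempty continuum. (ii) Show $C\subset J(f)$: each $B_n\subset\C\sm K(f)$ gives $C\subset\overline{\C\sm K(f)}$, while if some $y\in C$ lay in the open set $\C\sm K(f)$, then by continuity of $\psi^{-1}$ there, $\psi^{-1}(y)$ would lie in $\bigcap_n\overline{D_n}=\{\ol\al\}\subset\uc$, impossible since $|\psi^{-1}(y)|>1$; thus $C\subset K(f)\cap\overline{\C\sm K(f)}=J(f)$. (iii) Prove $\imp(\al^-)=C$. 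For ``$\subseteq$'': a point $y\in\imp(\al^-)$ is a limit of points $x_k=\psi(s_ke^{2\pi i\al_k})$ with $\al_k<\al$, $\al_k\to\al$; since $y\in J(f)$ we have $\log s_k=G_{K(f)}(x_k)\to 0$, so $s_k\to 1^+$, and for each $n$ choosing $k$ with $s_k<1+1/n$, $\al-1/n<\al_k<\al$ and $|x_k-y|<1/n$ places $y$ in $\overline{B_n}$; hence $y\in C$. For ``$\supseteq$'': a point $y\in C$ is a limit of points $\psi(\sigma_ne^{2\pi i\tau_n})$ with $\sigma_n\downarrow 1$ and $\tau_n\uparrow\al$, $\tau_n<\al$, i.e.\ a limit of points lying on rays $R_f(\tau_n)$ approaching from the negative side, so $y\in L^-$; and $y\in C\subset J(f)$ is disjoint from $R_f(\al)=R^e_f(\al^-)$, so $y\in\imp(\al^-)$. (iv) Finally, run the same argument with $D_n$ replaced by $\{\,se^{2\pi it}\mid 1<s<1+1/n,\ \al<t<\al+1/n\,\}$ to handle $\imp(\al^+)$.

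The hard part is step (ii) combined with the identification in (iii): one cannot simply intersect the nested continua $\overline{B_n}$ with the closed set $J(f)$, since that operation destroys connectedness in general, so it is essential that the limit continuum $C$ already sits inside $J(f)$ — and this rests precisely on the boxes $D_n$ shrinking to the single boundary point $\ol\al$, which lies outside the domain of $\psi$. The remaining points are routine: that limit points of the approximating rays landing in $J(f)$ force the radial coordinate to tend to $1$ (so the approximating points genuinely enter every box $B_n$), and that each $\overline{B_n}$ is compact. With these in hand, $\imp(\al^-)$ and $\imp(\al^+)$ are nonempty subcontinua of $J(f)$, as asserted.
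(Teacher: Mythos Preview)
Your proof is correct and follows essentially the same approach as the paper's: both define one-sided ``boxes'' $S(\eps,r)=\{\rho e^{2\pi i\ta}\mid 1<\rho<r,\ \ga-\eps<\ta<\ga\}$ in the B\"ottcher coordinate, pull them back via $\phi_f^{-1}$, take closures, and realize the one-sided impression as the nested intersection of the resulting continua. The paper's presentation is terser (it asserts the identification $\imp(\ga^-)=\bigcap T(\eps,r)$ as ``by definition'' and argues containment in $J(f)$ via escape time), whereas you spell out the two inclusions and use continuity of the Green's function directly; but the underlying argument is the same.
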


\begin{proof}
Let us show that $\imp(\ga^-)$ is a continuum.
Recall that $\phi_f$ is the B\"ottcher coordinate on $\C\sm K(f)$.
For $\eps>0$ and $r>1$, let $T(\eps,r)$ be the closure of the set $\phi_f^{-1}(S(\eps,r))$, where $S(\eps,r)$ consists of all points $\rho\, e^{2\pi i\ta}$ with $\rho<r$ and $\ta\in (\ga-\eps,\ga)$.
Clearly, each set $T(\eps, r)$  is a continuum.
By definition, $\imp(\ga^-)$ is the intersection of a nested family of sets $T(\eps, r)$ taken for $\eps\to 0$ and $r\to 1$.
It follows that $\imp(\ga^-)$ is a continuum.
Moreover, since points of such closures need more and more time to escape to infinity as $r\to 0$, it follows that $\imp(\ga^-)\subset J(f)$, as desired.
\end{proof}

\begin{prop}
  \label{p:uflaf}
Suppose that $f\in\Fc_\la$ with $|\la|\le 1$ has a disconnected Julia set.
Let $\ol{\al\be}$ be any edge of $\Uf(f)$.
Then the principal sets of $R^e(\al^-)$ and $R^e(\be^+)$ intersect.
Moreover, if $\al$ and $\be$ are periodic, then the rays $R^e(\al^-)$ and $R^e(\be^+)$ land at the same point of $K(f^*)$.
\end{prop}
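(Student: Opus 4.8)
The plan is to reduce the statement to the ray analysis of Subsection~\ref{ss:exteql}. Since $J(f)$ is disconnected we have $f\in\Fc_\la\sm\Cc_\la$, and because the parameter rays $\Rc_\la(\cdot)$ are the $\Phi_\la$-preimages of the radial rays, which foliate $\C\sm\cdisk$, the polynomial $f$ lies on a unique parameter ray $\Rc_\la(\vk)$. By Theorem~\ref{t:unboren} (or Corollary~\ref{c:unboren}), $f$ is immediately renormalizable; let $f^*:U^*\to V^*$ be its quadratic-like restriction, so that $\Uf(f)$ is defined.

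The key step is to identify $\Uf(f)$ with $\Uf_c(\cf_\vk)$. For the inclusion $X(f)\subset\Uf'_c(\cf_\vk)$: if $\ol\al$ is $\si_3$-(pre)periodic and $R_f(\al)$ lands in $K(f^*)$, then $R_f(\al)$ is smooth (a crashing ray does not land), hence an external ray accumulating in $K(f^*)$, so by Proposition~\ref{P:no-accum-K_b} its argument lies in $\Uf'_c(\cf_\vk)$; taking closures gives $X(f)\subset\Uf'_c(\cf_\vk)$. For the reverse inclusion I would use that, by \cite{BOPT} (cf.\ the proof of Lemma~\ref{l:XY}), the backward $\si_3$-orbit inside $\Uf'_c(\cf_\vk)$ of any of its points — in particular of a periodic one — is dense in $\Uf'_c(\cf_\vk)$; such a backward orbit consists of $\si_3$-preperiodic angles which, after discarding the finitely many per-level preimages of the endpoints of $\cf_\vk$, have their rays landing in $K(f^*)$ by Proposition~\ref{P:no-accum-K_b}, hence lie in $\widehat X(f)$. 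Therefore $X(f)=\Uf'_c(\cf_\vk)$ and $\Uf(f)=\Uf_c(\cf_\vk)$; in particular the edges of $\Uf(f)$ are exactly the edges of $\Uf_c(\cf_\vk)$, so $(\ol\al,\ol\be)$ is a hole of $\Uf_c(\cf_\vk)$.

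Now I would apply Proposition~\ref{p:land-endholes} (equivalently Lemma~\ref{l:cuts-cross}) to this hole. If $\Uf_c(\cf_\vk)$ is of regular critical type, then $R^e_f(\al^-)$ and $R^e_f(\be^+)$ meet at an eventual preimage $x$ of $\omega_2(f)$ and then run along a common arc from $x$ into $K(f^*)$, so $\pr(R^e_f(\al^-))=\pr(R^e_f(\be^+))\subset K(f^*)$ and these principal sets intersect. If $\Uf_c(\cf_\vk)$ is of periodic type, the same proposition gives that $R^e_f(\al^-)$ and $R^e_f(\be^+)$ land at a common point of $K(f^*)$, so again their principal sets intersect; this proves the first assertion. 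For the ``moreover'' clause, suppose $\al$ and $\be$ are periodic. Then $\Uf_c(\cf_\vk)$ cannot be of regular critical type, since in that type the major $\cf_\vk$ has non-periodic endpoints, so no $\si_3$-iterate of $\al$ or $\be$ is an endpoint of $\cf_\vk$, whence $R_f(\al)$ and $R_f(\be)$ are smooth and cannot meet at the precritical point $x$ — a contradiction. Thus $\Uf_c(\cf_\vk)$ is of periodic type and case~(2) of Proposition~\ref{p:land-endholes} gives that $R^e_f(\al^-)$ and $R^e_f(\be^+)$ land at a common (here periodic) preimage of a periodic landing point, which lies in $K(f^*)$; this is the required common landing point. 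The main obstacle is the density argument in the identification $\Uf(f)=\Uf_c(\cf_\vk)$, where one must treat the regular critical, periodic, and caterpillar-via-$\Uf_c$ situations uniformly and control the angles eventually mapped to the endpoints of $\cf_\vk$.
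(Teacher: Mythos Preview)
Your argument is correct but takes a genuinely different route from the paper. You first identify $\Uf(f)$ with $\Uf_c(\cf_\vk)$ (an identification the paper announces at the start of Section~\ref{s:qugap} but never writes out) and then invoke Proposition~\ref{p:land-endholes} directly. The paper instead argues straight from Definition~\ref{d:uf}: assuming the principal sets of $R^e(\al^-)$ and $R^e(\be^+)$ are disjoint, it observes via Theorem~\ref{t-extepoly} that these external rays correspond to two \emph{distinct} quadratic-like rays; pushing forward by the straightening map to an annulus around $\uc$, these two radial rays cut the annulus into two pieces, each containing infinitely many periodic radial rays. The corresponding external rays must then land in $K(f^*)$, but one of the two pieces consists of rays with arguments in the hole $(\al,\be)$ of $\Uf(f)$, contradicting the definition of $\widehat X(f)$. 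The periodic case is then immediate since periodic one-sided rays land.

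What each approach buys: the paper's argument is self-contained and works directly from the intrinsic definition of $\Uf(f)$, without reference to the parameter ray $\Rc_\la(\vk)$ or the machinery of Subsection~\ref{ss:exteql}. Your approach is more economical in that it reuses Proposition~\ref{p:land-endholes} verbatim, and in the process it supplies the promised verification that $\Uf(f)=\Uf_c(\cf_\vk)$ in the disconnected case. One small remark on your ``moreover'' clause: the cleanest way to phrase the exclusion of the regular critical case is simply that in that type every edge of $\Uf_c(\cf_\vk)$ is an iterated $\si_3$-pullback of the critical major $\cf_\vk$, whose endpoints are non-periodic, so no edge can have periodic endpoints; your contradiction via smoothness of the rays is correct but slightly roundabout.
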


\begin{proof}
Recall that $f^*:U^*\to V^*$ is a quadratic-like restriction of $f$.
Let $\psi:\C\sm K(f^*)\to \C\sm\ol\disk$ be a conformal isomorphism. By
way of contradiction let us assume that the principal sets of $R^e(\al^-)$ and $R^e(\be^+)$ are disjoint.

Since $J(f)$ is disconnected, the set $K(f^*)$ is a component of the filled Julia set $K(f)$.
Then $R^e_f(\al^-)$, $R^e_f(\be^+)$ correspond to different quadratic-like rays $R^*(\al)$ and $R^*(\be)$ (recall that, by Theorem~\ref{t-extepoly}, for every external ray $R^e$ to $J(f^*)$, there exists a unique quadratic-like ray $R^*$ such that $\pr(R^e)=\pr(R^*)$, and these rays are homotopic in $\C\sm K(f^*)$ among all curves with the same limit set).
We can define a quasi-conformal isomorphism $\psi^*$ between $U^*\sm K(f^*)$ and some topological annulus $A$, whose inner boundary is the unit circle.

The map $\psi^*$ is the composition of the straightening map (taking $U^*$ to some Jordan neighborhood of $K(g)$, where $g$ is the quadratic polynomial hybrid equivalent to $f^*$) and the B\"ottcher coordinate for $g$.
The quadratic-like rays $R^*(\al)$ and $R^*(\be)$ are mapped to different radial rays under $\psi^*$.
These two radial rays disconnect the annulus $A$.
Each complementary component has infinitely many radial rays with periodic arguments.
The associated quadratic-like rays correspond to external rays (by Theorem \ref{t-extepoly}) that must all land in $K(f^*)$, a contradiction with the definition of $\Uf(f)$.
Finally, if an argument of a one-sided external ray is periodic, then the one-sided ray in question must land.
This implies the last claim of the lemma.
\end{proof}

In the rest of Section~\ref{s:qugap}, we suppose that $J(f)$ is connected.
Let us relate the gap $\Uf(f)$ and the lamination $\sim_f$. For an edge
$\ol{\al\be}$ of $\Uf(f)$, we let $\Delta(\al,\be)$ stand for the set
$$
\imp(\al^-)\cup\imp(\be^+)\cup R^e_f(\al^-)\cup R^e_f(\be^+).
$$
This set is clearly a closed set.

Let us now quote Theorem \ref{t:conn-extepoly}, which is proven in
\cite{bopt16} and is similar to Theorem \ref{t-extepoly} but applicable to the
case when the Julia set is connected. Given a set $T\subset \uc$ and a
map $h:T\to \uc$ we say that $h$ is \emph{monotone extendable} if it
has a monotone extension $m:\uc\to \uc$.

\begin{thm}[\cite{bopt16}, Lemma 4.9]\label{t:conn-extepoly}
Let a polynomial $f$ of degree $d$ have a connected Julia set and
$K(f^*)\subset K(f)$ be a polynomial-like connected filled Julia set of
degree $k$. Let $\mathcal B_f(K(f^*))=\mathcal B$ be the set of all
angles $\ta\in \uc$ such that the external ray $R^e_f(\ta)$ lands in
$J(f^*)$. Then there is a monotone extendable continuous map
$\psi:\mathcal B\to \uc$ such that:
\begin{enumerate}
\item the set of all $($pre$)$periodic angles from $\psi(\mathcal B)$
    is dense in $\uc$;
\item for every $\ta\in \mathcal B$, the polynomial-like ray
    $R^*(\psi(\ta))$ lands at the same point $y_\ta$ as
    $R^e_f(\ta)$ and is homotopic to $R^e_f(\ta)$ outside of
    $K(f^*)$ under a homotopy that fixes $y_\ta$;
\item we have $\psi\circ \si_d(\ta)=\si_k\circ \psi(\ta)$ for
    every $\ta\in \mathcal B$.
\end{enumerate}
\end{thm}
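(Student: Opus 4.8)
The plan is to import the B\"ottcher combinatorics of the straightened polynomial into a neighbourhood of $K(f^*)$, match external rays of $f$ landing in $J(f^*)$ with polynomial-like rays by their landing points on $($pre$)$periodic arguments, and then propagate the matching to all of $\mathcal B$ by monotonicity. \emph{Setup and $($pre$)$periodic angles.} Fix a polynomial-like restriction $f^*\colon U^*\to V^*$ of degree $k$ with $\ol{U^*}\subset V^*$ and $K(f^*)$ connected, hence full. By the Straightening Theorem there are a degree $k$ polynomial $g$ with connected $K(g)$ and a hybrid equivalence $\varphi$ conjugating $f$ to $g$ on a neighbourhood of $K(f^*)$; pulling back radial rays of $g$ by its B\"ottcher coordinate and by $\varphi^{-1}$ produces the polynomial-like rays $R^*(\be)$ of Definition \ref{d:plrays}, each running from $\partial U^*$ down to $J(f^*)$, with $f(R^*(\be))\supset R^*(k\be)$ near $J(f^*)$ and with $R^*(\be)$ landing whenever $\be$ is $($pre$)$periodic. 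Now let $\ta\in\mathcal B$ be $($pre$)$periodic. Since $J(f)$ is connected, $R^e_f(\ta)=R_f(\ta)$ is a genuine dynamic ray, and being of rational argument it lands at a $($pre$)$periodic point $y_\ta\in J(f^*)$ that is repelling or parabolic; hence $\varphi(y_\ta)$ is a repelling or parabolic $($pre$)$periodic point of $g$, so at least one polynomial-like ray of $f$ lands at $y_\ta$. The finitely many rays $R_f(\cdot)$ and $R^*(\cdot)$ landing at $y_\ta$ sit in a definite cyclic order around $y_\ta$, measured in $\C\sm K(f^*)$ (equivalently, by the Riemann map $\chi\colon\C\sm K(f^*)\to\C\sm\cdisk$); define $\psi(\ta)$ to be the argument $\be$ of the unique $R^*(\be)$ homotopic to $R_f(\ta)$ in $\C\sm K(f^*)$ rel $y_\ta$. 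Existence and uniqueness follow from counting and ordering the rays at $y_\ta$ together with the remark after Definition \ref{d:corres} that distinct polynomial-like rays cannot correspond to the same external ray. Since $f$ acts on $f$-rays as $\si_d$, $g$ acts on polynomial-like rays as $\si_k$, and $\varphi$ intertwines $f^*$ with $g$, one reads off $\psi\circ\si_d(\ta)=\si_k\circ\psi(\ta)$ for rational $\ta$, and $\psi$ sends periodic arguments to periodic arguments of the same period and $($pre$)$periodic to $($pre$)$periodic.

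\emph{Monotonicity and extension to all of $\mathcal B$.} Because $J(f)$ is connected, all external rays of $f$ are smooth and pairwise disjoint (the connected-Julia-set instance of Lemma \ref{l:6.1}); together with the standard separation argument — a pair of rays landing at a common point, plus an arc in $K(f^*)$ joining the landing points, separates the sphere — this shows that $\psi$, restricted to the $($pre$)$periodic arguments in $\mathcal B$, is monotone for the cyclic order of $\uc$, exactly as in the proof of Theorem \ref{t-extepoly}. A monotone map defined on a subset of $\uc$ admits a monotone extension $m\colon\uc\to\uc$; put $\psi(\ta)=m(\ta)$ for the remaining $\ta\in\mathcal B$. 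For such a $\ta$, squeezing $R_f(\ta)$ and $R^*(m(\ta))$ between nested pairs of rational rays that land at common points and enclose shrinking sectors, one verifies that $R^*(m(\ta))$ accumulates exactly at $y_\ta$ and is homotopic to $R_f(\ta)$ in $\C\sm K(f^*)$ keeping $y_\ta$ fixed; this is item (2). Continuity of $\psi$ and monotone extendability are then immediate.

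\emph{Density and the semiconjugacy on $\mathcal B$.} Every repelling periodic point of $g$ that is the landing point of some external ray of $g$ pulls back by $\varphi^{-1}$ to a repelling or parabolic periodic point $y\in J(f^*)$ at which some $f$-ray $R_f(\ta)$ with rational $\ta\in\mathcal B$ lands; then $\psi(\ta)$ is a rational argument of a $g$-ray landing at $\varphi(y)$. Since such points of $g$ are dense in $J(g)$ and their rational arguments are dense in $\uc$, the $($pre$)$periodic angles in $\psi(\mathcal B)$ are dense in $\uc$, which is (1). Finally, (3) holds on rational arguments by the first paragraph, and hence on all of $\mathcal B$ by the density of rationals in $\mathcal B$ and the continuity of $\psi$, $\si_d$, and $\si_k$.

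\emph{Main obstacle.} The delicate point is the second paragraph: promoting the correspondence from rational arguments to a genuinely monotone map that is still realized geometrically — same accumulation point, homotopic rays — for every $\ta\in\mathcal B$. One must combine the squeezing argument with a clear picture of $\C\sm K(f^*)$: since $K(f^*)$ is full and connected, after straightening $\C\sm K(f^*)$ near $K(f^*)$ looks like an annular neighbourhood of $\uc$, and the ``decorations'' of $K(f)$ hanging off $K(f^*)$ are transparent for homotopies there. The non-crossing of rays is the mechanism that keeps the cyclic order intact throughout; this is precisely the connected-Julia-set counterpart of the argument behind Theorem \ref{t-extepoly} (Theorem 6.9 of \cite{bclos}).
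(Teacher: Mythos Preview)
The paper does not prove this theorem: it is quoted from \cite{bopt16} (Lemma 4.9 there) and used as a black box, so there is no in-paper argument to compare against. Your outline follows the natural strategy one would expect such a proof to take --- match external rays to polynomial-like rays on $($pre$)$periodic arguments via landing point and homotopy class in $\C\sm K(f^*)$, check cyclic monotonicity, then extend --- and is likely close in spirit to what \cite{bopt16} does.

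That said, the step you flag as the ``main obstacle'' is a genuine gap as written. Your squeezing argument shows that, under the Riemann map $\chi:\C\sm K(f^*)\to\C\sm\cdisk$, the curves $\chi(R_f(\ta))$ and $\chi(R^*(m(\ta)))$ land at the same point of $\uc$, i.e., the two rays determine the same prime end of $\C\sm K(f^*)$. But two curves approaching the same prime end need not have the same cluster set in $J(f^*)$; one can land while the other accumulates on a nontrivial subcontinuum of the impression. What rescues the argument is accessibility: since $R_f(\ta)$ actually lands at $y_\ta$, the point $\varphi(y_\ta)\in J(g)$ is accessible from $\C\sm K(g)$ along $\varphi(R_f(\ta))$, hence (by the correspondence between accessible boundary points and landing radial rays) some external ray $R_g(\be)$ of $g$ lands at $\varphi(y_\ta)$ in the same access. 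One then takes $\psi(\ta)=\be$ and checks this agrees with the monotone extension $m(\ta)$. Two smaller points: you assert ``density of rationals in $\mathcal B$'' to pass (3) to the limit, but this needs to be established (it follows once you know (pre)periodic points of $J(f^*)$ are dense and are landing points of rational $f$-rays); and the ``unique $R^*(\be)$ homotopic to $R_f(\ta)$'' at a rational $y_\ta$ deserves a line saying that the external and polynomial-like rays at $y_\ta$ alternate in the cyclic order of accesses.
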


By Theorem \ref{t:conn-extepoly}, the $\psi$-images of $\si_d$-(pre)periodic points of $\mathcal B$
are $\si_k$-(pre)periodic. Since for any connected polynomial Julia set $J$ a dense
in $\uc$ set of (pre)periodic angles gives rise to a dense in $J$ set of their landing
points, it follows that the set of landing points of all (pre)periodic angles
from $\mathcal B$ is dense in $J(f^*)$.

\begin{prop}\label{p:uflaf1}
If $\ol{\al\be}$ is an edge of $\Uf(f)$, then $\imp(\al^-)\cap\imp(\be^+)\cap J(f^*)$ is non-empty.
Moreover, either $J(f^*)\subset\imp(\al^-)\cup \imp(\be^+)$ or, otherwise, $J(f^*)\sm [\imp(\al^-)\cup \imp(\be^+)]$
is contained in the unbounded component of $\C\sm \Delta(\al,\be)$ containing external rays with arguments from $(\be,\al)$.
In particular, $\ol\al \sim_f \ol\be$.
\end{prop}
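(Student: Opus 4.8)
The plan is to transport the incidence of polynomial-like rays, supplied by Theorem~\ref{t:conn-extepoly}, into an incidence of impressions, using the combinatorial structure of $\Uf(f)$. Write $\mathcal B=\mathcal B_f(K(f^*))$ and let $\psi:\mathcal B\to\uc$ be the monotone-extendable, $\si_2$-semiconjugating map of Theorem~\ref{t:conn-extepoly}; recall also (as in the proof of Theorem~\ref{t:sp-wakes-nsp}, see \cite{BOPT}) the continuous \emph{combinatorial} projection $\psi_{\Uf(f)}:\Uf'(f)\to\uc$ that collapses the closure of each hole of $\Uf'(f)$ to a point and semiconjugates $\si_3|_{\Uf'(f)}$ to $\si_2$. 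First I would record the routine facts: by Lemma~\ref{l:XY} and \cite{BOPT} the set $X=\Uf'(f)$ is a Cantor set (in particular it has no isolated points); the (pre)periodic angles of $X$, which make up $\widehat X$, are dense in $X$ and contained in $\mathcal B$; and $\overline{\mathcal B}=X$. Since $\si_3|_X$ is forward expansive, its monotone $\si_2$-semiconjugacy is unique up to a circle homeomorphism commuting with $\si_2$, hence unique; therefore $\psi|_{\widehat X}=\psi_{\Uf(f)}|_{\widehat X}$.

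Now fix an edge $\ol{\al\be}$ of $\Uf(f)$, so $(\ol\al,\ol\be)$ is a hole of $X$ and $\zeta:=\psi_{\Uf(f)}(\al)=\psi_{\Uf(f)}(\be)$; the side of $X$ at $\al$ is the negative side and at $\be$ the positive side. Choose $\widehat X$-sequences $\ta_i\nearrow\al$ and $\ta_i'\searrow\be$. Then $\psi(\ta_i)=\psi_{\Uf(f)}(\ta_i)\to\zeta$ and $\psi(\ta_i')=\psi_{\Uf(f)}(\ta_i')\to\zeta$, approaching $\zeta$ from opposite sides since $\psi_{\Uf(f)}$ preserves orientation. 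By Theorem~\ref{t:conn-extepoly}, the (smooth) external ray $R^e_f(\ta_i)$ and the polynomial-like ray $R^*(\psi(\ta_i))$ land at a common point $y_i\in J(f^*)$, and likewise $R^e_f(\ta_i')$, $R^*(\psi(\ta_i'))$ at $y_i'\in J(f^*)$. Passing to limits, the $y_i$ accumulate in $\imp(\al^-)$ and the $y_i'$ in $\imp(\be^+)$; and since $R^*(\psi(\ta_i))\to R^*(\zeta)$ from one side and $R^*(\psi(\ta_i'))\to R^*(\zeta)$ from the other, the $y_i$ accumulate in the one-sided polynomial-like impression $\imp_{f^*}(\zeta^-)$ and the $y_i'$ in $\imp_{f^*}(\zeta^+)$. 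I would then upgrade these to the exact identifications $\imp(\al^-)\cap J(f^*)=\imp_{f^*}(\zeta^-)$ and $\imp(\be^+)\cap J(f^*)=\imp_{f^*}(\zeta^+)$ (the nontrivial inclusions again use Theorem~\ref{t:conn-extepoly}, which matches each external ray accumulating in $J(f^*)$ with a homotopic polynomial-like ray, together with $\overline{\mathcal B}=X$). Because $K(f^*)$ is connected, $R^*(\zeta)$ accumulates in $J(f^*)$ and its accumulation set lies in $\imp_{f^*}(\zeta^-)\cap\imp_{f^*}(\zeta^+)$, so this intersection is nonempty; hence $\imp(\al^-)\cap\imp(\be^+)\cap J(f^*)\supset\imp_{f^*}(\zeta^-)\cap\imp_{f^*}(\zeta^+)\ne\emptyset$, which is the first assertion. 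For the last assertion: $\imp(\al)\supset\imp(\al^-)$ and $\imp(\be)\supset\imp(\be^+)$ then intersect, and by Theorem~\ref{t:monmodel} the map $\vp_f$ collapses each impression to a point and $\imp(\al)$, $\imp(\be)$ lie in fibers of $\vp_f$; as these fibers meet, they coincide, i.e. $\ol\al\sim_f\ol\be$.

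For the dichotomy, note $(\imp(\al^-)\cup\imp(\be^+))\cap J(f^*)=\imp_{f^*}(\zeta^-)\cup\imp_{f^*}(\zeta^+)=\imp_{f^*}(\zeta)$. If $\imp_{f^*}(\zeta)=J(f^*)$ we are in the first alternative. Otherwise, by the first assertion $\Delta(\al,\be)$ is connected, so $\C\sm\Delta(\al,\be)$ has a well-defined unbounded component $W$ containing all external rays with arguments in $(\be,\al)$. Since every external or polynomial-like ray accumulating in $J(f^*)$ has argument in $X\subset[\ol\be,\ol\al]$, each point of $J(f^*)\sm[\imp(\al^-)\cup\imp(\be^+)]=J(f^*)\sm\imp_{f^*}(\zeta)$ is accessed by such a ray and lies off $\Delta(\al,\be)$ (indeed $\Delta(\al,\be)\cap J(f^*)=\imp_{f^*}(\zeta)$, and $\partial W\subset\Delta(\al,\be)$); hence it lies in $W$, as required.

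The main obstacle I expect is the identification $\imp(\al^-)\cap J(f^*)=\imp_{f^*}(\zeta^-)$ (and its companion): controlling precisely which external rays of $f$ approaching $\al$ from the $X$-side actually accumulate in $K(f^*)$, and showing their combined accumulation set inside $J(f^*)$ is exactly the one-sided polynomial-like impression. This requires a careful use of the correspondence and homotopy statements in Theorem~\ref{t:conn-extepoly} and of $\overline{\mathcal B}=\Uf'(f)$; the coincidence $\psi|_{\widehat X}=\psi_{\Uf(f)}|_{\widehat X}$ of the dynamical and combinatorial projections, while needed, is a secondary technical point that follows from \cite{BOPT}.
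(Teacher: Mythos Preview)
Your route is genuinely different from the paper's, and the difference matters. The paper never attempts the identification $\imp(\al^-)\cap J(f^*)=\imp_{f^*}(\zeta^-)$; instead it first shows, by a purely topological argument (Moore's decomposition theorem together with the separation criterion of \cite[Theorem 61.4]{mun00}), that $T=\imp(\al^-)\cap J(f^*)$ is a \emph{continuum}, and then argues by contradiction: if $\imp(\al^-)\cap J(f^*)$ and $\imp(\be^+)\cap J(f^*)$ were disjoint continua, one separates them by a pair of polynomial-like rays and, via Theorem~\ref{t:conn-extepoly}, by the homotopic external rays $R_f(\ga)$, $R_f(\eta)$; this forces one of $\ga,\eta$ into the hole $(\al,\be)$, contradicting the definition of $\Uf(f)$. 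The dichotomy is then obtained by a coarse argument about the components of $\C\sm\Delta(\al,\be)$ that uses only connectedness of $J(f^*)$ and the fact that no (pre)periodic ray with argument in $(\al,\be)$ lands in $J(f^*)$.

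Your argument, as written, has two real gaps. First, the ``upgrade'' $\imp(\al^-)\cap J(f^*)=\imp_{f^*}(\zeta^-)$ is not delivered by Theorem~\ref{t:conn-extepoly}: that theorem matches \emph{landing points} and \emph{homotopy classes} of individual rays, not accumulation sets of one-parameter families of rays. From your sequences you only obtain that subsequential limits of the $y_i$ lie in $\imp(\al^-)\cap\imp_{f^*}(\zeta^-)$ (and similarly for $y_i'$); this does not give the inclusion $\imp_{f^*}(\zeta^-)\subset\imp(\al^-)$, which you need in order to conclude that $\pr(R^*(\zeta))\subset\imp(\al^-)\cap\imp(\be^+)$. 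A point of $\pr(R^*(\zeta))$ is a limit of points \emph{on} $R^*(\zeta)$, and the homotopy to an external ray moves those points in an uncontrolled way near $K(f^*)$. Second, your dichotomy step asserts that every point of $J(f^*)\sm\imp_{f^*}(\zeta)$ ``is accessed by such a ray''; this is false in general when $J(f^*)$ is not locally connected, and no hypothesis here rules that out. The paper avoids both issues by never comparing impressions pointwise and never invoking accessibility: it uses only that the two sets are continua (proved separately) and that a separating pair of rays would produce a forbidden argument in $(\al,\be)$.
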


Recall that, since $J(f)$ is connected, the lamination $\sim_f$ generated by $f$ is defined.
Unshielded compacta $A$ and their topological hulls $\thu(A)$ were defined in Subsection~\ref{ss:immeren}.
The topological hull of a compact unshielded subset of the plane is the union of topological hulls of its components.

\begin{proof}
Set $T=\imp(\al^-)\cap J(f^*)$.  We claim that the topological hull
$\thu(T)=T'$ of $T$ coincides with the intersection of the topological
hull $\thu(\imp(\al^-))$ of $\imp(\al^-)$, and $K(f^*)$. Indeed, let
$S$ be a bounded complementary component of $T$.
Then $S\subset \thu(\imp(\al^-))\cap K(f^*)$.
Hence $T'\subset \thu(\imp(\al^-))\cap K(f^*)$. On the other
hand, if $z\in \thu(\imp(\al^-))\cap K(f^*)$, then either $z\in T$, or
$z$ belongs to a Fatou domain of $J(f)$ that is also a bounded
complementary domain of both $\imp(\al^-)$ and $K(f^*)$. In either case
$z\in T'$ as desired. Thus, $T'=\thu(\imp(\al^-))\cap K(f^*)$.

We claim that $T$ is a continuum. Recall that one-sided impressions are
continua. We will use Theorem \ref{t:conn-extepoly}, including the notation
from that theorem. Observe that any bounded complementary component of
$J(f^*)$ is a Fatou component of $J(f)$; similarly, any bounded
complementary component of $\imp(\al^-)$ is a Fatou component of
$J(f)$. Therefore, bounded complementary components of either $J(f^*)$
or $\imp(\al^-)$ are disjoint from $J(f^*)\cup \imp(\al^-)$. Moreover,
if a bounded complementary domain of $\imp(\al^-)$ and a bounded
complementary domain of $J(f^*)$ are non-disjoint, then they coincide.

If $T$ is disconnected, then $T'$ is disconnected too. Divide the
components of $T'$ in two groups whose unions $A$ and $B$ are disjoint
non-separating compact sets. Choose open Jordan disks $U\supset A$ and
$V\supset B$ such that $\ol{U}$ and $\ol{V}$ are disjoint closed Jordan disks.
Then, by Moore's theorem \cite{dave86}, a map $\Psi$ that collapses $\ol{U}$ and
$\ol{V}$ maps $\C$ to a space homeomorphic to the plane, in which the
continuum $\Psi(\thu(\imp(\al^-)))$ and the continuum $\Psi(K(f^*))$
intersect over a two-point set. By Theorem 61.4 from \cite{mun00}, this
implies that $\Psi(\thu(\imp(\al^-)))\cup \Psi(K(f^*))$ is a separating
continuum, which implies that $T'=\thu(\imp(\al^-))\cap K(f^*)$ is a
separating continuum. Since both $\thu(\imp(\al^-))$ and $K(f^*)$ are
non-separating, there exists a bounded complementary component $W$ of
$T'$ and an open (in relative topology of $\bd(W)$) set $E\subset
\bd(W)\cap K(f^*)$.
Then, by Theorem \ref{t:conn-extepoly}, we can find a (pre)periodic angle $\ga\in \mathcal B$ such that the (pre)periodic
polynomial-like ray $R^*(\psi(\ga))$ has a terminal interval contained in $W$, lands in $E$ and is homotopic to the external ray $R^e_f(\ga)$ by a homotopy outside $K(f^*)$ fixing its landing point, a contradiction.
Thus, $T$ is connected.
Similarly, $\imp(\be^+)\cap J(f^*)$ is a continuum.

By way of contradiction, assume that the continua $\imp(\al^-)\cap
J(f^*)$ and $\imp(\be^+)\cap J(f^*)$ are disjoint. By choosing $U^*$
sufficiently tight around $K(f^*)$ we may assume that there are Jordan
disks $V_\al$, $V_\be$ with
$$
\imp(\al^-)\cap J(f^*)\subset V_\al\subset \ol{V_\al}\subset U^*,
\quad \imp(\be^+)\cap J(f^*)\subset V_\be\subset \ol{V_\be}\subset U^*
$$
such that $\ol{V_\al}\cap \ol{V_\be}=\0$.
By Theorem \ref{t:conn-extepoly}, we can find a pair of (pre)periodic angles
$\psi(\ga)$, $\psi(\eta)$ in $\psi(\mathcal B)$ such that
$R^*(\psi(\ga))\cup R^*(\psi(\eta))\cup J(f^*)$ separates
$\ol{V_\al}\sm K(f^*)$ from $\ol{V_\be}\sm K(f^*)$. Then $R_f(\ga)\cup
R_f(\eta)\cup J(f^*)$ also separates $\ol{V_\al}\sm K(f^*)$ from
$\ol{V_\be}\sm K(f^*)$.
It follows that $\{\ga,\eta\}$ separates $\{\al,\be\}$ in $\R/\Z$. Thus
$\ga$ or $\eta$ belongs to $(\al,\be)$; assume that it is $\ga$. The
ray $R_f(\ga)$ is periodic and lands in $J(f^*)$, a contradiction with
the definition of $\Uf(f)$.

Let us prove the remaining claims of the lemma.
If $J(f^*)$ has any points outside of $\imp(\al^-)\cup \imp(\be^+)$, then these points
cannot belong to the unbounded component of $\C\sm \Delta(\al,\be)$ that contains external rays with arguments from $(\al, \be)$.
On the other hand, no point of $J(f^*)$ can belong to a bounded component of
$\C\sm \Delta(\al,\be)$ because otherwise some points of $J(f^*)$ do
not belong to the closure of $\C\sm K(f)$.
Hence $J(f^*)\sm (\imp(\al^-)\cup \imp(\be^+))$ is contained in the unbounded component of
$\C\sm \Delta(\al,\be)$ containing external rays with arguments from
$(\be, \al)$.

The last claim of the lemma follows from the fact that $\imp(\al^-)\cap \imp(\be^+)\ne \0$ and from \cite{bco11, kiwi97}.
\end{proof}

\section{Proof of Theorem C}\label{s:proofc}

This section is mainly devoted to a proof of Theorem C; along the way we also complete the proof of Theorem B and show that root polynomials belong to $\cuc_\la$.
First we show that parameter wakes in $\Fc_\la$ are disjoint from $\cuc_\la$ where $|\lambda|\le 1$.
Recall that if $f$ is immediately renormalizable, then we denote by $f^*$ its
quadratic-like restriction upon the appropriate planar domain $U^*$.

\subsection{Parameter wakes are disjoint from $\cu$}

In this subsection, $\lambda$ is a fixed complex number such that
$|\lambda|\le 1$. Recall, that the \emph{main cubioid} $\cu$ is the
set of classes $[f]\in\Mc_3$ with the following properties: $f$ has
a non-repelling fixed point, $f$ has no repelling periodic cutpoints
in $J(f)$, and all non-repelling periodic points of $f$, except at
most one fixed point, have multiplier 1. Recall that $\cuc_\la$ is
the set of all polynomials $f\in\Fc_\la$ with $[f]\in\cu$. We will
need Theorem~\ref{t:fxpt} when proving that certain polynomials
$f\in\Fc_\la$ do not belong to $\cuc_\la$.

We will prove that all parameter wakes in the
$\lambda$-slice $\Fc_\lambda$ are disjoint from $\cuc_\la$, for
$\la\ne 1$. Recall, that the \emph{$\la$-connectedness locus
$\Cc_\lambda$} is the set of all $f\in\Fc_\la$ such that $K(f)$ is
connected. A \emph{limb} of $\Cc_\la$
is the intersection of $\Cc_\la$ with a parameter wake.

\begin{lem}\label{l:perpt-limb}
Suppose that $f$ lies in a limb of $\Cc_\la$.
Then either $f$ has a repelling periodic cutpoint in $K(f)$, or $f$ belongs to
a special wake $\Wc_\la(\theta_1,\theta_2)$ of some period $k$, the
dynamic wedge $W_f(\ta_1+\frac13, \ta_2+\frac23)$ contains a non-repelling periodic point $x\ne 0$ of period $k$ of multiplier
different from 1 such that for every $i$, $0<i<k$, the point $f^{\circ i}(x)$ belongs to the dynamic wedge $W_f(\si_3^{\circ i}(\ta_1), \si_3^{\circ i}(\ta_2))$.
\end{lem}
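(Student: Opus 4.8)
The plan is to invoke Theorem~B --- that is, Theorem~\ref{t:dyn-sp} and Theorem~\ref{t:dyn-nsp} --- to split the argument according to whether the parameter wake $\Wc_\la(\theta_1,\theta_2)$ to which $f$ belongs is non-special or special, and in each case to produce the required periodic point from the landing behaviour of the dynamic rays $R_f(\theta_1+\3)$ and $R_f(\theta_2+\4)$, in the special case with the help of Theorem~\ref{t:fxpt}. Throughout, recall that $f$ lying in a limb means $K(f)$ is connected and $f$ belongs to $\Wc_\la(\theta_1,\theta_2)$ for a hole $(\theta_1,\theta_2)$ of $\Qf$.

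In the non-special case, Theorem~\ref{t:dyn-nsp} gives that $R_f(\theta_1+\3)$ and $R_f(\theta_2+\4)$ land at a common \emph{repelling} periodic point $z$, and it remains only to see that $z$ is a cutpoint of $K(f)$. For this I would use that both arcs $(\theta_1+\3,\theta_2+\4)$ and $(\theta_2+\4,\theta_1+\3)$ have length $>\3$ (as recorded in the proof of Lemma~\ref{l:land-in-k}): each of the two components of $\C\setminus(R_f(\theta_1+\3)\cup\{z\}\cup R_f(\theta_2+\4))$ then contains dynamic rays of infinitely many rational arguments, and since $K(f)$ is connected all these rays land in $K(f)$, with only finitely many landing at $z$; hence $K(f)$ meets both components, so $z$ is a repelling periodic cutpoint and the first alternative of the lemma holds.

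In the special case, I may assume $f$ has no repelling periodic cutpoint (otherwise the first alternative holds); write $\la=e^{2\pi ip/q}$. By Theorem~\ref{t:dyn-sp} we then have $T_{p/q}(b)\ne0$, the rays $R_f(\theta_1+\3)$, $R_f(\theta_2+\4)$ land at $0$, and the parabolic domains at $0$ are disjoint from $W:=W_f(\theta_1+\3,\theta_2+\4)$; moreover the period of the wake is $k=q$, and, arguing as in the proof of Theorem~\ref{t:sp-wakes-nsp}, $\ol{(\theta_1+\3)(\theta_2+\4)}$ is an edge of the finite invariant gap $\gf_f$, which carries combinatorial rotation number $p/q$. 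Since $\theta_1+\3$ and $\theta_2+\4$ have period $q$, the map $f^{\circ q}$ fixes $0$ and each of the two boundary rays of $W$, hence fixes $\Gamma:=\partial W$ as a set and preserves its two sides, while $K(f)$ meets $\Gamma$ only at $0$. Using these facts together with the disjointness of $W$ from the parabolic domains at $0$, I would exhibit a non-separating continuum $X$ with $\{0\}\subset X\subset[W\cap K(f)]\cup\{0\}$ --- roughly, the component of $\ol W\cap K(f)$ through $0$ together with its bounded complementary components --- as a special piece for $f^{\circ q}$ in the sense of Definition~\ref{d:spiece}, with a single degenerate exit continuum $E_1=\{0\}$ and central component $W$. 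Theorem~\ref{t:fxpt} then yields an $f^{\circ q}$-fixed point $x\in X$. If $x$ is a repelling point at which at least two rays land and are non-trivially rotated by $f^{\circ q}$, it is a repelling periodic cutpoint, contrary to assumption; in all remaining cases $x\ne0$ (since $0\in\Gamma$ is parabolic with its landing rays \emph{fixed}, not rotated, by $f^{\circ q}$), $x$ lies in $W$, and $x$ is non-repelling with $(f^{\circ q})'(x)\ne1$: a Cremer or Siegel point has irrational rotation number, an $f^{\circ q}$-invariant attracting domain has a centre of multiplier of modulus $<1$, and a parabolic $f^{\circ q}$-fixed point whose rays are non-trivially rotated cannot have multiplier $1$. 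Thus $x$ is the desired periodic point; transporting $x$ forward through the cycle of dynamic wedges behind the edges $\si_3^{\circ i}(\ol{(\theta_1+\3)(\theta_2+\4)})$ of $\gf_f$ gives both the orbit condition $f^{\circ i}(x)\in W_f(\si_3^{\circ i}(\theta_1),\si_3^{\circ i}(\theta_2))$ for $0<i<q$ and the fact that the period of $x$ is exactly $k=q$.

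The step I expect to be the main obstacle is verifying condition~(3) of Definition~\ref{d:spiece} for the candidate special piece $X$ --- that $f^{\circ q}(X)\setminus X$ is disjoint from the central component $W$. The difficulty is that $f^{\circ q}$ is very far from injective on $W$ (the arc $(\theta_1+\3,\theta_2+\4)$ has length exceeding $\3$), so although $f^{\circ q}$ fixes $\Gamma=\partial W$ and preserves its two sides while $K(f)$ touches $\Gamma$ only at $0$, one must argue carefully --- using connectedness of $K(f)$ and the observation that an orbit inside $K(f)\cap W$ can escape $W$ only by running into the fixed point $0$ --- that $f^{\circ q}$ carries $X$ into no part of $W$ lying outside $X$. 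A lesser point needing attention is excluding, among the alternatives of Theorem~\ref{t:fxpt}, an $f^{\circ q}$-invariant parabolic Fatou domain whose parabolic point has multiplier $1$ (which by itself would be compatible with membership in $\cu$); here one invokes the Fatou--Shishikura inequality together with the presence of the parabolic cycle at $0$ and $T_{p/q}(b)\ne0$.
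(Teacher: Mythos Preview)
Your overall architecture matches the paper's: split into non-special versus special wakes, dispose of the non-special case via the repelling cutpoint, and in the special case apply Theorem~\ref{t:fxpt} to the special piece $X_1=\ol{W}\cap K(f)$ with exit continuum $\{0\}$. Two of your steps, however, have genuine gaps.

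First, your ``transport forward'' argument for the orbit condition does not work. The wedge $W=W_f(\ta_1+\frac13,\ta_2+\frac23)$ has angular width strictly greater than $\frac13$, so $f(W)$ covers far more than the next wedge $W_1$; in fact points of $K(f)\cap W$ lying beyond the sibling cut (rays of arguments $\ta_2+\frac13$, $\ta_1+\frac23$, landing at the non-zero preimage of $0$) are mapped by $f$ \emph{outside} $W_1$. Merely knowing $x\in W$ therefore tells you nothing about $f(x)$. The paper closes this gap with an extra ingredient you did not invoke: Theorem~4.3 of \cite{bclos}, which produces a critical point $d$ not separated from the non-repelling point $x$ by any (pre)periodic cut. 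Since the major cut and its sibling are such cuts, this pins $x$ into the strip between them, and \emph{that} strip does map under $f$ into $W_1$; iterating gives $f^{\circ i}(x)\in W_i$ for $0<i<k$ and, since the $W_i$ are pairwise disjoint away from $0$, forces the $f$-period of $x$ to be exactly $k$.

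Second, your proposed exclusion of an $f^{\circ k}$-invariant parabolic domain with multiplier $1$ via Fatou--Shishikura and $T_{p/q}(b)\ne 0$ is insufficient: a cubic may carry two non-repelling cycles (here the parabolic cycle at $0$ and a hypothetical second parabolic cycle through $x$), each absorbing one critical point, without contradiction. The paper instead rules this case out combinatorially: a parabolic point $x\ne 0$ of multiplier~$1$ in $W$ would be the landing point of a $\si_3^{\circ k}$-fixed ray with argument in $(\ta_1+\frac13,\ta_2+\frac23)$, but an angle count on how $\si_3$ folds this arc shows that the only $\si_3^{\circ k}$-fixed arguments available there are $\ta_1+\frac13$ and $\ta_2+\frac23$ themselves.

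Finally, your ``main obstacle'' disappears if you take $X=X_1=\ol W\cap K(f)$ as the paper does: then $f^{\circ k}(X_1)\cap W\subset K(f)\cap W\subset X_1$, so condition~(3) of Definition~\ref{d:spiece} is automatic, and $X_1$ is connected and non-separating because $K(f)$ is full and meets $\partial W$ only at $0$.
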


Recall that $W_f(\al,\be)$ is the wedge bounded by the dynamic rays
$R_f(\al)$, $R_f(\be)$ and the point $z$ on which they land or crash;
it contains external rays with arguments from $(\al, \be)$.

\begin{proof}
By our assumptions, the filled Julia set $K(f)$ is connected, and $f$
lies in some parameter wake $\Wc_\la(\theta_1,\theta_2)$ of some period
$k$. The rays $R_f(\theta_1+\frac 13)$ and $R_f(\theta_2+\frac 23)$
land at the point $x_1$, which is either a repelling periodic point (if
the parameter wake is non-special), or the parabolic point $0$ (if the
parameter wake is special). If the parameter wake is non-special, we
are done. Hence we may assume that the parameter wake is special. Set
$X_1=\ol{W_f(\ta_1+\3,\ta_2+\4)}\cap K(f)$. Then $X_1$ is a special
piece for $f^{\circ k}$ with exit continuum $\{0\}$. By
Theorem~\ref{t:fxpt} applied to $X_1$ and $f^{\circ k}$, the set $X_1$
contains either (1) a non-repelling $f^{\circ k}$-fixed point in $X_1$,
whose multiplier is different from 1, or (2) an $f^{\circ k}$-invariant
parabolic domain with the $f^{\circ k}$-fixed point on its boundary and
multiplier $1$, or (3) a repelling $f^{\circ k}$-fixed cutpoint of
$X_1$. Observe that case (3) is impossible because we assume that $f$
does not have repelling periodic cutpoints.

Let us show that case (2) is impossible. Suppose that there is an
$f^{\circ k}$-invariant parabolic domain $\Omega$ in $X_1$. Let $x$ be
the corresponding parabolic $f^{\circ k}$-fixed point of multiplier
$1$. Since the parameter wake $\Wc_\la(\theta_1,\theta_2)$ is special,
then, by Theorem~\ref{t:dyn-sp}, we have $x\ne 0$. Since the multiplier
at $x$ is 1, there exists an external $f^{\circ k}$-fixed ray
$R_f(\ga)$ landing at $x$ with $\ga\in (\ta_1, \ta_2)$. We claim that
this is impossible for combinatorial reasons. Indeed,  the arc
$[\ta_1+\frac13, \ta_2+\frac23]$ maps onto its image under the angle
tripling map so that the arc $[3\ta_1, 3\ta_2]$ is covered twice while
the rest of the circle is covered exactly once. Since the map is
two-to-one on $\Omega$, it follows that all external rays landing at
points from $\Omega$ must have arguments that map to $[3\ta_1,
3\ta_2]$. However, a simple analysis shows that the only two fixed
angles under the $k$th iterate of the angle tripling map in $[3\ta_1,
3\ta_2]$ are $3\ta_1$ and $3\ta_2$ themselves, a contradiction with the
existence of $\ga$.

Finally, consider case (1). Then since $x$ is non-repelling, then by
Theorem 4.3 from \cite{bclos} there exists a critical point $d$ such
that no (pre)periodic cut separates $d$ and $x$. Hence $x$ belongs to
the strip bound by the two cuts: the cut formed by the rays
$R_f(\ta_1+\frac23)$, $R_f(\ta_2+\frac23)$ (these rays land on the point
$0$) and the cut formed by the rays $R_f(\ta_2+\frac13),
R_f(\ta_1+\frac23)$ (these rays land at the preimage of $0$ not equal to $0$).
It follows that for every $i$, $0<i<k$, the point $f^{\circ i}(x)$ belongs
to the dynamic wedge $W_f(\si_3^{\circ i}(\ta_1)$, $\si_3^{\circ i}(\ta_2))$ as claimed.
\end{proof}

We are now ready to prove the following theorem.

\begin{thm}
  \label{t:wakes-cu}
  Parameter wakes in $\Fc_\la$, where $\la\ne 1$, are disjoint from $\cuc_\la$.
\end{thm}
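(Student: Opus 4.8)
The plan is to split the polynomials in a parameter wake into those with disconnected Julia set (which are trivially excluded from $\cuc_\la$) and those with connected Julia set (which I will handle via Lemma~\ref{l:perpt-limb}). So fix $\la$ with $|\la|\le1$ and $\la\ne1$, a parameter wake $\Wc=\Wc_\la(\theta_1,\theta_2)$ of some period $k$, and a polynomial $f\in\Wc$; I want to show $f\notin\cuc_\la$. If $J(f)$ is disconnected then $[f]\notin\Mc_3$, and since $\cu\subset\Mc_3$ by the very definition of the main cubioid, $f\notin\cuc_\la$ and we are done. So I would assume $J(f)$ is connected, i.e.\ $f$ lies in the limb $\Cc_\la\cap\Wc$. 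The key elementary observation is that $0$ is a fixed point of $f$ of multiplier $\la$, which is non-repelling (as $|\la|\le1$) and has multiplier different from $1$ (as $\la\ne1$); this is exactly where the hypothesis $\la\ne1$ enters, since it means that $0$ already occupies the single exceptional slot permitted by the definition of $\cu$.

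Next I would apply Lemma~\ref{l:perpt-limb} to $f$ and treat its two alternatives. In the first, $f$ has a repelling periodic cutpoint $z$ of $K(f)$. Being repelling and periodic, $z\in J(f)$; and being a cutpoint of $K(f)$ it is also a cutpoint of $J(f)$ — at least two external rays land at $z$, their arguments are $\si_3$-(pre)periodic, the forward $\si_3$-orbit of any nondegenerate arc between consecutive such arguments eventually covers $\uc$ by expansion, so each sector cut off at $z$ contains further landing points and hence points of $J(f)$, so that $J(f)\setminus\{z\}$ is disconnected. Thus $f$ has a repelling periodic cutpoint in $J(f)$, contradicting the defining property of $\cu$, so $[f]\notin\cu$. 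In the second alternative, $\Wc$ is a special wake and the dynamic wedge $W_f(\theta_1+\frac13,\theta_2+\frac23)$ contains a non-repelling periodic point $x\ne0$ of period $k$ of multiplier different from $1$. Then $0$ and $x$ are two \emph{distinct} non-repelling periodic points of $f$, each with multiplier $\ne1$, whereas the definition of $\cu$ permits at most one such point and moreover requires it to be a fixed point. Hence $[f]\notin\cu$ in this case too, and in either alternative $f\notin\cuc_\la$, which proves the theorem.

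I expect that the only points in this deduction requiring a little care are the passage from ``cutpoint of $K(f)$'' to ``cutpoint of $J(f)$'' in the first alternative, and the precise bookkeeping with the ``except at most one fixed point'' clause (keeping track of the role of $\la\ne1$). The substantive work is already packed into Lemma~\ref{l:perpt-limb} and, behind it, Theorem~\ref{t:fxpt}, so the main obstacle is that lemma rather than anything in the present argument.
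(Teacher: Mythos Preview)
Your proof is correct and follows essentially the same route as the paper: split into the disconnected case (trivial) and the connected case, then invoke Lemma~\ref{l:perpt-limb} and check that each alternative violates the definition of $\cu$ once $\la\ne 1$ forces $0$ to have multiplier $\ne 1$. The paper's version is terser and does not pause over the $K(f)$ vs.\ $J(f)$ cutpoint distinction (the cutpoint in Lemma~\ref{l:perpt-limb} comes equipped with two landing rays, so it is immediately a cutpoint of $J(f)$), but the structure is identical.
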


\begin{proof}
Suppose that $\la\ne 1$.
Let $f\in\Wc_\la(\theta_1,\theta_2)$.
  If $K(f)$ is disconnected, then, by definition, $f\notin\cuc_\la$.
  Suppose that $K(f)$ is connected.
  By Lemma \ref{l:perpt-limb}, the map $f$ has a non-repelling periodic
  point $x\ne 0$, whose multiplier is different from 1, or
  a periodic repelling cutpoint in $K(f)$.
  In the latter case, we have (by definition of $\cuc_\la$) that $f\notin\cuc_\la$.
  In the former case, since $\la\ne 1$, we again see that,
  by definition of $\cuc_\la$, we have $f\notin\cuc_\la$.
\end{proof}

More details on the mutual location of $\cuc_1$ and the wakes in
$\Fc_1$ are given in Section \ref{ss:fc1}.

\subsection{Properties of polynomials from $\Cc_\la\sm \cuc_\la$}

We need the \emph{Separation Lemma} of Jan Kiwi \cite{Ki} inspired by
\cite{GM}; a convenient version is given below (in fact, a stronger
statement is proved in \cite{Ki}).

\begin{lem}[Separation Lemma \cite{Ki}]
\label{l:sep}
  Suppose that $f$ is a polynomial with connected Julia set.
  Then there is a finite collection of periodic dynamic rays for $f$
  $($landing at periodic repelling or parabolic points$)$
  such that the closure of the union of these rays divides the plane into parts
  with the following property: every part contains at most one non-repelling
  periodic point of $f$ or one parabolic basin of $f$.
\end{lem}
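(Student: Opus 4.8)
The plan is to turn the statement into finitely many pairwise separation problems and to feed each one into the structure theory developed above. First, by the Fatou--Shishikura inequality \cite{fat20,shi87}, $f$ has only finitely many non-repelling periodic points and only finitely many parabolic basins; call each such object a \emph{special object} and list them as $S_1,\dots,S_N$. Since a finite union of finite collections of periodic rays is again finite, it suffices to produce, for each pair $i\ne j$, a finite collection of periodic dynamic rays landing at repelling or parabolic periodic points whose union together with its landing points separates $S_i$ from $S_j$ in the plane; taking the union of these collections over all pairs yields a collection separating every special object from every other, which is exactly the asserted property. Note that ``separates'' is unambiguous here, because such a ray together with its landing point never meets a non-repelling periodic point or a parabolic basin: the open ray lies in the basin of infinity, and the landing point lies in $J(f)$ but is repelling or parabolic.

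To separate a fixed pair $S=S_i$, $S'=S_j$ I would pass to the topological model provided by Theorem~\ref{t:monmodel}: the finest monotone map $\vp_{J(f)}$ semiconjugates $f$ to a topological polynomial $f_\sim$ on the locally connected topological Julia set $J_\sim(f)$, collapses all impressions (hence all fibers) to points, and is injective on boundaries of Fatou domains eventually mapped onto attracting or parabolic ones. Thus each special object determines either a single point of $J_\sim(f)$ --- if $S$ is a non-repelling periodic point, whose fiber collapses to a point by Proposition~\ref{p:cs} --- or a Jordan domain of $J_\sim(f)$ --- if $S$ is a parabolic basin --- and distinct special objects determine distinct such sets. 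Joining the images of $S$ and $S'$ by an arc in $J_\sim(f)$ (or in its complement, if one image is a Fatou domain), I would locate a cutpoint of $J_\sim(f)$ strictly between them whose $\vp_{J(f)}$-preimage is a repelling or parabolic periodic point: all cutpoints of a topological Julia set are eventually periodic, and a periodic one can be selected on the separating arc or among its preimages, at which finitely many periodic dynamic rays of $f$ land and are non-trivially permuted by the first return map. A wedge between two consecutive such rays then isolates $S$ from $S'$, and the union of all rays landing at that point is the desired finite collection for the pair. Collapsing the fibers of the indifferent objects is what makes this work: applying Theorem~\ref{t:fxpt} or Corollary~\ref{c:inf-many} directly to a piece of $K(f)$ that still contains $S$ itself can return $S$ (a Cremer or Siegel point) as its only dynamically distinguished content, which is exactly why one first moves the argument to $J_\sim(f)$, where $S$ has been absorbed into a point.

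The principal obstacle is the bookkeeping needed to control all special objects at once while keeping all rays periodic: one must verify that after adjoining the separating rays of every pair no complementary region still contains two special objects, and that each ray used is periodic (not merely preperiodic) and lands at a repelling or parabolic point. Periodicity is not a serious issue --- every ray landing at a repelling periodic point of a connected Julia set is periodic by Goldberg--Milnor \cite{GM}, and a preperiodic separating cutpoint may be traded for a periodic separating cutpoint in its grand orbit --- and the simultaneous control follows from the finite-union reduction of the first paragraph. The genuinely delicate point, namely ruling out that two distinct irrationally indifferent periodic points lie in a single fiber (so that the collapsed model never identifies two special objects), requires the Fatou--Shishikura inequality once more and is carried out in full in \cite{Ki}, whose stronger statement we are content to quote.
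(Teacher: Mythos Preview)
The paper does not prove this lemma at all: it is quoted from Kiwi \cite{Ki} (with the remark that a stronger statement is proved there) and used as a black box. So there is no ``paper's own proof'' to compare against; your proposal is an attempt to supply what the paper deliberately outsources.

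As a sketch, your reduction to pairwise separation and passage to the locally connected model $J_\sim(f)$ is reasonable, but the argument as written has a real gap at exactly the place you flag at the end. Proposition~\ref{p:cs} does not say that the fiber of a non-repelling periodic point is a single point; it says that an \emph{infinite} periodic fiber must contain a Cremer or Siegel point, and its second clause explicitly allows a single fiber to contain \emph{two} irrationally indifferent periodic points (provided it also has a repelling periodic cutpoint). So ``distinct special objects determine distinct such sets'' in $J_\sim(f)$ is precisely what is not established by the tools you invoke, and without it the separating cutpoint you locate in $J_\sim(f)$ need not separate $S$ from $S'$ upstairs. You acknowledge this and defer to \cite{Ki} for it --- but that step is the entire content of the Separation Lemma, so the sketch is circular rather than an independent proof. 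In short: the paper cites \cite{Ki}; your proposal, after unpacking, also cites \cite{Ki} at the decisive moment.
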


A cut formed by periodic dynamic rays from Lemma \ref{l:sep} will be called
a \emph{K-cut}. Let $\Upsilon$ be the closure of the union of the
periodic rays from Lemma \ref{l:sep}; every component of
$\C\sm\Upsilon$ can contain at most one non-repelling periodic point of
$f$ or parabolic basin of $f$.  Let $x\ne y$ be periodic non-repelling
points of $f$ and let $\Ga$ be a K-cut. Say that $\Ga$ \emph{separates
domains at $x$} from $y$ if $\Ga$ separates $x$ and $y$ and, if $x$ is
attracting or parabolic, then $\Ga$ separates all attracting or parabolic domains at $x$
from $y$.

We need a few laminational results from \cite{BOPT}.

\begin{lem}[\cite{BOPT}]\label{l:noreguc1}
If a geolamination $\lam$ has an invariant quadratic gap $\Uf$ of
regular critical type, then, for every chord $\ol{\al\be}$ of the unit
disk such that $\si_3^{\circ n}(\ol{\al\be})$ does not cross an edge of
$\Uf$ for all $n$, we have that $\ol{\al\be}$ is eventually mapped to a
subset of $\Uf$.
\end{lem}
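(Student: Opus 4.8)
The plan is to exploit the explicit model of a regular critical type gap from \cite{BOPT} (see Theorem~\ref{t:invagap}): here $\Uf=\Uf(\cf)$ for a critical major $\cf$ with non-periodic endpoints $\ol{\gamma_1},\ol{\gamma_2}$, the set $\Uf'=X(\cf)$ consists of exactly those points whose forward $\si_3$-orbit stays in the longer arc $L=L(\cf)$, the unique major hole $H=\uc\sm L$ (behind $\cf$) has length $\frac13$, every other hole of $\Uf$ is \emph{minor} of length $<\frac13$ and is carried by some iterate of $\si_3$ onto $H$, and $\si_3$ maps $\ol H$ onto $\uc$ with $\ol{\gamma_1},\ol{\gamma_2}$ the two preimages in $\ol H$ of the single point $\ol v:=\si_3(\ol{\gamma_1})=\si_3(\ol{\gamma_2})$. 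Note that, since for regular critical type the orbits of $\ol{\gamma_1},\ol{\gamma_2}$ stay in $L$, the critical value $\ol v$ lies in $\Uf'$; this is the feature that fails in the caterpillar and periodic cases and is exactly why the statement holds only for regular critical type.

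First I would put the chord $\ol{\al\be}$ into a normal form. Supposing toward a contradiction that no forward image of $\ol{\al\be}$ lies in $\Uf$, observe that a chord crossing no edge of $\Uf$ and not contained in $\Uf$ must lie in the closed cap behind exactly one hole $G$ of $\Uf$ — in particular both its endpoints lie in $\ol G$. Thus for each $n$ there is a hole $G_n$ of $\Uf$ with $\si_3^{\circ n}(\ol{\al\be})$ in the cap behind $G_n$. When $G_n$ is minor, $\si_3$ is injective on $\ol{G_n}$ (its length is $<\frac12$), so $\si_3^{\circ(n+1)}(\ol{\al\be})$ sits in the cap behind $\si_3(G_n)$, i.e.\ $G_{n+1}=\si_3(G_n)$; since every minor hole reaches $H$ in finitely many steps, the sequence $(G_n)$ must equal $H$ for infinitely many $n$. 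Replacing $\ol{\al\be}$ by a forward image, I may assume $G_0=H$, so both endpoints of $\ol{\al\be}$ lie in $\ol H$.

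The heart of the argument is then the single step across the major hole. Parametrize $\ol H$ by $s\mapsto\ol{\gamma_1}+s$, $s\in[0,\frac13]$, and write the endpoints of $\ol{\al\be}$ as $\ol{\gamma_1}+s_\al$, $\ol{\gamma_1}+s_\be$ with $0\le s_\al<s_\be\le\frac13$, so that $\si_3(\ol{\al\be})$ joins $\ol{v+3s_\al}$ to $\ol{v+3s_\be}$ and the sub-arc of $\ol H$ cut off by $\ol{\al\be}$, of length $s_\be-s_\al<\frac13$, is carried by $\si_3$ to an arc of length $3(s_\be-s_\al)$. Using that $\ol v\in\Uf'$, that $\si_3(\ol{\al\be})$ still crosses no edge of $\Uf$ (in particular not $\cf$), and the exact arithmetic of minor holes (a minor hole lying $j$ iterates from $H$ has length precisely $3^{-(j+1)}$), I would show that $\si_3(\ol{\al\be})$ either already lies in $\Uf$ or lies in a minor cap whose hole is strictly closer to $\Uf'$ — equivalently, that the lengths $|\si_3^{\circ n}(\ol{\al\be})|$ along the subsequence of return times to $H$ form a strictly decreasing sequence, which is impossible; this contradiction proves the lemma. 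The main obstacle is precisely this crossing step: a minor cap is merely translated (up to injective rescaling) by $\si_3$, whereas the cap behind $H$ is unfolded by $\si_3$ onto all of $\disk$, so there is no single cap to follow, and one must fuse the non-crossing hypothesis, the location $\ol v\in\Uf'$ of the critical value, and the length bookkeeping into one monotone quantity. Everything else is routine once the structural facts above — all from \cite{BOPT} — are in place.
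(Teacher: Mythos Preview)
The paper does not give its own proof of this lemma; it is quoted from \cite{BOPT}. So there is nothing in the paper to compare your argument against, and I will simply evaluate your sketch on its own merits.

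Your reduction is correct and is the right skeleton: a chord that never crosses an edge and is never in $\Uf$ lives at each moment in a single cap behind a hole; minor holes are carried injectively by $\si_3$ toward $H$, so the orbit of the chord visits the cap behind $H$ infinitely often; and the feature of regular critical type you isolate --- that the critical value $\ol v=\si_3(\ol{\gamma_1})=\si_3(\ol{\gamma_2})$ lies in $\Uf'$ and is not the endpoint of any hole --- is exactly the right lever for the crossing step.

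The gap is in the crossing step itself, and it is twofold. First, a strictly decreasing sequence of positive reals is not impossible, so even if your monotonicity claim were true it would not finish the argument. Second, the monotonicity goes the other way: the return-time lengths \emph{increase}, and that is what yields the contradiction. Concretely, write $d$ for the arc-length of the chord at a visit to $H$ (so $d\le\frac13$). The image chord has one complementary arc of length $3d$ and another of length $1-3d$; the latter is precisely the arc passing through $\ol v$, since the endpoints $\ol{\gamma_1},\ol{\gamma_2}$ of $H$ both map to $\ol v$. If $3d>\frac13$, neither arc fits inside any hole (the arc of length $1-3d$ contains $\ol v$, which lies in $\Uf'$ and is not a hole endpoint, while the arc of length $3d$ exceeds the maximal hole length $\frac13$); since the image crosses no edge, it must then lie in $\Uf$, contradicting the standing assumption. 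Hence $d\le\frac19$ at every visit. The image then sits behind a hole $G$ with $|G|=3^{-(j+1)}$ for some $j\ge 0$ and has length $3d\le|G|$; after $j$ further (injective) steps the chord is back behind $H$ with length $3^{j+1}d\ge 3d$. So the return-time lengths grow by a factor $\ge 3$ at each return, while remaining $\le\frac19$. For a nondegenerate chord ($d>0$) this is impossible after finitely many returns.

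In short, you identified the right invariant but reversed its direction and drew an invalid conclusion from the wrong direction; once you flip the inequality, the argument you set up does go through.
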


We will use the following corollary of Lemma \ref{l:noreguc1}.

\begin{cor}\label{c:noreguc1}
If $f\in \Cc_\la$ with $|\la|\le 1$ is immediately renormalizable, then
there are no cuts $\Ga(\ol{\al\be})$ with $($pre$)$periodic vertex $x$
such that $\ol{\al\be}$ crosses an edge of $\Uf(f)$. Moreover, if there
exists a periodic cut with vertex $x\ne 0$, then $\Uf(f)$ is of
periodic type, and, for any periodic cut $\Ga(\ol{\al\be})$ with vertex
$x$, the chord $\ol{\al\be}$ does not cross the siblings of the major
$\Mf$ of $\Uf(f)$.
\end{cor}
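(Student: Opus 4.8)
The plan is to separate the statement into two parts: a non‑crossing claim about (pre)periodic cuts, which is essentially immediate from the definition of $\Uf(f)$, and the regular critical / periodic dichotomy together with the sibling claim, where Lemma~\ref{l:noreguc1} and elementary combinatorics of quadratic invariant gaps enter.

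\emph{The non‑crossing claim.} Since $f\in\Cc_\la$, the Julia set $J(f)$ is connected, so every dynamic ray of $f$ is smooth and, for each argument $\al$, one has $R^e_f(\al^+)=R^e_f(\al^-)=R_f(\al)$. Let $\Ga(\ol{\al\be})$ be a cut with a (pre)periodic vertex $x$. By definition of a vertex the rays $R_f(\al)$, $R_f(\be)$ land at $x$; their principal sets therefore equal $\{x\}$, and by definition of a cut they lie in $K(f^*)$, so $x\in K(f^*)$. Because $x$ is (pre)periodic and only finitely many rays land at its cycle, these rays are permuted by the first return map, whence the arguments $\al$ and $\be$ are $\si_3$-(pre)periodic. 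By Definition~\ref{d:uf} this gives $\ol\al,\ol\be\in\widehat X(f)\subset X(f)=\Uf'(f)$, so $\ol{\al\be}$ is a chord of the convex hull $\Uf(f)$ and hence cannot cross an edge of $\Uf(f)$.

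\emph{The dichotomy.} Suppose now there is a periodic cut $\Ga(\ol{\al\be})$ with vertex $x\ne 0$; then $\al,\be$ are $\si_3$-periodic and $x$ is periodic. For every $n\ge 0$ the rays $R_f(\si_3^{\circ n}(\al))$, $R_f(\si_3^{\circ n}(\be))$ land at $f^{\circ n}(x)$, again periodic and still different from $0$ (as $0$ is fixed and $x\ne 0$); thus $\si_3^{\circ n}(\ol{\al\be})$ again generates a periodic cut with nonzero vertex, so by the non‑crossing claim every forward image of $\ol{\al\be}$ lies in $\Uf(f)$, in particular crossing no edge of $\Uf(f)$. By Lemma~\ref{l:XY}, $\Uf(f)$ is of regular critical or of periodic type. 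Assume, for contradiction, it is of regular critical type, and let $\lam$ be a $\si_3$-invariant geolamination having $\Uf(f)$ as a gap (such $\lam$ exists by \cite{BOPT}). Applying Lemma~\ref{l:noreguc1} to $\lam$ and to $\ol{\al\be}$, whose forward orbit avoids the edges of $\Uf(f)$, we conclude that some $\si_3^{\circ n}(\ol{\al\be})$ is an edge of $\Uf(f)$, and then, by periodicity, $\ol{\al\be}$ itself is an edge of $\Uf(f)$. But in a regular‑critical‑type gap every edge is eventually carried onto the (critical) major and hence eventually collapsed to a point, so no edge is periodic; this contradicts the periodicity of $\ol{\al\be}$. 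Therefore $\Uf(f)$ is of periodic type.

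\emph{The siblings, and the main obstacle.} With $\Uf(f)$ of periodic type and $\Mf$ its periodic major, let $\Mf',\Mf''$ be its two siblings — the remaining leaves mapped by $\si_3$ onto $\si_3(\Mf)$ in a sibling $\si_3$-invariant geolamination containing $\Uf(f)$ — which are disjoint from $\Mf$ since the major hole of $\Uf(f)$ has length exceeding $\frac13$. By the structure of periodic‑type quadratic gaps in \cite{BOPT}, each of $\Mf'$, $\Mf''$ is either an edge of $\Uf(f)$ or a chord with both endpoints in the interior of the major hole; in the first case $\ol{\al\be}$ does not cross it because $\ol{\al\be}\subset\Uf(f)$, and in the second case it does not cross it either because $\ol{\al\be}$ has both endpoints in $\Uf'(f)$, which lies in the complementary arc (alternatively, in the connected case distinct smooth external rays of $f$ are pairwise disjoint by Lemma~\ref{l:6.1}, so distinct cuts do not cross). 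Hence $\ol{\al\be}$ crosses neither sibling, completing the proof. The step I expect to be the main obstacle is the regular‑critical case of the dichotomy: one must extract from Lemma~\ref{l:noreguc1} the precise conclusion that a periodic chord whose forward orbit avoids the edges of a regular‑critical gap is forced to be an edge of that gap, and combine it with the combinatorial fact that edges of regular‑critical‑type gaps are never periodic; pinning down the exact placement of the siblings of the major of a periodic‑type gap is a secondary technical point.
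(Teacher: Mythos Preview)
Your non-crossing argument is correct and in fact slightly more direct than the paper's (the paper re-derives $x\in J(f^*)$ via a separation argument, whereas you observe that $x\in K(f^*)$ is immediate from the definition of a cut).

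The dichotomy step, however, has a genuine gap. Lemma~\ref{l:noreguc1} only asserts that a chord whose forward images never cross edges of $\Uf$ is eventually mapped \emph{into} $\Uf$ --- not onto an edge. Since you already know $\ol\al,\ol\be\in\Uf'(f)$ from the first part, the chord lies in $\Uf(f)$ to begin with and the lemma yields nothing new. Your jump to ``some $\si_3^{\circ n}(\ol{\al\be})$ is an edge of $\Uf(f)$'' is unjustified: a regular-critical quadratic gap contains many periodic chords with both endpoints in $\Uf'$ that are not edges (under the semiconjugacy of $\si_3|_{\Uf'}$ to $\si_2$, these correspond to arbitrary periodic leaves of the angle-doubling map). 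The paper supplies the missing \emph{dynamical} ingredient: since $\ol{\al\be}$ lies in $\Uf(f)$, the vertex $x$ (or an iterate) is a periodic cutpoint of $J(f^*)$; but $f^*$ is hybrid equivalent to a quadratic polynomial with a non-repelling fixed point, and such a quadratic has no periodic Julia cutpoints other than that fixed point, contradicting $x\ne 0$. This appeal to the structure of $J(f^*)$ is essential and cannot be replaced by the combinatorics of $\Uf(f)$ alone.

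Your sibling argument is correct for the statement as written and is simpler than the paper's. Note, though, that the paper actually proves the stronger assertion that no forward \emph{image} of $\ol{\al\be}$ crosses the sibling $\Mf^*$ disjoint from $\Uf(f)$; this strengthening is what is used downstream in Lemma~\ref{l:nocutinab}.
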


\begin{proof}
Suppose that there exists a cut $\Ga(\ol{\al\be})$ with (pre)periodic
vertex $x$ such that $\ol{\al\be}$ crosses an edge of $\Uf(f)$. Then,
by definition, there are points of $J(f^*)$ in either component of
$\C\sm \Ga(\ol{\al\be})$, which implies that $x\in J(f^*)$, and hence
$\ol\al$, $\ol\be\in \widehat X\subset \Uf'(f)$, a contradiction
(indeed, by the assumption at least one of $\ol\al$, $\ol\be$ must not
belong to $\Uf'(f)$).

Suppose now that $x\ne 0$ and that $\Uf(f)$ is of regular critical
type. Let $\hf$ be the convex hull of the finite set $\hf'\subset \uc$
of points $\ol\ga$, where $\ga$ are arguments of rays landing at $x$.
By the previous paragraph, images of $\hf$ do not cross edges of
$\Uf(f)$. Then, by Lemma \ref{l:noreguc1}, the set $\hf$ is eventually
mapped inside $\Uf(f)$. However, no image of $x$ can be a cutpoint of
$J(f^*)$ since $|\la|\le 1$ and, hence, there are no periodic cutpoints
of $J(f^*)$ different from $0$. This contradiction proves the second
claim.

Now, let $\Gamma(\alpha\beta)$ be a periodic cut with vertex $x$; set
$\ell=\ol{\al\be}$. By the above, no image of $\ell$ crosses edges of
$\Uf$. Suppose that an image $\widetilde\ell$ of $\ell$ crosses the
sibling $\Mf^*$ of $\Mf$ that is disjoint from $\Uf$. If
$\widetilde\ell$ is disjoint from $\Uf$ then $\si_3(\widetilde\ell)$
crosses an edge of $\Uf$, a contradiction. Otherwise an endpoint of
$\widetilde\ell$ must coincide with an endpoint of $\Mf$. Then either
$\si_3(\widetilde\ell)\subset \Uf$, and all leaves from the orbit
$\widetilde\ell$ are contained in $\Uf$, a contradiction ($\ell$ is
periodic!), or $\si_3(\widetilde\ell)$ crosses an edge of $\Uf$, again
a contradiction. Hence no image of $\ell$ crosses $\Mf^*$ as desired.
\end{proof}

Below, we prove a sequence of lemmas in which we study polynomials
$f\in \Cc_\la\sm\cuc_\la$. Our aim is to prove an analog of
Proposition \ref{p:uflaf}. We will do this in a step by step fashion.
Theorem \ref{t-extepoly} and other tools applicable in the disconnected case do not apply anymore.
Instead we have to use various planar tools
combining continuum theory and dynamics of (geo)laminations.

We now investigate the situation, where $\Uf(f)$ is of periodic type.

\begin{dfn}[cf. \cite{BOPT}]\label{d:periot}
Let $\Uf$ be a quadratic invariant gap of periodic type. Denote its
major by $\ol{\ta_1+\frac 13\,\ta_2+\frac 23}=\Mf$ and assume that it
is of period $k$ with sibling $\ol{\ta_2+\frac13\,
\ta_1+\frac23}=\Mf^*$. Let $\Vf'$ be the set of all points $\ol\al$ of
the circle such that, for every $n$, we have $3^n\al\in
[3^n(\ta_1+\frac 13),3^n(\ta_2+\frac 23)]$, and let $\Vf$ be the convex
hull of $\Vf'$. In this setting, $\Uf$ is called the \emph{senior gap}
while $\Vf$ is called the \emph{vassal gap} (of $\Uf$). If $\Uf=\Uf(f)$
is defined by an immediately renormalizable polynomial $f\in \Cc_\la$,
we also consider the continuum $I(f)=I=\imp^-(\ta_1+\frac 13)\cup
\imp^+(\ta_2+\frac 23)$ (the fact that $I$ is a continuum follows from Proposition \ref{p:uflaf1}).
\end{dfn}

In Theorem \ref{t:periocan}, specific (geo)laminations with a quadratic
invariant gap $\Uf$ of periodic type are studied. Recall that, given a
geolamination $\lam$, the map $\si_3$ extends to all leaves of $\lam$
so that leaves are mapped to leaves or points in $\uc$. We also extend
$\si_3$ to all gaps of $\lam$ so that gaps are mapped to gaps, leaves
or points and so that the obtained extension is a continuous self-map
of the closed unit disk.

\begin{thm}[\cite{BOPT}]\label{t:periocan}
There exists a unique invariant lamination $\sim_\Uf$ with a given
senior gap $\Uf$ of periodic type and such that the vassal gap
$\ch(\Vf')=\Vf$ of period $k$ is a gap of $\sim_\Uf$. The gap $\Vf$
maps two-to-one onto itself under $\si_3^{\circ k}$, all other gaps of
$\sim_\Uf$ are one-to-one pullbacks of $\Uf$ or $\Vf$ and all leaves of
the corresponding geolamination $\lam_\Uf$ are edges of these gaps.
Also, there are no points of period $k$ located between the major $\Mf$
and its sibling $\Mf^*$ except for the endpoints of $\Mf$.
\end{thm}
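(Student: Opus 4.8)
The plan is to construct $\sim_\Uf$ explicitly, as the ``pull-back closure'' of the pair of model gaps $\{\Uf,\Vf\}$ of Definition~\ref{d:periot}, and then to show that no other invariant lamination has the listed properties. First I would assemble the elementary structure of the two model gaps. Since $\Uf$ is a quadratic invariant gap, $\si_3|_{\Uf'}$ is two-to-one onto $\Uf'$, its major $\Mf$ is a periodic leaf of period $k$, and $\Mf^*$ is the sibling of $\Mf$ disjoint from $\Uf$. Using Theorem~\ref{t:invagap} and the description of periodic type gaps recalled in Subsection~\ref{ss:overview}, I would verify that $\Vf=\ch(\Vf')$ is indeed a gap, that $\Mf$ and $\Mf^*$ occur among its edges, and --- crucially --- that the first return map $\si_3^{\circ k}$ carries $\Vf$ onto itself as a degree-two branched self-map, so that $\Vf$ behaves, under $\si_3^{\circ k}$, like a quadratic gap. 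Informally, $\Vf$ is the combinatorial avatar of the little filled Julia set $K(f^*)$ of a renormalization while $\Uf$ is the avatar of what lies outside it --- precisely the dichotomy already met in Subsection~\ref{ss:exteql} in the disconnected case --- and the two gaps fit together along $\Mf$ without their remaining edges crossing.

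Next I would define the geolamination $\lam_\Uf$ by declaring its gaps to be $\Uf$, $\Vf$, and all iterated \emph{univalent} $\si_3$-preimages of $\Uf$ and of $\Vf$, together with the chords forced as edges of these gaps, and then passing to the closure; $\sim_\Uf$ is the equivalence relation so generated. The real content is the verification of sibling $\si_3$-invariance in the sense of Definition~\ref{d:sibli}: forward invariance is immediate, while for backward invariance and the disjointness (sibling) condition one must check that at every level the new preimages can be chosen consistently and without crossings. The key input is the structure of $\si_3^{-1}(\Uf)$: it consists of $\Uf$ together with exactly one further gap $\widehat\Uf$ mapped univalently onto $\Uf$ (the combinatorial counterpart of $\widetilde K(f^*)$), attached to the rest of the configuration along $\Mf^*$; similarly the $\si_3$-orbit and $\si_3$-preimages of $\Vf$ form chains of univalently mapped gaps. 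An induction on the pull-back level, using that pulling back a chord which in its interior avoids all of $\si_3^{\circ i}(\Mf)$ again yields such chords, shows that $\lam_\Uf$ is a closed, non-crossing, sibling-invariant family with senior gap $\Uf$ and with $\Vf$ a gap of period $k$. Since every infinite gap of $\lam_\Uf$ is an eventual univalent pull-back of $\Uf$ or of $\Vf$, the family is generated by an equivalence relation, as required.

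For uniqueness, let $\sim$ be any invariant lamination whose senior gap is $\Uf$ and for which $\Vf$ is a gap of period $k$. Then every leaf in the $\si_3$-orbits of $\Mf$ and of $\Mf^*$, and every univalent pull-back of such a leaf, is an edge of a gap of $\sim$; hence no leaf of $\sim$ crosses any of them inside $\disk$. I would then show that every leaf of $\sim$ is an edge of one of the gaps constructed above: if some leaf $\ell$ were not, its entire forward orbit would consist of leaves avoiding (in $\disk$) the orbits of $\Mf$ and $\Mf^*$, and the periodic-type counterpart of Lemma~\ref{l:noreguc1} --- a chord whose orbit never crosses the majors of a periodic type invariant quadratic gap is eventually mapped into $\Uf$ or into $\Vf$ --- would force $\ell$ eventually into $\Uf$ or $\Vf$; undoing the pull-back exhibits $\ell$ as an edge of one of our gaps. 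Hence $\sim=\sim_\Uf$. I expect this step, together with the pull-back consistency check of the previous paragraph, to be the main obstacle, since it requires establishing and applying that capture lemma while tracking simultaneously $\Uf$ (which is $\si_3$-invariant) and $\Vf$ (which is only $\si_3^{\circ k}$-invariant, its $\si_3$-orbit being a chain of univalent preimage gaps).

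Finally, the assertion that, apart from the two endpoints of $\Mf$, no point of period $k$ lies strictly between $\Mf$ and $\Mf^*$. The part of $\cdisk$ bounded by the disjoint chords $\Mf$, $\Mf^*$ and the two circular arcs joining them --- call the arcs $A$ and $A'=A+\frac13$, each carrying on its closure one endpoint of $\Mf$ and one endpoint of $\Mf^*$ --- is moved by the first return map $\si_3^{\circ k}$ in a strictly expanding way. Indeed, by the structure of periodic type gaps the $k$ leaves in the $\si_3$-orbit of the major are pairwise disjoint edges of $\Uf$, the holes behind the $k-1$ non-major ones are minor (length below $\frac13$), so $\si_3$ is injective on each of them and on $A$; consequently $\si_3^{\circ k}$ is an orientation-preserving homeomorphism carrying $\overline A$ onto the closure of the major hole of $\Uf$, multiplying arc length by $3^{k}$ and fixing the endpoint of $A$ lying on $\Mf$. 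In arc-length coordinates based at that fixed endpoint the map is $s\mapsto 3^{k}s$, which has no further fixed point on $\overline A$; the same applies to $A'$, and a direct orbit computation shows that the endpoints of $\Mf^*$ are not fixed by $\si_3^{\circ k}$. This yields the claim.
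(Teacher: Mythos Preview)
The paper does not prove Theorem~\ref{t:periocan}; it is quoted verbatim from \cite{BOPT} and used as a black box (in particular, Corollary~\ref{c:periocan} is derived \emph{from} it). So there is no in-paper proof to compare your proposal against.

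That said, your outline matches the natural strategy and is broadly sound. Two remarks. First, in your uniqueness step you invoke ``the periodic-type counterpart of Lemma~\ref{l:noreguc1}'' to force an arbitrary leaf of $\sim$ eventually into $\Uf$ or $\Vf$. Be careful not to smuggle in Corollary~\ref{c:periocan} here, since in this paper that corollary is deduced from the very theorem you are proving; you would need to establish the capture statement directly from the structure of $\Uf$ and $\Vf$ (which is doable, essentially by the argument you sketch in the last paragraph combined with the semiconjugacy $\psi_\Uf:\Uf'\to\uc$ to $\si_2$). Second, your final expansion argument for the ``no other period-$k$ points between $\Mf$ and $\Mf^*$'' claim is correct in spirit, but note that you also need to rule out period-$k$ points in the \emph{interiors} of the two arcs that are not endpoints of either chord; your $s\mapsto 3^k s$ computation handles the endpoint of $\Mf$ on each arc, and you should make explicit that the other endpoint of each arc (the one on $\Mf^*$) is genuinely moved, which you do note but only parenthetically.
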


Theorem \ref{t:periocan} implies the following corollary.

\begin{cor}\label{c:periocan}
Let $\Uf$ be a senior gap of periodic type with major $\Mf$ and vassal
gap $\Vf$. Let $\ell$ be a chord. Then one of the following holds.
\begin{enumerate}
\item An eventual $\si_3$-image of $\ell$ crosses an edge of $\Uf$.
\item An eventual $\si_3$-image of $\ell$ is contained in $\Uf$.
\item An eventual $\si_3$-image of $\ell$ separates $\Mf$ from
    $\Mf^*$ in $\disk$.
\end{enumerate}
\end{cor}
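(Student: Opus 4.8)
The plan is to read off the trichotomy from the explicit description of the invariant lamination $\sim_\Uf$ and its geolamination $\lam_\Uf$ supplied by Theorem~\ref{t:periocan}, in the same spirit in which Corollary~\ref{c:noreguc1} was deduced from Lemma~\ref{l:noreguc1} in the regular critical case. First I would dispose of case~(1): we may assume that no $\si_3$-image of $\ell$ crosses an edge of $\Uf$, and must then show that some image lies in $\Uf$ or separates $\Mf$ from $\Mf^*$. Recall from Theorem~\ref{t:periocan} that the only gaps of $\lam_\Uf$ are $\Uf$ itself, the vassal gap $\Vf=\ch(\Vf')$ of period $k$, and one-to-one $\si_3$-pullbacks of $\Uf$ or $\Vf$; all leaves of $\lam_\Uf$ are edges of these gaps; the major $\Mf=\ol{(\ta_1+\frac13)(\ta_2+\frac23)}$ and its sibling $\Mf^*$ are both edges of $\Vf$ (see \cite{BOPT}); and $\Vf$ maps two-to-one onto itself under $\si_3^{\circ k}$.

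The key reduction is to show that under our standing assumption $\ell$ cannot cross any leaf of $\lam_\Uf$ at all. Since every pullback gap of $\lam_\Uf$ maps one-to-one onto $\Uf$ or $\Vf$, a crossing of a leaf $m$ is preserved along the orbit: $\si_3^{\circ n}(\ell)$ crosses $\si_3^{\circ n}(m)$ for every $n$ up to the first moment $n_0$ when $\si_3^{\circ n_0}(m)$ is an edge of $\Uf$ or of $\Vf$. In the first case $\si_3^{\circ n_0}(\ell)$ crosses an edge of $\Uf$, contradicting the assumption. In the second case $\si_3^{\circ n_0}(\ell)$ crosses an edge $e$ of $\Vf$; now every edge $e\ne\Mf$ of $\Vf$ is eventually carried by $\si_3$ (again through one-to-one pullbacks) onto $\Mf^*$ and then onto the edge $\si_3(\Mf)=\si_3(\Mf^*)$ of $\Uf$ --- this is forced by Theorem~\ref{t:periocan}, since otherwise the orbit of $e$ under $\si_3^{\circ k}$ would reveal a proper sub-gap of $\Vf$ mapping two-to-one onto itself, which is not on the list of gaps --- so once more some image of $\ell$ crosses an edge of $\Uf$, a contradiction (and if $e=\Mf$ then $\si_3^{\circ n_0}(\ell)$ already crosses the edge $\Mf$ of $\Uf$). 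Hence $\ell$ lies in the closure of a single gap $G$ of $\lam_\Uf$, or $\ell$ is itself a leaf of $\lam_\Uf$; in the latter case the same orbit analysis carries $\ell$ into $\Uf$, which is case~(2).

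It remains to push $\ell$ forward through the one-to-one pullbacks so as to assume $G=\Uf$ or $G=\Vf$. If $G=\Uf$ then $\ell\subset\Uf$ and we are in case~(2). So suppose $\ell\subset\Vf$; then both endpoints of $\ell$ lie on $\uc$ in $\Vf'$ (a chord contained in $\Vf$ cannot terminate in a hole of $\Vf$), and if $\ell$ is an edge of $\Vf$ its $\si_3$-orbit reaches $\Mf\subset\Uf$ as above, giving case~(2). In the remaining sub-case I would use the standard monotone semiconjugacy $\psi_\Vf\colon\Vf'\to\uc$ between $\si_3^{\circ k}|_{\Vf'}$ and the doubling map $\si_2$ (see \cite{BOPT}): it collapses exactly the arcs of $\uc$ subtended by holes of $\Vf$ and sends $\Mf$ and $\Mf^*$ onto the two $\si_2$-preimages of the fixed point of $\si_2$, so that the two ``halves'' into which $\Mf\cup\Mf^*$ divides $\Vf$ are precisely the $\psi_\Vf$-preimages of the two closed semicircles. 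Since $\ell$ is non-degenerate and is not an edge of $\Vf$, the $\psi_\Vf$-images of its two endpoints are distinct points of $\uc$; hence their $\si_2$-itineraries differ, so for some $n$ the images under $\si_2^{\circ n}$ of these two points lie in the interiors of opposite semicircles. Then $\si_3^{\circ kn}(\ell)\subset\Vf$ has one endpoint in each half of $\Vf$ and therefore separates $\Mf$ from $\Mf^*$ in $\disk$, which is case~(3).

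The step I expect to be the main obstacle is the propagation argument in the second paragraph: one must be careful that crossings really are preserved under the one-to-one pullback maps (so that a single crossing survives along the whole forward orbit), and --- more substantially --- one must squeeze out of Theorem~\ref{t:periocan} the fact that the vassal $\Vf$ harbours no ``hidden'' renormalization, i.e.\ that every edge of $\Vf$ other than $\Mf$ is eventually mapped by $\si_3$ onto an edge of $\Uf$. A minor further nuisance in the last paragraph is the bookkeeping when an endpoint of $\ell$ is one of the four vertices $\ta_1+\frac13$, $\ta_2+\frac23$, $\ta_2+\frac13$, $\ta_1+\frac23$ of $\Mf\cup\Mf^*$: there ``separates'' is to be read as separating after $\ell$ is deleted from $\disk$, and the itinerary argument is applied to $\psi_\Vf$ of the remaining non-special endpoint, whose itinerary differs from the (pre-)fixed itinerary of the special one.
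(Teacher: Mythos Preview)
Your overall strategy is close to the paper's---both use the description of $\lam_\Uf$ from Theorem~\ref{t:periocan} and finish inside $\Vf$ via the semiconjugacy to $\si_2$---but your key reduction ``$\ell$ cannot cross any leaf of $\lam_\Uf$'' does not follow from the justification you give, and the paper in fact does \emph{not} prove this stronger statement.

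The gap is precisely the one you flag as the main obstacle. That pullback gaps map one-to-one onto $\Uf$ or $\Vf$ says nothing about the forward behaviour of a chord $\ell$ whose endpoints lie far from the leaf $m$ it crosses; crossing of arbitrary chords is \emph{not} preserved under $\si_3$ (take e.g.\ $\ol{(0.1)(0.5)}$ crossing $\ol{(0.2)(0.9)}$: after tripling, the images $\ol{(0.3)(0.5)}$ and $\ol{(0.6)(0.7)}$ are unlinked). What controls this is not the local one-to-one structure near $m$, but whether $\ell$ itself passes through the fold locus, namely the strip between $\Mf$ and $\Mf^*$. The paper therefore argues differently: it first disposes of the case where some image of $\ell$ meets $\Uf$ (either $\ell\subset\Uf$ and we are in~(2), or $\ell$ shares an endpoint with the major, which is then pushed forward to a contradiction with both not-(1) and not-(2)); then, having all images of $\ell$ disjoint from $\Uf$, either some image lands inside $\Vf$ (handled by the $\si_2$-semiconjugacy, your last paragraph), or $\ell$ crosses a leaf $\ell_1$ of $\lam_\Uf$. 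In this last case the paper \emph{assumes for contradiction that (3) fails as well}. Now no image of $\ell$ separates $\Mf$ from $\Mf^*$, and no image of $\ell_1$ does either since $\ell_1$ is a leaf; this is exactly the hypothesis under which the crossing of $\ell$ with $\ell_1$ is preserved along the orbit, and one obtains~(1) once $\ell_1$ reaches $\Mf$.

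In short, you cannot hope to show ``$\ell$ lies in a single gap of $\lam_\Uf$'' from not-(1) alone; the correct move is to negate~(3) as well and run your propagation argument under that extra hypothesis. With that change (and a separate treatment of the case where some image of $\ell$ has an endpoint on $\Uf'$, which you have folded into the general leaf-crossing discussion but which the paper handles directly), your outline becomes the paper's proof.
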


\begin{proof}
Set $\ell=\ol{\al\be}$. Assume that neither (1) nor (2) holds. Let an
eventual image of $\ell$ intersect $\Uf$. Then, by the assumption, this
image is a chord having a common endpoint with an edge of $\Uf$ but
otherwise located outside $\Uf$.
Suppose that $\Mf=\ol{\ta_1+\frac 13\,\ta_2+\frac 23}$.
By properties of senior gaps of
periodic type, we may assume that $\al=\ta_1+\frac 13$ and $\ta_1+\frac
13<\be<\ta_2+\frac 23$ while, by our assumptions, no image of $\ell$
has an endpoint $\ta_1+\frac23$ or an endpoint $\ta_2+\frac13$. This
implies that an eventual image $\widetilde\ell$ of $\ell$ will have one
endpoint $\ta_1+\frac 13$ and the other endpoint belonging to
$(\ta_2+\frac13,\ta_1+\frac23)$. It follows that $\si_3^{\circ
k}(\widetilde\ell)$ will have an endpoint $\ta_1+\frac 13$ and the
other endpoint in $(\ta_2+\frac 23,\ta_1+\frac 13)$, which implies that
either (1) or (2) must hold, a contradiction. Thus we may assume that
eventual images of $\ell$ are disjoint from $\Uf$.

We claim that then (3) holds. Indeed, suppose first that $\ell$ is
eventually mapped inside $\Vf$. Then the claim follows from the fact
that $\si_3^{\circ k}|_{\Vf}$ is semiconjugate to $\si_2$ and the
well-known (and easy) fact that an eventual $\si_2$-image of any
non-degenerate chord separates $0$ and $\frac12$ or coincides with
$\ol{0\frac12}$. Suppose that $\ell$ is never mapped inside $\Vf$.
Combining this and the fact that all images of $\ell$ are disjoint from
$\Uf$ we see that $\ell$ crosses a leaf $\ell_1$ of $\lam_\Uf$. If no
image of $\ell$ separates $\Mf$ and $\Mf'$, then this implies that the
images of $\ell$ continue crossing the corresponding images of
$\ell_1$. However, $\ell_1$ is eventually mapped to $\Mf$, a
contradiction.
\end{proof}

Lemma \ref{l:nocutinab} studies periodic cuts of $f$.

\begin{lem}\label{l:nocutinab}
Suppose that $f\in\Cc_\la$ is immediately renormalizable, and that
$\Uf(f)=\Uf$ is of periodic type with major $\Mf=\ol{\ta_1+\frac
13\,\ta_2+\frac 23}$ of period $k$. Suppose that there exists a
periodic cut $\Ga(\ell_1)=\Ga$ with vertex $\ne 0$ corresponding to a
chord $\ell_1$ that does not belong to the orbit of $\Mf$. Then there
is a cut $\Ga(\ell_2)$ such that $\ell_2$ is separated from $\Uf$ by
$\Mf$ and such that $\si_3^{\circ k}(\ell_2)$ is separated from $\Uf$
by $\ell_2$.
\end{lem}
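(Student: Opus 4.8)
The plan is to run the trichotomy of Corollary~\ref{c:periocan} on the chord $\ell_1$, eliminate its first two alternatives, and then extract $\ell_2$ from the forward $\si_3$-orbit of $\ell_1$.

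First I would apply Corollary~\ref{c:periocan} to $\ell_1$, taking the senior gap $\Uf$ and vassal gap $\Vf$ in the roles there. Since $\ell_1$ is periodic and $\Ga(\ell_1)$ has vertex $x\ne 0$, each chord $\si_3^{\circ m}(\ell_1)$ again generates a cut $\Ga(\si_3^{\circ m}(\ell_1))$, with vertex $f^{\circ m}(x)$, which is periodic and $\ne 0$ (because $0$ is fixed and $x$ periodic). Hence, by the first assertion of Corollary~\ref{c:noreguc1}, no $\si_3^{\circ m}(\ell_1)$ crosses an edge of $\Uf$, which rules out alternative~(1). For alternative~(2): if some $\ell:=\si_3^{\circ m}(\ell_1)$ were contained in $\Uf$, then either $\ell$ is an edge of $\Uf$, hence (being periodic and, like every edge of a periodic-type quadratic gap, eventually mapped to $\Mf$) it lies in the $\si_3$-orbit of $\Mf$, and then so does $\ell_1$, contradicting the hypothesis; or $\ell$ is a diagonal of $\Uf$, in which case both sides of $\ell$ contain points of $\widehat X$, so the cut $\Ga(\ell)$ separates $K(f^*)$, forcing its vertex $f^{\circ m}(x)\ne 0$ to be a repelling or parabolic periodic cutpoint of $K(f^*)$ — impossible since $|\la|\le 1$. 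Therefore alternative~(3) holds: some $\ell=\si_3^{\circ m}(\ell_1)$ separates $\Mf$ from its sibling $\Mf^*$ in $\disk$.

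Next I would locate $\ell$ precisely. By the second assertion of Corollary~\ref{c:noreguc1}, $\ell$ crosses neither $\Mf$ nor $\Mf^*$ (nor any other sibling of $\Mf$); by Step~1 it crosses no edge of $\Uf$ and is not contained in $\Uf$. Combined with the fact that $\ell$ separates $\Mf$ from $\Mf^*$, this forces $\ell$ to lie in the closed ``strip'' $S$ cut off from $\Uf$ by the two chords $\Mf$ and $\Mf^*$ inside the major hole of $\Uf$. In particular $\Mf$ separates $\ell$ from $\Uf$, so $\ell$ already has the first required property; it remains only to replace $\ell$ by a suitable image of it having also the second property.

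Finally, I would analyse the action of $\si_3^{\circ k}$ on $S$. Since $\Mf$ has period $k$ while $\si_3^{\circ k}(\Mf^*)=\Mf$, the map $\si_3^{\circ k}$ carries $S$ onto the strictly larger region behind $\Mf$ (the whole major hole of $\Uf$), folding it two-to-one, fixing the $\Uf$-side boundary chord $\Mf$, and with no $\si_3^{\circ k}$-fixed chord strictly inside $S$ (by the last assertion of Theorem~\ref{t:periocan}); near $\Mf$ this map is moreover expanding, because $\Mf$ corresponds, via Proposition~\ref{p:uflaf1}, to a repelling or parabolic periodic point. Using the finiteness and pairwise non-crossing of the $\si_3$-orbit of $\ell_1$, I would take $\ell_2$ to be the orbit-chord lying in $S$ that is closest to $\Mf$ (i.e.\ separates $\Mf$ from every other orbit-chord in $S$); expansion at $\Mf$ then forces $\si_3^{\circ k}(\ell_2)$ to lie strictly farther from $\Uf$ than $\ell_2$, i.e.\ to be separated from $\Uf$ by $\ell_2$, and $\Ga(\ell_2)$ is a genuine cut since $\ell_2$ lies in the $\si_3$-orbit of $\ell_1$ and cuts are preserved under $f$. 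I expect the main obstacle to be exactly this last step: converting the soft picture ``$\si_3^{\circ k}$ folds $S$ onto a larger region while fixing and expanding at $\Mf$'' into an honest monotonicity statement, in particular excluding the degenerate configuration where $\si_3^{\circ k}(\ell_2)$ and $\ell_2$ are incomparable (neither separating the other from $\Uf$) and handling the parabolic case of $\Mf$'s landing point with care.
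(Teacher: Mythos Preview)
Your first two steps --- eliminating alternatives (1) and (2) of Corollary~\ref{c:periocan} and thereby producing an iterated image of $\ell_1$ that separates $\Mf$ from $\Mf^*$ --- are essentially correct and parallel the paper's argument (the paper phrases this as ``all images of $\ell_1$ are disjoint from $\Uf$'', using in addition that a chord touching $\Uf$ only at an endpoint of $\Mf$ would, by Theorem~\ref{t:periocan}, have to equal $\Mf$).

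Where you diverge is in the construction of $\ell_2$. You search for $\ell_2$ in the \emph{forward} orbit of $\ell_1$, taking the orbit-chord ``closest to $\Mf$'' and then trying to argue by expansion that $\si_3^{\circ k}$ pushes it farther away. The obstacle you flag is genuine: even granting a well-defined ``closest'' chord, you would still need to exclude the possibility that $\si_3^{\circ k}(\ell_2)$ lands on the $\Mf$-side of $\ell_2$ as a short chord with both endpoints on the same arc between $\Mf$ and $\Mf^*$ (so not separating $\Mf$ from $\Mf^*$, hence not contradicting minimality). Your appeal to Proposition~\ref{p:uflaf1} for ``expansion'' is also misplaced --- the relevant expansion is the purely combinatorial fact that $\si_3^{\circ k}$ has derivative $3^k$ on the circle and fixes the endpoints of $\Mf$; it has nothing to do with the landing point of the corresponding rays.

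The paper avoids all of this by going \emph{backward} rather than forward: once $\ell_1$ separates $\Mf$ from $\Mf^*$, take $\ell_2$ to be the $\si_3^{\circ k}$-pullback of $\ell_1$ lying strictly between $\Mf$ and $\ell_1$ (this exists and is unique because each of the two arcs of $\uc$ between $\Mf$ and $\Mf^*$ maps homeomorphically onto the full major hole under $\si_3^{\circ k}$). Then $\si_3^{\circ k}(\ell_2)=\ell_1$ is separated from $\Uf$ by $\ell_2$ by construction, and $\Ga(\ell_2)$ is a genuine cut because it is the $f^{\circ k}$-pullback of the cut $\Ga(\ell_1)$ (using Corollary~\ref{c:noreguc1} to see that this pullback has the right combinatorics). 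This one-step pullback replaces your entire final paragraph and removes the need for any monotonicity or expansion argument.
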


\begin{proof}
Let us show that all images of $\ell_1$ is disjoint from $\Uf$. Assume
otherwise. Then by Corollary \ref{c:noreguc1}, the iterated images of $\ell_1$ do not cross
edges of $\Uf$. Hence an eventual image $\ell'_1$ of $\ell_1$ has
either $\ta_1+\frac 13$ or $\ta_2+\frac 23$ as its endpoint. On the
other hand, by Corollary \ref{c:noreguc1}, $\ell'_1$ cannot cross $\Mf^*$, the
sibling of $\Mf$ disjoint from $\Uf$. By Theorem \ref{t:periocan} except for
$\ta_1+\frac 13$ or $\ta_2+\frac 23$ there are no other periodic points
of period $k$ between $\Mf$ and $\Mf^*$. Hence $\ell'_1=\Mf$, a
contradiction.

Since $\ell_1$ and all its images are disjoint from $\Uf$, then, by
Corollary \ref{c:periocan}, there is an image of $\ell_1$ separating $\Mf$ and
$\Mf^*$. We may assume that $\ell_1$ separates $\Mf$ and $\Mf^*$.
Recall that $k$ is the period of $\Mf$. Let $\ol\al_1$ and $\ol\be_1$
be the endpoints of $\ell_1$ chosen so that
$$
\ta_1+\frac 13<\al_1<\be_1<\ta_2+\frac 23.
$$
Since both arcs of $\uc$ between $\Mf$ and $\Mf^*$ are mapped onto the
major  hole of $\Mf$ homeomorphically under $\si_3^{\circ k}$, there is
a unique angle $\al_2$ in $(\ta_1+\frac 13,\al_1)$ with
$3^k\al_2=\al_1$, and there is a unique angle $\be_2$ in
$(\be_1,\ta_2+\frac 23)$ with $3^k\be_2=\be_1$. Consider the chord
$\ell_2=\ol{\al_2\be_2}$; then we have $\si_3^{\circ
k}(\ell_2)=\ell_1$. By Corollary \ref{c:noreguc1}, no pullback of $\ell_1$ can
cross an edge of $\Uf(f)$. Hence the cut that maps onto $\Ga(\ell_1)$
under $f^{\circ k}$ and contains $R_f(\al_2)$ coincides with
$\Ga(\ell_2)$. Hence $\ell_2$ is the desired pullback of $\ell_1$.
\end{proof}

\begin{lem}\label{l:case0}
Let $f\in \Cc_\la\sm \cuc_\la$, where $|\la|\le 1$. Moreover, suppose
that there is one cycle of parabolic domains at $0$ and two cycles of
external rays landing at $0$. Let $\gf$ be the gap whose vertices are
arguments of all external rays landing on $0$. Then $\gf$ is of type D.
There exists a unique major $\Mf=\ol{\ta_1+\frac13, \ta_2+\frac23}$ of
$\gf$ such that the quadratic invariant gap defined by $\Mf$ coincides
with $\Uf(f)$, and the polynomial $f$ belongs to a special wake
$\Wc_\la(\theta_1,\theta_2)$
\end{lem}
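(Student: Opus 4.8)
The plan is to extract from the dynamics at the parabolic point $0$ the finite invariant gap $\gf$, then to match it with a quadratic invariant gap and with a special parameter wake. Write $f=f_{\la,b}$; since $0$ is a parabolic fixed point and $|\la|\le 1$, the multiplier is a root of unity $\la=e^{2\pi ip/q}$, and by Corollary~\ref{c:unboren} the polynomial $f$ is immediately renormalizable, so $f^*$ and $\Uf(f)$ make sense. The assumption that there is exactly one cycle of parabolic domains at $0$ forces $T_{p/q}(b)\ne 0$, since otherwise, by Proposition~\ref{P:Tpq}, $f^{\circ q}$ would have $2q$ parabolic Fatou domains at $0$ forming two cycles. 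By \cite[Lemma 18.12]{M}, the arguments of the rays landing at $0$ form a finite $\si_3$-invariant set permuted as a combinatorial rotation with rotation number $p/q$; hence $\gf$ is a finite invariant gap of rotation number $p/q$. If there are $N$ such rays, then $q\mid N$ and $\si_3$ acts on them---equivalently on the $N$ edges of $\gf$---with $N/q$ orbits, so the hypothesis of two cycles gives $N=2q$ and exactly two $\si_3$-orbits of edges. As in the proof of Lemma~\ref{l:land-par}, the combinatorics of \cite{BOPT} then force $\gf$ to be of type D, with two majors $\Mf_1,\Mf_2$ lying in these two disjoint orbits. (When $p/q=0$ the only $\si_3$-fixed angles are $0$ and $\frac12$, so $\gf=\ol{0\frac12}$, the degenerate type D leaf, and $\fg_a$, $\fg_b$ will play the role of the two quadratic gaps.)

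Next I identify $\Uf(f)$. Since $0\in K(f^*)$, every ray landing at $0$ lands in $K(f^*)$, so $\gf\subset\Uf(f)$, and thus the unique major hole of $\Uf(f)$, of length $\ge\frac13$, lies inside one of the two major holes of $\gf$. Now $0$ is a parabolic cut point of the connected continuum $K(f^*)$; straightening $f^*$ to $z\mapsto z^2+c$ with $c$ the root of the period-$q$ satellite component at internal angle $p/q$, and transporting back by the hybrid conjugacy, one sees that $K(f^*)\sm\{0\}$ has exactly $q$ components (one when $q=1$), each contained in one of the $2q$ open sectors at $0$ cut out by the rays landing at $0$. The set of occupied sectors is invariant, forward and backward, under the bijection of the $2q$ sectors induced by $\si_3$ on the rays, hence is a union of $\si_3$-orbits of sectors; since there are exactly two such orbits of $q$ sectors each, and $K(f^*)$ occupies at least one and at most $q$ sectors, it occupies exactly one orbit. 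Because $\Mf_1$ and $\Mf_2$ lie in distinct orbits, one of them---say $\Mf_1$, bounding the major hole $H_1=(\ta_1+\frac13,\ta_2+\frac23)$---has its sector disjoint from $K(f^*)\sm\{0\}$; then no ray landing in $K(f^*)$ has argument strictly inside $H_1$, so $\Uf'(f)\cap H_1=\emptyset$, which makes $\Mf_1$ an edge of $\Uf(f)$ bounding a hole of length $\ge\frac13$, hence the major of $\Uf(f)$. Thus $\Uf(f)$ is the quadratic invariant gap defined by $\Mf_1$, and since $\ta_1+\frac13$, $\ta_2+\frac23$ are periodic, Lemma~\ref{l:XY} shows that $\Uf(f)$ is of periodic type. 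This major is unique with this property: behind $\Mf_2$ sits a component of $K(f^*)\sm\{0\}$ containing one of the parabolic domains, which carries (pre)periodic points realized as landing points of (pre)periodic rays with arguments strictly inside the hole behind $\Mf_2$, so $\Mf_2$ is not even an edge of $\Uf(f)$.

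Finally, $\Mf_1$ is a major of the finite type D invariant gap $\gf$, so by Lemma~\ref{l:spec-wake-d} the hole $(\ta_1,\ta_2)$ of $\Qf$ is $p/q$-special, and by Theorem~\ref{t:sp-wakes-nsp} the parameter wake $\Wc_\la(\ta_1,\ta_2)$ is special. I then invoke Theorem~\ref{t:dyn-sp}: the dynamic rays $R_f(\ta_1+\frac13)$ and $R_f(\ta_2+\frac23)$ are the rays at the two endpoints of the edge $\Mf_1$ of $\gf$, hence land at $0$; $T_{p/q}(b)\ne 0$ by the first paragraph; and the parabolic domains at $0$ are disjoint from the wedge $W_f(\ta_1+\frac13,\ta_2+\frac23)$, because that wedge is the region behind $H_1$, which meets $K(f^*)$ only at $0$, while every parabolic domain at $0$ is contained in $K(f^*)$ and omits $0$. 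Therefore $f\in\Wc_\la(\ta_1,\ta_2)$, and (writing $\Mf=\Mf_1$) the lemma follows.

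I expect the main obstacle to be the middle step: verifying that $\gf$ is of type D and, above all, that the unique major of $\Uf(f)$ coincides with one of its two majors rather than some finer sub-chord. This rests on the precise local picture of $K(f^*)$ at its parabolic cut point $0$---that $K(f^*)\sm\{0\}$ has exactly $q$ components and that they occupy a single $\si_3$-orbit of the $2q$ sectors at $0$---which I would derive from the Straightening Theorem~\cite{DH-pl} (realizing $f^*$ as a satellite quadratic) together with the combinatorics of finite invariant gaps in \cite{BOPT}.
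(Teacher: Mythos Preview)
Your proof is correct and follows essentially the same route as the paper's: identify the two edge-cycles of the type~D gap $\gf$, recognize that one cycle of wedges is ``empty'' of $K(f^*)$, and deduce that the major from that cycle is the major of $\Uf(f)$. The paper is considerably terser --- it simply picks the major $\Mf$ from the edge-cycle whose wedges do \emph{not} contain the parabolic domains at $0$, observes that all (pre)periodic rays landing in the parabolic domains have arguments in $\Uf'(f)$, and then asserts that the wedge behind $\Mf$ contains no points of $J(f^*)$, from which $\Uf(f)$ equals the quadratic gap with major $\Mf$ follows; the membership in the special wake is then ``by definition'' (implicitly, Theorem~\ref{t:dyn-sp}).

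The one genuine difference is how the key step ``the empty cycle of wedges really meets $K(f^*)$ only at $0$'' is justified. You invoke the Straightening Theorem to see that $K(f^*)\sm\{0\}$ has exactly $q$ components, each sitting in one sector, and then use equivariance to conclude the occupied sectors are a single edge-orbit. The paper bypasses straightening and argues directly from the location of the parabolic domains (which lie in $K(f^*)$ and fill one edge-orbit of wedges), though it does not spell out why the \emph{other} wedges contain no points of $J(f^*)$; your argument actually fills that gap. Your appeal to Lemma~\ref{l:land-par} for the type~D claim is slightly off-target (that lemma uses a different hypothesis), but the conclusion is right: with two vertex-cycles there are two edge-cycles of $q$ holes each, and a $q$-cycle of holes under $\si_3$ must contain a hole of length $\ge\frac13$, so each edge-cycle contributes a major --- hence type~D.
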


\begin{proof}
The fact that $\gf$ is of type D is immediate. Moreover, by the
assumptions $0$ is parabolic. Since $f\in \Cc_\la\sm \cuc_\la, |\la|\le
1$, then $f$ is immediately renormalizable. It follows that $\Uf(f)$
contains arguments of all external (pre)periodic rays landing on points
from the cycle of parabolic domains at $0$. Denote by
$\Mf=\ol{\ta_1+\frac13, \ta_2+\frac23}$ the major of $\gf$ coming from
the cycle of edges of $\gf$ corresponding to the cycle of planar wedges
at $0$ that do not contain parabolic domains at $0$. Then the wedge
$W_f(\ta_1+\frac13, \ta_2+\frac23)$ contains no points of $J(f^*)$. It
follows that the quadratic invariant gap defined by $\Mf$ coincides
with $\Uf(f)$. Moreover, by definition the polynomial $f$ belongs to a
special wake $\Wc_\la(\theta_1,\theta_2)$ as desired.
\end{proof}

We are ready to prove a lemma describing dynamics of polynomials $f\in
\Cc_\la\sm \cuc_\la, |\la|\le 1$.

\begin{lem}\label{l:locate}
Let $f\in \Cc_\la\sm \cuc_\la$, where $|\la|\le 1$. Then $f$ is
immediately renormalizable, $\Uf(f)$ is of periodic type, and, if
$\Mf=\ol{\ta_1+\frac 13\,\ta_2+\frac 23}$ is the major of $\Uf(f)$,
then the rays $R_f(\ta_1+\frac 13)$, $R_f(\ta_2+\frac 23)$ land at the
same point $z\in K(f^*)$. Moreover, either $z\ne 0$ is repelling, or
$z=0$ is parabolic with exactly one cycle of parabolic Fatou domains at
$0$ and two cycles of external rays landing on $0$. In particular, $f$
belongs to one of the parameter wakes.
\end{lem}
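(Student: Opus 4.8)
The plan is to run four steps, the weight falling on the middle two. \emph{Setup.} By Corollary~\ref{c:unboren} the polynomial $f$ is immediately renormalizable; fix its quadratic-like restriction $f^*\colon U^*\to V^*$ with $0\in K(f^*)$ and form the invariant quadratic gap $\Uf(f)$ of Definition~\ref{d:uf}, which by Lemma~\ref{l:XY} is of regular critical or of periodic type, with major hole $I$. Recall that $f^*$ is hybrid equivalent to some $z^2+c$ with $c$ in the closed main cardioid, so $J(f^*)$ is a Jordan curve; hence $K(f^*)$ has no cutpoints and $0$ is the only non-repelling periodic point of $f$ in $K(f^*)$. Since $K(f)$ is connected, every dynamic ray of $f$ is smooth and every periodic ray lands at a repelling or parabolic point (with impression equal to its landing point, by Theorem~\ref{t:monmodel}).

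\emph{Producing an obstruction.} As $[f]\in\Mc_3$ and $0$ is a non-repelling fixed point, $[f]\notin\cu$ forces either (I) a repelling periodic cutpoint of $J(f)$, or (II) a non-repelling periodic point of multiplier $\ne1$ other than the single excused fixed point; by the Fatou--Shishikura inequality this second cycle is then the unique non-repelling cycle besides $\{0\}$, and in either case the ``bad'' periodic orbit does not meet $K(f^*)$ unless it is a repelling cutpoint lying in $J(f^*)$. In case (I) the rays landing at the cutpoint are periodic and their convex hull is a periodic chord $\hf$ generating a cut $\Ga(\hf)$ with vertex $x$. In case (II) I apply the Separation Lemma~\ref{l:sep} together with Theorem~\ref{t:fxpt} to suitable special pieces, exactly as in the proof of Lemma~\ref{l:perpt-limb}, to isolate the bad cycle inside one component of the complement of a K-cut whose boundary consists of periodic rays landing at repelling or parabolic points. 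These data must now be placed relative to $\Uf(f)$, and this is where I expect the main difficulty: one has to distinguish whether the obstruction meets $K(f^*)$, and, in case (II), whether the bad cycle is attracting/Siegel/Cremer or parabolic.

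\emph{$\Uf(f)$ is of periodic type, and the landing point.} If $x\in K(f^*)$ (a repelling periodic cutpoint of $J(f)$ lying in $J(f^*)$), then $\Ga(\hf)$ is a periodic cut with vertex $x\ne0$, so by Corollary~\ref{c:noreguc1} the gap $\Uf(f)$ is of periodic type. Otherwise the whole periodic orbit avoids the forward-invariant $K(f^*)$, and were $\Uf(f)$ of regular critical type Lemma~\ref{l:noreguc1} would push some forward image of the relevant chord across an edge $e$ of $\Uf(f)$; since the rays at the endpoints of $e$ land in the connected set $K(f^*)$ while the rays at the endpoints of that image land at one point off $K(f^*)$, crossing chords would force that point into $K(f^*)$, a contradiction. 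So $\Uf(f)$ is of periodic type; write $\Mf=\ol{\ta_1+\frac13\,\ta_2+\frac23}$ for its major, of period $k$, which by Theorem~\ref{T:ParDynHoles} is the major of a hole $(\ol\ta_1,\ol\ta_2)$ of $\Qf$. Since $\Mf$ is an edge of $\Uf(f)$ with periodic endpoints, Proposition~\ref{p:uflaf1} gives that $R_f(\ta_1+\frac13)$ and $R_f(\ta_2+\frac23)$ land at a common point $z\in J(f^*)\subset K(f^*)$, periodic of period $k$. If $z\ne0$ it is repelling. If $z=0$, then $0$ is a non-repelling periodic cutpoint, hence parabolic, so $\la=e^{2\pi ip/q}$; as $\ta_1+\frac13$ and $\ta_2+\frac23$ are periodic arguments of rays landing at $0$ lying in distinct $\si_3$-orbits (Theorem~\ref{T:ParDynHoles}), the finite invariant gap $\gf_f$ has two cycles of vertices and is of type D, i.e.\ exactly two cycles of external rays land at $0$. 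Moreover $T_{p/q}(b)\ne0$: otherwise, by Proposition~\ref{P:Tpq}, both critical points of $f$ lie in the two cycles of parabolic basins of $0$, so $[f]\notin\cu$ is due only to a repelling periodic cutpoint, and then a contradiction with $z=0$ follows via Lemma~\ref{l:nocutinab} (a cut whose chord lies behind $\Mf$ would involve rays with arguments in the hole $I$, none landing in $K(f^*)$, so the chord of any periodic cut with vertex $\ne0$ lies in the $\si_3$-orbit of $\Mf$, forcing $z$ into the orbit of that cutpoint) when the cutpoint meets $K(f^*)$, and via Theorem~\ref{t:fxpt} applied to $\ol{W_f(\ta_1+\3,\ta_2+\4)}\cap K(f)$ (as in case~(2) of the proof of Lemma~\ref{l:perpt-limb}) otherwise. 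Hence there is exactly one cycle of parabolic domains at $0$, and Lemma~\ref{l:case0} identifies $\Mf$ as the major of $\gf_f$ defining $\Uf(f)$ and places $f$ in a special parameter wake $\Wc_\la(\ta_1,\ta_2)$.

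\emph{Conclusion.} If $z\ne0$, then $R_f(\ta_1+\frac13)$ and $R_f(\ta_2+\frac23)$ land at the common repelling periodic point $z$, so $\tWc_\la(\ta_1,\ta_2)\ne\emptyset$; by Proposition~\ref{p:W-nonspec} the parameter rays $\Rc_\la(\ta_1)$, $\Rc_\la(\ta_2)$ land at a common point and $\tWc_\la(\ta_1,\ta_2)=\Wc'_\la(\ta_1,\ta_2)\ni f$, a (non-special) parameter wake. If $z=0$, then $f$ lies in a special parameter wake by the previous step. In either case $f$ belongs to one of the parameter wakes, which is the last assertion. The main obstacle throughout is the placement of the ``bad'' periodic object relative to $\Uf(f)$ and, in the $z=0$ branch, the exclusion of two cycles of parabolic domains — both carried out with the laminational lemmas \ref{l:noreguc1}, \ref{c:noreguc1}, \ref{l:nocutinab}, \ref{c:periocan} and \ref{l:case0}.
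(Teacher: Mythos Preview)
Your proposal has a genuine gap at the most important step: concluding that the rays $R_f(\ta_1+\frac13)$ and $R_f(\ta_2+\frac23)$ land at a \emph{common} point. You invoke Proposition~\ref{p:uflaf1}, but that proposition only gives $\imp(\al^-)\cap\imp(\be^+)\cap J(f^*)\ne\emptyset$ and $\ol\al\sim_f\ol\be$; it does not say the two landing points coincide. Your supporting claim in the Setup that a periodic ray has ``impression equal to its landing point, by Theorem~\ref{t:monmodel}'' is a misreading: Theorem~\ref{t:monmodel} says only that $\vp_{J(f)}$ collapses each impression to a point in the quotient, not that impressions are singletons in $J(f)$. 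When $0$ is Cremer or Siegel (perfectly possible for $|\la|=1$ irrational), impressions are typically large, so two distinct periodic landing points can sit in the same $\sim_f$-fiber. Relatedly, your claim that $J(f^*)$ is a Jordan curve is false in those cases. The paper proves $z_1=z_2$ by a completely different route: in the presence of a periodic cut with vertex $\ne 0$, Lemma~\ref{l:nocutinab} produces a chord $\ell_2$ behind $\Mf$ whose $f^{\circ k}$-pullbacks, together with an equipotential, bound a Jordan disk on which $f$ is quadratic-like; Lemma~\ref{l:uniqj} forces this filled Julia set to equal $K(f^*)$, so both landing points lie in $J(f^*)$, and then Theorem~\ref{t:conn-extepoly} yields (pre)periodic external rays with arguments in the major hole landing in $J(f^*)$ if $z_1\ne z_2$, contradicting the definition of $\Uf(f)$.

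A second structural issue is your case split. The paper does not split according to (I) repelling cutpoint versus (II) extra non-repelling point; it splits according to whether a periodic cut with vertex $\ne 0$ exists. When no such cut exists, the Separation Lemma forces all separating K-cuts to have vertex $0$, so $0$ is parabolic with two cycles of external rays and one cycle of parabolic domains, and Lemma~\ref{l:case0} finishes. Your handling of (II) gestures at the Separation Lemma but never explains why $\Uf(f)$ is of periodic type in that branch, nor why the separating cut must be at $0$. Finally, in the $z=0$ subcase your exclusion of $T_{p/q}(b)=0$ via Lemma~\ref{l:nocutinab} and Theorem~\ref{t:fxpt} is opaque; the paper's argument is one line: two cycles of parabolic domains at $0$ absorb both critical points and therefore preclude immediate renormalizability, contradicting Corollary~\ref{c:unboren}.
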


\begin{proof}
Since $f\in \Cc_\la\sm \cuc_\la$, then, by Corollary \ref{c:unboren}, the polynomial $f$ is immediately renormalizable.
Consider first the case when there exists a periodic
cut $\Ga(\ell_1)$ with vertex $x\ne 0$. By Corollary \ref{c:noreguc1}, the gap
$\Uf(f)$ is of periodic type. Assume that $\Mf=\ol{\ta_1+\frac
13\,\ta_2+\frac 23}$ is the major of $\Uf(f)$. We claim that the rays
$R_f(\ta_1+\frac 13)$, $R_f(\ta_2+\frac 23)$ land at the same point. By
way of contradiction suppose otherwise. Then the cut $\Ga(\ell_1)$
satisfies conditions of Lemma \ref{l:nocutinab}. This implies that there
exists a cut $\Ga(\ell_2)$ with properties from that lemma. Consider
the first $k-1$ iterated $\si_3$-pullbacks of the chord $\ell_2$ whose
endpoints are located in holes of $\Uf(f)$ behind the periodic edges.
The corresponding cuts together with a suitably chosen equipotential
bound a Jordan disk such that the restriction of $f$ to this Jordan
disk is a quadratic-like map. Let $\widehat J$ be the corresponding
quadratic-like Julia set. Then, by Lemma \ref{l:uniqj}, we have $\widehat
J=J(f^*)$, and so the landing point $z_1$ of $R_f(\ta_1+\frac 13)$ and
the landing point $z_2$ of $R_f(\ta_2+\frac 23)$ belong to $J(f^*)$.

Suppose that $z_1\ne z_2$. Consider the component $W$ of $\C\sm
[R_f(\ta_1+\frac 13)\cup R_f(\ta_2+\frac 23)\cup J(f^*)]$ containing
external rays with arguments from $(\ta_1+\frac 13, \ta_2+\frac 23)$.
Since $z_1\ne z_2$, there are preperiodic points of $J(f^*)$ that
belong to $\bd(W)$ and such that some quadratic-like rays landing at
these points (or at least parts of these quadratic-like rays near
$K(f^*)$) are contained in $W$. Then, by Theorem \ref{t:conn-extepoly}, there
are repelling preperiodic points $y\in J(f^*)\cap \bd(W)$ at which the
corresponding external rays with arguments from $(\ta_1+\frac 13,
\ta_2+\frac 23)$ land. However, by definition of $\widehat X$, this is
impossible. Thus $z_1=z_2=z\in K(f^*)$, as desired.

Let us prove the claims of the lemma concerning the point $z$. First
assume that $z\ne 0$. Then, if $z$ is parabolic, it would imply that
points of a parabolic domain at $z$ will belong to $K(f^*)$, a
contradiction. Hence $z$ is repelling as stated in the lemma.  Observe
that by definition in this case $f$ belongs to the parameter wake
$\Wc_\la(\theta_1,\theta_2)$ that is non-special.

Assume now that $z=0$. Then by definition $z$ must be parabolic. We
claim that there is exactly one cycle of parabolic domains at $0$.
Indeed, if there are two cycles of parabolic Fatou domains at $0$ then,
clearly, there are no quadratic-like Julia sets containing $0$, which
implies that $f$ is not immediately renormalizable, a contradiction.
Thus, there is exactly one cycle, say, $\mathcal P$, of parabolic
domains at $0$. By way of contradiction suppose that there is exactly
one cycle of external rays landing at $0$. Then each wedge formed by
these rays contains one parabolic domain from this cycle. However the
wedge associated to the major $\Mf=\ol{\ta_1+\frac 13\,\ta_2+\frac 23}$
(i.e., formed by the external rays with arguments $\ta_1+\frac 13$ and
$\ta_2+\frac 23$) has $z=0$ as its vertex and by construction cannot
contain any parabolic domains from $\mathcal P$, a contradiction. Hence
there is exactly one cycle of parabolic Fatou domains at $0$ and there
are exactly two cycles of external rays landing on $0$ as claimed. Then
the rest follows from Lemma \ref{l:case0}.

Assume now that there are no periodic cuts with non-zero vertex. In
particular, there are no periodic cuts whose vertex is a repelling
periodic point. Since $f\notin \cuc_\la$, it follows that there exist
two distinct periodic non-repelling points of multipliers not equal to
$1$. In particular, there exists a non-repelling periodic point $y\ne
0$ of multiplier not equal to $1$. If $y$ were parabolic then a
periodic cut with non-zero vertex would exist, a contradiction. Hence
$y$ is not parabolic. By Lemma \ref{l:sep} (Kiwi's Separation Lemma) there
are periodic cuts separating $0$ (or possibly existing parabolic
domains at $0$) from points of the orbit of $y$. Since the only
periodic cuts of $f$ are cuts with vertex $0$, it follows that $0$ is
parabolic, and cuts at $0$ separate parabolic domains at $0$ from the
points of the orbit of $y$. This is only possible if there are two
cycles of external rays landing on $0$ and forming two cycles of wedges
at $0$: one cycle contains a cycle of parabolic domains at $0$ and the
other cycle of wedges contains the entire orbit of $y$. Then, again,
the desired claims follow from Lemma \ref{l:case0}.
\end{proof}

Lemma \ref{l:locate} implies a few claims from our main theorems, which are not proven yet. 
To begin with, recall that the last claim of Theorem B
states that if $f_{root}$ is the root point of a parameter wake
$\Wc_\la(\theta_1,\theta_2)$, then $f_{root}$ belongs to $\cuc_\la$.
Indeed, otherwise $f_{root}$ would have properties listed in Lemma \ref{l:locate}, and it clearly does not.

Now, the first claim of Theorem C is that the set $\cuc_\la$ is
disjoint from all parameter wakes, unless $\la=1$. Indeed, suppose that
$\la\ne 1$ and $f\in \cuc_\la$ belongs to a parameter wake. Since $f\in
\cuc_\la$ then by definition $f$ has no periodic cutpoints of its Julia
set. Moreover, since $f\in \cuc_\la$ then $J(f)$ is connected. 
Hence by Lemma \ref{l:perpt-limb} $f$ has a non-repelling periodic point $x\ne 0$
with multiplier not equal to 1. Since $\la\ne 1$, $f$ has at least two
non-repelling periodic points with multiplier not equal to 1, a
contradiction with $f\in \cuc_\la$.

To complete the proof of Theorem C, it remains to prove that
$\cuc_\la$, where $|\la|\le 1$, is a full continuum. The set $\Cc_\la$
is a full continuum \cite{BH}; this is very similar to the fact that
the standard Mandelbrot set is a full continuum \cite{DH}. By Theorem
\ref{t:wakes-cu} and Lemma \ref{l:locate}, the set $\cuc_\la$ is obtained
from the full continuum $\Cc_\la$ by removing all limbs. Note that, if
we remove finitely many limbs from $\Cc_\la$, then we are left with a
full continuum; indeed, a limb does not separate $\Cc_\la$. Being the
intersection of a nested sequence of full continua, the set $\cuc_\la$
is also a full continuum. This concludes the proof of Theorem C.

\subsection{Root points of non-special wakes}
We now complete the characterization of non-special wakes by providing
conditions on polynomials necessary and sufficient for being root
points of non-special wakes. Recall the following notation: a unique
quadratic invariant gap with major $\ol{0 \frac12}$ contained in the
upper half of the unit disk (\textbf{a}bove $\ol{0 \frac12}$) is
denoted by $\fg_a$ while a unique quadratic invariant gap with major
$\ol{0 \frac12}$ contained in the lower half of the unit disk
(\textbf{b}elow $\ol{0 \frac12}$) is denoted by $\fg_b$. We begin with
a laminational claim concerning majors of quadratic invariant gaps of
$\si_3$.

\begin{lem}\label{l:major-dyn}
Let $\Mf$ be a major of a quadratic invariant gap. Suppose that $\Mf$
is of period $q$. Then the only major of a quadratic invariant gap of
$\si_3$ in the orbit of $\Mf$ is $\Mf$ itself.
Moreover, if there exists a finite stand alone periodic gap $G$ of period $q$ with edge $\Mf$, then $G$ is a triangle, and
the only edge of $G$ whose $\si_3$-orbit contains a major of a quadratic invariant gap of
$\si_3$ is $\Mf$ itself.
\end{lem}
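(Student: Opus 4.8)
The plan is to prove the two assertions in turn. Throughout, write $\Mf=\ol{(\ta_1+\frac13)(\ta_2+\frac23)}$ and fix a quadratic invariant gap $\Uf$ with major $\Mf$. Since $\Mf$ is periodic it is not critical, so $\Uf$ is of periodic type; let $H$ be its major hole, i.e.\ the hole of $\Uf$ spanned by $\Mf$. Because $\Mf$ is non-critical, $|H|>\frac13$ strictly, and $\Uf\cap\uc$ lies in the complementary arc. I will use, from \cite{BOPT} (as already invoked in the proof of Lemma~\ref{l:landvk}), that the $\si_3$-orbit of a periodic major consists of pairwise disjoint chords, and that a quadratic invariant gap has exactly one major.

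\emph{First assertion.} Suppose toward a contradiction that $\Mf_j:=\si_3^{\circ j}(\Mf)$ with $0<j<q$ is also a major of a quadratic invariant gap $\Wf_j$. Then $\Mf_j$ is periodic of period $q$ and non-critical, so $\Wf_j$ is of periodic type with periodic major $\Mf_j$, and its major hole $H_j$ satisfies $|H_j|>\frac13$, with $\Wf_j\cap\uc$ contained in the complement of $H_j$. Since $\Uf$ is invariant and $\Mf$ is one of its edges, the whole orbit of $\Mf$ lies in $\Uf$, so $\Mf_j$ is an edge of $\Uf$ and spans some hole of $\Uf$; symmetrically $\Mf\subset\Wf_j$, so both endpoints of $\Mf$ lie in $\Wf_j\cap\uc$, hence outside $H_j$. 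Therefore the hole of $\Uf$ spanned by $\Mf_j$ — the one of the two arcs determined by $\Mf_j$ that misses $\Uf\cap\uc$, in particular missing the endpoints of $\Mf$ — is exactly $H_j$. Thus $H$ and $H_j$ are two distinct holes of $\Uf$ of length $>\frac13$, contradicting the uniqueness of the major. This proves the first assertion (the case $q=1$ being vacuous).

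\emph{Second assertion.} Let $G$ be a finite stand-alone periodic gap of period $q$ with $\Mf$ among its edges. As $\si_3^{\circ q}$ maps $G$ onto itself and permutes its edges, every edge of $G$ is $\si_3$-periodic, $G$ has no critical edge, and $\si_3$ carries $G$ onto the next gap of its orbit by a vertex bijection; moreover $\si_3^{\circ q}$ fixes both endpoints of $\Mf$ and preserves the cyclic order on $G\cap\uc$, so it is the identity there, and every vertex (hence edge) of $G$ is $\si_3$-periodic of period dividing $q$. To see $G$ is a triangle I would combine this with the structure of the canonical lamination $\lam_\Uf$ with senior gap $\Uf$ and vassal gap $\Vf$ (Theorem~\ref{t:periocan}): $G$ sits on the side of $\Mf$ opposite $\Uf\cap\uc$, cannot be contained in the infinite gap $\Vf$ nor cross its edges, and the only finite $\si_3$-periodic gap co-existing with $\lam_\Uf$ and having $\Mf$ as an edge is the triangle spanned by the endpoints of $\Mf$ together with the first preimage vertex of the quadratic return map on $\Vf$; hence $|G\cap\uc|=3$. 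Writing the edges of $G$ as $\Mf,e_1,e_2$ with $e_1,e_2$ sharing the endpoints of $\Mf$, each $e_i$ shares a vertex with $\Mf$ while every chord of the orbit of $\Mf$ is disjoint from $\Mf$; so $e_1,e_2$ lie in no orbit of $\Mf$, and neither do any of their iterates. If $\si_3^{\circ i}(e_1)$ were a quadratic major it would span a hole of length $>\frac13$; but $e_1$ and all its iterates lie inside $\Uf$ or inside $\Vf$, whose boundary chords other than conjugates of $\Mf$ span holes of length $<\frac13$ — so this is impossible, and likewise for $e_2$. The hard part is precisely this triangle step together with the length control on $e_1,e_2$: one must locate $G$ exactly with respect to $\Uf$ and $\Vf$, identify its third vertex, and verify that the two non-major edges and all their images span holes shorter than $\frac13$; this is where the combinatorics of \cite{BOPT} (Theorem~\ref{t:periocan}, Theorem~\ref{t:typeD-quad}, and the A/B/D classification of finite invariant gaps) enters, whereas the first assertion is an elementary length count.
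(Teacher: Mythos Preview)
Your first assertion is correct and essentially matches the paper's: both reduce to the fact (from \cite{BOPT}) that a periodic major is the unique chord in its $\si_3$-orbit dividing $\uc$ into two arcs each longer than $\frac13$. Your phrasing via ``two major holes of $\Uf$'' is a valid variant.

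The second assertion, however, has genuine gaps. First, the triangle claim: the paper obtains it in one line from Kiwi \cite{kiwi02} (a finite periodic gap of period $q$ all of whose vertices have period $q$ must be a triangle for $\si_3$). Your sketch via the canonical lamination $\lam_\Uf$ and the vassal gap $\Vf$ does not actually prove this; you assert that ``the only finite $\si_3$-periodic gap co-existing with $\lam_\Uf$ and having $\Mf$ as an edge is the triangle~\dots'' without argument, and you have not established that $G$ must co-exist with $\lam_\Uf$ in the first place.

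Second, and more seriously, your argument that no iterate of $e_1$ (or $e_2$) is a major breaks down. You claim ``$e_1$ and all its iterates lie inside $\Uf$ or inside $\Vf$, whose boundary chords other than conjugates of $\Mf$ span holes of length~$<\frac13$''. But $e_1$ need not be an \emph{edge} of $\Uf$ or $\Vf$; it may be a diagonal, and a diagonal of $\Uf$ can perfectly well divide $\uc$ into two arcs each longer than $\frac13$. The paper handles this by splitting into two cases: either the orbit of $e_1$ stays inside $\Uf$, or it stays in the closures of the holes behind the iterates of $\Mf$. The second case yields to your length argument, but the first does not: there the paper projects $\Uf$ to $\uc$ via the monotone semiconjugacy $\psi$ to $\si_2$, obtaining a $\si_2$-periodic orbit of pairwise disjoint leaves all lying on one side of the longest one, and then invokes a property of $\si_2$ to get a contradiction. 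Your proposal skips this entirely, and the length estimate you give cannot substitute for it.
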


Recall that by a finite stand alone gap $G$ of period $q$ we mean a gap that maps onto itself under $\si_3^{\circ q}$ so that $G$, $\si_3(G)$, $\dots$, $\si_3^{\circ q-1}(G)$ are pairwise disjoint.

\begin{proof} The first claim is immediate if $\Mf$ is invariant (i.e.,
if $\Mf=\ol{0 \frac12}$). Suppose that $\Mf$ is not invariant (i.e., $q>1$).
Then, by \cite{BOPT}, the chord $\Mf$ is the only leaf in the orbit of
$\Mf$ that divides the circle into two arcs of length $>\frac13$.
Since, by \cite{BOPT}, every major of an invariant quadratic gap of $\si_3$ divides the circle into two arcs of length $>\frac13$, we see that $\Mf$ is the only major of an
invariant quadratic gap of $\si_3$ in the orbit of $\Mf$.

Suppose now that there exists a stand alone gap $G$ of period $q$
containing $\Mf$. Clearly, this is impossible if $\Mf=\ol{0 \frac12}$,
thus we may assume that $\Mf\ne \ol{0 \frac12}$ and $q>1$.
Then all vertices of $G$ are of period $q$.
By \cite{kiwi02}, this implies that $G$ is a triangle of period $q$ such
that $\Mf$ is one of its sides, and all vertices of $G$ are of period $q$.
By the previous paragraph, the only major in the orbit of $\Mf$ is $\Mf$ itself.
Denote by $\ell'$ and $\ell''$ the two remaining edges of $G$; also, denote by
$\Uf$ the quadratic invariant gap with major $\Mf$ (since $\Mf\ne \ol{0 \frac12}$, the gap $\Uf$ is unique).

Let us show that the orbit of $\ell'$ does not contain the major of an
invariant quadratic gap (similar arguments show that neither does
$\ell''$). Suppose that there is some $i$, $0\le i<q$, such that
$\si_3^{\circ i}(\ell')$ is the major of a quadratic invariant gap of
$\si_3$. Then $\si_3^{\circ i}(\ell')$ is the unique leaf in the orbit
of $\ell'$ that divides the circle in two arcs of length $>\frac13$. By
definition, we have one of the following two cases:
\begin{enumerate}
\item the leaf $\ell'$ and all its images are contained in $\Uf$, or
\item the leaf $\ell'$ and all its images are contained in the closures
    of the holes of $\Uf$ behind the corresponding images of $\Mf$.
\end{enumerate}
Let us show that in either case $i>0$. Indeed, consider case (1). Then
the quadratic invariant gap generated by $\ell'$ is located on the same
side of $\Mf$ and hence must be strictly contained in $\Uf$. This
contradicts the fact that on both quadratic invariant gaps in question
the map $\si_3$ is exactly two-to-one and the fact that the set of all
preimages of every point is dense in the basis of the corresponding
gap. Similarly one can consider case (2). Hence $i>0$; consider now
cases (1) and (2) separately using the fact that $i>0$. Observe that
non-major leaves from the orbit of $\ell'$ are contained in one of the
two components into which $\si_3^{\circ i}(\ell')$ divides the closed
unit disk, namely, the component containing $\ell'$.

(1) Use the ``projection'' $\psi$ that collapses all edges of $\Uf$ to
points and semiconjugates $\si_3|_{\Uf}$ to $\si_2|_{\uc}$. Then the
$\psi$-image of $G$ equals the $\psi$-image of $\ell'$. Moreover, the
orbit of $\ell'$ (equivalently, of $G$) ``projects'' to a
$\si_2$-periodic orbit consisting of $q$ leaves $\psi(\ell')$,
$\si_2(\psi(\ell'))=\psi(\si_3(\ell'))$, $\dots$. Observe that since,
by \cite{BOPT}, the gap $\Uf$ contains no concatenations of edges, and
the set $\Uf\cap \uc$ is a Cantor set, then $\psi(\ell')$ and all its
$\si_2$-images are non-degenerate leaves.

Moreover, the fact that all triangles in the $\si_3$-orbit of $G$ are disjoint implies that the leaves $\psi(\ell')$, $\si_2(\psi(\ell'))=\psi(\si_3(\ell''))$, $\dots$ are disjoint. Indeed, it is clear that these leaves are pairwise unlinked.
If two of them are concatenated, then a triangle $\si_3^{\circ k}(G)$ would have one of its sides coinciding with $\si^{\circ k}_3(\ell')$ while the remaining vertex would be an
endpoint of another image of $\ell'$.
This would contradict the claim that all distinct images of $G$ are pairwise disjoint.

Recall that since $\si_3^{\circ i}(\ell')$ is a major of some quadratic invariant gap, it is the unique leaf in the orbit of $\ell'$ that divides the circle in two arcs of length $>\frac13$.
In particular, the arc not containing $\ell'$ must be longer than $\frac13$.
Moreover, the rest of the orbit of $\ell'$ is contained in one of the two components into which $\si_3^{\circ i}\ell'$ divides the closed unit disk, namely the component containing $\ell'$.
It follows that $\psi(\si^{\circ i}_3(\ell'))$ is a $\si_2$-periodic leaf of period $q$ whose entire orbit is contained in one component of $\cdisk\sm\psi(\si^{\circ i}_3(\ell'))$, namely in the component bounded by $\psi(\si^{\circ i}_3(\ell'))$ and an arc of length strictly less than $\frac12$.
However, then it follows from well-known properties of $\si_2$ that leaves $\psi(\ell')$,
$\si_2(\psi(\ell'))=\psi(\si_3(\ell'))$, $\dots$ cannot be pairwise disjoint, a contradiction.

(2) Recall that in this case $\ell'$ and all its $\si_3$-images are
contained in the closures of the holes of $\Uf$ behind the
corresponding images of $\Mf$. Since all holes of $\Uf$ behind
$\si_3$-images of $\Mf$ are of length $<\frac13$ except for the major
hole behind $\Mf$, then the only potential major in the $\si_3$-orbit
of $\ell'$ is $\ell'$ itself (recall that the major of a quadratic
$\si_3$-invariant gap has to divide $\uc$ into two arcs of length at
least $\frac13$), contradicting the fact that $i>0$. 

Thus, in either case the orbit of $\ell'$ does not contain the major of
an invariant quadratic gap in its orbit; similar arguments show that
neither does $\ell''$. This completes the proof.
\end{proof}

We are ready to prove the theorem describing root polynomials of
non-special wakes.

\begin{thm}
  \label{t:root-nsp}
  Suppose that $\Wc_\la(\ta_1,\ta_2)$ is a non-special wake, $f\in \Fc_\la$. Then the
  following holds.
\begin{enumerate}
  \item Suppose that $(\ta_1, \ta_2)\ne (\frac23, \frac56)$ and
      $(\ta_1, \ta_2)\ne (\frac16, \frac13)$. Then the dynamic rays
      $R_f(\ta_1+\frac 13)$, $R_f(\ta_2+\frac 23)$ land at the same
      periodic parabolic point $z\ne 0$ of multiplier 1 if and only
      if $f$ is the root point of $\Wc_\la(\ta_1,\ta_2)$.
  \item Suppose that either $(\ta_1, \ta_2)=(\frac23, \frac56)$ or
      $(\ta_1, \ta_2)=(\frac16, \frac13)$. Then the dynamic rays
      $R_f(0)$ and $R_f(\frac12)$ land at the same periodic parabolic
      point $z\ne 0$ of multiplier 1 if and only if $f$ is the root
      point of either $\Wc_\la(\frac23, \frac56)$ or
      $\Wc_\la(\frac16, \frac13)$.
  \end{enumerate}
\end{thm}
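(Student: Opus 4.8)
The plan is to dispose of the forward implication immediately and then concentrate on the converse. If $f$ is the root point of $\Wc_\la(\ta_1,\ta_2)$ (in case (1)), or of $\Wc_\la(\frac23,\frac56)$ or $\Wc_\la(\frac16,\frac13)$ (in case (2)), then the assertion that the dynamic rays $R_f(\ta_1+\frac13),R_f(\ta_2+\frac23)$ (resp.\ $R_f(0),R_f(\frac12)$) land at a common parabolic periodic point of multiplier $1$ different from $0$ is exactly the last statement of Theorem~\ref{t:dyn-nsp} (together with Proposition~\ref{P:rootpt-nsp}); there is nothing to prove. So from now on I assume that $R_f(\ta_1+\frac13),R_f(\ta_2+\frac23)$ land at a common parabolic periodic point $z\ne 0$ of multiplier $1$, of some period $m$ (necessarily the period of the wake, i.e.\ the period of $\ta_1+\frac13$ under $\si_3$), and I must show $f$ is a root point of the appropriate wake. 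I would first note that the two rays are fixed by $f^{\circ m}$ (since $\si_3^{\circ m}(\ta_1+\frac13)=\ta_1+\frac13$ and likewise for $\ta_2+\frac23$), so they carry combinatorial rotation number $0$ at $z$, which is why the multiplier is forced to be $1$ once $z$ is known to be non-repelling.

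The first real step is to show $f\in\Cc_\la$. Suppose not; then $K(f)$ is disconnected, and by the Fatou--Shishikura inequality $f$ has at most one non-repelling cycle (this is the same observation used in Section~\ref{ss:wakes}). But $0$ is a non-repelling fixed point (multiplier $\la$, $|\la|\le 1$) and the cycle of $z$ is a second, distinct, non-repelling cycle (parabolic), a contradiction. Hence $K(f)$ is connected. The second step is to locate $f$: by Theorem~A the parameter rays $\Rc_\la(\ta_1),\Rc_\la(\ta_2)$ land at a common point $f_{root}$, and together with $f_{root}$ they bound the open wake $\Wc:=\Wc_\la(\ta_1,\ta_2)$ on one side and a complementary component $\Wc^{op}$ on the other (in case (2) one does this for both $(\frac16,\frac13)$ and $(\frac23,\frac56)$). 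Since $z$ is parabolic rather than repelling, Theorem~\ref{t:dyn-nsp} shows $f$ lies in none of the open non-special wakes under consideration; and $f$ is not on a parameter ray since $f\in\Cc_\la$. Therefore $f$ is either a root point of the relevant wake -- and we are done -- or $f$ lies in $\Wc^{op}$ (in case (1)), resp.\ outside the closures of both period-one wakes (in case (2)). It remains to exclude this last possibility.

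The exclusion is the main point. Set $\tWc:=\{g\in\Fc_\la:\ R_g(\ta_1+\frac13),R_g(\ta_2+\frac23)\text{ land at a common repelling periodic point}\}$. By Lemma~\ref{l:rep} this set is open, and since the wake is non-special it is non-empty, so by Proposition~\ref{p:W-nonspec} it coincides with $\Wc_\la(\ta_1,\ta_2)$ (case (1)) or with $\Wc_\la(\frac16,\frac13)\cup\Wc_\la(\frac23,\frac56)$ (case (2)); its closure is disjoint from $\Wc^{op}$ in case (1), and is contained in the union of the closures of the two period-one wakes in case (2). Thus it suffices to prove $f\in\overline{\tWc}$. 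Here is where I expect the main difficulty. The idea is a standard parabolic-bifurcation argument: since $z$ has multiplier $1$ for $f^{\circ m}$ and the two \emph{distinct} rays $R_f(\ta_1+\frac13),R_f(\ta_2+\frac23)$ land at $z$ from two distinct repelling directions -- hence enclosing an attracting petal of $z$ -- an arbitrarily small perturbation $g=f_{\la,b}$, $b$ near $b_0$, taken in the ``repelling direction'' of the parameter, unfolds the multiple solution of $g^{\circ m}(w)=w$ at $w=z$ into a repelling periodic point $z_g$ of period $m$ near $z$ at which the two rays, being periodic (hence smooth and combinatorially stable by Lemma~\ref{l:rep}), still land. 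Then $g\in\tWc$ for such $g$ arbitrarily close to $f$, so $f\in\overline{\tWc}$, the desired contradiction. The delicate part is making the existence of this repelling perturbation precise; this is the local theory of perturbations of a parabolic fixed point of multiplier $1$, in the spirit of Lemmas~\ref{l:par} and~\ref{l:rp}. An alternative, purely combinatorial route to the same contradiction: the arguments of the rays landing at $z$ form a finite $\si_3^{\circ m}$-invariant stand-alone periodic subset of $\uc$ whose convex hull $\gf_z$ has $\ol{\ta_1+\frac13\,\ta_2+\frac23}$ as an edge; since $\ol{\ta_1+\frac13\,\ta_2+\frac23}$ is the major of the quadratic invariant gap attached to the hole $(\ta_1,\ta_2)$, Lemma~\ref{l:major-dyn} forces $\gf_z$ to be a triangle (or a leaf) whose only ``major-type'' edge is $\ol{\ta_1+\frac13\,\ta_2+\frac23}$ itself, and then Theorem~\ref{t:monmodel} pins $\lam_f$ down to co-exist with that gap and $f$ to equal $f_{root}$. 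Either way, combining with the first paragraph yields the equivalence, and in case (2) one checks, using that $\tWc$ has exactly the two stated components (Proposition~\ref{p:W-nonspec}), that the two period-one wakes have distinct root points so that the two alternatives in the statement are genuinely distinct.
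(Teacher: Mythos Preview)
Your overall structure is sound: the ``if'' direction via Theorem~\ref{t:dyn-nsp} is correct, and so is your first step showing $f\in\Cc_\la$ by the Fatou--Shishikura count. The reduction to proving $f\in\ol{\tWc}$ is also a correct reformulation.

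The genuine gap is the perturbation step. You propose to perturb $f$ so that the parabolic $z$ becomes repelling and the two periodic rays still land there. But Lemma~\ref{l:rep} does not say this: it says rays landing at a \emph{repelling} point are stable, whereas you are starting at a \emph{parabolic} point. Under perturbation of a multiplier-$1$ parabolic point the local fixed-point equation $g^{\circ m}(w)=w$ typically splits into several simple solutions, and periodic rays may jump between them; controlling where the rays go is exactly the delicate issue you flag, and nothing in Lemmas~\ref{l:par} or~\ref{l:rp} resolves it. Your ``alternative combinatorial route'' is also incomplete: knowing that $\gf_z$ is a triangle with $\Mf=\ol{(\ta_1+\frac13)(\ta_2+\frac23)}$ as its only major-type edge does not by itself force $f=f_{root}$, and Theorem~\ref{t:monmodel} gives no mechanism for that identification.

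The paper avoids this problem by perturbing to the \emph{attracting} side instead. Since the multiplier of the period-$q$ orbit through $z$ is a non-constant analytic function of $b$ equal to $1$ at $f$, the open mapping theorem gives a stable component $\Uc\subset\Fc_\la$ with $f\in\bd(\Uc)$ on which the continuation $z_g$ is attracting. Polynomials in $\Uc$ have a non-repelling cycle of period $q$ with multiplier $\ne 1$, hence $\Uc\cap\cuc_\la=\emptyset$; by Theorem~C, $\Uc$ lies in some wake $\Wc_\la(\ta_1^*,\ta_2^*)$, and $f$ is in its closure. Now one does a case analysis on $(\ta_1^*,\ta_2^*)$: if it equals $(\ta_1,\ta_2)$ one is done (as in your argument); if $f$ were in the interior of a different wake then $\Mf=\ol{(\ta_1+\frac13)(\ta_2+\frac23)}$ would cut the quadratic-like Julia set, contradicting that $K(f^*)$ has no non-zero parabolic cutpoints; and if $f$ were the root of a different non-special wake, Fatou--Shishikura forces the two parabolic orbits to coincide, whereupon Lemma~\ref{l:major-dyn} forces $\Mf=\Mf^*$, leaving only the period-one ambiguity of case~(2). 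The detour through the attracting component and Theorem~C replaces your unproved ray-stability statement with structural information that is already established in the paper.
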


Observe that in case (1) the arc $(\ta_1, \ta_2)$ is in one-to-one
correspondence with the major $\Mf=\ol{(\ta_1+\frac 13)\,(\ta_2+\frac
23)}\ne \ol{0 \frac12}$ of a quadratic invariant gap $\Uf$ not equal
$\fg_a$ or $\fg_b$. However, in case (2) both arcs $(\ta_1, \ta_2)=
(\frac23, \frac56)$ or $(\ta_1, \ta_2)=(\frac16, \frac13)$ are
associated with the same major $\ol{0 \frac12}$ that can serve as the
major of either $\fg_a$ or $\fg_b$.

\begin{proof}
The ``if'' part in both cases follows from Theorem \ref{t:dyn-nsp}. We
need to prove the ``only if'' part, i.e. to prove that if the dynamic rays
$R_f(\ta_1+\frac 13)$, $R_f(\ta_2+\frac 23)$ land at the same periodic
parabolic point $z\ne 0$ of multiplier 1, then $f$ is the root point of
$\Wc_\la(\ta_1,\ta_2)$ (in case (1)) or the root point of either
$\Wc_\la(\frac23, \frac56)$ or $\Wc_\la(\frac16, \frac13)$ (in case (2)). The
arguments are alike, so we separate the two cases only later in the
proof.

Suppose that the dynamic rays $R_f(\ta_1+\frac 13)$, $R_f(\ta_2+\frac
23)$ land at the same periodic parabolic point $z\ne 0$ of multiplier
1. Let $q$ be the minimal period of $z$. Since the multipliers of the
periodic points of $g\in\Fc_\la$ are branches of certain multivalued
analytic functions of $g$, there exists a stable component
$\Uc\subset\Fc_\la$ bounded by a real analytic curve such that,
\begin{itemize}
\item we have $f\in\bd(\Uc)$;
\item for all $g\in\Uc$, there is an attracting periodic point $z_g$ of period $q$ depending analytically on $g$;
\item we have $z_g\to z$ as $g\to f$.
\end{itemize}
By Theorem C, the domain $\Uc$ is contained in a parameter wake
$\Wc(\ta^*_1,\ta^*_2)$ and $f$ belongs to the closure of $\Wc(\ta^*_1,\ta^*_2)$.
Then either $f\in \Wc_\la(\ta^*_1,\ta^*_2)$ and the dynamic rays $R_f(\ta^*_1+\frac 13)$,
$R_f(\ta^*_2+\frac 23)$ land at the same \emph{repelling} periodic point $z^*$, or $f$ is the root point of
$\Wc_\la(\ta^*_1,\ta^*_2)$ and the dynamic rays $R_f(\ta^*_1+\frac 13)$,
$R_f(\ta^*_2+\frac 23)$ land at the same \emph{parabolic}
periodic point $z^*$. Consider cases.

First, let $(\ta_1,\ta_2)=(\ta^*_1,\ta^*_2)$. Then 
$f\notin \Wc_\la(\ta_1,\ta_2)$, as otherwise, since $\Wc_\la(\ta_1,\ta_2)$ is non-special,
the external rays $R_f(\ta_1+\frac13)$ and $R_f(\ta_2+\frac23)$ land at
a repelling periodic point, a contradiction. Hence then $f$ is the root
polynomial of $\Wc_\la(\ta_1,\ta_2)$. This corresponds to case (1).

Suppose now that $(\ta_1,\ta_2)\ne (\ta^*_1,\ta^*_2)$. Set
$\Mf=\ol{(\ta_1+\frac 13)\,(\ta_2+\frac 23)}$ and
$\Mf^*=\ol{(\ta^*_1+\frac 13)\,(\ta^*_2+\frac 23)}$, and again consider
various cases.

(a) The polynomial $f$ is the root polynomial of a non-special wake
$\Wc_\la(\ta^*_1,\ta^*_2)$. Then $z^*\ne 0$ is parabolic. By the
Fatou--Shishikura inequality, $z$ and $z^*$ belong to the same orbit.
By Lemma \ref{l:major-dyn}, we have $\Mf=\Mf^*$.
Since $(\ta_1,\ta_2)\ne (\ta^*_1,\ta^*_2)$, it follows from \cite{BOPT}
that the only way it can happen is when $(\ta_1,\ta_2)=(0, \frac12)$
and $(\ta^*_1,\ta^*_2)=(\frac12, 0)$ or vice versa. This corresponds to
case (2).

(b) The polynomial $f$ is the root polynomial of a special wake $\Wc_\la(\ta^*_1,$ $\ta^*_2)$.
Then, by Theorem \ref{t:dyn-sp}, there are two cycles of parabolic domains at $0$, which contradicts the assumption about $f$ and $z$.

(c) Suppose now that $z^*$ is repelling. Then $f$ is immediately
renormalizable, and $\Uf(f)$ is the quadratic invariant gap with major
$\Mf^*$. It follows from \cite{BOPT} that $\Mf$ crosses the interior of
$\Uf(f)$, hence $z$ separates $K(f)$. In particular, $z\in K(f)$, a
contradiction with the fact that $K(f)$ contains no parabolic points
different from $0$ (see \cite[Theorems A and B]{bopt16}).
\end{proof}

\subsection{The slice $\Fc_1$}
\label{ss:fc1} 
The case $\la=1$ is an exception in Theorem C. Thus it
requires  separate attention. The slice $\Fc_1$ consists of polynomials
$f=z^3+bz^2+z$. The point $0$ is a fixed parabolic point with
multiplier 1 of every polynomial in $\Fc_1$.
Figure \ref{fig:Fc0} shows the parameter slice $\Fc_1$ in which several
parameter rays and several wakes are shown.

\begin{figure}
  \includegraphics[width=8cm]{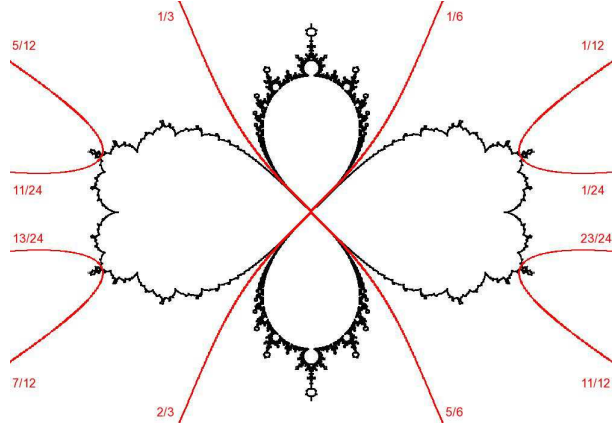}
  \caption{Parameter slice $\Fc_1$ with some rays}
  \label{fig:Fc0}
\end{figure}

The mutual position of
$\cuc_1$ and the wakes in $\Fc_1$ is described below:

\begin{thm}
  \label{t:la1}
  Parameter wakes in $\Fc_1$ of period greater than 1 are disjoint from $\cuc_1$.
  A polynomial $f\in\Fc_1$ belongs to the set
  $$\left(\Wc_1\left(\frac 16,\frac 13\right)\cup\Wc_1\left(\frac 23,\frac 56\right)\right)\cap\cuc_1$$
  if and only if $f$ is affinely conjugate to the root point of some wake $\Wc_{\la^*}(\frac 16,\frac 13)$ with $|\la^*|\le 1$ and $\la^*\ne 1$.
\end{thm}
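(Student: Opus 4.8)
The plan is to analyze the slice $\Fc_1$ by splitting it into the two period-one wakes $\Wc_1(\frac16,\frac13)$, $\Wc_1(\frac23,\frac56)$ and their complement, and to treat the intersection with $\cuc_1$ via a ``multiplier perturbation'' argument identical in spirit to those in Theorem~\ref{t:root-nsp} and Lemma~\ref{l:multipl}. First I would observe that, by Theorem~\ref{t:wakes-cu} (or rather its proof, which never genuinely needs $\la\ne 1$ away from period one), all parameter wakes of period $>1$ in $\Fc_1$ are disjoint from $\cuc_1$: the argument of Lemma~\ref{l:perpt-limb} produces either a repelling periodic cutpoint or a non-repelling periodic point $x\ne 0$ of multiplier $\ne 1$, and since the forbidden point $0$ already has multiplier $1$, such an $x$ is a second non-repelling periodic point forcing $[f]\notin\cu$. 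The subtlety is only that for $\la=1$ the period-one wakes \emph{do} meet $\cuc_1$, because there the repelling periodic point guaranteed by a non-special period-one wake can merge with $0$; this is exactly the phenomenon Lemma~\ref{l:multipl} controls.

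Next I would pin down which polynomials of $\Wc_1(\frac16,\frac13)\cup\Wc_1(\frac23,\frac56)$ lie in $\cuc_1$. The two period-one wakes of $\Fc_1$ are non-special (there are no $0/1$-special holes other than via $\fg_a,\fg_b$, and in the slice $\la=1$ these correspond to the root configuration), so by Theorem~\ref{t:dyn-nsp} a generic $f$ in such a wake has the dynamic rays $R_f(0)$, $R_f(\frac12)$ landing at a common \emph{repelling} fixed point $z_f\ne 0$; as $f$ runs through the wake, $z_f$ is a holomorphic function, and $[f]\in\cu$ fails as soon as $z_f$ is a genuine repelling cutpoint or a second non-repelling point, so by Lemma~\ref{l:sep} (Kiwi's Separation Lemma) combined with the structure of $\Uf(f)$ the only way $[f]\in\cu$ is if $z_f$ degenerates, i.e.\ $z_f=0$. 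Thus $\cuc_1\cap\Wc_1(\frac16,\frac13)$ consists precisely of the polynomials for which $R_f(0)$ and $R_f(\frac12)$ land at the \emph{same point as} the parabolic fixed point $0$ — and by the degeneracy analysis of Lemma~\ref{l:multipl} this forces $0$ to be a \emph{degenerate} parabolic fixed point, i.e.\ $f_{1,b}$ has $T_{0/1}(b)=0$ in the notation of Proposition~\ref{P:Tpq}, equivalently $f^{\circ 1}(z)-z$ vanishes to order $\ge 3$ at $0$, so that $0$ has two cycles of parabolic domains.

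The core of the argument is then to match these degenerate-parabolic polynomials in $\Fc_1$ with root points of wakes $\Wc_{\la^*}(\frac16,\frac13)$ for $|\la^*|\le1$, $\la^*\ne1$. The plan is to use the ``unfolding'' correspondence: a cubic polynomial $g$ with a parabolic fixed point of multiplier $1$ and two cycles of attracting petals at it is, up to affine conjugacy, exactly a member of $\Fc_{\la^*}$ whose fixed point $0$ of multiplier $\la^*$ has \emph{collided} with a repelling fixed point $z$ carrying the dynamic rays $R(\frac16+\frac13)=R(\frac12)$ and $R(\frac13+\frac23)=R(0)$ — that is, a root point of the non-special wake $\Wc_{\la^*}(\frac16,\frac13)$, by Proposition~\ref{P:rootpt-nsp} and Theorem~\ref{t:root-nsp}. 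Concretely, I would show: (i) if $f_{1,b}\in\cuc_1$ lies in one of the two period-one wakes, then by the above $T_{0/1}(b)=0$, the rays $R_{f_{1,b}}(0)$, $R_{f_{1,b}}(\frac12)$ both land at $0$, and $0$ is degenerate parabolic with two petal cycles; the local unfolding of such a point gives an affine conjugacy to a polynomial in some $\Fc_{\la^*}$ ($\la^*$ the multiplier of the fixed point that separates from $0$) at which the ``diagonal'' configuration of the period-one wake degenerates, i.e.\ $f_{1,b}$ is affinely conjugate to the root point of $\Wc_{\la^*}(\frac16,\frac13)$; conversely (ii) given a root point $g$ of $\Wc_{\la^*}(\frac16,\frac13)$ with $|\la^*|\le1$, $\la^*\ne1$, the parabolic periodic point of multiplier $1$ at which $R_g(\frac16+\frac13)$, $R_g(\frac23+\frac23)$ land is in fact a fixed point (since $0/1$-rotation number forces period one for the relevant gap $\fg_a$ or $\fg_b$), so $g$ has a parabolic fixed point of multiplier $1$; rescaling that fixed point to the origin places $g$ in $\Fc_1$, and the landing data of the two rays plus $[g]\in\cuc_{\la^*}$ (Theorem~B's last claim, already proven) gives $[g]\in\cu$, hence the affine representative lies in $\cuc_1$ inside one of the two period-one wakes. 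The main obstacle I expect is bookkeeping the identification of which dynamic rays and which fixed point of $g$ play the role of $0$, $R(0)$, $R(\frac12)$ after the rescaling — i.e.\ making precise that the ``other'' major $\ol{0\frac12}$ governing $\fg_a$ versus $\fg_b$ is exactly what distinguishes the two period-one wakes of $\Fc_1$ — together with checking that the multiplier on the surviving fixed point of the unfolded family really ranges over the full punctured closed disk $\{|\la^*|\le1,\ \la^*\ne1\}$, which is where the explicit polynomial $T_{0/1}$ of $\Fc_{\la^*}$ and Proposition~\ref{p:roots} (degree of $T_{p/q}$) must be invoked to rule out missing values.
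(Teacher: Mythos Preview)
Your treatment of the period $>1$ wakes is essentially correct and matches the paper, though the paper phrases it slightly differently: when $k>1$, the non-repelling $k$-periodic point $x$ from Lemma~\ref{l:perpt-limb} gives at least \emph{two} non-repelling periodic points $x$ and $f(x)$ with multiplier $\ne 1$, which already violates the definition of $\cu$.

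The serious problem is in your handling of the period-one wakes. You assert that $\Wc_1(\frac16,\frac13)$ and $\Wc_1(\frac23,\frac56)$ are \emph{non-special}, and build your whole analysis on a holomorphically moving repelling fixed point $z_f$ that degenerates to $0$. This is wrong: for $\la=1=e^{2\pi i\cdot 0/1}$ the holes $(\frac16,\frac13)$ and $(\frac23,\frac56)$ are precisely the two $0/1$-special holes (their associated angles $\frac12$ and $0$ are $\si_3$-fixed with rotation number $0/1$), so by Theorem~\ref{t:sp-wakes-nsp} these wakes are \emph{special}. Hence, by Proposition~\ref{P:landing-sp} (or Theorem~\ref{t:dyn-sp}), the rays $R_f(0)$ and $R_f(\frac12)$ land at the parabolic point $0$ for \emph{every} $f$ in these wakes, not just at a degenerate limit. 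There is no repelling $z_f$ to track, and the condition $T_{0/1}(b)=0$ characterizes only the \emph{root point} of the wake, not the polynomials inside it that happen to lie in $\cuc_1$.

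The paper's actual route for the ``only if'' direction is much more direct and avoids any degeneration analysis. Given $f$ in one of these special wakes with $[f]\in\cu$, the rays $R_f(0)$, $R_f(\frac12)$ land at $0$; since $f\in\cuc_1$ there are no repelling periodic cutpoints, so Lemma~\ref{l:perpt-limb} (its second alternative, the one for special wakes) yields a non-repelling \emph{fixed} point $x\ne 0$ with multiplier $\la^*\ne 1$; translating $x$ to the origin produces $f^*\in\Fc_{\la^*}$ with a parabolic fixed point $x^*=-x$ of multiplier $1$ at which $R_{f^*}(0)$, $R_{f^*}(\frac12)$ land, and Theorem~\ref{t:root-nsp} identifies $f^*$ as a root point. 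Your ``if'' direction (ii) is correct and matches the paper. Finally, your worry about $\la^*$ ranging over the full punctured disk is unfounded: the statement only claims the existence of \emph{some} $\la^*$, not surjectivity.
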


\begin{proof}
Let $\Wc_1(\ta_1,\ta_2)$ be a parameter wake of period $k>1$. Take
$f\in\Wc_\la(\theta_1,\theta_2)$. If $K(f)$ is disconnected, then, by
definition, $f\notin\cuc_1$. Suppose that $K(f)$ is connected. Then by
Lemma \ref{l:perpt-limb} there are two possibilities at least of which
holds. First, there may exist a repelling periodic cutpoint of $K(f)$.
Then, by definition, $f\notin \cuc_1$, as desired. Second, there may exist
a non-repelling $k$-periodic point $x\ne 0$. Hence there are at least
two non-repelling periodic points, $x$ and $f(x)$, whose multiplier is
different from 1, and so again $f\notin \cuc_1$ as desired.

Suppose now that $f\in\Fc_1$ is affinely conjugate to the root point
$f_{root}$ of some wake $\Wc_{\la^*}(\frac 16,\frac 13)$, where
$|\la^*|\le 1$ and $\la^*\ne 1$. By Lemma \ref{l:locate} and the
description of the root points of wakes given in Theorems
\ref{t:dyn-nsp}, \ref{t:dyn-sp}, we have $[f_{root}]\in\cu$, hence
$f\in\cuc_1$.

Finally, let us prove that if $f\in\Fc_1$ belongs to the set
$$
\left(\Wc_1\left(\frac 16,\frac 13\right)\cup\Wc_1\left(\frac
23,\frac 56\right)\right)\cap\cuc_1,
$$
then $f$ is affinely conjugate to
the root point of some wake $\Wc_{\la^*}(\frac 16,\frac 13)$ with
$|\la^*|\le 1$ and $\la^*\ne 1$. Note that the rays $R_f(0)$ and
$R_f(\frac 12)$ land at $0$ (the common landing point is a parabolic
fixed point of multiplier 1, hence it must be $0$). Since $f\in \cuc_1$,
there are no repelling periodic cutpoints in $K(f)$. Hence, by Lemma
\ref{l:perpt-limb}, the polynomial $f$ has a non-repelling fixed point $x\ne 0$ with
multiplier $\la^*\ne 1$. Consider a translation of $\C$ that moves $x$
to $0$, and let $f^*$ be the cubic polynomial obtained as the
conjugation of $f$ by this translation. Then $f^*\in\Fc_{\la^*}$, and
$x^*=-x$ is a fixed point of $f^*$ with multiplier 1. Since the rays
$R_f(0)$ and $R_f(\frac 12)$ land at $0$, the rays $R_{f^*}(0)$ and
$R_{f^*}(\frac 12)$ land at $x^*$. It follows from Theorem
\ref{t:root-nsp} that $f^*$ is the root point of the wake
$\Wc_{\la^*}(\frac 16,\frac 13)$ or of the wake $\Wc_{\la^*}(\frac
23,\frac 56)$.
\end{proof}

\section*{Acknowledgements}
The first named author was partially supported by NSF grant DMS--1201450.
The second named author was partially supported by NSF grant DMS-0906316.
The third named author was partially supported by the RFBR grant 16-01-00748a.
The study has been funded by the Russian Academic Excellence Project `5-100'.
The research comprised in Theorem C was funded by RScF grant 14-21-00053.

\end{document}